\newtheorem{theorem}{Theorem}[section]
\newtheorem{lemma}[theorem]{Lemma}
\newtheorem{proposition}[theorem]{Proposition}
\newtheorem{assumption}[theorem]{Assumption}
\newtheorem{corollary}[theorem]{Corollary}
\newtheorem{definition}[theorem]{Definition}
\newtheorem{preremark}[theorem]{Remark}
\newenvironment{remark}{\begin{preremark}\rm}{\end{preremark}}
\newtheorem{notation}[theorem]{Notation}
\newtheorem{preexample}[theorem]{Example}
\newenvironment{example}{\begin{preexample}\rm}{\end{preexample}}
\newtheorem{preclaim}[theorem]{Claim}
\newtheorem{prequestion}[theorem]{Question}
\newtheorem{preapplication}[theorem]{Application}
\numberwithin{equation}{section}
\newcommand \ZZ {{\mathbb Z}}
\newcommand \QQ {{\mathbb Q}}
\newcommand \NN {{\mathbb N}}
\newcommand  \FF {{\mathbb F}}
\newcommand \bC {{\mathbb C}}
\newcommand \bQ {{\mathbb Q}}
\newcommand \cO {{\mathcal O}}
\newcommand \CC {{\mathbb C}}
\newcommand \RR {{\mathbb R}}
\newcommand \cf {{\mathfrak f}}
\newcommand \bP {{\mathbb P}}
\newcommand \bH {{\mathbb H}}
\newcommand \bD {{\mathbb D}}
\newcommand{\fa}{\mathfrak{a}}
\newcommand{\cP}{\mathcal{P}}
\newcommand{\cQ}{\mathcal{Q}}
\newcommand{\Chi}{X}
\newcommand \fp {{\mathfrak p}}
\newcommand \hpi {{h}}
\newcommand \cl {\mathrm{cl}}
\global\let\ker\undefined
\DeclareMathOperator{\ker}{Ker}
\DeclareMathOperator{\End}{End}
\DeclareMathOperator{\Gal}{Gal}
\DeclareMathOperator{\GL}{GL}
\DeclareMathOperator{\SL}{SL}
\DeclareMathOperator{\SO}{SO}
\DeclareMathOperator{\GU}{GU}
\DeclareMathOperator{\Tr}{tr}
\DeclareMathOperator{\Id}{Id}
\DeclareMathOperator{\Lie}{Lie}
\DeclareMathOperator{\Res}{Res}
\newcommand \cA {{\mathcal A}}
\newcommand \CG {{\mathcal G}}
\newcommand \CH {{\mathcal H}}
\newcommand \Sh {\mathrm{Sh}}
\newcommand \Shg {\mathrm{S}}
\newcommand \pol {\mathrm{pol}}
\newcommand{\cU}{\mathcal{U}}
\newcommand{\bb}{v}
\newcommand{\jj}{j}
\newcommand{\kk}{k}
\newcommand{\U}{\mathcal{U}}
\newcommand{\bZ}{\mathbb{Z}}
\newcommand{\bF}{\mathbb{F}}
\newcommand{\Aut}{{\mathrm{Aut}}}
\newcommand{\bcP}{\overline{\cP}}
\newcommand{\bcQo}{\overline{\cQ^\circ_\lambda}}
\title{Infinitely many primes of basic reduction for some abelian fourfolds}
\date{}
\author{Wanlin Li}
\address{Department of Mathematics,
Vanderbilt University,
Nashville, TN 37235, USA}
\email{wanlin.li@vanderbilt.edu}
\author{Elena Mantovan}
\address{Department of Mathematics, California Institute of Technology, Pasadena, CA 91125, USA}
\email{mantovan@caltech.edu}
\author{Rachel Pries}
\address{Department of Mathematics, 
Colorado State University, 
Fort Collins, CO 80523, USA}
\email{pries@colostate.edu}
\author{Yunqing Tang}
\address{Department of Mathematics,
University of California at Berkeley,
Berkeley, CA 94720, USA}
\email{yunqing.tang@berkeley.edu}
\begin{document}

\thanks{
We would like to thank the American Institute of Mathematics for their support through the Square program.
Li was partially supported by NSF grant DMS-23-02511.
Mantovan was partially supported by NSF grant DMS-22-00694.
Pries was partially supported by NSF grants DMS-19-01819 
and DMS-22-00418. 
Tang was partially supported by NSF grants 
DMS-18-01237 and DMS-22-31958 and a Sloan Research Fellowship.
We would like to thank  
John Voight (for information about class groups and triangle groups), Liang Xiao (for suggesting this problem to us),
and Tonghai Yang (for discussion on complex Shimura curves and Heegner cycles).
We would also like to thank Eran Assaf, Francesc Castella, K\c{e}stutis \v{C}esnavi\v{c}ius, Ofer Gabber, Tom Graber, Emma Knight, Yuan Liu, and Peter Sarnak for helpful comments and/or discussions.
Some of the work was done when Tang was at CNRS and Universit\'e Paris-Saclay from February 2020 to June 2021 and at Princeton University from January 2019 to January 2020 and from July 2021 to June 2022.}

\begin{abstract}
If $E$ is an elliptic curve, defined over $\QQ$ or a number field having at least one real embedding,  
then Elkies proved that $E$ has supersingular reduction at infinitely many primes $p$.
Baba and Granath extended this result to certain curves $C$ of genus $2$ with field of moduli $\QQ$,
under a condition on the endomorphism ring of the Jacobian.
In this paper, we extend these results to certain curves of genus $4$ having an automorphism of order $5$, 
proving that the Jacobians of these curves have basic reduction (as defined by Kottwitz) for infinitely many primes $p$.

To do this, we study the complex uniformization of the Deligne--Mostow Shimura variety $\Sh$ associated with the one dimensional family of these curves.  
By analyzing the real points on $\Sh$, 
we compute three geodesics in the upper half plane that are edges of a fundamental triangle for the action of the unitary similitude group.
Using representations of quadratic forms, 
we determine the points on $\Sh$ which represent curves whose Jacobians have 
complex multiplication by certain quadratic extensions of the cyclotomic field $\QQ(\zeta_5)$.
We conclude by studying the equidistribution of these points and the reduction of these CM cycles on the Shimura variety.

Keywords: curve, Jacobian, abelian variety, complex multiplication,  
reduction, Frobenius, $L$-polynomial, supersingular, 
Hurwitz space, Shimura variety, basic locus, complex uniformization, 
fundamental triangle, geodesic, quadratic form, equidistribution, CM cycle, class polynomial.

MSC20 classifications: 
primary 11F06, 11G15, 11G18, 11M38, 14G35; 
secondary 11E12, 11R29, 14H10, 14K22, 32M15.

\end{abstract}

\maketitle


\section{Introduction}

\subsection{Infinitely many primes of supersingular reduction}

If $E$ is an elliptic curve defined over $\QQ$,
then Elkies proved that there are infinitely many primes $p$ for which the reduction of $E$ modulo $p$ is supersingular \cite{Elkies1}.
Elkies also generalized this result for elliptic curves $E$ defined over other number fields, including those having at least one real embedding \cite{Elkies2}. In the work of Jao \cite{JAO,jao2003supersingular}, this result was extended to some elliptic curves parameterized by $\bQ$-points on modular curves $X_0(p)/\omega_p$ with small $p$, (including cases where $E$ is defined over an imaginary quadratic field).

For most curves $C$ of genus $g > 1$, not much is known about the 
primes of supersingular reduction of $C$.  
If the Jacobian of $C$ does not have complex multiplication, 
the expectation is that primes of supersingular reduction are rare for $C$. 
So it is intriguing to find situations where this set of primes is infinite. 

The result of Elkies was extended by Sadykov \cite{sadykov2004two} and  Baba--Granath \cite{babagranath}
to certain curves $C$ of genus $2$.
In the case of Baba--Granath, the curve $C$ has field of moduli 
$\QQ$, and its Jacobian $\mathrm{Jac}(C)$ has multiplication by the 
maximal quaternion order of discriminant 6. Under the condition that  
$C$ has potentially smooth stable reduction at 2 and 3, Baba--Granath \cite{babagranath} prove that 
$\mathrm{Jac}(C)$ has superspecial (and thus supersingular) reduction at infinitely many primes $p$.

In this paper, we extend the results of Elkies, Sadykov, and 
Baba--Granath to certain curves of genus $4$ having 
an automorphism of order $5$.
There are more possibilities for the Newton polygons of the reductions of these curves; the appropriate generalization of supersingular reduction is 
\emph{basic reduction}.
For the definition of basic reduction of these curves, see Section~\ref{Sbasic}, specifically Example~\ref{EbasicM11}.

Here is a simplified version of our main theorem, whose full statement can be found in Theorem~\ref{thm:M11}. In particular, Theorem \ref{Tintro} restricts to curves defined over $\bQ$ while Theorem~\ref{thm:M11} includes curves defined over $\bQ(\sqrt{5})$.

\begin{theorem} \label{Tintro}
Suppose $C_t$ is a smooth projective genus $4$ curve 
with an affine equation of the form 
\begin{equation} \label{EfamilyM11}
C_t \colon y^5=x(x-1)(x-t).
\end{equation}
Assume that the reduction of $C_t$ at $5$ is singular.
Suppose that $J(t):=(t^2 -t+1)^3/t^2(t-1)^2$ is in $\QQ \cap (-\infty, 27/4)$.
Then $\mathrm{Jac}(C_t)$ has basic reduction at infinitely many primes.
\end{theorem}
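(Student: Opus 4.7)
I would adapt Elkies' CM method \cite{Elkies1} (as extended by Baba--Granath \cite{babagranath}) to the one-dimensional Deligne--Mostow Shimura variety $\Sh$ parameterizing the family \eqref{EfamilyM11} with its $\ZZ[\zeta_5]$-action. The first step is to fix a uniformization $\Sh(\CC) \cong \Gamma \backslash \bH$ for a Fuchsian triangle group $\Gamma$, under which the coarse modulus $J$ becomes a real-analytic map. In this model the real locus $\Sh(\RR)$ is the union of the three geodesic edges of a fundamental triangle, and the assumption $J(t) \in \QQ \cap (-\infty, 27/4)$ forces the point $[C_t]$ to lie on one specific edge $\gamma$.

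Next I would produce a large supply of CM points on $\gamma$. For each imaginary quadratic field $F$ such that $K = F \cdot \QQ(\zeta_5)$ is a CM field with a CM type compatible with $\gamma$, the CM points on $\gamma$ with CM by $\co_K$ correspond, via the quadratic-form analysis of the paper, to representations of a discriminant related to $F$ by a fixed integral Hermitian form on a $\ZZ[\zeta_5]$-module of rank $2$. Each orbit is Galois-stable, so assembling it gives a class polynomial $P_F(J) \in \ZZ[\sqrt{5}][J]$ whose roots are the $J$-values of those CM points. Since $C_t$ is not itself CM, $P_F(J(t))$ is a nonzero algebraic integer; its archimedean size is controlled from above by a height estimate and from below by a Siegel-type lower bound, while Duke-type equidistribution of CM cycles along $\gamma$ sharpens the upper bound. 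Meanwhile, by the Shimura--Taniyama formula and the classification of Newton strata in Example~\ref{EbasicM11}, any rational prime $p$ that is non-split in $F$ in a prescribed way reduces every CM point with CM by $\co_K$ into the basic locus of $\Sh$.

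The conclusion then follows by combining these ingredients: if $P_F(J(t))$ has a rational prime divisor $p$ that is non-split in $F$ in the prescribed way, then $[C_t]$ is congruent modulo some prime above $p$ to a CM point in the basic stratum of $\Sh$, so $\mathrm{Jac}(C_t)$ has basic reduction at $p$. Letting $F$ range over an infinite family chosen so that this ``forced inert divisor'' situation always occurs yields infinitely many such $p$. The main obstacle, as for Elkies and Baba--Granath, is the quantitative balance in this final step: one must show that the archimedean growth of $\log|P_F(J(t))|$ is dominated by the density of primes inert in $F$, so that at least one such prime necessarily appears as a divisor. The hypothesis that $C_t$ has singular reduction at $5$ is used to keep the integral model of $\Sh$ under control at the ramified prime $5$, so that the congruence transfers faithfully from $\Sh$ to the moduli of Jacobians at all other primes, and the restriction $J(t) < 27/4$ pins down the arc $\gamma$ along which the equidistribution statement is to be applied.
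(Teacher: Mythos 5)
Your overall skeleton — uniformize $\Sh$ by a triangle group, read $\Sh(\RR)$ off a geodesic triangle, produce CM points via representations of quadratic forms, and land in the basic locus via Shimura--Taniyama — is the right one and matches the paper. But two of your key steps diverge from what actually makes the argument close, and as stated they leave genuine gaps.

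First, you write a class polynomial $P_F(J)\in\ZZ[\sqrt 5][J]$. For a compact Shimura curve there is no cusp, so the $J$-values of CM points are \emph{not} algebraic integers, and your $P_F(J)$ only lies in $F_0[J]$. The paper has to confront exactly this: it clears denominators by $a_\lambda$ and proves in Proposition~\ref{asquare}, via a local analysis of ramification at the orbifold point $P$, that this denominator is a square away from $\lambda$; this is what replaces the free ``$P_D \in \ZZ[x]$'' of the modular-curve case. Your proposal silently assumes integrality, so the divisibility/congruence bookkeeping that the argument hangs on is not available to you as written.

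Second, and more seriously, your final step is not Elkies' argument. You say one should compare the archimedean growth of $\log\lvert P_F(J(t))\rvert$ to the density of inert primes, and invoke Siegel-type lower bounds and Duke-type equidistribution of CM cycles. No such quantitative trade-off forces an inert divisor to appear: the value could a priori be a pure power of a single split prime. Elkies (and the paper) sidesteps this by an algebraic sign-plus-reciprocity mechanism. Concretely, one arranges (via a careful choice of $\lambda$ and the relative position of the two real CM points $C_\lambda$, $C_{\lambda^\tau}$ with respect to $[C]$ and $[C^\tau]$, using the midpoint $M$ and the unit action $\eta$) that $\cP_\lambda(j)\cP_{\lambda^\tau}(j^\tau)$ or $\cQ^\circ_\lambda(j)\cQ^\circ_{\lambda^\tau}(j^\tau)$ is negative; then quadratic reciprocity over $F_0$ (Lemma~\ref{QR}, Lemma~\ref{quadrec5}) shows a suitable multiple of this value is a nonsquare mod~$\lambda$; and since Proposition~\ref{coroSQ} guarantees the reduction of $(x-\bar c)\bcP_\lambda(x)$ is a square in $\FF_\lambda[x]$, the only way to reconcile these is that the value has a prime divisor $v$ that is a nonsquare mod $\lambda$ and appears to odd order — and by the reciprocity computation such a $v$ is non-split in $F_0(\sqrt{-\lambda})/F_0$, hence basic by Proposition~\ref{ST}. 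The equidistribution actually used is Hecke's id\`elic equidistribution of \emph{primes} $\lambda$ (Theorem~\ref{thm:infintely-many-lambda}), to locate infinitely many admissible $\lambda$ putting $C_\lambda$ in the right arc and making all of $\mathcal S$ split — not Duke-type equidistribution of Heegner cycles, and not as a bound on $\lvert P_F(J(t))\rvert$. You should replace your analytic-density step with this algebraic reciprocity step; without it the proof does not close.

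Two smaller points: the hypothesis that $C_t$ has singular reduction at $5$ is used (via Proposition~\ref{Predlehr} and Corollary~\ref{Cnot5}) to guarantee that $[C_t]$ never coincides mod $5$ with a CM point of $\tilde Z(\lambda)$, so that the new basic primes produced are genuinely away from $5$ and the induction on $\mathcal S$ terminates — this is a sharper role than ``keeping the integral model under control.'' And the restriction $J(t)<27/4$ serves to place both $[C]$ and $[C^\tau]$ on the arc $\overset{\frown}{PQR}$ where the two real CM points live, which is the geometric input to the sign argument; it is not merely ``pinning down the arc for equidistribution.''
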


\subsection{An approach using moduli spaces and complex multiplication}
\label{Sapproach}

The essential idea of the paper is to study the family 
$C_t$ for $t \in \CC-\{0,1\}$, with a focus on values of $t$ for which the 
Jacobian $\mathrm{Jac}(C_t)$ has complex multiplication (CM).
The family of curves in \eqref{EfamilyM11} has several important properties 
which were studied in earlier papers of multiple authors, including Shimura \cite{shimuratranscend}, de Jong--Noot \cite{dejongnoot}, Moonen \cite{moonen}, and van Geemen--Sch\"utt \cite{geemenschutt}.

This family can be studied from many viewpoints:
as a Hurwitz space parametrizing cyclic covers of the projective line;
as a Deligne--Mostow Shimura variety $\Sh$ parametrizing abelian fourfolds with 
an action of $\mu_5$; as a quotient of the upper halfplane $\bH$ by a unitary
similitude group; or as a quotient of $\bH$ by a triangle group 
$\Delta(2,3,10)$.  

We use each of these perspectives to obtain key information. 
The Hurwitz space yields information about the Klein $J$-function 
$J(t)$ and the field of definition of $C_t$.
The Shimura variety perspective, together with Serre--Tate and Lubin--Tate theory,  gives information about Tate modules, 
basic reduction, $p$-divisible groups, and CM-cycles. 
The action of the unitary similitude group, or the triangle group, 
allows us to encode information 
about the real points $\Sh(\RR)$ using hyperbolic geodesics. 
Furthermore, we can describe the points of $\Sh(\RR)$ representing 
abelian fourfolds $\mathrm{Jac}(C_t)$ with complex multiplication 
by solutions to quadratic forms.

\subsection{Review of proof of Elkies}

Before giving a more technical description of the proof 
in Section~\ref{Sproof}, we recall some key points for the genus $1$ case.
Given an elliptic curve $E/\QQ$, Elkies wrote a `Euclid-style' proof to 
show that $E$ has infinitely many primes of supersingular reduction.

Given $D \equiv 0, 3 \bmod 4$, let $\mathcal{O}_D=\ZZ[(D+\sqrt{-D})/2]$.
For a prime $p$, the reduction $E_p$ is supersingular if and only if it has
complex multiplication by some ${\mathcal O}_D$ 
such that $p$ is ramified or inert in $\QQ(\sqrt{-D})/\QQ$.

Let $P_D(x)$ be the monic polynomial whose roots are the $j$-invariants of elliptic curves 
having complex multiplication by $\mathcal{O}_D$.  
Then $P_D(x) \in \ZZ[x]$ because these $j$-invariants 
are algebraic integers and conjugate under the action of 
the absolute Galois group $G_\QQ$.
If the $j$-invariant $j_E$ of $E$ is a root of $P_{D}(x)$ modulo $p$, then 
the reduction $E_p$ has complex multiplication by ${\mathcal O}_{D'}$ for some 
$D'$ such that $D/D'$ is a square.

Let $D=\ell$ or $D=4\ell$ for a prime $\ell \equiv 3 \bmod 4$.
Elkies proved that:
\begin{itemize}
\item[(i)] when working modulo $\ell$, the polynomial
$P_{\ell}(x)$ (resp.\ $P_{4\ell}(x)$) has a unique root of odd multiplicity, which is $1728$;
thus $P_\ell(x) \cdot P_{4\ell}(x)$ is the square of a polynomial modulo $\ell$.
\item[(ii)] $P_{\ell}(x)$ (resp.\ $P_{4\ell}(x)$) has a unique real root 
and it has limit $-\infty$ (resp.\ $+ \infty$) as $\ell \to \infty$.
\end{itemize}

Let $\Omega$ be the set of primes of supersingular reduction 
(and bad reduction) for $E$ and assume that $\Omega$ is finite.
There exist arbitrarily large primes $\ell$ such that $\ell \equiv 3 \bmod 4$ 
and $\binom{p}{\ell}=1$ for all primes $p \in \Omega$, 
where $\binom{*}{*}$ denotes the quadratic residue symbol.

Consider the even-degree polynomial $P_\ell(x) \cdot P_{4\ell}(x)$.  
By (ii), its value at $j_E$ is a negative rational number whose denominator is a square.
If its numerator is divisible by $\ell$ or by a prime
$\rho$ which is a quadratic non-residue modulo $\ell$, 
then the proof is complete because $\ell$ and $\rho$ are not in $\Omega$. 
If not, the fact that $\binom{-1}{\ell}=-1$ implies that $P_\ell(j_E) \cdot P_{4\ell}(j_E)$ 
is a quadratic non-residue modulo $\ell$, contradicting (i).
This proves that $\Omega$ is infinite.  

We note that the proof provides no congruence information 
about the primes in $\Omega$. It remains an interesting open problem whether $\Omega$ contains infinitely many primes satisfying a given congruence condition.

\subsection{Strategy of the proof} \label{Sproof}

The strategy in this paper shares broad outline with Elkies' proof; 
however, every step becomes more subtle and complicated.  
This includes: the properties of the polynomial analogue of $P_D(x)$;  
the parametrization of the family; the distinguished points in the family;
the arithmetic of CM fields of higher degree; and 
quadratic reciprocity and quadratic forms over $\QQ(\sqrt{5})$. 

Let $F = \bQ(\zeta_5)$, where $\zeta_5$ is a primitive fifth root of unity;
let $F_0$ denote its maximal totally real subfield.
Consider a totally positive element $\lambda\in F_0$ such that $\langle \lambda \rangle \subset \cO_{F_0}$ is a prime ideal; let $\lambda^\tau$ denote the $\Gal(F_0/\QQ)$-conjugate of $\lambda$.

Consider the Shimura curve $\Sh$ and the point $[C]\in \Sh(\QQ)$ representing the curve $C = C_t$.
On the Shimura curve $\Sh$, we consider Heegner cycles/sets of CM points $\tilde{Z}(\lambda)$ consisting of points/$J$-invariants corresponding to abelian varieties 
with CM by $\cO_F[\sqrt{-\lambda}]$. The reduction types of these CM points are well understood by the Shimura--Taniyama formula. In particular, to show that $C$ has a prime of basic reduction, we only need to show that there exists a prime $\fp$ of $\cO_{F_0}$ which is inert or ramified in $F_0(\sqrt{-\lambda})/F_0$ 
such that the $\bmod \ \fp$ reduction of $[C]\in \Sh(\QQ)$ coincides with the $\bmod \ \fp$ reduction of some point in $\tilde{Z}(\lambda)$.
Motivated from Elkies's argument, we use quadratic reciprocity for $F_0$ to reduce this task to analogues of statements (i) and (ii) above for $\Sh$.

The analogue of (i) is about the $\bmod \ \lambda$ reduction of $\tilde{Z}(\lambda)$. Vaguely speaking, for any $\bmod \ \lambda$ point $x_0$ of $\Sh$, the points in $\tilde{Z}(\lambda)$ whose reductions are $x_0$ show up in pairs, except when $x_0$ is the reduction of certain elliptic points\footnote{
These are the two elliptic points with automorphism groups of even order; these two points are the analogue of $1728$ in Elkies's proof.} of $\Sh$. 
We prove this property using Lubin--Tate theory, see \Cref{RootInPairs}. 
Due to the lack of a cusp on $\Sh$ 
(unlike the $j$-line for the classical modular curve), we no longer have a statement analogous to $P_D(x) \in \ZZ[x]$; we carry out a more refined analysis of the local behavior of $\tilde{Z}(\lambda)$ at the two elliptic points to deduce the full analogue of (i); see \Cref{coroSQ}.

The analogue of (ii) is about the $\RR$-points of $\tilde{Z}(\lambda)$.
For well-chosen $\lambda$, 
we prove that $\tilde{Z}(\lambda)$ has exactly two real points (\Cref{CMbyorderQP}). The analogue of (ii) is a statement about the relative positioning of $[C]$, the two real points of $\tilde{Z}(\lambda)$, and the two real points of $\tilde{Z}(\lambda^\tau)$ (see the figure in the proof of \Cref{thm:M11}). Using concrete computations on the complex uniformization of the Shimura curve, we prove the desired result by relating the position of the roots to the representability of primes by certain quadratic forms over $F_0$ (see Sections \ref{Scomplexunif},\ref{FundTr},\ref{SrealCM}) and by applying Hecke's equidistribution theorem (see \Cref{thm:infintely-many-lambda}).

Given a finite set of primes of basic reduction, 
we use these techniques to find (infinitely many) $\lambda$
which allow us to verify that we can always produce new primes of basic reduction.
This can be achieved as long as we find new primes of basic reduction
that do not divide $5$.  One way to deal with this issue is to require that $C$ 
and $\tilde{Z}(\lambda)$ do not have the same reduction type at $5$. 
Indeed, we prove that the Heegner points that we construct are Jacobians of smooth curves and thus $C_{\overline{\FF}_5}$, which is assumed to be singular, does not lie in $\tilde{Z}(\lambda)$; see \Cref{CMmod5}. Thus we can always construct more and more primes of basic reduction, 
finalizing the proofs of \Cref{Tintro} and \Cref{thm:M11}.

\subsection{Related work}

\begin{remark}
   For a curve $C$ as in \eqref{EfamilyM11},
in Cantoral-Farf\'an--Li--Mantovan--Pries--Tang
\cite[Corollary~5.1]{CLMPTmuordinaryreduction}, the authors prove that 
    the set of primes where the reduction of $C$ is not basic has density 1.
\end{remark}

\begin{remark}
The family \eqref{EfamilyM11} is \emph{special}, 
meaning that the image of the Torelli morphism is open and dense in $\Sh$.
Up to equivalence, 
there are exactly 20 special families of cyclic covers of ${\mathbb P}^1$;
of these $14$ are one-dimensional by the work of Moonen \cite{moonen}.
The family \eqref{EfamilyM11} is called $M[11]$ because it is the 11th entry of the table \cite[Table~1]{moonen}.

The result of Elkies on infinitely many primes of supersingular reduction is about the Legendre family, which is $M[1]$.
For $M[3,4,5,7,12]$, the curves in the family dominate a 
non-isotrivial family of elliptic curves. 
Applying Elkies' result, together with a short argument
about the decomposition of the Jacobians, implies that each curve with a suitable field of definition in these  
families has infinitely many primes of basic reduction.
 
We expect that the methods of this paper will yield a similar result for the family 
$M[17]$ consisting of curves of genus $6$ of the form $y^7=x(x-1)(x-t)$.
\end{remark}

\begin{remark}
By work of de Jong--Noot \cite[Proposition 2.7]{dejongnoot}, it 
was already known that infinitely many CM fields of degree $8$ 
occur for the Jacobians of the curves in \eqref{EfamilyM11}.  
The results in this paper provide more information about
the curves in the family whose Jacobian has CM by a particular CM field.
\end{remark}

\begin{remark}
In Section~\ref{Scomplexunif}, we provide 
a complex parametrization of the $M[11]$ family.
Another parametrization of \eqref{EfamilyM11} using projective embeddings and vanishing of theta nulls is given in van Geemen--Sch\"utt \cite[Section 4]{geemenschutt}.
In greater generality, one can find 
a numerical parametrization of compact Shimura curves, and their CM points, 
from the perspective of triangle groups by Klug-Musty-Schiavone-Voight in \cite{KMSVoight}, and by Voight in \cite{Voight06}.
\end{remark}

\bigskip

\subsection{Table of Contents}

For a paper of this length, we think the section headings provide the most efficient overview of the organization and contents of the paper.

\tableofcontents



\section{Moduli of cyclic covers and the Shimura variety}\label{sec_curvefamily}

In this section, we provide information about certain families of $\mu_m$-covers of the projective line $\mathbb{P}^1$, 
for a prime integer $m > 3$.
We suppose the covers are branched at $4$ points and that they have 
inertia type $a=(1,1,1,m-3)$.  
Over an algebraically closed field $k$ (whose characteristic is 0 or relatively prime to $m$),
each such $\mu_m$-cover has an affine equation of the form
\begin{equation} \label{Emumcover}
C_t: y^m = x(x-1)(x-t),
\end{equation} 
for some $t \in k - \{0,1\}$.  
Let 
$\hpi:C_t \to {\mathbb P}^1$ denote the $\mu_m$-cover taking $(x,y) \mapsto x$.  

In Section~\ref{Ssignature}, we determine the signature of the family.
In Section~\ref{subsec-special-points}, we determine the curves $C_t$ that have additional automorphisms.
In Section~\ref{Sjfunction}, we parametrize the family using the Klein $J$-function.
In Section~\ref{M11asSh}, we describe the 
Deligne--Mostov Shimura variety associated with the family \eqref{Emumcover}.
In Section~\ref{Sbasic}, we review the $\mu$-ordinary and basic Newton polygons for a Shimura variety of PEL-type, 
focusing on the families $M[11]$ (resp.\ $M[17]$) when $m=5$ (resp.\ $m=7$).

\subsection{Description of curves and signature types} \label{Ssignature}

Let $C_t$ be the smooth projective curve with equation $y^m = x(x-1)(x-t)$ as in \eqref{Emumcover}.
By the Riemann--Hurwitz formula, $C_t$ has genus $g=m-1$.
Let $\tau \in \mathrm{Aut}(C_t)$ be the automorphism $\tau((x,y))=(x, \zeta_m y)$.

For a fixed point $t \in {\mathbb C}$, the holomorphic differentials $H^0(C_t({\mathbb C}), \Omega^1_{C_t})$ 
form a $\mu_m$-module.  For $0 < n < m$, let $\cf_n$ denote the dimension of its $\zeta_m^n$-th eigenspace.    
For any $r\in \QQ$, let $\langle r\rangle$ denote the fractional part of $r$. 
By \cite[Lemma 2.7, \S3.2]{moonen}, 
$\cf_n = -1+\sum_{i=1}^4\langle\frac{-na_i}{m} \rangle$; 
this dimension is independent of $t$.
The \emph{signature type} is $\cf=(\cf_1, \ldots, \cf_{m-1})$.

When $a=(1,1,1,m-3)$, then $\cf_n = 2 - \frac{3n}{m} + \langle \frac{3n}{m}\rangle$.
In particular, when $m=5$, then $\cf=(2,1,1,0)$;
and when $m=7$, then $\cf=(2,2,1,1,0,0)$.

It will be convenient in later sections to adjust to a new signature $\cf'$ such that $\cf'_1 = 1$.
Note that $\cf_n = 1$, for $n=(m+1)/2$.
So, as in \cite[Lemma~2.1]{LMPTshimuradata}, this adjustment can be made using the automorphism 
$\sigma_2 \in \mathrm{Aut}(\mu_m)$.  
In particular, when $m=5$, this changes the inertia type to $a'=(3,3,3,1)$ and $\cf' = (1,2,0,1)$; 
and when $m=7$, then $a'=(4,4,4,2)$ and $\cf'=(1,2,0,2,0,1)$.

\subsection{Curves in the family with extra automorphisms} \label{subsec-special-points}

\subsubsection{Two curves in the family with extra automorphisms} \label{SdefQR}

Let $C_R$ be the curve when $t=-1$.  It is given by the equation 
$y^m = x^3-x$.  Then $\mathrm{Aut}(C_R) \simeq \ZZ/m \ZZ \times \ZZ/2 \ZZ$; 
the extra automorphism of order two is given by $(x,y) \mapsto (-x,-y)$.
 
Let $C_Q$ be the curve when $t = - \zeta_3$.
The cross ratios of $\{\infty, 1, 0, - \zeta_3\}$ and $\{\infty, 1, \zeta_3, \zeta_3^2\}$ are the same.
So $C_Q$ is isomorphic to the curve with equation $y^m = x^3-1$.
Then $\mathrm{Aut}(C_Q) \simeq \ZZ/m \ZZ \times \ZZ/3 \ZZ$; with respect to the latter equation, 
the extra automorphism of order $3$ is given by $(x,y) \mapsto (\zeta_3 x, y)$.

\subsubsection{No other curves in the family have extra automorphisms}

\begin{proposition} \label{Pnomorebigaut}
Let $m>3$ be prime.  
Suppose $\hpi: C \to {\mathbb P}^1$ is a $\mu_m$-cover of smooth projective 
curves over $k$ that is branched at $4$ points and has inertia type $a=(1,1,1,m-3)$.
If $\#\mathrm{Aut}(C) > m$, then $C$ is isomorphic to either $C_R$ or $C_Q$.
\end{proposition}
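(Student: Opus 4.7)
The plan is to show first that $\mu_m \subset G := \mathrm{Aut}(C)$ is normal, then to analyze the residual action on $\mathbb{P}^1 = C/\mu_m$, which will turn out to be a subgroup of $S_3$ fixing the branch point of inertia $m-3$. The main obstacle is the normality step; once it is in place, the remainder reduces to an elementary affine-combinatorics computation.

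To show $\mu_m$ is normal, I would argue that it is the unique subgroup of order $m$ in $G$. Suppose $\mu_m' \subset G$ is a second such subgroup; since $m$ is prime, $\mu_m \cap \mu_m' = \{1\}$. Each of $\mu_m, \mu_m'$ has a four-point fixed locus on $C$, namely the ramification points of the respective degree-$m$ quotient. These two loci are either disjoint or share a point. If they are disjoint, then $\mu_m$ acts freely on the four-element fixed locus of $\mu_m'$, forcing $m \mid 4$, contradicting $m > 3$ prime. If instead they share a point $P$, then $\mathrm{Stab}_G(P)$ contains both $\mu_m$ and $\mu_m'$; but a point-stabilizer on a smooth curve acts faithfully on the $1$-dimensional cotangent space at $P$ and is therefore cyclic, and a cyclic group has a unique subgroup of each order, contradicting $\mu_m \neq \mu_m'$.

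With $\mu_m$ normal, every $\sigma \in G$ satisfies $\sigma \tau \sigma^{-1} = \tau^j$ for some $j \in (\mathbb{Z}/m)^\times$ and permutes the four $\mu_m$-fixed points $P_1, \dots, P_4$. Comparing the $\mu_m$-character on the cotangent space at $P_i$ and at $\sigma(P_i) = P_k$, the inertia weights must satisfy $a_k \equiv j^{-1} a_i \pmod m$. Hence the multiset $\{1, 1, 1, m-3\}$ is invariant under multiplication by $j^{-1}$. Since this multiset has only two distinct entries modulo $m$ for $m > 3$ prime, an elementary case analysis forces $j \equiv 1 \pmod m$. Thus $G$ centralizes $\mu_m$, and $G/\mu_m$ acts faithfully on $C/\mu_m = \mathbb{P}^1$, permuting the four branch points while fixing the distinguished one whose inertia is $m-3$.

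After normalizing so that the distinguished branch point is $\infty$ and the others are $\{0, 1, t\}$, the group $G/\mu_m$ embeds into the subgroup of affine transformations of $\mathbb{A}^1$ that permute $\{0, 1, t\}$, and in particular into $S_3$. I would finish by enumerating this group by permutation type: the three transpositions produce the conditions $t = 1/2$, $t = 2$, and $t = -1$ respectively, all of which give $\mathrm{PGL}_2$-equivalent configurations of four branch points and hence correspond to the curve $C_R$; the two $3$-cycles yield $t^2 - t + 1 = 0$, i.e.\ $t = -\zeta_3^{\pm 1}$, which corresponds to $C_Q$ in the notation of Section~\ref{SdefQR}. This exhausts the cases and completes the classification.
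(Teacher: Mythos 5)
The overall structure of your argument (normality of $\mu_m$, then trivial conjugation action, then an $S_3$ case analysis on $\mathbb{P}^1$) matches the paper's, but your normality step has a genuine gap in the ``disjoint fixed loci'' case. You assert that if $\mathrm{Fix}(\mu_m)$ and $\mathrm{Fix}(\mu_m')$ are disjoint then ``$\mu_m$ acts freely on the four-element fixed locus of $\mu_m'$, forcing $m \mid 4$.'' For $\mu_m$ to \emph{act} on $\mathrm{Fix}(\mu_m')$ at all, you would need $\sigma(\mathrm{Fix}(\mu_m')) = \mathrm{Fix}(\mu_m')$ for all $\sigma \in \mu_m$. But $\sigma(\mathrm{Fix}(\mu_m')) = \mathrm{Fix}(\sigma\mu_m'\sigma^{-1})$, so this requires $\mu_m$ to normalize $\mu_m'$ — which is precisely what is in question, not something you have established. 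You have conflated ``every non-identity element of $\mu_m$ acts without fixed points on $\mathrm{Fix}(\mu_m')$'' (true, since those fixed points lie only in $\mathrm{Fix}(\mu_m)$) with ``$\mu_m$ acts freely, as a group action, on the set $\mathrm{Fix}(\mu_m')$'' (which presupposes the set is $\mu_m$-stable). The divisibility conclusion $m\mid 4$ therefore does not follow.

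Moreover, the disjoint-fixed-locus configuration does actually occur: Bring's curve ($g=4$, $m=5$) has $\mathrm{Aut}(C)\cong S_5$ with six conjugate order-$5$ subgroups, and by your own (correct) shared-point argument their four-point fixed loci must be pairwise disjoint — so a fortiori $\mu_5$ is not normal there. The paper handles exactly this obstruction by invoking Wootton's classification of curves of genus $p-1$ whose prime-order automorphism group is non-normal, which singles out Bring's curve as the only exception for $p=5$, and then eliminates Bring's curve because the signature of any $\mu_5$-cover $\mathrm{Bring} \to \mathbb{P}^1$ is $(1,1,1,1)$, which is incompatible with the inertia type $(1,1,1,2)$ stipulated in the hypothesis. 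Your remaining steps — the multiset-invariance argument giving $j\equiv 1\pmod m$ hence the conjugation action is trivial, the faithful action of $G/\mu_m$ on $\mathbb{P}^1$ fixing $\infty$, and the $S_3$ enumeration producing $t\in\{1/2,2,-1\}$ for $C_R$ and $t^2-t+1=0$ for $C_Q$ — are all correct and essentially reproduce the paper's ``brief explanation'' of the Obus–Shaska result. To repair the proof you need to either cite Wootton (and the Braden–Northover signature computation for Bring's curve) as the paper does, or supply an independent argument ruling out a non-normal Sylow $m$-subgroup under the stated inertia-type hypothesis.
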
 

\begin{proof}
It suffices to prove the result over ${\mathbb C}$.
By \cite[Theorem~8.1, Table~7]{wootton}, the fact that $C$ has genus $g=p-1$ 
shows that $\langle \tau \rangle$ is normal in $\mathrm{Aut}(C)$, 
except possibly when $m=5$.
For $m=5$ and $g=4$, the only exception to $\langle \tau \rangle$ being normal in $\mathrm{Aut}(C)$ is when 
$C$ is Bring's curve.  
By \cite[Section~5.3]{bradennorthover}, the signature for the $\mu_5$-action on Bring's curve 
is $\cf=(1,1,1,1)$, which is not the signature for the family $\eqref{EfamilyM11}$.

Thus $\langle \tau \rangle$ is normal in $\mathrm{Aut}(C)$.
The result is then a special case of \cite[Proposition~3.6]{obusshaska}.
As a brief explanation, any $\sigma \in \mathrm{Aut}(C)$ descends to an automorphism $\bar{\sigma}$ of ${\mathbb P}^1$.
The automorphism $\sigma$ fixes the ramification point whose generator of inertia is different from the others.
Without loss of generality, this point maps to $\infty$ and so $\bar{\sigma}(x)$ fixes $\infty$.
By depressing the cubic, $C$ has an equation of the form $y^m=x^3 + Ax +B$.  This shows that $\bar{\sigma}(x) = ax$.
A case-by-case analysis shows that $C$ is isomorphic to $C_R$ or $C_Q$.
\end{proof}

Let $J_t=\mathrm{Jac}(C_t)$. 
Since $C_t$ is not hyperelliptic, $\mathrm{Aut}(J_t) \simeq \mathrm{Aut}(C_t)$ by \cite[Appendice]{lauterappendix}.

\subsubsection{A singular curve in the family} \label{SdefP}

Let  $D_1$ (resp.\ $D_2$) be the smooth projective curve with affine equation $y^m=x(x-1)$ 
(resp.\ $y^m=x^2(x-1)$).
The $\mu_m$-cover $\psi: D_i \to \bP^1$ is branched at three points, 
with inertia type $(1,1,m-2)$ when $i=1$ and $(2,1,m-3)$ when $i=2$.
Let $J_i=\mathrm{Jac}(D_i)$. 

Let $C_P$ denote the singular curve, whose irreducible components are $D_1$ and $D_2$, formed by identifying 
the point of $D_1$ above $\infty$ with the point of $D_2$ above $0$, in an ordinary double point.
The curve $C_P$ admits an admissible $\mu_m$-cover $\psi$ to a chain of two projective lines.
So $\psi$ can be deformed to a $\mu_m$-cover of ${\mathbb P}^1$ branched at $4$ points 
with inertia type $(1,1,1,m-3)$.
This implies that the moduli point of $C_P$ is in the boundary of the family \eqref{EfamilyM11}; 
it plays the role of being the third distinguished point in the family.
 
Note that $J_P=\mathrm{Jac}(C_P)$ decomposes, together with the product polarization, as $J_1 \oplus J_2$.
Thus $J_P$ has complex multiplication by $\QQ(\zeta_m)$.
Also, $J_P$ has an extra automorphism of order $2m$, given by $\mathrm{diag}[-\zeta_m^{-1},\zeta_m]$.

\subsection{The Klein $J$-function and field of definition} \label{Sjfunction}

Consider the Klein $J$-function 
\begin{equation} \label{Ejfunction}
J(t) = (t^2-t+1)^3/t^2(t-1)^2.  
\end{equation}

\begin{lemma} \label{Lfomfod}
Let $C_t:y^m=x(x-1)(x-t)$.
\begin{enumerate}
\item Then $C_{t_1}$ is geometrically isomorphic to $C_{t_2}$ if and only if $J(t_1)=J(t_2)$.
\item If $t \in \bar{\QQ}$, then the field of definition of $C_t$ is $\QQ(J(t))$.
\item The curve $C_Q$ has $J(-\zeta_3)=0$; the curve $C_R$ has
$J(-1)=27/4$ and $C_P$ has $J(\infty)=\infty$.
\end{enumerate}
\end{lemma}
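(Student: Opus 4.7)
The overall approach is to exploit the $\mu_m$-cover structure $\hpi\colon C_t \to \mathbb{P}^1$ branched at $\{0,1,t,\infty\}$ with inertia type $(1,1,1,m-3)$, reducing each claim to a statement about this branch datum.

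For (1), I would first show that any geometric isomorphism $f\colon C_{t_1}\xrightarrow{\sim} C_{t_2}$ is $\mu_m$-equivariant: by Proposition~\ref{Pnomorebigaut} and Section~\ref{subsec-special-points}, $\lvert\Aut(C_t)\rvert\in\{m,2m,3m\}$, and since $m>3$ is prime and coprime to $2$ and $3$, $\langle\tau\rangle$ is the unique order-$m$ subgroup of $\Aut(C_t)$, hence characteristic; conjugation by $f$ acts on it as $\tau\mapsto\tau^k$ and rescales the multiset of inertia characters from $\{1,1,1,m-3\}$ to $\{k,k,k,k(m-3)\}$, forcing $k=1$ (the only other candidate $k=m-3$ would require $(m-3)^2\equiv 1\pmod m$, failing for prime $m>3$ since it would demand $m\mid 8$). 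Thus $f$ descends to a Möbius map $\phi\colon \mathbb{P}^1\to\mathbb{P}^1$ fixing $\infty$ and restricting to a bijection $\{0,1,t_1\}\to\{0,1,t_2\}$. Enumerating the six such affine maps yields $t_2\in\{t_1,\,1-t_1,\,1/t_1,\,1/(1-t_1),\,(t_1-1)/t_1,\,t_1/(t_1-1)\}$; direct computation ($J(1-t)=J(1/t)=J(t)$) shows $J$ is $S_3$-invariant, and since $J$ has degree $6$ in $t$ its fibers coincide set-theoretically with these $S_3$-orbits, giving both directions (the converse uses that $\mu_m$-covers of $\mathbb{P}^1$ with prescribed branch and inertia are unique up to isomorphism over $\overline{\QQ}$).

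For (2), part (1) identifies the field of moduli of $C_t$ with the fixed field of $\{\sigma\in G_\QQ \mid J(t^\sigma)=J(t)\}=\QQ(J(t))$; since the field of moduli is contained in every field of definition, it suffices to exhibit a single model over $\QQ(J(t))$. Depressing the cubic $x(x-1)(x-t)$ via $x\mapsto x-(1+t)/3$ rewrites $C_t$ as $y^m=x^3+Px+Q$ with $P=-(t^2-t+1)/3$ and $Q=-(1+t)(2t-1)(t-2)/27$; the discriminant identity $-4P^3-27Q^2=t^2(t-1)^2$ then gives
\[
\frac{Q^2}{P^3}=-\tfrac{4}{27}+\tfrac{1}{J(t)}\in \QQ(J(t)).
\]
Setting $r:=-\tfrac{4}{27}+\tfrac{1}{J(t)}$, the curve $y^m=x^3+rx+r^2$ has coefficients in $\QQ(J(t))$, is smooth for $J(t)\notin\{27/4,\infty\}$, and over $\overline{\QQ}$ is isomorphic to $C_t$ via $(x,y)\mapsto(\lambda x,\mu y)$ with $\lambda^2=P/r$ and $\mu^m=\lambda^3$ (such roots exist in $\overline{\QQ}$, consistent with the relation $(P/r)^3=(Q/r^2)^2$). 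The exceptional case $J(t)=27/4$ corresponds to $C_R$ and admits the $\QQ$-model $y^m=x^3-x$.

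Part (3) is immediate: $\zeta_3^2+\zeta_3+1=0$ forces $J(-\zeta_3)=0$; $J(-1)=3^3/(1\cdot 4)=27/4$; and comparing degrees gives $J(t)\sim t^2\to\infty$ as $t\to\infty$, matching the boundary stratum $C_P$ of Section~\ref{SdefP}. The most delicate step is the descent from field of moduli to field of definition in (2); the explicit construction above is essential because it supplies canonical coefficients in $\QQ(J(t))$, ruling out any potential cocycle obstruction.
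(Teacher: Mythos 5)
Your overall strategy for part (1) is essentially the paper's: descend the isomorphism to a M\"obius transformation on $\mathbb{P}^1$ permuting the branch locus and compatible with the inertia data, then identify $J$ as the degree-$6$ $S_3$-invariant. The paper works in the transformed coordinate (via $L_t(x)=(1-t)x/(x-t)$) so that $\{0,1,\infty\}$ carry the common inertia character, whereas you keep $\infty$ as the distinguished point; both work. Your aside that $k=m-3$ is ``the only other candidate'' is not quite the right framing of the multiset-matching argument, but the conclusion $k=1$ is correct, and part (3) is a direct computation, same as the paper.

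The substantive divergence is part (2). The paper invokes Wolfart's theorem (for the three triangle-curve points $C_Q,C_R,C_P$) and Kontogeorgis's Theorem 1.1 (for the generic case) to pass from field of moduli to field of definition, noting that the latter's branch-count hypothesis is unnecessary here because $\Aut(C)=\langle\tau\rangle$. You instead produce an \emph{explicit} model $y^m=x^3+rx+r^2$ with $r=Q^2/P^3=-\tfrac{4}{27}+\tfrac{1}{J(t)}\in\QQ(J(t))$, which sidesteps any cocycle obstruction by construction. This is a genuinely more elementary and self-contained route (given that field of moduli $=\QQ(J(t))$ comes for free from part (1) and the Galois action), and the algebra checks out: the discriminant identity $-4P^3-27Q^2=t^2(t-1)^2$ is correct, and the scaling $\lambda^2=P/r$, $\mu^m=\lambda^3$ with compatibility $r=Q^2/P^3$ is consistent.

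However, there is a genuine gap: your explicit model is undefined exactly when $P=0$, i.e.\ when $J(t)=0$, which is precisely the case $C_Q$. Since $r=-\tfrac{4}{27}+\tfrac{1}{J(t)}$ involves $1/J(t)$, the element $r$ does not lie in $\QQ(J(t))=\QQ$ (indeed it is undefined), so the model $y^m=x^3+rx+r^2$ does not make sense. You handle $J(t)=27/4$ (where $Q=0$, giving $C_R$) and $J(t)=\infty$ (singular $C_P$), but not $J(t)=0$. The fix is easy---when $P=0$ the depressed cubic is $y^m=x^3+Q$, which after rescaling gives the $\QQ$-model $y^m=x^3-1$, exactly the model for $C_Q$ noted in Section~\ref{SdefQR}---but as written the proof silently omits one of the three special points and should state the exceptional set as $J(t)\in\{0,\,27/4,\,\infty\}$.
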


\begin{proof}
\begin{enumerate}
\item 
There are three branch points $0,1,t$ of $\hpi: C_t \to {\mathbb P}^1$ that have the same generator of inertia.  
We consider a linear fractional transformation $L_t$ that moves these to $0,1,\infty$ respectively: 
it is $L_t(x) = (1-t)x/(x-t)$.  Then $L_t(\infty) = 1-t$.  
As a result, $C_t$ is isomorphic to the curve $C'_t : y^m= x(x-1)(x-(1-t))^{m-3}$.
It suffices to show that $C'_{t_1}$ is isomorphic to $C'_{t_2}$ if and only if $J(t_1)=J(t_2)$.

The function $J(t)$ is invariant under the six fractional linear transformations that stabilize $\{0,1,\infty\}$; 
in particular, $J(1-t) = J(t) = J(1/t)$.  It is the unique such function up to scaling.  

Suppose $J(t_1)=J(t_2)$.  Then there is a fractional linear transformation $L$ stabilizing $\{0,1,\infty\}$ such that $L(t_1)=t_2$.
So the composition of $C'_{t_1} \to {\mathbb P}^1$ with the map ${\mathbb P}^1 \to {\mathbb P}^1$ induced by $L$
is a $\mu_m$-cover branched at $\{0,1,t_2, \infty\}$ with inertia type $(1,1,m-3, 1)$.  
There is a unique such cover over $k$, thus $C'_{t_1}$ and $C'_{t_2}$
are geometrically isomorphic.

Conversely, suppose there is an isomorphism $\phi: C'_{t_1} \to C'_{t_2}$.
This proof uses the ideas in \cite[Propositions~4.1,4.2]{kontogeorgis};
the hypothesis on the number of branch points in those results is not necessary in this case because
there is a unique subgroup of order $m$ in the automorphism group.
Thus $\phi$ descends to ${\mathbb P}^1$. 
So $\phi$ acts via a fractional linear transformation $L$ on $x$.  
Also $L$ stabilizes $\{0,1,\infty\}$ because these values correspond to branch points with 
canonical generator of inertia $1$ and so $L(t_1)=t_2$.  
Thus $J(t_1)=J(t_2)$.

\item For the curve $C=C_Q$ (resp.\ $C=C_R$), 
the action of $\mathrm{Aut}(C)$ yields a cover $C \to {\mathbb P}^1$ branched at
three points with inertia groups of order $3,m,3m$ (resp.\ $2,m,2m$).
By \cite[Theorem~5.1]{Wolfart}, in this situation the field of moduli of $C$ is a field of definition.
Thus $C_Q$ and $C_R$ are defined over $\QQ$.
The same is true for the curve $C_P$, because the curves $D_1$ and $D_2$ are covers of 
${\mathbb P}^1$ branched at three points.

Let $C$ be a curve in the family \eqref{Emumcover} other than $C_Q$ or $C_R$.
Then the field of moduli of $C$ is a field of definition of $C$ by \cite[Theorem 1.1]{kontogeorgis}.
(The hypothesis that $2m$ is bounded by the number of branch points in that result is not necessary, because
$\mathrm{Aut}(C) = \langle \tau \rangle$ by Proposition~\ref{Pnomorebigaut}.) 

To determine the field of moduli, consider $\sigma \in \mathrm{Gal}(\bar{\QQ}/\QQ)$.  
The action of $\sigma$ takes $C_t$ to $C_{\sigma(t)}$, 
and thus takes $J(t)$ to $J(\sigma(t))=\sigma(J(t))$.
So $C_t$ is isomorphic to $\sigma(C_t)$ if and only if $J(t) = \sigma(J(t))$, or equivalently 
$\sigma \in \mathrm{Gal}(\bar{\QQ}/\QQ(J(t)))$. 
This implies that the field of moduli of $C_t$ is $\QQ(J(t))$.

\item Direct computation.
\end{enumerate}
\end{proof}

\subsection{The Deligne-Mostow Shimura variety}\label{M11asSh}

Given the data $\gamma = (m, 4, a)$,
there is a Hurwitz space parametrizing the family \eqref{Emumcover}
of $\mu_m$-covers of curves branched at $4$ points with inertia type $a$.  
Let $Z_\gamma$ be the closure of the image in $\mathcal{A}_g$ (of the projection to $\mathcal{M}_g$) of this Hurwitz space under the Torelli morphism.

Given the degree $m$ and signature type $\cf$, 
there is an associated PEL-type moduli stack 
$\Sh(\mu_m, \cf)$ introduced by
Deligne and Mostow \cite[2.21, 2.23]{deligne-mostow}.
It is defined over $\QQ(\zeta_m)$.
In general, the image of $\Sh(\mu_m, \cf)$ in $\mathcal{A}_g$ contains $Z_\gamma$.

\begin{notation}
When $a = (1,1,1,m-3)$ with $m=5$ (resp.\ $m=7$), 
we denote the Shimura variety $\Sh(\mu_m, \cf)$ by $\Sh$, 
or by $M[11]$ (resp.\ $M[17]$) as in \cite[Table~1]{moonen}.
\end{notation}

\begin{lemma} \label{LM11defQ}
Suppose $a = (1,1,1,m-3)$ with $m=5$ or $m=7$.

Then $Z_\gamma$ is a projective line with three marked points defined over $\QQ$.

Also $\Sh=\Sh(\mu_m, \cf)$ has dimension $1$ and is connected.
Furthermore, $\Sh =Z_\gamma$.
\end{lemma}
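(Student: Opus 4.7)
The plan is to verify the three assertions in order, with Lemma~\ref{Lfomfod} providing the explicit shape of $Z_\gamma$ and a dimension-and-irreducibility comparison yielding $\Sh = Z_\gamma$.

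First, for the description of $Z_\gamma$: by Lemma~\ref{Lfomfod}(1) the Klein $J$-function induces a bijection between geometric isomorphism classes of smooth $C_t$ and $\mathbb{A}^1_\QQ = \mathrm{Spec}\,\QQ[J]$. Since a generic $C_t$ is non-hyperelliptic and satisfies $\mathrm{Aut}(C_t) = \langle\tau\rangle$ by Proposition~\ref{Pnomorebigaut}, the polarized Jacobian together with its induced $\mu_m$-action recovers $C_t$ via Torelli, so the smooth Hurwitz space maps injectively to $\mathcal{A}_g$ with image $\mathbb{A}^1_\QQ$. Taking closure in $\mathcal{A}_g$ adjoins the semistable degeneration $C_P$ of Section~\ref{SdefP} at $J = \infty$, producing $Z_\gamma \cong \mathbb{P}^1_\QQ$; the three marked points $J = 0,\ 27/4,\ \infty$ lie in $\QQ$ and correspond to $C_Q,\ C_R,\ C_P$ by Lemma~\ref{Lfomfod}(3).

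Next, for the dimension and connectedness of $\Sh$: with Deligne--Mostow data $(m, N=4, a)$ the complex dimension of $\Sh$ is $N - 3 = 1$, since $\Sh$ is birational to the quotient of the configuration space of four ordered distinct points on $\mathbb{P}^1$ by $\mathrm{PGL}_2$ (equivalently, one may compute $\dim = 1$ from the signature $\cf$ directly). Connectedness then follows from the connectedness of the configuration space, transported through the Hurwitz space; the latter is connected because the monodromy datum $a = (1,1,1,m-3)$ admits no nontrivial braid-group orbit obstruction, up to permutation of the three equal inertia generators.

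Finally, for the equality $\Sh = Z_\gamma$: both are irreducible of dimension $1$, and the image of $\Sh$ in $\mathcal{A}_g$ contains $Z_\gamma$ by the paragraph preceding the lemma, so that image must equal $Z_\gamma$. The main obstacle is to upgrade this set-theoretic equality of closed subvarieties to the scheme-theoretic assertion $\Sh = Z_\gamma$, i.e.\ that $\Sh$ is exhausted by Jacobians in our family and not strictly larger. I would resolve this by noting that for our signatures no pair of Deligne--Mostow weights sums to $1$, so $\Sh$ is proper (cocompact ball quotient), whence its image in $\mathcal{A}_g$ is closed, irreducible, of dimension~$1$, and contains $Z_\gamma$; the generic injectivity of the forgetful map $\Sh \to \mathcal{A}_g$, coming from the uniqueness of the $\mu_m$-action on a generic $C_t$ with $\mathrm{Aut}(C_t) = \langle\tau\rangle$, then yields the identification $\Sh = Z_\gamma$.
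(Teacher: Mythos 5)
Your argument for the first assertion (that $Z_\gamma \cong \mathbb{P}^1_\QQ$ with the three marked points) and for $\dim(\Sh)=1$ is sound and essentially parallels the paper's, which cites Lemma~\ref{Lfomfod} and the signature computation. The trouble is the connectedness of $\Sh$, which is the pivot of the whole lemma. Your braid-group/configuration-space argument establishes connectedness of the \emph{Hurwitz space}, hence of its Torelli image $Z_\gamma$ --- but $Z_\gamma$ is $\mathbb{P}^1$, so that was never in doubt. It does not touch the a priori possibility that $\Sh(\mu_m,\cf)$ has components whose points are polarized $\mathcal{O}_F$-abelian varieties that are \emph{not} Jacobians of curves in the family; ruling those out is exactly what $\Sh = Z_\gamma$ asserts, so the step is circular. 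Connectedness of this PEL moduli space is a genuine theorem, and the paper gets it from Shimura's result (the reference \cite{shimuratranscend}), not from the Hurwitz-space topology.

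A second, smaller point: your final paragraph worries about upgrading ``set-theoretic equality'' to ``scheme-theoretic'' via properness and generic injectivity, but that is not where the difficulty lies. The paper's route is cleaner: the signature forces any PPAV arising from $\Sh$ to carry a unique $\ZZ[\zeta_m]$-action up to equivalence, so $\Sh \hookrightarrow \mathcal{A}_g$ is an actual (not merely generic) closed embedding; then $Z_\gamma \subset \Sh$ with both of dimension $1$ forces $Z_\gamma$ to be a connected component of $\Sh$, and connectedness of $\Sh$ finishes it. Your ``generic injectivity'' via $\mathrm{Aut}(C_t)=\langle\tau\rangle$ for generic $t$ (Proposition~\ref{Pnomorebigaut}) is a statement about Jacobians of curves in the family, not about an arbitrary point of $\Sh$, so it does not substitute for the uniqueness-of-$\ZZ[\zeta_m]$-action argument. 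With connectedness of $\Sh$ supplied by Shimura and the embedding $\Sh\subset\mathcal{A}_g$ supplied by the signature condition, the rest of your proof goes through.
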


\begin{proof}
The first statement follows from Lemma~\ref{Lfomfod}. 

When $a=(1,1,1,m-3)$, with $m=5$ or $m=7$, then $\mathrm{dim}(\Sh)=1$
because of the signature $\cf$ computed in Section~\ref{Ssignature}.
The signature condition forces each principally polarized abelian variety corresponding to a point on $\Sh$ to admit a unique 
$\ZZ[\zeta_m]$-action up to equivalence. 
Thus $\Sh$ is a subvariety of ${\mathcal A}_g$.
So $Z_\gamma$ is a connected component of $\Sh(\mu_m, \cf)$.
Furthermore, in these cases, $\Sh(\mu_m, \cf)$ is connected by \cite{shimuratranscend}.  Hence $\Sh =Z_\gamma$.
\end{proof}

\subsection{The $\mu$-ordinary and basic locus} \label{Sbasic}

Consider a Shimura variety $\Shg$ of PEL-type.
For a (good) prime $p$,  
in \cite[\S 5]{kottwitz1} and \cite[\S 6]{kottwitz2}, Kottwitz introduced a partially ordered set $B$ of Newton polygons.
In \cite[Theorem 1.6]{viehmann-wedhorn}, Viehmann and Wedhorn proved that these all occur on $\Shg$. 

Kottwitz proved that $B$ has a maximal element 
(called the \emph{$\mu$-ordinary} Newton polygon)
and a minimal one (called the \emph{basic} Newton polygon).
The $\mu$-ordinary Newton polygon occurs on an open dense subset of $\Shg$.
If $\Shg$ has dimension $1$, then for each prime, there are only two Newton polygons in 
$B$ and the locus of $\Shg$ where the basic Newton polygon occurs is closed.

For $\Sh(\mu_m, \cf)$,
a prime $p$ is good if and only if $p \nmid m$.
The set $B=B(\mu_m,\cf)$ depends on the congruence of $p$ modulo $m$.
The elements in $B(\mu_m, \cf)$ 
are symmetric convex polygons, with endpoints $(0,0)$ and $(2g,g)$, integral break-points, 
and rational slopes in $[0,1]$. 

\begin{notation}
Let $ord$ be the Newton polygon  $\{0,1\}$ and $ss$ be the Newton polygon $\{1/2,1/2\}$.
Let $\oplus$ denote the union of multi-sets.  
For any multi-set $\nu$, and $n\in\NN$, we write $\nu^n$ for $\nu\oplus \cdots \oplus \nu$, $n$-times. 
Thus  $ord^g$ (resp.\ $ss^g$) denotes the Newton polygon for 
an ordinary (resp.\ supersingular) abelian variety of dimension $g$.
For $s,t\in \NN$, 
with $s \leq t/2$ and $\mathrm{gcd}(s,t)=1$, let $(s/t, (t-s)/t)$ denote the Newton polygon with slopes $s/t$ and  $(t-s)/t$, each with multiplicity $t$.
\end{notation}

\begin{example} \label{EbasicM11} \cite[Section 6.2]{LMPT2}
For the family $M[11]$, with $m=5$ and $a=(1,1,1,2)$ and $g=4$, 
the $\mu$-ordinary and basic Newton polygon 
are as follows:
\[\begin{array}{|c|c|c|c|}
\hline 
\mod 5 &p\equiv 1 & p\equiv 4 &p\equiv  2,3\\
\hline

\mu-\mathrm{ord} & \mathrm{ord}^4 & \mathrm{ord}^2\oplus \mathrm{ss}^2 & (1/4,3/4) \\
\hline

\text{basic} &  \mathrm{ord}^2\oplus \mathrm{ss}^2 & \mathrm{ss}^4 &\mathrm{ss}^4\\
\hline
\end{array}\]

In \cite[Theorem~5.11]{LMPT2}, we proved that the basic Newton polygon occurs for the Jacobian 
of a {\it smooth} curve in the family $M[11]$, 
under the condition that $p \gg 0$ when $p \not \equiv 1 \bmod 5$.
\end{example}

\begin{example} \cite[Section 6.2]{LMPT2}
For the family $M[17]$, with $m=7$ and $a=(1,1,1,4)$ and $g=6$, 
the $\mu$-ordinary and basic Newton polygon 
are as follows:
\[\begin{array}{|c|c|c|c|c|}
\hline 
\mod 7 &p\equiv 1 & p\equiv 2,4 &p\equiv  3,5 & p \equiv 6\\
\hline

\mu-\mathrm{ord} & \mathrm{ord}^6 & \mathrm{ord}^3\oplus (1/3, 2/3) & (1/3,2/3)^2 & \mathrm{ord}^2\oplus \mathrm{ss}^4 \\
\hline

\text{basic} &  \mathrm{ord}^4\oplus \mathrm{ss}^2 & (1/6,5/6) & \mathrm{ss}^6 &\mathrm{ss}^6\\
\hline
\end{array}\]

\end{example}



\section{Structure of complex multiplication}

Let $m > 3$ be prime and let $\zeta_m = e^{2 \pi i/m} \in \CC$. 
Consider $F=\QQ(\zeta_m)$ which is a CM field over $\QQ$ with  
maximal totally real subfield $F_0=\QQ(\zeta_m + \zeta_m^{-1})$.

Our goal is to study curves of genus $m-1$ in the family \eqref{Emumcover} given by the affine equation $y^m=x(x-1)(x-t)$ 
whose Jacobians have complex multiplication by certain degree two extensions of $F$ that are CM fields.
The main outputs of the section are:
Theorem~\ref{Tuniqueconganym}, which proves a uniqueness statement for 
principally polarized abelian varieties defined over $\RR$ with certain CM types that arise in this context;
and Proposition~\ref{ST}, in which we produce congruence classes of primes of basic reduction for abelian varieties 
with these CM types when $m=5$, using the Shimura--Tanayama formula.

\subsection{Construction of a CM extension} \label{Scmextension}
 
\begin{assumption}\label{def:good}
    Throughout the paper, we assume that $\lambda \in {\mathcal O}_{F_0}$ is totally positive, is relatively prime to $m$, generates a prime ideal, and has odd norm in $\QQ$.
    We further assume that $-\lambda$ is a square modulo $4{\mathcal O}_{F_0}$.
\end{assumption}

When there is no ambiguity, we denote by $\lambda$ also the ideal in $\mathcal{O}_{F_0}$ generated by $\lambda$.
Define
\begin{equation} \label{EdefEfield}
E=F(\sqrt{-\lambda}), \ \mbox{and} \ E_0=F_0((\zeta_m -\zeta_m^{-1})\sqrt{-\lambda}).
\end{equation}
Then $E$ is a CM field and $E_0$ is its maximal totally real subfield. 

The next lemma explains the reason for the last condition on $\lambda$.

\begin{lemma}\label{Checkram2Mod4}
Let $\mathfrak{p}$ be a prime of $F_0$ dividing $2$.
The last condition in Assumption~\ref{def:good}
(that $-\lambda$ is a square modulo $4{\mathcal O}_{F_0}$) is equivalent to $\mathfrak{p}$ being unramified in 
$F_0(\sqrt{-\lambda})$. 
\end{lemma}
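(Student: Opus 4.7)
The plan is to reduce the statement to a purely local question at the completion of $F_0$ at $\mathfrak{p}$, and then invoke the standard criterion for unramifiedness of a quadratic extension of a $2$-adic local field. In the cases of interest in the paper ($m=5$ or $m=7$), there is in fact a unique prime $\mathfrak{p}$ of $F_0$ above $2$ (since the order of $2$ modulo $m$ is $f = (m-1)/2$), so the global condition "$-\lambda$ is a square modulo $4\mathcal{O}_{F_0}$" coincides with the condition "$-\lambda$ is a square modulo $4(\mathcal{O}_{F_0})_\mathfrak{p}$"; in general one applies CRT to $\mathcal{O}_{F_0}/4\mathcal{O}_{F_0} \cong \prod_{\mathfrak{q}\mid 2}(\mathcal{O}_{F_0})_\mathfrak{q}/4(\mathcal{O}_{F_0})_\mathfrak{q}$ to pass between the global and local formulations.

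First I would observe that $-\lambda$ is a unit at every prime $\mathfrak{p}\mid 2$: Assumption~\ref{def:good} requires $\lambda$ to have odd norm over $\QQ$, hence $\lambda$ is coprime to $2$, and so $-\lambda \in (\mathcal{O}_{F_0})_\mathfrak{p}^\times$. Writing $K=(F_0)_\mathfrak{p}$, $\mathcal{O}=\mathcal{O}_K$, uniformizer $\pi$, and $e=v_\mathfrak{p}(2)$ (so that $4\mathcal{O}=\pi^{2e}\mathcal{O}$), the lemma becomes the local claim: for $u=-\lambda\in\mathcal{O}^\times$, the extension $K(\sqrt{u})/K$ is unramified if and only if there exists $w\in\mathcal{O}$ with $u\equiv w^2\pmod{4\mathcal{O}}$.

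For the "if" direction I would write down the integral element explicitly: if $u=w^2+4s$ with $w,s\in\mathcal{O}$, set $\theta=(w+\sqrt{u})/2$; then $\theta$ is a root of the monic polynomial $X^2-wX-s\in\mathcal{O}[X]$, whose discriminant is $w^2+4s=u$, a unit. Hence $\mathcal{O}[\theta]$ has unit discriminant, so it already equals $\mathcal{O}_L$ for $L=K(\sqrt{u})$, and $L/K$ is unramified. For the "only if" direction, starting from the assumption that $L/K$ is unramified, I would show that any generator of $\mathcal{O}_L$ over $\mathcal{O}$ has the form $\theta=(a+b\sqrt{u})/\pi^e$ with $a\in\mathcal{O}$ and $b\in\mathcal{O}^\times$. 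The reason is that $\theta$'s different is $(2b\sqrt{u})$ with valuation $2e+v_\mathfrak{p}(b)$, and for $\mathcal{O}[\theta]$ to have trivial different one must have $v_\mathfrak{p}(b)=-e$. Integrality of the minimal polynomial of $\theta$ forces $a^2-ub^2\in 4\mathcal{O}$, so that $u\equiv (ab^{-1})^2\pmod{4\mathcal{O}}$, giving the square root mod $4$.

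The only step that requires care is the reverse direction of the local criterion, where one must identify the correct normalization of a generator of $\mathcal{O}_L$ and justify that some such generator exists with $b\in\mathcal{O}^\times$; this follows from $\mathcal{O}_L$ being a free $\mathcal{O}$-module of rank $2$ containing $\sqrt{u}/\pi^e$ (up to units) whenever $L/K$ is unramified. Once this local statement is established, combining it with the CRT decomposition recovers the lemma as stated, and in particular in the $m=5$ setting this is an equivalence between a single local condition and a single global one.
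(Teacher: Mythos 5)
Your proof is correct, but it takes a genuinely different route from the paper. The paper dispatches the lemma in two lines by citing Washington's \emph{Introduction to Cyclotomic Fields}, Exercise~9.3, which asserts exactly the needed criterion: for $a\in F_0^*$ a non-square coprime to $2$, a prime $\mathfrak{p}\mid 2$ is unramified in $F_0(\sqrt{a})$ iff $a$ is a square modulo $4\mathcal{O}_{F_0}$. You instead give a self-contained local proof of that criterion. The trade-off is that the paper's argument is shorter and offloads the verification, while yours is transparent and does not depend on the reader having Washington's exercise at hand. Your ``if'' direction is the clean part of the argument: given $u=w^2+4s$, the element $\theta=(w+\sqrt u)/2$ satisfies $X^2-wX-s$ with unit discriminant $u$, forcing $\mathcal O[\theta]=\mathcal O_L$ and hence unramifiedness. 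That is exactly right, and it is also the only direction the paper actually needs for Lemma~\ref{Lotherram}.

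Two small points. In the ``only if'' direction you write that the different of $\theta=(a+b\sqrt u)/\pi^e$ has valuation $2e+v_\mathfrak{p}(b)$; the correct count is $v_\mathfrak{p}\bigl(2b\sqrt u/\pi^e\bigr)=e+v_\mathfrak{p}(b)-e=v_\mathfrak{p}(b)$, so the unramified condition is $v_\mathfrak{p}(b)=0$, i.e.\ $b\in\mathcal O^\times$, which is what you use anyway; the quoted $2e+v(b)$ and the conclusion $v(b)=-e$ look like leftover bookkeeping from writing $\theta$ first as $a+b\sqrt u$ without the $\pi^e$ denominator. This does not break the argument since the resulting integrality constraint $a^2-ub^2\in 4\mathcal O$ and the conclusion $u\equiv(ab^{-1})^2\pmod{4\mathcal O}$ are both correct. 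Your observation about the global-to-local reduction is also a point the paper silently skips: for $m=5$ or $m=7$ the rational prime $2$ has residue degree $(m-1)/2$ in $F_0$, so there is a unique $\mathfrak p\mid 2$ and the global mod-$4$ condition coincides with the local one; for a general $F_0$ one would need CRT, and the lemma as literally stated (with ``$\mathfrak p$ unramified'' on the right) would only be meaningful under the assumption that all primes above $2$ behave identically.
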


\begin{proof}
By \cite[Exercise 9.3]{washington},
if $a \in F_0^*$ is a non-square relatively prime to $2$, and 
if $\mathfrak{p}$ is a prime of $F_0$ dividing $2$, 
then $\mathfrak{p}$ is unramified in $F_0(\sqrt{a})$ if and only if $a \equiv X^2 \bmod 4{\mathcal O}_{F_0}$ has a solution $X$ (of odd norm) in ${\mathcal O}_{F_0}$.
Setting $a=-\lambda$ completes the proof.
\end{proof}

\begin{lemma} \label{Lotherram}
Under Assumption~\ref{def:good}:
$E/F$ is ramified only over the primes of $F$ above $\lambda$;
also $E/E_0$ is ramified at no finite prime.
\end{lemma}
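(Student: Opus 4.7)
The first assertion follows from a straightforward discriminant calculation. Since $\sqrt{-\lambda}$ satisfies $x^2+\lambda=0$ over $\mathcal{O}_F$, the polynomial discriminant is $-4\lambda$, so the relative discriminant $\mathrm{disc}(E/F)$ divides $(4\lambda)\mathcal{O}_F$, and the only possible ramified primes of $F$ in $E/F$ lie above $2$ or above $\lambda$. Primes above $\lambda$ ramify because $v(-\lambda)$ is odd there. To rule out primes above $2$, I would apply the same criterion used to prove Lemma~\ref{Checkram2Mod4} (the criterion from \cite[Exercise 9.3]{washington} is valid over any number field, not only $F_0$), now to the extension $F(\sqrt{-\lambda})/F$: it suffices that $-\lambda$ be a square modulo $4\mathcal{O}_F$, and this is inherited from Assumption~\ref{def:good} since $4\mathcal{O}_{F_0}\subset 4\mathcal{O}_F$.

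For the second assertion, my plan is to exploit the biquadratic structure of $E/F_0$. Because $-\lambda\in F_0$, the extension $E/F_0$ is Galois with Galois group $(\ZZ/2)^2$, and its three intermediate quadratic subfields are $F$, $E_0$, and $K:=F_0(\sqrt{-\lambda})$. The conductor--discriminant formula gives
\[
\mathrm{disc}(E/F_0) \;=\; \mathrm{disc}(F/F_0)\,\mathrm{disc}(E_0/F_0)\,\mathrm{disc}(K/F_0),
\]
while the tower formula for $E_0\subset E$ gives
\[
\mathrm{disc}(E/F_0) \;=\; N_{E_0/F_0}\bigl(\mathfrak{d}(E/E_0)\bigr)\cdot\mathrm{disc}(E_0/F_0)^{2}.
\]
Combining these reduces the lemma to the identity of ideals of $\mathcal{O}_{F_0}$
\[
\mathrm{disc}(F/F_0)\cdot\mathrm{disc}(K/F_0) \;=\; \mathrm{disc}(E_0/F_0),
\]
since the norm map is injective on integral ideals.

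Each discriminant can then be computed directly. Letting $\mathfrak{p}_m$ denote the unique prime of $F_0$ above $m$, the extension $F/F_0$ is tamely ramified of degree $2$ at $\mathfrak{p}_m$ and unramified elsewhere, so $\mathrm{disc}(F/F_0)=\mathfrak{p}_m$. The computation $\mathrm{disc}(K/F_0) = \lambda$ is exactly the first assertion, applied with $F$ replaced by $F_0$. For $\mathrm{disc}(E_0/F_0)$, write $E_0=F_0(\sqrt{\alpha})$ with $\alpha=-\lambda(\zeta_m-\zeta_m^{-1})^2\in F_0$. Using the standard fact that $\zeta_m-\zeta_m^{-1}$ generates the prime above $m$ in $\mathcal{O}_F$ (because $\zeta_m+1$ is a cyclotomic unit), $\alpha$ has odd valuation $1$ at both $\mathfrak{p}_m$ and $\lambda$ and is a unit elsewhere. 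At primes above $2$, the congruence $(\zeta_m-\zeta_m^{-1})^2=(\zeta_m+\zeta_m^{-1})^2-4\equiv(\zeta_m+\zeta_m^{-1})^2\pmod{4\mathcal{O}_{F_0}}$ together with Assumption~\ref{def:good} exhibits $\alpha$ as a square modulo $4\mathcal{O}_{F_0}$, so there is no ramification above $2$ by Lemma~\ref{Checkram2Mod4}. Thus $\mathrm{disc}(E_0/F_0)=\mathfrak{p}_m\cdot\lambda$, which matches $\mathrm{disc}(F/F_0)\cdot\mathrm{disc}(K/F_0)$.

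The main piece of work will be the bookkeeping of the mod $4$ condition as it propagates through the three quadratic subfields of $E/F_0$; the cyclotomic facts about $\zeta_m\pm\zeta_m^{-1}$ and the tame ramification calculations are routine.
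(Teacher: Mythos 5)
Your proof is correct, but it takes a genuinely different route from the paper. The paper's argument is a group-theoretic parity count: it first establishes that $E/F_0$ is unramified above $2$ (via Lemma~\ref{Checkram2Mod4}), and then uses that a prime of odd norm ramifying in a biquadratic extension has cyclic inertia of order $2$, hence ramifies in exactly two of the three intermediate quadratic subfields. Since $F/F_0$ ramifies only at $\mathfrak{p}_m$ and $F_0(\sqrt{-\lambda})/F_0$ only at $\lambda$, the parity count immediately forces both assertions without ever writing down $E_0$ as $F_0(\sqrt{\alpha})$ or computing any discriminants. Your proof instead verifies the ideal identity $\mathrm{disc}(F/F_0)\cdot\mathrm{disc}(K/F_0)=\mathrm{disc}(E_0/F_0)$ via the conductor--discriminant formula, the tower formula, and three separate tame-ramification computations (including a careful propagation of the mod-$4$ condition to the generator $\alpha=-\lambda(\zeta_m-\zeta_m^{-1})^2$ of $E_0$, which uses the identity $(\zeta_m-\zeta_m^{-1})^2\equiv(\zeta_m+\zeta_m^{-1})^2\bmod 4\cO_{F_0}$ and the fact that $\zeta_m+\zeta_m^{-1}$ is a unit). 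Both approaches work and need the same two inputs (ramification loci of $F/F_0$ and $F_0(\sqrt{-\lambda})/F_0$, plus the $2$-adic criterion); the paper's version is shorter and avoids computing $\mathrm{disc}(E_0/F_0)$, while yours is more explicit and incidentally produces the exact discriminant ideals and the tame exponents, which could be useful elsewhere. One small imprecision: the norm map $N_{E_0/F_0}$ is not injective on integral ideals (distinct primes over the same base prime have equal norms); what you actually use, and what is true, is that $N_{E_0/F_0}(\mathfrak{a})=(1)$ for an integral ideal $\mathfrak{a}$ forces $\mathfrak{a}=(1)$.
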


\begin{proof}
The extension $E/F_0$ is biquadratic, with the three intermediate field extensions
being $E_0$, $F$, and $F_0(\sqrt{-\lambda})$.
The extension $F/F_0$ ramifies at $\sqrt{m}$ and the infinite primes by \cite[Proposition 2.15]{washington}. 
This, together with Lemma~\ref{Checkram2Mod4}, implies 
that $E/F_0$ is not ramified at any prime of $F_0$ above $2$.
So $F_0(\sqrt{-\lambda})/F_0$ is ramified only at $\lambda$ and the infinite primes.
Any prime of odd norm that ramifies in a biquadratic extension 
has a cyclic inertia group, and thus is
ramified in exactly two of the three intermediate 
degree two field extensions of $F_0$.
We deduce that $E/F$ ramifies only at $\lambda$ and $E/E_0$ ramifies only at the infinite primes.
\end{proof}

When $K$ is a number field, we use $\cl_K$ to denote its ideal class group. In the next result, we study the parity of the class numbers of $E$ and $E_0$.

\begin{proposition}\label{Poddclassnumber}
Under Assumption~\ref{def:good}, suppose also that 
$\lambda$ is inert in the extension $F/F_0$.
If $|\cl_F|$ is odd, then $|\cl_E|$ and $|\cl_{E_0}|$ are odd.
\end{proposition}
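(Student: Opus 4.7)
The plan is to first show $|\cl_E|$ is odd by applying Chevalley's ambiguous class number formula to $E/F$, and then to transfer this oddness to $|\cl_{E_0}|$ via a direct Hilbert class field argument.

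For the first step, by \Cref{Lotherram} the only finite primes of $F$ that ramify in $E$ lie above $\lambda$; the hypothesis that $\lambda$ is inert in $F/F_0$ produces a unique such prime $\mathfrak{P}$ of $F$. Since $F$ is totally imaginary, no infinite place of $F$ ramifies in $E$. Setting $G = \Gal(E/F)$, Chevalley's formula reads
\[
|\cl_E^G| \cdot [\cO_F^\times : \cO_F^\times \cap N_{E/F}(E^\times)] \;=\; |\cl_F| \cdot \frac{\prod_v e_v}{[E:F]} \;=\; |\cl_F|,
\]
where $\prod_v e_v = 2$ comes from $\mathfrak{P}$ alone. The unit index is a power of $2$, since for $u \in \cO_F^\times$ one has $u^2 = N_{E/F}(u)$, so the quotient $\cO_F^\times/(\cO_F^\times \cap N_{E/F}(E^\times))$ has exponent dividing $2$. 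As $|\cl_F|$ is odd by hypothesis, both factors on the left are odd; in particular, $|\cl_E^G|$ is odd. To bootstrap to $|\cl_E|$ odd, observe that the $2$-primary component $\cl_E[2^\infty]$ is a $G$-stable finite abelian $2$-group, and any non-trivial such group admits a non-trivial $G$-fixed element (by inspecting the action on the $2$-torsion subgroup), which would force a non-trivial $2$-part in $\cl_E^G$.

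For the second step, \Cref{Lotherram} also shows $E/E_0$ is unramified at every finite prime, while every infinite place ramifies (a real place becoming complex) since $E_0$ is totally real and $E$ is totally complex. Hence $E$ is not contained in the totally real Hilbert class field $H_{E_0}$ of $E_0$, so $H_{E_0} \cap E = E_0$, and the compositum $H_{E_0} \cdot E / E$ is an everywhere unramified abelian extension of degree $[H_{E_0} : E_0] = |\cl_{E_0}|$. Therefore $H_{E_0} \cdot E \subseteq H_E$, giving $|\cl_{E_0}| \mid |\cl_E|$, and the oddness of $|\cl_E|$ transfers to $|\cl_{E_0}|$. The principal obstacle is the tight ramification control in the Chevalley step: without the inertness of $\lambda$ in $F/F_0$, a second prime of $F$ above $\lambda$ would push $\prod_v e_v$ to $4$ and introduce an unavoidable factor of $2$ in $|\cl_E^G|$, destroying the oddness argument.
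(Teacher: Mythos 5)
Your proof is correct and follows essentially the same route as the paper's. The paper cites Washington's Theorem~10.4 (a $p$-group extension ramified at at most one prime preserves coprimality of the class number to $p$), whose proof is exactly your Chevalley ambiguous class number formula computation plus the fixed-point argument; and it cites Washington's Theorem~4.10 for the divisibility $|\cl_{E_0}| \mid |\cl_{E}|$, whose proof is the Hilbert class field argument you reproduce. You have simply unpacked the two citations into self-contained arguments.
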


\begin{proof}
Since $\lambda$ is inert in $F/F_0$, there is one prime of $F$ above $\lambda$.  
By Lemma~\ref{Lotherram}, 
$E/F$ is a $2$-group extension ramified at only one prime.
In this situation, by \cite[Theorem 10.4]{washington}, if $|\cl_E|$ is even then $|\cl_{F}|$ is even.
Thus by the assumption of $|\cl_F|$ being odd, we deduce that $|\cl_E|$ is odd.
Since $E_0$ is the maximal totally real subfield of the CM field $E$, 
it follows from \cite[Theorem~4.10]{washington} that $|\cl_{E_0}|$ divides $|\cl_{E}|$. 
\end{proof}

\subsection{Totally positive units}

Let $\mathcal{U}_{E_0}^+$ denote the totally positive units of $E_0$.
Let $N_{E/E_0}:E \to E_0$ denote the norm map.
Since $E$ is a CM field, quadratic over its maximal totally real subfield $E_0$,
it follows that $N_{E/E_0}(\mathcal{U}_E) \subseteq \mathcal{U}_{E_0}^+$.
In this section, we prove that $N_{E/E_0}(\mathcal{U}_E)=\mathcal{U}_{E_0}^+$ when $\lambda$ satisfies certain congruence conditions.

Recall the {\em Hasse unit index} of the CM extension $E/E_0$ is 
defined as $Q(E):=[\mathcal{U}_E:\mu_E\mathcal{U}_{E_0}]$, where $\mu_E$ is the group of roots of unity of $E$.
By \cite[Theorem 4.12]{washington},
$Q(E)=1$ or $2$. 

Since $\ker(N_{E/E_0})=\mu_E$ and $N_{E/E_0}(\mathcal{U}_{E_0}) = \mathcal{U}_{E_0}^2$, it follows that 
\begin{equation} \label{EQexact}
Q(E)=[N_{E/E_0}(\mathcal{U}_E):\mathcal{U}_{E_0}^2].
\end{equation}

Let $n=\mathrm{deg}(E_0/\QQ)$.
We fix an ordering $\tau_1, \ldots, \tau_n$ of the $n$ real embeddings $E_0 \hookrightarrow \RR$. 
Consider the group homomorphism
\begin{equation} \label{Edefrho}
\rho_{E_0}:  \mathcal{U}_{E_0} \to \{\pm 1\}^{n}, \ \rho_{E_0}(u)= (\tau_i(u)/|\tau_i(u)|)_{1 \leq i \leq n} \ \mbox{ for } 
u \in \mathcal{U}_{E_0}.
\end{equation}
Following \cite[Lemma 11.2, Definitions 12.1, 12.13]{bookCH}, 
we say that $E_0$ has {\it units of independent signs} if $\rho_{E_0}$ is surjective and 
that $E_0$ has {\it units of almost independent signs} if $|\mathrm{coker}(\rho_{E_0})|=2$.


\begin{proposition}\label{Poddclassuniquepp}
Under Assumption~\ref{def:good}, suppose also that 
$\lambda$ is inert in the extension $F/F_0$.
If $|\cl_F|$ is odd, then $E_0$ has units of almost independent signs, $Q(E)=2$, and $[\cU_{E_0}^+:N_{E/E_0}(\cU_E)] =1$.
\end{proposition}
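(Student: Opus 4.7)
The plan is to derive all three conclusions from two ingredients: Chevalley's ambiguous class number formula for $E/E_0$, and a Galois cohomology computation for $G=\Gal(E/E_0)$. Both rely on facts already established under our hypotheses: $E/E_0$ is unramified at every finite prime (\Cref{Lotherram}), and both $|\cl_E|$ and $|\cl_{E_0}|$ are odd (\Cref{Poddclassnumber}).

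For Claim 1, the Hasse norm theorem combined with a routine local calculation---at finite unramified primes every unit is a local norm, and at archimedean places $N_{\CC/\RR}(\CC^*)=\RR_{>0}$---identifies $\cU_{E_0}\cap N_{E/E_0}(E^*)=\cU_{E_0}^+$. The only ramified places of $E/E_0$ are the $n=[E_0:\QQ]$ archimedean ones (each with $e=2$), so Chevalley's formula gives
\[ |\cl_E^{G}| \;=\; h_{E_0}\cdot \frac{2^{n-1}}{[\cU_{E_0}:\cU_{E_0}^+]}. \]
Since $|\cl_E|$ is odd, $\cl_E$ decomposes into $\pm 1$-eigenspaces for complex conjugation, and the extension-of-ideals map $j:\cl_{E_0}\to \cl_E^{G}$ is an isomorphism (its composition with $N_{E/E_0}$ is squaring on $\cl_{E_0}$, hence invertible). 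Substituting $|\cl_E^G|=h_{E_0}$ yields $[\cU_{E_0}:\cU_{E_0}^+]=2^{n-1}$, which is Claim 1. Combined with Dirichlet's unit theorem, this also gives $[\cU_{E_0}^+:\cU_{E_0}^2]=2$.

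For Claim 3, I would use the cohomology of the short exact sequences $1\to\cU_E\to E^*\to P_E\to 1$ and $1\to P_E\to I_E\to\cl_E\to 1$ of $G$-modules. The first, together with Hilbert 90 and the identification $H^2(G,\cU_E)=\cU_{E_0}/N(\cU_E)$ (Tate periodicity for cyclic $G$), gives $H^1(G,P_E)=\cU_{E_0}^+/N(\cU_E)$. The second, together with $H^1(G,I_E)=0$ (each local summand of $I_E$ is either induced in the split case or has trivial $G$-action in the inert case, with no finite ramification), gives $H^1(G,P_E)=\cl_E^{G}/\im(I_E^{G})$. Since $E/E_0$ has no finite ramification, $I_E^{G}=I_{E_0}\cO_E$, so its image in $\cl_E^G$ equals the image of $j$, which is all of $\cl_E^G$ by the previous paragraph. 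Hence $H^1(G,P_E)=0$ and $N(\cU_E)=\cU_{E_0}^+$, which is Claim 3.

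Claim 2 then follows immediately from \eqref{EQexact}: $Q(E)=[N(\cU_E):\cU_{E_0}^2]=[\cU_{E_0}^+:\cU_{E_0}^2]=2$. The main technical obstacle is the bookkeeping with the connecting maps in the two long exact sequences---particularly confirming that the second one identifies $H^1(G,P_E)$ with $\cU_{E_0}^+/N(\cU_E)$ rather than some larger quotient; once this is in place, the odd class number and unramified-at-finite-primes hypotheses interlock to kill $H^1(G,P_E)$.
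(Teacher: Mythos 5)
Your proof is correct and takes a genuinely different route from the paper's. The paper treats all three conclusions as consequences of two cited results from \cite{bookCH}: Corollary~13.10 there gives units of almost independent signs directly from ``odd $|\cl_E|$ and no finite ramification,'' Theorem~13.4 (Kummer) gives $Q(E)=2$, and the index $[\cU_{E_0}^+:N_{E/E_0}(\cU_E)]=1$ then falls out of the chain $\cU_{E_0}^2\subseteq N_{E/E_0}(\cU_E)\subseteq\cU_{E_0}^+\subseteq\cU_{E_0}$ by comparing $2^n$ with $2^{n-1}\cdot Q(E)$. You instead reprove the two imported facts from scratch: Claim~1 via Chevalley's ambiguous class number formula (after identifying $\cl_E^G\cong\cl_{E_0}$ and $\cU_{E_0}\cap N_{E/E_0}(E^*)=\cU_{E_0}^+$ using Hasse), Claim~3 by computing $H^1(G,P_E)$ from both short exact sequences and concluding it vanishes, and Claim~2 by index arithmetic. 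This is more work, but it is self-contained and makes transparent how the two hypotheses interlock: odd class number makes $j:\cl_{E_0}\to\cl_E^G$ bijective (so $\im(I_E^G)$ exhausts $\cl_E^G$), while the absence of finite ramification both controls the Chevalley numerator and guarantees $I_E^G = j(I_{E_0})$, together forcing $H^1(G,P_E)=0$.

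One step is stated too tersely to stand as written. You justify $j:\cl_{E_0}\to\cl_E^G$ being an isomorphism by noting that $N_{E/E_0}\circ j$ is squaring on $\cl_{E_0}$ and hence invertible; this gives injectivity of $j$ but not surjectivity onto $\cl_E^G$. To close the gap, also use $j\circ N_{E/E_0}$: for $[\fa]\in\cl_E^G$ one has $j(N_{E/E_0}([\fa]))=[\fa][\bar{\fa}]=[\fa]^2$, so the composite is squaring on $\cl_E^G$ and invertible since $|\cl_E^G|$ divides the odd $|\cl_E|$; that gives surjectivity of $j$ onto $\cl_E^G$. The same fact is needed again in Claim~3 when you assert $\im(I_E^G\to\cl_E^G)=\cl_E^G$. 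With that small repair the argument is complete.
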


\begin{proof} 
By Lemma~\ref{Lotherram}, $E/E_0$ is unramified at all finite primes.
The hypotheses of Proposition~\ref{Poddclassnumber} 
are satisfied, so $|\cl_E|$ is odd.
The facts that $|\cl_E|$ is odd and $E/E_0$ does not ramify at finite primes imply that $E_0$ has units of almost independent signs by 
\cite[Corollary 13.10]{bookCH} and $Q(E)=2$ by a theorem of Kummer \cite[Theorem 13.4, page 73]{bookCH}.

We have a sequence of inclusions of groups
\[\cU_{E_0}^2 \subseteq N_{E/E_0}(\cU_E) \subseteq \cU_{E_0}^+ \subseteq \cU_{E_0}.\]

Let $r$ be the rank of $\cU_{E_0}$, then $[\cU_{E_0}:\cU_{E_0}^2]=2^r$.
Since $E_0$ has units of almost independent signs, $[\cU_{E_0}:\cU_{E_0}^+] = 2^{r-1}$. 
By \eqref{EQexact}, $[N_{E/E_0}(\mathcal{U}_E):\mathcal{U}_{E_0}^2]=Q(E)=2$. 
Thus $[\cU_{E_0}^+:N_{E/E_0}(\cU_E)] =1$.
\end{proof}

\subsection{Quadratic reciprocity}

Recall that $F_0=\QQ(\zeta_m+\zeta_m^{-1})$.
Let $N:F_0 \to \QQ$ be the norm map.
Suppose $\alpha, \beta \in {\mathcal O}_{F_0}$ 
have odd norm in $\ZZ$.

Recall the quadratic Legendre symbol (\cite[Chapter 12, Section 4]{ArtinTate}) which takes values in $\{\pm 1\}$.
If $\alpha$ and $\beta$ are relatively prime, 
and $\beta$ is prime, it is defined by
$$\left(\frac{\alpha}{\beta}\right)= \alpha^{(N(\beta)-1)/2} \bmod \beta.$$
When $\beta$ is prime, $\left(\frac{\alpha}{\beta}\right)=1$ if and only if $\alpha$ is a square in 
${\mathcal O}_{F_0}/\langle \beta \rangle$.
If $u$ is a unit, then $\left(\frac{\alpha}{u}\right)= 1$.

We recall the quadratic Hilbert symbol (\cite[Chapter 14]{SerreLF}).
For $\nu$ a prime of ${\mathcal O}_{F_0}$,  
writing $K_\nu=(F_0)_\nu$ for the local field, it is
the symmetric non-degenerate symbol
defined by \[(\alpha, \beta)_\nu = \begin{cases} 
1 & \text{if $\beta$ is a norm of an element in } K_v(\sqrt{\alpha}), \\ 
-1 & \text{otherwise}.
\end{cases}\]

By a classical result of Hasse \cite{Hasse} (see \cite[Page 171, Corollary]{ArtinTate}),
\begin{equation} \label{EflipQR}
\left(\frac{\alpha}{\beta}\right)\left(\frac{\beta}{\alpha}\right) =
\prod_{\nu \mid 2 \infty} (\alpha, \beta)_\nu.
\end{equation}

If either $\alpha$ or $\beta$ is totally positive, then 
$\prod_{\nu \mid \infty} (\alpha, \beta)_\nu=1$.

For $\nu \mid 2$, by Hensel's Lemma, $(\alpha, \beta)_\nu$ is determined 
by the congruence of $\alpha$ and $\beta$ modulo $4{\mathcal O}_{F_0}$.
If $\alpha$ is a square modulo $4{\mathcal O}_{F_0}$ of an element of odd norm, then $(\alpha, \beta)_\nu = 1$.
Also $(1-\alpha, \alpha)_\nu = 1$.

\begin{lemma} \label{QR}
Under Assumption~\ref{def:good}:
suppose $\lambda, \beta \in {\mathcal O}_{F_0}$ have odd norm in $\ZZ$ and are relatively prime.
If $\beta$ is totally positive, then
$
\left(\frac{-\lambda}{\beta}\right)
\left(\frac{\beta}{\lambda}\right)=1$.
\end{lemma}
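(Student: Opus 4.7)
The plan is to apply the reciprocity identity \eqref{EflipQR} with $\alpha = -\lambda$ and then show that every local Hilbert symbol on the right-hand side is trivial, using the hypotheses on $\lambda$ and $\beta$.

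First I would invoke \eqref{EflipQR} to write
\[
\left(\frac{-\lambda}{\beta}\right)\left(\frac{\beta}{-\lambda}\right) = \prod_{\nu\mid 2\infty}(-\lambda,\beta)_\nu.
\]
Since $-1$ is a unit in $\mathcal{O}_{F_0}$ and the Legendre symbol in the bottom slot is trivial on units, multiplicativity gives $\left(\frac{\beta}{-\lambda}\right) = \left(\frac{\beta}{-1}\right)\left(\frac{\beta}{\lambda}\right) = \left(\frac{\beta}{\lambda}\right)$, so it suffices to prove that each factor on the right-hand side equals $1$.

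For the archimedean places, the hypothesis that $\beta$ is totally positive together with the remark following \eqref{EflipQR} immediately gives $\prod_{\nu\mid\infty}(-\lambda,\beta)_\nu = 1$. For the primes $\nu\mid 2$, I would use Assumption~\ref{def:good}, which says that $-\lambda$ is a square modulo $4\mathcal{O}_{F_0}$; writing $-\lambda \equiv X^2 \bmod 4\mathcal{O}_{F_0}$, the element $X$ automatically has odd norm (since $-\lambda$ does, so $X^2$ is a unit modulo every prime above $2$, hence $X$ is too). The remark that ``if $\alpha$ is a square modulo $4\mathcal{O}_{F_0}$ of an element of odd norm, then $(\alpha,\beta)_\nu = 1$'' then gives $(-\lambda,\beta)_\nu = 1$ for all $\nu\mid 2$.

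Multiplying these vanishing statements together yields $\prod_{\nu\mid 2\infty}(-\lambda,\beta)_\nu = 1$, which combined with the reduction in the first paragraph gives the desired identity. This argument is essentially bookkeeping once the tools are in hand; the only subtle point — and the one that should be checked carefully — is that the last clause of Assumption~\ref{def:good} is set up precisely so the dyadic Hilbert symbols are trivial, which is exactly the role played by Lemma~\ref{Checkram2Mod4} in the broader narrative. No obstacle of substance arises beyond verifying that the odd-norm hypothesis on the square root $X$ is automatic.
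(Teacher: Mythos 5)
Your proof is correct and takes essentially the same route as the paper's: both rely on \eqref{EflipQR} and then show each local Hilbert factor at $\nu\mid 2\infty$ is trivial using the total positivity of $\beta$ and the congruence condition on $-\lambda$. The only cosmetic difference is that you apply \eqref{EflipQR} once directly to the pair $(-\lambda,\beta)$ and peel off the unit $-1$ in the Legendre symbol, whereas the paper first factors $\left(\frac{-\lambda}{\beta}\right)=\left(\frac{-1}{\beta}\right)\left(\frac{\lambda}{\beta}\right)$ and flips each factor separately; your observation that the square root $X$ automatically has odd norm is a small but correct point that the paper leaves implicit.
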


\begin{proof}
By \eqref{EflipQR}, 
\begin{eqnarray*}
\left(\frac{-\lambda}{\beta}\right)
\left(\frac{\beta}{\lambda}\right)= \left(\frac{-1}{\beta}\right)\left(\frac{\lambda}{\beta}\right)
\left(\frac{\beta}{\lambda}\right)=
\left(\frac{\beta}{-1}\right)\prod_{\nu \mid 2 \infty} (-1, \beta)_\nu 
\prod_{\nu \mid 2 \infty} (\lambda, \beta)_\nu.
\end{eqnarray*}
Note that $\left(\frac{\beta}{-1}\right)=1$.  
The hypothesis that $-\lambda$ is a square modulo $4{\mathcal O}_{F_0}$ implies that 
$(-1, \beta)_\nu= (\lambda, \beta)_\nu$ for each $\nu \mid 2$.
Also $(-1, \beta)_\nu= (\lambda, \beta)_\nu = 1$ for each $\nu \mid \infty$ because $\beta$ is totally positive.
\end{proof}

When $m=5$ then $F_0= \mathbb{Q}(\sqrt{5})$, and $F_0$ has narrow class number $1$. Hence, an element $\lambda\in\cO_{F_0}$ is prime if and only if it is irreducible.
Denote by $\tau$ the nontrivial automorphism in $\Gal(F_0/\QQ)$.
Consider the unit $u=(1+\sqrt{5})/2$.
Note that ${\mathcal O}_{F_0} = [1, u]_{\ZZ}$.
By direct computation (see also \cite[Chapter 12, page 15]{lemmermeyer}), we obtain the following:

\begin{lemma}\label{quadrec5} For $F_0=\QQ(\sqrt{5})$, let $\lambda\in \cO_{F_0}$ be an irreducible, totally positive element
which is relatively prime to $2 \sqrt{5}$.
\begin{enumerate}
    \item Then Assumption~\ref{def:good} is satisfied 
    if and only if $-\lambda \bmod 4\cO_{F_0}$ is in $ \{ 1, 1+u, 1+u^\tau\}$, or equivalently, if and only if $u_0\lambda\equiv -1\bmod 4\cO_{F_0}$ for some $u_0\in \mathcal{U}_{F_0}^+$;
    \item if this condition is satisfied, then 
    $\left(\frac{-1}{\lambda}\right)=1$, $\left(\frac{u}{\lambda}\right)=-1$, and
    $\left(\frac{u^\tau}{\lambda}\right)=-1$.
\item 
The ideal $\langle \lambda \rangle$ is inert in $F/F_0$ if and only if 
the rational prime $p$ under $\lambda$ satisfies $p \equiv 2,3,4 \bmod 5$, which is equivalent to
$N_{F_0/\QQ}(\lambda) \equiv 4 \bmod 5$.    
\end{enumerate}
\end{lemma}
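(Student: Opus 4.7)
The plan is to handle the three claims sequentially, with congruence computations in $\cO_{F_0}/4\cO_{F_0}$ underpinning (1), a case analysis (inert vs.\ split) for the unit Legendre symbols in (2), and standard prime-decomposition facts for (3).

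\emph{Part (1).}
Since $(\alpha+2\beta)^2\equiv\alpha^2 \pmod{4\cO_{F_0}}$, the odd-norm squares mod $4\cO_{F_0}$ arise by squaring the residues in $\cO_{F_0}/2\cO_{F_0}\cong\FF_4$; using $u^2=u+1$ and $u^\tau=1-u$, these are $\{1,\,1+u,\,1+u^\tau\}$. Since $u$ has norm $-1$, $\mathcal{U}_{F_0}^+=\langle u^2\rangle$, and the powers $u^0,u^2,u^4,u^6\bmod 4\cO_{F_0}$ cycle through the same three-element set with period $3$; hence $\mathcal{U}_{F_0}^+\bmod 4\cO_{F_0}$ coincides with the set of odd-norm squares, and both characterizations in (1) follow.

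\emph{Part (2).}
The symbol $\left(\frac{-1}{\lambda}\right)=1$ follows from $-\lambda\equiv s^2\pmod{4\cO_{F_0}}$ forcing $N(\lambda)\equiv N(s)^2\equiv 1\pmod 4$. To show $\left(\frac{u}{\lambda}\right)=-1$ I split on whether $\lambda$ is inert or split in $F_0/\QQ$. \textbf{Inert case:} $\lambda$ is an associate of a rational prime $p\equiv 2,3\pmod 5$, and Assumption~\ref{def:good} forces $p\equiv 3\pmod 4$ (the only rational integer in the odd-norm square set $\{1,1+u,1+u^\tau\}$ being $1$). The Frobenius on $\FF_{p^2}=\cO_{F_0}/p$ is $\tau$, so $u^p\equiv u^\tau=-u^{-1}$, yielding $u^{p+1}\equiv-1$ and hence $u^{(p^2-1)/2}=(-1)^{(p-1)/2}=-1$. \textbf{Split case:} the narrow class number of $F_0$ is $1$, so the prime ideal $(\lambda)$ has a totally positive generator of the form $au+b$ with $a,b\in\ZZ$ coprime and $p=N(\lambda)=b^2+ab-a^2$; a parity check on $-\lambda\equiv s^2\pmod{4\cO_{F_0}}$ carves out three admissible classes of $(a,b)\bmod 4$. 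In each, $\bar u\equiv -b/a\pmod\lambda$ in $\FF_p$, so $\left(\frac{u}{\lambda}\right)=\left(\frac{ab}{p}\right)$ (using $\left(\frac{-1}{p}\right)=1$). Combining the congruences $p\equiv b^2\pmod a$ and $p\equiv -a^2\pmod b$ (from $p=b^2+ab-a^2$) with Jacobi reciprocity and $p\equiv 1\pmod 4$ reduces the computation to $\left(\frac{2}{p}\right)$ and $\left(\frac{-1}{b}\right)$ (or $\left(\frac{-1}{c}\right)$ when $b=2c$); a short case check on parities modulo $4$ and $8$ gives $\left(\frac{ab}{p}\right)=-1$ in every class. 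Finally, $u u^\tau=-1$ and multiplicativity yield $\left(\frac{u^\tau}{\lambda}\right)=\left(\frac{-1}{\lambda}\right)\left(\frac{u}{\lambda}\right)^{-1}=-1$.

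\emph{Part (3).}
The residue degree of $p\ne 5$ in $F=\QQ(\zeta_5)$ equals the multiplicative order of $p$ in $(\ZZ/5)^*$---namely $1,4,4,2$ for $p\equiv 1,2,3,4\pmod 5$---while $p$ splits in $F_0=\QQ(\sqrt 5)$ iff $p\equiv\pm 1\pmod 5$. Comparing residue degrees in the tower $F/F_0/\QQ$ shows $\lambda\mid p$ is inert in $F/F_0$ precisely when $p\equiv 2,3,4\pmod 5$; this is equivalent to $N(\lambda)\equiv 4\pmod 5$, since $N(\lambda)=p$ when $\lambda$ has degree $1$ (with $p\equiv 1,4\pmod 5$) and $N(\lambda)=p^2$ when $\lambda$ has degree $2$ (with $p\equiv 2,3\pmod 5$, giving $p^2\equiv 4$).

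The main obstacle is the split case of Part (2): a uniform Hilbert-symbol argument does not obviously collapse $\left(\frac{u}{\lambda}\right)$ to a single value determined by Assumption~\ref{def:good} (the conductor of $F_0(\sqrt u)/F_0$ at $2$ is not forced to divide $4\cO_{F_0}$), so I proceed by explicit Jacobi-reciprocity computations exploiting the arithmetic identities $p\equiv b^2\pmod a$ and $p\equiv-a^2\pmod b$ coming from $p=b^2+ab-a^2$.
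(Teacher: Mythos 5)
Parts (1) and (3) are correct and agree with what the paper implicitly relies on; in part (2) the inert case is also correct (the observation that $-p$ must be the only rational square class mod $4\cO_{F_0}$, namely $1$, forcing $p\equiv 3\bmod 4$, and then $u^{p+1}\equiv -1$ via Frobenius). However, the split case is genuinely incomplete: you correctly set up $p\equiv b^2\pmod a$, $p\equiv -a^2\pmod b$ and identify the three admissible classes $(a,b)\equiv(0,3),(3,3),(1,2)\bmod 4$, but then assert ``a short case check on parities modulo $4$ and $8$ gives $\left(\frac{ab}{p}\right)=-1$ in every class'' without carrying it out. Tracked carefully this does work, but the cases with $a$ or $b$ even require following $p\bmod 8$ and (when $b=2c$) $c\bmod 4$ through the reciprocity, and the outcome $-1$ does depend on using the specific admissible residue constraints on $a$ and $b$; it is not a one-line check and leaving it as a claim is a real gap in the write-up. (For comparison, the paper itself only says ``direct computation'' and cites Lemmermeyer, so there is no proof given there to match against.)

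Your parenthetical reason for abandoning the uniform class-field-theoretic route is also incorrect, and this is worth flagging because that route is actually cleaner and is precisely what the paper's own Lemma~\ref{QR} and the surrounding discussion of Hilbert symbols are set up to deliver. Since $F_0/\QQ$ is unramified at $2$ (the prime $\nu\mid 2$ is inert with $e(F_{0,\nu}/\QQ_2)=1$) and $u$ is a unit, the conductor exponent of $F_{0,\nu}(\sqrt{u})/F_{0,\nu}$ at $\nu$ is at most $2e=2$, i.e.\ the conductor at $2$ \emph{does} divide $4\cO_{F_0}$; together with the ramification at $\infty_2$ only, the conductor of the character $\left(\frac{u}{\cdot}\right)$ divides $4\cO_{F_0}\cdot\infty_2$, which is exactly the modulus controlled by Assumption~\ref{def:good}. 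Concretely, Hasse reciprocity \eqref{EflipQR} gives $\left(\frac{u}{\lambda}\right)=\left(\frac{u}{\lambda}\right)\left(\frac{\lambda}{u}\right)=\prod_{\nu\mid 2\infty}(u,\lambda)_\nu=(u,\lambda)_{\nu\mid 2}$ (the infinite factors are $1$ since $\lambda$ is totally positive). Then $\lambda\equiv -u_0\bmod 4$ with $u_0$ a square mod $4$, so $(u,\lambda)_{\nu\mid 2}=(u,-1)_{\nu\mid 2}$, and the product formula $\prod_\nu(u,-1)_\nu=1$ with $(u,-1)_{\infty_2}=-1$ the only other nontrivial factor yields $(u,-1)_{\nu\mid 2}=-1$. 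The same argument with $u^\tau=-u^{-1}$ and with $-1$ in place of $u$ gives the remaining two symbols. This sidesteps the inert/split dichotomy entirely, and I would recommend using it in place of the Jacobi-symbol casework.
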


In later sections, we specialize to the case $\lambda\equiv -1\bmod 4 \cO_{F_0}$.

\subsection{CM types}\label{CMtype}

Recall $m>3$ is prime and consider the CM field $F = \QQ(\zeta_m)$. 
We define the following embeddings, for $1 \leq j \leq m-1$:
\begin{equation}\label{EembedF}
\sigma_j: F \to \CC, \ \sigma_j(\zeta_m) = e^{2 \pi i \cdot j /m}.
\end{equation}

Let $\cf$ be a signature for $\mu_m$ as was defined in Section \ref{Ssignature}. 
Consider the PEL Shimura variety $\Shg=\Sh(\mu_m,\cf)$ as was described in \cite{LMPTshimuradata}. 
Assume $\dim(\Shg)=1$; this is equivalent to
$0\leq \cf_j\leq 2$ for all $1\leq j\leq m-1$, and $\cf_j\cf_{m-j}=0$ for all but two $1\leq j\leq m-1$. Such a signature $\cf$ is called \textit{simple.}

As in Section \ref{Scmextension}, let $E=F(\sqrt{-\lambda})$.
A complex embedding of $E$ is determined by $(\sigma, \pm)$, where $\sigma:F \hookrightarrow \bC$ is an embedding
and $\pm$ is the sign
of the imaginary part of the image of $\sqrt{-\lambda}$ under $\sigma$.  Complex conjugation acts by 
$(\sigma, \pm) \mapsto (\bar{\sigma}, \mp)$.

\begin{definition}\label{CM_def} 
A \emph{CM type} for $E$ is a subset $\Phi$ of $m-1$ complex embeddings of $E$, 
no two of which are complex conjugate.
We say that $\Phi$ is compatible with $\cf$
if $\cf_j$ equals the number of embeddings in $\Phi$ whose restriction to $F$ is $\sigma_j$ for every $1 \leq j \leq m-1$. 
\end{definition}

\begin{example} \label{Ecmtypem5}
For $m=5$ and $\cf=(1,2,0,1)$, then 
$\Phi=\{(\sigma_1, +), (\sigma_2, +), (\sigma_2,-), (\sigma_4,+)\}$ is a CM type for $E$ compatible with $\cf$.
\end{example}

\begin{lemma}\label{LsimpleCMtype} 
Given $E=F(\sqrt{-\lambda})$ as above, there is a unique CM type $\Phi$ of $E$ compatible with a simple signature type $\cf$ 
up to the action of $\mathrm{Gal}(E/F)$.
The CM type $\Phi$ is primitive.
Hence, an abelian variety $A$ with complex multiplication 
by $E$ and CM type $\Phi$ compatible with $\cf$ is simple.
\end{lemma}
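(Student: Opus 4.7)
I would organize the proof into three steps.

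\textbf{Counting CM types.} The conditions $\dim\Shg = \sum_{j\in S}\cf_j\cf_{m-j}=1$, $0\le\cf_j\le 2$, and $\sum_j\cf_j=m-1$ together force exactly one ``distinguished'' pair $\{j_0,m-j_0\}$ to satisfy $(\cf_{j_0},\cf_{m-j_0})=(1,1)$, and every other pair $\{j,m-j\}$ to satisfy $\{\cf_j,\cf_{m-j}\}=\{0,2\}$ (since each non-distinguished pair can contribute at most $2$ to $\sum_j\cf_j$, and the total over the $(m-3)/2$ non-distinguished pairs must equal $m-3$). On each non-distinguished pair, compatibility with $\cf$ determines the contribution to $\Phi$ uniquely (include both $(\sigma_j,+)$ and $(\sigma_j,-)$ when $\cf_j=2$, neither when $\cf_j=0$). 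On the distinguished pair, the CM condition $\Phi\cap\bar\Phi=\emptyset$---together with $\overline{(\sigma_{j_0},\epsilon)}=(\sigma_{m-j_0},-\epsilon)$---forces a common sign $\epsilon\in\{+,-\}$ at $\sigma_{j_0}$ and $\sigma_{m-j_0}$, yielding exactly two CM types $\Phi_\pm$. The nontrivial element of $\Gal(E/F)$ sends $\sqrt{-\lambda}$ to $-\sqrt{-\lambda}$ and hence maps $(\sigma_j,\pm)$ to $(\sigma_j,\mp)$ globally, exchanging $\Phi_+$ with $\Phi_-$ and giving uniqueness up to $\Gal(E/F)$.

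\textbf{Primitivity.} I would show $\Phi$ is not induced from any proper CM subfield of $E$. The $V_4$-structure of $\Gal(E/F_0)$ identifies three index-$2$ subfields of $E$ containing $F_0$, namely $F$, $F_0(\sqrt{-\lambda})$, and the totally real $E_0$; only $F$ and $F_0(\sqrt{-\lambda})$ are CM. Induction from $F$ would force $\cf_j$ to be even for every $j$, contradicting $\cf_{j_0}=1$. Induction from $F_0(\sqrt{-\lambda})$ fails because $m>3$ guarantees the existence of a non-distinguished pair $\{j,m-j\}$ with $\cf_j=2$ and $\cf_{m-j}=0$; the two preimages $(\sigma_j,+)$ and $(\sigma_{m-j},+)$ of a single embedding of $F_0(\sqrt{-\lambda})$ then meet $\Phi$ with non-uniform multiplicity. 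By transitivity of induction along towers of CM subfields, this handles every proper CM subfield contained in $F$ or $F_0(\sqrt{-\lambda})$.

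The main obstacle is an ``exotic'' proper CM subfield $E'\subseteq E$ not contained in $F$ or $F_0(\sqrt{-\lambda})$, which can arise only under special conditions on $\lambda$ (for instance when $N_{F_0/\QQ}(\lambda)$ is a rational square). In this case $E'\cdot F = E'\cdot F_0(\sqrt{-\lambda})=E$, which forces the totally real subfield $E_0'\subseteq E'$ to lie in $F\cap F_0(\sqrt{-\lambda})=F_0$; a direct check using that the $\cf_j$ take both values $0$ and $2$ then produces an embedding of $E'$ whose preimages in $E$ split non-uniformly across $\Phi$. Having ruled out induction from every proper CM subfield, $\Phi$ is primitive, and the classical equivalence between primitive CM types and simple CM abelian varieties (Shimura--Taniyama) yields the simplicity of $A$.
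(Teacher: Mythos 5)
Your uniqueness argument matches the paper's, and the two obstructions to induction from $F$ (parity) and from $F_0(\sqrt{-\lambda})$ (non-uniform multiplicity on a $(2,0)$ pair) are correct. But for primitivity you take a genuinely different route: the paper does not enumerate CM subfields of $E$ at all. Instead it uses the Galois-stabilizer criterion: for $\alpha\in\Gal(\overline{\bQ}/\bQ)$ with $\alpha\Phi=\Phi$, the restriction $\alpha|_F\in\Gal(F/\bQ)$ must preserve the multiplicity profile $(\cf_j)$; since $\cf_j\neq\cf_{m-j}$ on every $\{0,2\}$ pair (this is where $m>3$ enters), $\alpha|_F$ cannot be complex conjugation, so $\alpha|_F=\mathrm{id}$, and then the condition on $\sqrt{-\lambda}$ forces $\alpha\in\Gal(\overline{\bQ}/E)$. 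This sidesteps the subfield lattice of $E$ entirely, which matters because $E/\bQ$ is not Galois.

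Your enumeration approach has a real gap in case (4). You assert that $E'\cdot F=E'\cdot F_0(\sqrt{-\lambda})=E$ forces $E_0'\subseteq F_0$. This is only clear if $E'\cap F=E'\cap F_0(\sqrt{-\lambda})$, in which case the common field lies in $F\cap F_0(\sqrt{-\lambda})=F_0$ and, being a totally real index-$2$ subfield of $E'$, equals $E_0'$. But $E'\cap F$ and $E'\cap F_0(\sqrt{-\lambda})$ are two index-$2$ subfields of $E'$ that need not coincide; if they differ, at least one of them, say $E'\cap F$, is a proper CM subfield of $F$. When $m-1$ is a $2$-power (e.g.\ $m=5$, $m=17$) $F$ has no proper CM subfield so you escape, but already for $m=7$ the field $F=\bQ(\zeta_7)$ contains $\bQ(\sqrt{-7})$, and a quartic CM field $E'\subset E$ with $E'\cap F=\bQ(\sqrt{-7})$ and $E_0'\not\subseteq F_0$ is a priori possible. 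The ``direct check'' you invoke at the end is not spelled out and it is not clear it covers this configuration. Since the lemma is stated for every simple signature, this is a genuine gap; the stabilizer criterion is the cleaner way to close it.
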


\begin{proof}
Without loss of generality, we assume $\cf(\sigma_1) =\cf(\sigma_{m-1}) =1$. 

First, we prove uniqueness. 
Consider a pair $1\le j,m-j \le m-1$ where $\cf_j=2, \cf_{m-j}=0$. For $\Phi$ to be compatible with $\cf$, we need $(\sigma_j,+),(\sigma_j,-) \in \Phi$.
By Definition~\ref{CM_def}, the pair $\{(\sigma_1,+),(\sigma_{m-1},-)\}$ are complex conjugates and thus only one is in $\Phi$. This implies there is a unique CM type  $\Phi_+$ (resp. $\Phi_-$) compatible with $\cf$ and satisfying  $(\sigma_1,+)\in \Phi_+$  (resp. $(\sigma_1,-)\in \Phi_-$).; the action of $\mathrm{Gal}(E/F)$ maps $\Phi_+$ to  $\Phi_-$.  

We prove $\Phi$ is primitive. Let  $\alpha \in \Gal(\overline{\bQ}/\bQ)$ be such that $\alpha \Phi=\Phi$. Then either $\alpha(\sigma_1)=\sigma_1$ or $\alpha(\sigma_1)=\sigma_{m-1}$. Since $\alpha\Phi=\Phi$, it follows that $\alpha(\sigma_j)\neq \sigma_{m-j}$ for all $2\leq j\leq m-2$. This implies $\alpha(\sigma_1)=\sigma_1$.
By definition, $(\sigma_1,+)\in \Phi$ if and only if $(\sigma_1,-)\not\in \Phi$, hence $\alpha(\sqrt{-\lambda})=\sqrt{-\lambda}$. 
Since $m>3$, we deduce $\alpha \in \Gal(\overline{\bQ}/E)$.

The simplicity of $A$ follows from \cite[Chapter~1]{LangCM}.
\end{proof}

\subsection{Uniqueness of CM abelian varieties}

\begin{proposition} \label{Puniquepp4} \cite[Proposition 4.5(1)]{LMPTshimuradata} Let $E$ be a CM field and $E_0$ its maximal totally real subfield. 
Suppose $E_0$ has units of almost independent signs and $Q(E)=2$. 
Let $(E, \Phi)$ be a primitive CM type. 
Then the number of isomorphism classes of principal polarizations on a CM abelian variety of type 
$({\mathcal O}_E,\Phi)$ is at most one.
\end{proposition}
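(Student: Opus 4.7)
The plan is to identify the set of isomorphism classes of principal polarizations on the CM abelian variety $A_\Phi = \CC^\Phi/\Phi(\cO_E)$ of type $(\cO_E,\Phi)$ with a torsor for the quotient $\cU_{E_0}^+/N_{E/E_0}(\cU_E)$, and then use the two hypotheses on $E$ to force this quotient to be trivial.

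First I would invoke the classical dictionary from complex multiplication theory (for example, as in Shimura's \emph{Abelian Varieties with Complex Multiplication and Modular Functions}): polarizations on $A_\Phi$ compatible with the $\cO_E$-action are in bijection with elements $\xi\in E^\times$ satisfying $\bar\xi = -\xi$ (where bar denotes the CM involution) and $\mathrm{Im}(\varphi(\xi)) > 0$ for every $\varphi\in\Phi$. Such a polarization is principal precisely when the fractional ideal $(\xi)$ equals the inverse different $\mathfrak{d}_{E/\QQ}^{-1}$. Because the $\cO_E$-linear automorphisms of $A_\Phi$ are given by multiplication by $\cU_E$, two admissible $\xi, \xi'$ yield isomorphic principally polarized CM abelian varieties precisely when $\xi' = N_{E/E_0}(u)\,\xi$ for some $u\in \cU_E$.

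If no admissible $\xi$ exists, the claim is trivial. Otherwise, fix one choice $\xi_0$ and write an arbitrary admissible $\xi$ as $\eta\xi_0$. Applying the CM involution gives $\bar\eta = \eta$, so $\eta\in E_0$; the equality $(\eta\xi_0) = (\xi_0) = \mathfrak{d}_{E/\QQ}^{-1}$ forces $\eta\in\cU_{E_0}$; and the positivity condition $\mathrm{Im}(\varphi(\eta\xi_0))>0$ for all $\varphi\in\Phi$ becomes $(\varphi|_{E_0})(\eta)>0$ for every real embedding of $E_0$ that arises as the restriction of some $\varphi\in\Phi$. Since $\Phi$ is a CM type, each real embedding of $E_0$ is the restriction of exactly one $\varphi\in\Phi$, so the condition reads $\eta\in\cU_{E_0}^+$. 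Hence the isomorphism classes of principal polarizations on $A_\Phi$ are in bijection with $\cU_{E_0}^+/N_{E/E_0}(\cU_E)$.

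Finally I would compute this index. Let $n = [E_0:\QQ]$. Dirichlet's theorem gives $\cU_{E_0} \cong \{\pm 1\}\times\ZZ^{n-1}$, hence $[\cU_{E_0}:\cU_{E_0}^2] = 2^n$. The assumption that $E_0$ has units of almost independent signs means $|\mathrm{coker}(\rho_{E_0})|=2$, so $|\mathrm{im}(\rho_{E_0})| = 2^{n-1}$ and $[\cU_{E_0}:\cU_{E_0}^+] = 2^{n-1}$; thus $[\cU_{E_0}^+:\cU_{E_0}^2]=2$. Combining this with $[N_{E/E_0}(\cU_E):\cU_{E_0}^2] = Q(E) = 2$ from \eqref{EQexact}, and using the inclusion $N_{E/E_0}(\cU_E)\subseteq\cU_{E_0}^+$, one concludes $N_{E/E_0}(\cU_E) = \cU_{E_0}^+$, so the torsor has cardinality one. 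The main obstacle is the identification in the third paragraph: one must carefully check that the $\Phi$-positivity conditions translate exactly into total positivity in $E_0$, which is the essential use of the CM-type property of $\Phi$; everything else is either classical CM bookkeeping or the unit-index arithmetic already assembled in Proposition~\ref{Poddclassuniquepp}.
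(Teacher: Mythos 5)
The paper does not give a proof of this proposition; it is cited directly from \cite[Proposition~4.5(1)]{LMPTshimuradata}, so there is no in-paper proof to compare against. Your reconstruction is the standard argument and is correct. Both of its ingredients in fact appear elsewhere in the paper: the CM dictionary identifying principal polarizations with admissible $\xi\in E^\times$ (totally imaginary, $\Phi$-positive, generating the right ideal) is exactly van~Wamelen's criterion that the paper invokes in the proof of Theorem~\ref{Tuniqueconganym}, and the unit-index chain $\cU_{E_0}^2 \subseteq N_{E/E_0}(\cU_E)\subseteq \cU_{E_0}^+\subseteq\cU_{E_0}$ together with the count $[\cU_{E_0}^+:\cU_{E_0}^2]=2=Q(E)=[N_{E/E_0}(\cU_E):\cU_{E_0}^2]$ is precisely the computation carried out in the proof of Proposition~\ref{Poddclassuniquepp}.

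Two small things to tighten. First, you only argue for the distinguished torus $A_\Phi=\CC^\Phi/\Phi(\cO_E)$, whereas the statement must cover an arbitrary CM abelian variety of type $(\cO_E,\Phi)$, namely $\CC^\Phi/\Phi(\mathfrak{a})$ for any fractional $\cO_E$-ideal $\mathfrak{a}$ — and Theorem~\ref{Tuniqueconganym} applies it with an unspecified $\mathfrak{a}$. The fix is cosmetic: replace $(\xi)=\mathfrak{d}_{E/\QQ}^{-1}$ by $(\xi^{-1})=\mathfrak{d}_{E/\QQ}\,\mathfrak{a}\bar{\mathfrak{a}}$; the quotient of the admissible $\xi$ by $\xi_0$ still lands in $\cU_{E_0}^+$ and the rest is unchanged, but you should say so. Second, you list primitivity of $\Phi$ as a hypothesis yet never explicitly use it. Because you phrase the equivalence in terms of $\cO_E$-linear automorphisms, your count goes through regardless; but primitivity is what guarantees $\End(A)\cong\cO_E$, so that every automorphism of the underlying polarized abelian variety is $\cO_E$-linear and hence lies in $\cU_E$ — this is the step that lets one identify "isomorphism classes of principal polarizations" in the sense you compute with the notion the statement intends. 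A sentence flagging this would close the argument.
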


\begin{theorem} \label{Tuniqueconganym}
Let $F=\QQ(\zeta_m)$ and suppose $|\cl_F|$ is odd.
Let $F_0=\QQ(\zeta_m+\zeta_m^{-1})$.
Under Assumption~\ref{def:good}, suppose also that $\lambda$ is inert in $F/F_0$.
Let $E=F(\sqrt{-\lambda})$.
Suppose $(E, \Phi)$ is a primitive CM type. 
If there exists a principally polarized abelian variety $(A, \eta)$ with CM by 
${\mathcal O}_E$ of type $\Phi$, which is fixed under complex conjugation, then it is unique up to isomorphism.
\end{theorem}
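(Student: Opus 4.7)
The plan is to combine Propositions~\ref{Poddclassnumber}, \ref{Poddclassuniquepp}, and \ref{Puniquepp4} with an analysis of the action of complex conjugation on isomorphism classes of CM abelian varieties. First, the hypotheses of the theorem ($|\cl_F|$ odd and $\lambda$ inert in $F/F_0$) are precisely those of Propositions~\ref{Poddclassnumber} and \ref{Poddclassuniquepp}. From these one deduces that $|\cl_E|$ and $|\cl_{E_0}|$ are odd, $E_0$ has units of almost independent signs, $Q(E)=2$, and $N_{E/E_0}(\cU_E) = \cU_{E_0}^+$. In particular, these are exactly the hypotheses of Proposition~\ref{Puniquepp4}, which gives uniqueness up to isomorphism of the principal polarization on any CM abelian variety of type $(\cO_E, \Phi)$. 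Hence the theorem reduces to proving uniqueness of the underlying CM abelian variety fixed by complex conjugation.

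By the main theorem of complex multiplication, isomorphism classes of abelian varieties with CM by $\cO_E$ of type $\Phi$ form a principal homogeneous space under $\cl_E$ via $[{\mathfrak a}] \mapsto \CC^g/\Phi({\mathfrak a})$. Complex conjugation acts on this torsor by the involution $[{\mathfrak a}] \mapsto [\bar{\mathfrak a}]$ (up to the non-trivial automorphism of $E/E_0$). The assumption that some $(A,\eta)$ is fixed provides a base point, and the set of all CM abelian varieties fixed by complex conjugation then becomes a torsor under the fixed subgroup of this involution. The goal is to show that this subgroup is trivial.

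The main obstacle is identifying the fixed subgroup and establishing its triviality. The idea is to show that it is simultaneously a subgroup of the odd-order group $\cl_E$ and supported on $2$-torsion classes, so that it must vanish. For this, I would apply the ambiguous class number formula to the extension $E/E_0$, which is unramified at all finite primes by \Cref{Lotherram}: it expresses the order of the fixed subgroup of $\cl_E$ in terms of $|\cl_{E_0}|$, the unit index $[\cU_{E_0}:N_{E/E_0}(\cU_E)]$, and the number of infinite places ramified in $E/E_0$. Combining this with the vanishing of $\cU_{E_0}^+/N_{E/E_0}(\cU_E)$ from Proposition~\ref{Poddclassuniquepp}, with Hilbert~90 applied to the quadratic extension $E/E_0$, and with the additional norm condition on the ideal class imposed by the existence of a principal polarization compatible with complex conjugation, one should cut the fixed set down to a single isomorphism class, yielding the desired uniqueness.
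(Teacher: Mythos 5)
Your reduction to the ideal class is the right framework and Proposition~\ref{Puniquepp4} is correctly invoked for the polarization, but there is a genuine gap in the middle: you set as your goal to prove that the fixed subgroup $\cl_E^{\Gal(E/E_0)}$ of the involution $[\fa]\mapsto[\bar{\fa}]$ is trivial, and to do so you want to show that it is both of odd order and supported on $2$-torsion. The second claim is false in general. An element $[\fa]$ with $[\fa]=[\bar{\fa}]$ need not satisfy $[\fa]^2=1$; that would additionally require $[\fa][\bar{\fa}]=1$, which is not a consequence of being fixed. The ambiguous class number formula, which you correctly propose to apply to the everywhere-finitely-unramified quadratic extension $E/E_0$, does not rescue this: together with $[\cU_{E_0}:\cU_{E_0}^+]=2^{n-1}$ (units of almost independent signs) and $\cU_{E_0}\cap N_{E/E_0}(E^*)=\cU_{E_0}^+$, it yields $|\cl_E^{\Gal(E/E_0)}|=|\cl_{E_0}|$, which Proposition~\ref{Poddclassnumber} only makes odd, not $1$. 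So the fixed set of the torsor has size $|\cl_{E_0}|$, and the ACNF and Hilbert~90 calculations are a dead end rather than a route to triviality.

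The missing input, which you gesture at in the final sentence but never make precise, is van Wamelen's criterion for the existence of a principal polarization on $\CC^n/\Phi(\fa)$: it forces $D_{E/\QQ}\,\fa\bar{\fa}$ to be principal, and since $D_{E/\QQ}$ is itself principal this gives $[\fa][\bar{\fa}]=1$. It is only for the \emph{specific} class $[\fa]$ of the assumed principally polarized, conjugation-fixed $A$ that one combines $[\fa]=[\bar{\fa}]$ with $[\fa][\bar{\fa}]=1$ to get $[\fa]^2=1$, and then odd $|\cl_E|$ forces $[\fa]=1$. That is the paper's argument; it never needs (and could not use) triviality of the whole fixed subgroup. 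To repair your proof, drop the ACNF/Hilbert~90 step, state van Wamelen's criterion explicitly, record that $D_{E/\QQ}$ is principal, and apply the $2$-torsion argument to $[\fa]$ directly rather than to $\cl_E^{\Gal(E/E_0)}$.
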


\begin{proof} 
By Proposition~\ref{Poddclassuniquepp},
the hypotheses of Proposition \ref{Puniquepp4} are satisfied. 

Suppose there exists an abelian variety $A$ with CM by ${\mathcal O}_E$ of type $\Phi$, which is fixed under complex conjugation, and which admits a principal polarization $\eta$.  Let $n=\mathrm{dim}(A)$.
The claim is that $(A, \eta)$ is unique up to isomorphism.
To do this, we show that the corresponding ideal class is trivial.

Let $\fa$ be an ideal class fixed by complex conjugation. 
For a simple CM type $\Phi$, by \cite[Theorem~3]{vanwamelen},
the complex torus $\CC^n/\Phi(\fa)$ admits a principal polarization of type $(E, \Phi)$ if and only if there exists $\xi\in E$ 
satisfying $E=E_0(\xi)$, $ \xi^2\in E_0$, and $D_{E/\QQ}\fa\bar{\fa}= \langle \xi^{-1} \rangle$, along with the 
positivity condition $\mathrm{Im}(\tilde{\sigma}(\xi)) >0$ for $\tilde{\sigma} \in \Phi$; furthermore, all principal polarizations 
on $\CC^n/\Phi(\fa)$ arise from such a $\xi$.

By \cite[Chapter 3, Propositions (2.2) and (2.4)]{Neukirch}, the different $D_{E/\QQ}$ is principal. 
Since $\fa = \bar{\fa}$, the ideal class $\fa$ is in $\cl_E[2]$, which is trivial by Proposition \ref{Poddclassnumber}. 
This implies that there exists a unique isomorphism class of abelian variety $\CC^n/\Phi(\fa)$ 
with CM by $\mathcal{O}_E$ stable under complex conjugation that admits a principal polarization.
The uniqueness of the principal polarization is guaranteed from Proposition~\ref{Puniquepp4}.
\end{proof}

\begin{theorem} \label{Tuniquecong}(Special case of Theorem \ref{Tuniqueconganym})
Let $m=5$ and $F_0=\QQ(\sqrt{5})$.
Let $\lambda \in {\mathcal O}_{F_0}$ be a totally positive, irreducible element satisfying $\lambda\equiv -1\bmod 4{\mathcal O}_{F_0}$ and $N_{F_0/\QQ}(\lambda) \equiv 4 \bmod 5$.
Let $\Phi$ be the CM type in Example~\ref{Ecmtypem5}. 
If there exists a principally polarized abelian fourfold with CM by 
${\mathcal O}_E$ of type $\Phi$, which is fixed under complex conjugation, then it is unique up to isomorphism.
\end{theorem}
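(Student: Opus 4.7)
The plan is straightforward: since Theorem~\ref{Tuniquecong} is explicitly a special case of Theorem~\ref{Tuniqueconganym}, the proof reduces to verifying each hypothesis of the latter in the present setting, and then invoking it. The main ``obstacle'' is purely bookkeeping: matching the congruence conditions on $\lambda$ to the abstract hypotheses on $E/F/F_0$, and checking primitivity of the CM type.

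First, I would note that $F = \QQ(\zeta_5)$ has class number one, so $|\cl_F|$ is odd. Next, I would verify Assumption~\ref{def:good}. The element $\lambda$ is totally positive and irreducible by hypothesis; since $F_0 = \QQ(\sqrt{5})$ has class number one, irreducible elements of $\cO_{F_0}$ generate prime ideals. The congruence $\lambda \equiv -1 \bmod 4\cO_{F_0}$ forces $\lambda$ to be a unit modulo each prime of $\cO_{F_0}$ above $2$, so $N_{F_0/\QQ}(\lambda)$ is odd; the congruence $N_{F_0/\QQ}(\lambda) \equiv 4 \bmod 5$ forces $\lambda$ to be coprime to $5$. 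Finally, $-\lambda \equiv 1 \bmod 4\cO_{F_0}$ is trivially a square, which is precisely the condition appearing in Lemma~\ref{quadrec5}(1) with $u_0 = 1$. Hence Assumption~\ref{def:good} holds.

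Second, I would apply Lemma~\ref{quadrec5}(3) to translate the hypothesis $N_{F_0/\QQ}(\lambda) \equiv 4 \bmod 5$ into the statement that $\lambda$ is inert in the quadratic extension $F/F_0$, which is exactly the extra hypothesis of Theorem~\ref{Tuniqueconganym}. Third, I would check that the CM type $\Phi$ of Example~\ref{Ecmtypem5} is primitive. Since $\cf = (1, 2, 0, 1)$ satisfies $0 \le \cf_j \le 2$ and $\cf_j \cf_{m-j} = 0$ except for the pair $(j, m-j) = (1, 4)$, the signature is simple in the sense of Section~\ref{CMtype}, and hence $\Phi$ is primitive by Lemma~\ref{LsimpleCMtype}.

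With all three hypotheses of Theorem~\ref{Tuniqueconganym} verified, its conclusion directly gives the desired uniqueness statement for $(A, \eta)$. The only conceptually substantive part of the argument is the translation of the concrete arithmetic conditions on $\lambda$ (congruence modulo $4\cO_{F_0}$ and modulo $5$) into the abstract ramification and inertness hypotheses, and this has essentially been done in Lemmas~\ref{Checkram2Mod4} and~\ref{quadrec5}; no new computation is required.
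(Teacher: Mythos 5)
Your proposal is correct and follows essentially the same route as the paper's own proof, which also cites Lemma~\ref{quadrec5} to check Assumption~\ref{def:good} and inertness of $\lambda$ in $F/F_0$, then cites Lemma~\ref{LsimpleCMtype} for primitivity of $\Phi$, and concludes via Theorem~\ref{Tuniqueconganym}. You simply carry out the congruence bookkeeping and the class-number remark (which the paper leaves implicit) more explicitly.
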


\begin{proof} 
By Lemma~\ref{quadrec5}, $\lambda$ satisfies Assumption~\ref{def:good} and $\lambda$ is inert in $F/F_0$.
By Lemma~\ref{LsimpleCMtype}, the CM type $\Phi$ is primitive.
The result follows from Theorem~\ref{Tuniqueconganym}.
\end{proof}

\begin{remark}\label{notmaxNT}  
With notation and hypotheses as in Theorem \ref{Tuniquecong}, 
the same argument shows that if there exists a principally polarized abelian variety with CM by the non-maximal order 
$\cO_F[\sqrt{-\lambda}]$ of type $\Phi$, which is fixed by complex multiplication, then there are at most two of these 
up to isomorphism.  We explain how to see this.
 
The hypotheses imply $\cO_E=\cO_F[(1+\sqrt{-\lambda})/2]$.
By Proposition~\ref{Poddclassuniquepp}, the class group $\cl_E$ of $\cO_E$ has odd size and $[\cU_{E_0}^+:N(\cU_E)] =1$.
Let $R=\cO_F[\sqrt{-\lambda}]$ and $\cU_R$ be the group of units in $R$.
Let $R_0=R\cap E_0$ and $\cU^+_{R_0}=\cU_R\cap E_0^+$.
To deduce the statement, it suffices to observe two things: first,
 the class semigroup of $R$ is 
$\cl_R=\cl_E\cdot  \langle1\rangle_R\cup \cl_E\cdot \langle2,\sqrt{-\lambda}\rangle_R$, where $\cl_E\cdot I$ denotes the orbit of $I\in\cl_R$ under multiplication by $\cl_E$, (which follows from \cite[Theorems 16 and 17; Example 20]{zanardo-zannier}); 
 and, second, that $[\cU^+_{R_0} \colon N(\cU_R)]=1$ (which follows by direct computations from the analogous statement for $\cO_E$). 
\end{remark}

\subsection{Reduction of CM abelian fourfolds}

We continue with previous notation.
For CM abelian varieties $A$ of CM type $(E,\Phi)$, we identify the primes of basic reduction for $A$ 
using the Shimura--Tanayama formula.

\begin{proposition}\label{ST}
Let $m=5$.
Suppose $A$ is an abelian fourfold defined over a field $K$ containing $F_0$. 
Suppose $A$ has complex multiplication by an order in $E$ and has CM type $\Phi$.
If a prime ideal $\mathfrak{p} \subset {\mathcal O}_{F_0}$ does not split in $F_0(\sqrt{-\lambda})/F_0$, then 
the reduction of $A$ at primes of $K$ above $\mathfrak{p}$ is basic.
\end{proposition}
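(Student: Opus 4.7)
The plan is to apply the Shimura--Taniyama formula, which expresses the Newton polygon of $A\bmod v$ (for $v$ a prime of $K$ above $\mathfrak{p}$) in terms of $\Phi$ and the factorization in $E$ of the rational prime $p$ beneath $\mathfrak{p}$: at each prime $\mathfrak{P}$ of $E$ above $p$, the polygon contributes an isoclinic piece of slope $|\Phi \cap H_\mathfrak{P}|/[E_\mathfrak{P}:\QQ_p]$ with multiplicity $[E_\mathfrak{P}:\QQ_p]$, where $H_\mathfrak{P}$ denotes the set of embeddings $E \hookrightarrow \bar{\QQ}_p$ inducing $\mathfrak{P}$.

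The key structural setup is the presentation $\Gal(E/F_0) \cong (\ZZ/2)^2 = \langle \sigma\rangle \times \langle \tau\rangle$, where $\sigma$ extends the non-trivial element of $\Gal(F/F_0)$ (fixing $\sqrt{-\lambda}$) and $\tau$ flips $\sqrt{-\lambda}$ (fixing $F$), so that complex conjugation on $E$ is $\sigma\tau$. The hypothesis that $\mathfrak{p}$ is non-split in $F_0(\sqrt{-\lambda})/F_0$ is equivalent to the decomposition group $D_\mathfrak{p} \subset \Gal(E/F_0)$ not being contained in $\Gal(E/F_0(\sqrt{-\lambda})) = \langle\sigma\rangle$, i.e., $D_\mathfrak{p}$ contains an element acting non-trivially on $\sqrt{-\lambda}$.

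I would then perform a case analysis based on $p\bmod 5$, using Lemma~\ref{quadrec5}(3) to govern the splitting of $\mathfrak{p}$ in $F/F_0$. For $p\equiv 1\bmod 5$, the prime $\mathfrak{p}$ splits in $F/F_0$ so $D_\mathfrak{p}\subset\langle\tau\rangle$, and the hypothesis forces $D_\mathfrak{p}=\langle\tau\rangle$; above $\mathfrak{p}$ lie two primes of $E$, each of degree $2$ over $\QQ_p$, with $\tau$-orbits on $H_\mathfrak{P}$ pairing $(\sigma_j,+)$ and $(\sigma_j,-)$. For $p\equiv 2,3,4\bmod 5$, the prime $\mathfrak{p}$ is inert in $F/F_0$ so $D_\mathfrak{p}$ contains $\sigma$, and combined with the hypothesis $D_\mathfrak{p}=\langle\sigma\tau\rangle$ (complex conjugation); the orbits now pair each embedding with its complex conjugate. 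In each case the explicit form of $\Phi$ with signature $(1,2,0,1)$ from Example~\ref{Ecmtypem5} determines $|\Phi\cap H_\mathfrak{P}|$, and Shimura--Taniyama yields slope-$1/2$ contributions that assemble into the basic Newton polygon of Example~\ref{EbasicM11}.

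The main obstacle will be the $p\equiv 1\bmod 5$ case: since the signature $(1,2,0,1)$ is asymmetric under $\Gal(F_0/\QQ)$, the two primes of $F_0$ above $p$ contribute asymmetrically to the Newton polygon, and careful bookkeeping is needed to verify that the contribution from the non-split $\mathfrak{p}$ combines with that from the other $F_0$-prime (whose splitting behavior in $F_0(\sqrt{-\lambda})$ is a priori unconstrained by the hypothesis) to yield exactly $\mathrm{ord}^2\oplus\mathrm{ss}^2$. A separate computation handles the ramified subcase $\mathfrak{p}=\langle\lambda\rangle$, in which $\mathfrak{p}$ is ramified in $F_0(\sqrt{-\lambda})/F_0$ with $e=2$ and a single prime $\mathfrak{P}$ of $E$ sits above $\mathfrak{p}$; then $|\Phi\cap H_\mathfrak{P}|$ equals half of $[E_\mathfrak{P}:\QQ_p]$, producing a $1/2$-slope with the correct multiplicity.
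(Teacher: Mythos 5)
Your proposal is correct and takes essentially the same route as the paper: invoke the Shimura--Taniyama formula and carry out a case analysis on $p\bmod 5$ governed by the decomposition group of $\mathfrak{p}$ inside $\Gal(E/F_0)\cong(\ZZ/2\ZZ)^2$. One small imprecision: when $\mathfrak{p}$ is inert in $F/F_0$ the decomposition group surjects onto $\Gal(F/F_0)$, so it contains $\sigma$ \emph{or} $\sigma\tau$; the hypothesis then rules out $\langle\sigma\rangle$ and leaves $D_\mathfrak{p}=\langle\sigma\tau\rangle$, which does not in fact contain $\sigma$. Your flagged concern about the unconstrained splitting of $\mathfrak{p}^\tau$ (which also arises when $p\equiv 4\bmod 5$, not only $p\equiv 1$) is a genuine subtlety that the paper's proof passes over somewhat quickly; the point is that the signature at $\mathfrak{p}^\tau$ is $(\cf_2,\cf_3)=(2,0)$, so the primes of $E$ above $\mathfrak{p}^\tau$ always contribute slopes $\{1,1,0,0\}$ when $p\equiv 1\bmod 5$ (and $\{1/2,1/2,1/2,1/2\}$ when $p\equiv 4$) regardless of whether $\mathfrak{p}^\tau$ splits in $F_0(\sqrt{-\lambda})/F_0$, and this is exactly what is needed to assemble the basic Newton polygon.
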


\begin{proof}
Let $w:K\to \overline{\QQ}_p$ be a place of $K$ above $v=v_\mathfrak{p}:F_0\to \overline{\QQ}_p$. We write $K_w$ for the completion of the image of $K$ under $w$.
By Example~\ref{EbasicM11},
the statement is equivalent to showing  that the slopes of the Newton polygon of the reduction of $A$ at $w$ are equal to $1/2$ if $p\not\equiv 1 \bmod 5$ and to $0$, $1/2$, and $1$ if $p\equiv 1\bmod 5$.

Following \cite[Section~5]{tate}, 
a $p$-divisible group $G$ over $\mathcal{O}_{K_w}$, of height $h$, has CM by a $p$-adic field $L/\QQ_p$ if there exists a $\QQ_p$-linear embedding of $L$ into $\mathrm{End}^0(G)$ and 
$[L:\QQ_p]=h$. By the Shimura--Tanayama formula \cite[Section~5, page 107]{tate}, if $G$ has CM then its reduction modulo $w$ is isoclinic, of slope $\dim(G)/h$. In particular, if $A$ is a CM abelian variety then $A[p^\infty]$ decomposes as sum of (isoclinic) CM $p$-divisible groups.

Assume $\mathfrak{p}\neq \lambda$; (a similar argument applies when $\mathfrak{p}=\lambda$). 
Suppose $p\not\equiv 1\bmod 5$. Then the prime $\mathfrak{p}$ is inert in $F/F_0$ 
and, by assumption, also inert in $F_0(\sqrt{-\lambda})/F_0$. 
Hence, $\mathfrak{p}$ is totally inert in $E/F_0$. 

If $p\equiv 2,3\bmod 5$, then $p$ is totally inert in $E/\QQ$ 
and the $p$-divisible group $A[p^\infty]$ has CM by $E_\mathfrak{v}$, 
for $\mathfrak{v}$ the unique prime of $E$ above $p$. Hence, it is isoclinic of slope $1/2$.

If $p\equiv 4\bmod 5$, then the $p$-divisible group $A[p^\infty]$ decomposes as a sum of two CM $p$-divisible groups $H,H'$, respectively with CM by $E_{\mathfrak{v}}, E_{\mathfrak{v}'}$, for $\mathfrak{v}, \mathfrak{v}'$ the unique primes of $E$ above $\mathfrak{p}, \mathfrak{p}^\tau$. Note that $\mathfrak{v}, \mathfrak{v}'$ are stable under complex conjugation; hence $H,H'$ are self-dual. We deduce each is isoclinic of slope $1/2$.

If $p\equiv 1\bmod 5$, then $p$ is totally split in $F/\QQ$.
By assumption, there is a unique place of $E$ above each place of $F$ above $p$.
Thus the $p$-divisible group $A[p^\infty]$ decomposes as a sum of four CM $p$-divisible groups, each of height $2$,  with dimensions $1, 2, 0, 1$, respectively.
Hence, their slopes are $1/2,1,0,1/2$, respectively.
\end{proof}



\section{Complex uniformization} \label{Scomplexunif}

Suppose $\Shg$ is a one-dimensional unitary Shimura variety parameterizing principally polarized abelian varieties having an action by a field $F$ of complex multiplication.
In Section~\ref{unif_sec}, we study the complex uniformization map
$\pi: \bH \to \Shg(\CC)$, which realizes the Shimura curve as a quotient of the upper half plane by a unitary group.
In Proposition~\ref{geodesic}, we prove that the 
pre-image under $\pi$ of the real points of $\Shg$ is a union of 
hyperbolic geodesics.
In Section~\ref{tzsqf}, we establish a connection between real CM points on $\Shg$ 
and solutions to certain quadratic forms.

Starting in Section~\ref{Srestricttocyc}, 
we restrict to the case of interest in this paper, 
where $F=\QQ(\zeta_m)$ and
$\Shg$ is a Shimura curve of genus $0$ defined over $\QQ$. 
In particular, we consider the families $\Sh$ from Section~\ref{M11asSh} when $m=5$ and $m=7$.
In Proposition~\ref{prop:QuadraticFormToCMpoints}, we describe the set of $\RR$-points of $\Sh$ that represent principally polarized abelian varieties having complex multiplication by certain quadratic extensions of $F$.

\subsection{The Shimura datum}\label{Shdatum}

Let $F$ be a CM field and let $F_0$ be the maximal totally real subfield of $F$. Let $n=[F_0:\QQ]$.  
We assume $n \geq 2$.  We label the embeddings of $F_0 \to \RR$
by $\tau_1, \ldots, \tau_n$.

We consider an integral PEL datum $(V,\langle\cdot,\cdot\rangle)$ 
associated with $F$ as in \cite[Definition~2.6]{LMPTshimuradata} (with respect to the families of curves in Section \ref{sec_curvefamily}).
In particular, $V$ is a $2$-dimensional vector space over $F$.
We write 
\begin{equation} \label{Vdecomp}
V\otimes_\QQ\RR = \oplus_{i=1}^n \left(V\otimes_{F_0,\tau_i}\RR\right).
\end{equation}

The integral PEL datum is determined by the $F$-vector space $V$, with the standard  
$\mathcal{O}_{F}$-lattice $\Lambda=\mathcal{O}_F^2\subset V$,
together with a symplectic form $\langle \cdot,\cdot\rangle$ on $V$, 
taking integral values on $\Lambda$. 
The symplectic form $\langle \cdot, \cdot \rangle: V\times V \to \QQ$ is given by
\begin{equation} \label{Exi}\langle x, y \rangle = \mathrm{tr}_{F/\QQ}(x\, B \, ^t\bar{y}), \text{ for } B= \mathrm{diag}[\xi_1,\xi_2]\in \mathrm{GL}_2(F),\end{equation}
where $\xi_1,\xi_2 \in F^*$ are each totally imaginary, and contained in the codifferent $D_{F/\QQ}^{-1}$. 

\begin{definition} \label{ShDef}
Let $\Shg$ be the PEL moduli space determined by the integral PEL datum as in \cite[Section 5]{kottwitz92}, \cite[Theorem~1.4.1.11]{Lan}, 
(see also \cite[Section 2]{LMPTshimuradata}).
\end{definition}

The points of $\Shg$ represent principally polarized abelian varieties of dimension $g=2n$ equipped with an action of $\mathcal{O}_F$ satisfying a certain signature condition (up to equivalence given by $\Gal(F/\bQ)$).
See related material in Lemma~\ref{LM11defQ}.
The integral PEL Shimura datum $\mathcal{D}$ for $\Sh=M[11]$ (resp.\ $M[17]$)
is described in \cite[Corollaries~6.4, 6.9]{LMPTshimuradata}. 

\subsection{Signature for Hermitian form}

We place conditions on the signature to guarantee that $\Shg$ is one-dimensional and compact.

\begin{notation} \label{Dbeta0Hermitian}
We fix a totally imaginary generator $\beta_0 \in F$ of $D_{F/\QQ}$ with $\mathrm{Im}(\sigma_1(\beta_0)) > 0$.

For $x, y \in V$, define a Hermitian form $(\cdot,\cdot) : V\times V \to F$ by 
\begin{equation} \label{Ebbi}
(x,y) = x\  A \  ^t\bar{y}, \ \text{ where } A=\beta_0 B = \mathrm{diag}[\bb_1,\bb_2].
\end{equation}
where $\bb_i \in \mathcal{O}_{F_0}$ are given by $\bb_i= \xi_i\beta_0$ { for } $i=1,2$.
\end{notation}

Note that
$\langle x, y \rangle = \mathrm{tr}_{F/\QQ} (\beta_0^{-1} (x, y))$. We use $\GU_2=\GU(V, (\cdot, \cdot))$ to denote the unitary similitude group.

\begin{example}
Let $m \geq 5$ be an odd prime.
Let $F=\QQ(\zeta_m)$.
Then $F_0=\QQ(\zeta_m + \zeta_m^{-1})$ and $n=(m-1)/2$.
From \eqref{EembedF}, consider the embedding $\sigma_1 : F \to \CC$ such that $\sigma_1(\zeta_m)=e^{2\pi i/m}$, 
By \cite[Lemma~3.7]{LMPTshimuradata}, in Notation~\ref{Dbeta0Hermitian}, we can choose
\begin{equation} \label{Ebeta0}
\beta_0 := m/(\zeta_m^{(m+1)/2} - \zeta_m^{(m-1)/2}).
\end{equation}
\end{example}

\begin{notation} \label{Nsignature}
We assume that $n \geq 2$ and that the signature of the unitary group $U(V, (\cdot ,\cdot ))$  is 
$(1,1)$ at $\tau_1$, and either $(2,0)$ or $(0,2)$ at $\tau_j$, for $2\leq j\leq n$. 
Equivalently, this assumption means
the elements $\bb_1,\bb_2\in \mathcal{O}_{F_0}$ satisfy $\tau_1(\bb_1\bb_2)<0$, and $\tau_j(\bb_1\bb_2)>0$ for all $2\leq j\leq n$. Without loss of generality, we assume $\tau_1(\bb_1)> 0$.
\end{notation}

\begin{lemma}
Under Notation~\ref{Nsignature},
the PEL moduli space $\Shg$ is $1$-dimensional and compact.
\end{lemma}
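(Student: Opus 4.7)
The plan is to decompose the statement into two assertions: (i) the complex dimension of $\Shg$ equals $1$, and (ii) $\Shg$ is compact. Both will be read off directly from the signature condition in Notation~\ref{Nsignature}, without needing to touch the moduli interpretation.

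For the dimension, I would use the standard fact that for a PEL Shimura datum of unitary type associated with a Hermitian $F$-vector space $V$, the Hermitian symmetric domain decomposes, via the splitting in \eqref{Vdecomp}, as a product $\prod_{i=1}^n \mathcal{D}_i$ where $\mathcal{D}_i$ is the symmetric space for $U(p_i,q_i)$, with $(p_i,q_i)$ the local signature at $\tau_i$. The complex dimension of $\mathcal{D}_i$ is $p_i q_i$. Under Notation~\ref{Nsignature}, the local signature is $(1,1)$ at $\tau_1$ and $(2,0)$ or $(0,2)$ at $\tau_j$ for $j \ge 2$, so $\dim_\CC\mathcal{D}_1 = 1$ and $\dim_\CC \mathcal{D}_j = 0$ for $j \ge 2$, giving $\dim \Shg = 1$.

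For compactness, the standard criterion (Baily--Borel) says that $\Shg$ is compact if and only if the derived group of $G := \GU_2 = \mathrm{Res}_{F_0/\QQ}(\GU(V,(\cdot,\cdot)))$ is anisotropic over $\QQ$. Since restriction of scalars transports anisotropy, this reduces to showing that the special unitary group $SU(V,(\cdot,\cdot))$ is anisotropic over $F_0$, which in turn is equivalent to the Hermitian form $(\cdot,\cdot)$ on $V$ being anisotropic over $F$. Here I would invoke the Hasse principle for Hermitian forms over a CM extension (Landherr's theorem), which gives that a Hermitian form over $F/F_0$ is globally anisotropic as soon as it is anisotropic at some place. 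By Notation~\ref{Nsignature} and the hypothesis $n \ge 2$, there exists $j \ge 2$ at which the form is definite (signature $(2,0)$ or $(0,2)$), hence anisotropic at $\tau_j$, and therefore anisotropic over $F$.

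The main subtleties I anticipate are bookkeeping rather than conceptual. First, one must be careful to distinguish the unitary similitude group from the unitary group when invoking Baily--Borel; this is handled by passing to derived groups, where the center disappears and the two coincide. Second, since the form is $2$-dimensional and diagonal, one could bypass Landherr by observing directly that anisotropy of $\mathrm{diag}[\bb_1,\bb_2]$ over $F$ is the statement $-\bb_1/\bb_2 \notin N_{F/F_0}(F^\times)$, which follows from its negation at $\tau_j$ (where the local norm group is contained in $\RR_{>0}$ while $-\tau_j(\bb_1/\bb_2) < 0$). Either route completes the proof; I would likely present the direct argument since it avoids citing a global theorem for a rank-$2$ form. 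The assumption $n \ge 2$ is used only here, to guarantee the existence of the definite archimedean place that forces global anisotropy.
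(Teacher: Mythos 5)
The paper states this lemma without giving any proof, so there is no argument in the source to compare yours against; I can only assess your proposal on its own merits.

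Your proof is correct, and it is the standard argument one would expect here. The dimension count is right: the Hermitian symmetric domain of $U(p,q)$ has complex dimension $pq$, so under Notation~\ref{Nsignature} the factor at $\tau_1$ contributes $1$ and each other factor contributes $0$. For compactness, the reduction from the Baily--Borel criterion to anisotropy of $SU(V,(\cdot,\cdot))$ over $F_0$, and then to anisotropy of the rank-$2$ Hermitian form, is exactly right; and your observation that one can bypass Landherr entirely by noting that $-\bb_1/\bb_2$ has negative image under some $\tau_j$ with $j\ge 2$ (so cannot lie in the local norm group $\RR_{>0}$ of $\CC/\RR$, hence cannot be a global norm) is the cleanest way to close the argument. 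You also correctly identify the role of the hypothesis $n\ge 2$: it guarantees the existence of a definite archimedean place, which is what forces anisotropy; without it one would get the modular curve, which is one-dimensional but not compact. The only cosmetic point is your remark that you avoid ``touching the moduli interpretation''---the dimension argument does of course use the complex uniformization $\Gamma\backslash X$, which is the analytic side of that interpretation, but this is a phrasing quibble and does not affect correctness.
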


\subsection{Bounded complex uniformization}

Let $\Shg$ denote the unitary Shimura curve with no level structure defined by the Shimura datum in Section~\ref{Shdatum} and Definition~\ref{ShDef}.
We start by recalling the (bounded) complex parametrization of $\Shg$. 

Consider $V \otimes_{F_0, \tau_1}\RR$ as a vector space of dimension $2$ over $F \otimes_{F_0, \tau_1}\RR \simeq \CC$.
Consider its complex projectivization $\bP(V \otimes_{F_0, \tau_1}\RR)$.
For any $w\in\bP(V \otimes_{F_0, \tau_1}\RR)$, the sign of the Hermitian form $(\cdot,\cdot)$ on $w$ is well-determined, 
(meaning that the sign of $(v,v)$ is independent
of the choice of a non-zero vector $v\in w$) 
and we denote it by $(w,w)$.

Let 
\begin{equation} \label{EdefD}
\bD^- = \{ w \in \bP(V \otimes_{F_0, \tau_1}\RR) \mid (w,w)  <0 \}
\end{equation} 
and 
\begin{equation} \label{EdefDperp}
	\bD^+= \{ w  \in \bP(V \otimes_{F_0, \tau_1}\RR) \mid (w,w)  >0 \}.
\end{equation} 

Write $\bD=\bD^-\cup\bD^+$. For any $w\in\bD$, we write 
\begin{equation} \label{Ewperp}
w^\perp=\{ x\in V\otimes_{F_0,\tau_1}\RR \mid \forall v\in w:\, (v,x)=0 \}.
\end{equation}
If $w\in\bD^\pm$, then $w^\perp \in \bD^\mp$, and $V\otimes_{F_0,\tau_1}\RR=w\oplus w^\perp.$

To each $w\in\bD$, we associate a complex structure $\cdot$ on $V\otimes_\QQ \RR$; we denote 
the associated complex vector space by $V_w \cong \CC^{g}$.
For all $a\in\CC$, if $w\in \bD^-$, then we define:
\begin{equation} 
a\cdot v = 
\begin{cases}
\bar{a}v  \text{ if }v\in w;\\ 
 {a}v \text{ if }v\in w^\perp;\\
av\text{ if }v\in V\otimes_{F_0,\tau_i}\RR \text{ for some } 2\leq i\leq n \text{ and the signature at } \tau_i \text{ is }(2,0);\\
\bar{a}v\text{ if }v\in V\otimes_{F_0,\tau_i}\RR \text{ for some } 2\leq i\leq n \text{ and the signature at } \tau_i \text{ is }(0,2).
\end{cases}
\end{equation}
(From \eqref{Vdecomp}, the conditions are well-defined, 
disjoint and span $V \otimes_\QQ \RR$.)
If $w\in \bD^+$, then $w^\perp\in \bD^-$, and we define the complex  structure on $V_w$ as the conjugate of the complex structure on $V_{w^\perp}$.

\begin{definition}\label{def-pi}
We define the (bounded) complex parametrization $\pi:\bD \to \Shg(\CC)$ as follows. 
For any $w\in\bD$, let
\[\pi(w)=(A_w,\lambda_w, \iota_w),\]
where
\begin{enumerate}
\item $A_w$ is the complex torus  $A_w=V_w/\Lambda$,
\item $\lambda_w$ is the Riemann form on $A_w$, namely the symplectic form $\langle\cdot,\cdot\rangle$ on $V_w$.

Note that $\lambda_w$ takes integral values on the lattice $\Lambda$, and  it is positive definite
with respect to the complex structure $\cdot$ on $V_w$,  that is $\langle i\cdot x,x\rangle_\RR >0$ for all $0\neq x\in V_w$.  

\item  $\iota_w:\mathcal{O}_F\to \mathrm{End}(A_w)$ is defined via the complex structure $\cdot$ on $V_w$. 
\end{enumerate}
\end{definition}

\subsection{Uniformization of unitary Shimura curves} \label{unif_sec}

Let $\bH^+$ (resp. $\bH^-$) denote the complex upper (resp. lower) upper half plane; write $\bH=\bH^+\cup\bH^-$.
We give an complex parametrization of $\Shg(\CC)$, by identifying $\bD$ with $\bH$ as follows. 

A basis $\{e,f\}$ of a Hermitian space $(W, (\cdot,\cdot))$ is called {\em isotropic} if it satisfies
\begin{equation} \label{isotropic_def}
(e,e)=0,\, (f,f)=0 \text{ and } (e,f)=-(f,e)\neq 0.
\end{equation}
An isotropic basis always exists (see Lemma \ref{basis_lemma}).

Suppose $\{e,f\}$ is an isotropic basis of $V \otimes_{F_0, \tau_1} \RR$ with respect to the Hermitian form 
$(\cdot,\cdot)$.
For any $\theta \in \CC$, consider the vector
$v_\theta=\theta e+f\in V \otimes_{F_0, \tau_1}\RR$
and define $w_\theta$ to be the line in $\bP(V \otimes_{F_0, \tau_1}\RR)$ spanned by $v_\theta$. 
Define $w_{\infty}= \CC e$.

Define:
\begin{equation} \label{DmapI}
I: \CC\to \bP(V \otimes_{F_0, \tau_1}\RR), \ \mbox{by} \ I(\theta)=w_\theta.
\end{equation} 

\begin{lemma}\label{H_lemma}
Given an isotropic basis $\{e,f\}$ of $V \otimes_{F_0, \tau_1} \RR$ with respect to the Hermitian form,
the map $I$ induces a bijective complex analytic map $\bH\to\bD$.
\end{lemma}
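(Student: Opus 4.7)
The plan is to reduce the lemma to a single explicit computation of the Hermitian form on the one-parameter family of lines $w_\theta = \CC(\theta e + f)$. First, I would observe that the isotropic condition $(e,f) = -(f,e)$, combined with the Hermitian symmetry $(x,y) = \overline{(y,x)}$, forces $(e,f)$ to be purely imaginary; so write $(e,f) = it$ for some $t \in \RR \setminus \{0\}$. A direct expansion then yields
\[
(v_\theta, v_\theta) = |\theta|^2 (e,e) + \theta(e,f) + \bar\theta(f,e) + (f,f) = (e,f)\bigl(\theta - \bar\theta\bigr) = -2t\,\mathrm{Im}(\theta),
\]
where the isotropy of $e$ and $f$ kills the first and last terms. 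Hence $(v_\theta, v_\theta)$ is nonzero precisely when $\theta \in \bH$, and the sign of $\mathrm{Im}(\theta)$ together with the sign of $t$ determines which of $\bD^-$ or $\bD^+$ contains $w_\theta$.

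For bijectivity I would appeal to the fact that $\{e,f\}$ is a $\CC$-basis of the two-dimensional complex vector space $V \otimes_{F_0,\tau_1} \RR$, so the assignment $\theta \mapsto \CC(\theta e + f)$ is nothing more than the standard affine chart on $\bP(V \otimes_{F_0,\tau_1} \RR) \cong \bP^1$, with the unique missing point being $w_\infty = \CC e$. Since $(e,e) = 0$, the line $w_\infty$ is isotropic and does not lie in $\bD$, so every element of $\bD$ has the form $w_\theta$ for a unique $\theta \in \CC$. The formula above then identifies this $\theta$ as an element of $\bH$, and conversely shows every $\theta \in \bH$ lands in $\bD$. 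Complex analyticity is automatic because $I$ is the restriction of the affine chart, which is a biholomorphism onto its image. There is no real obstacle here; the only care needed is to track sign conventions for the Hermitian form, in particular its linearity in the first versus second variable, in order to correctly match $\bH^\pm$ with $\bD^\mp$ (or $\bD^\pm$, depending on the sign of $t$) for use in later sections.
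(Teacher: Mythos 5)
Your proof is correct and takes essentially the same approach as the paper: use the isotropy of $e$ and $f$ to reduce $(v_\theta,v_\theta)$ to $(\theta-\bar\theta)(e,f)$, observe that $w_\infty=\CC e$ is isotropic and hence not in $\bD$, and identify $I$ with the standard affine chart on $\bP^1$. Your added observation that $(e,f)$ is purely imaginary (hence the displayed quantity is real and its sign is tracked by $\mathrm{Im}(\theta)$) is a nice clarification, and in fact your final expression $-2t\,\mathrm{Im}(\theta)$ is more carefully stated than the paper's, which drops the factor of $i$ in passing from $\theta-\bar\theta$ to $2\,\mathrm{Im}(\theta)$.
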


\begin{proof}
Any element in $\bP(V \otimes_{F_0, \tau_1}\RR)$,
except for $w_{\infty}= \CC e$, can be uniquely represented as $w_\theta=\CC( \theta e+f)$, for some $\theta \in \CC$. 
By \eqref{isotropic_def}, $w_\infty\notin\bD$.  Hence, 
to prove $I$ is a bijection, 
it suffices to check that  $I(\bH)=\bD$, that is $(w_\theta,w_\theta)\neq 0 $ if and only if $\mathrm{Im}(\theta)\neq 0$.
This follows from this computation: 
\begin{equation*}
(\theta e + f, \theta e + f) =  \theta (e,f)+ \bar{\theta} (f,e) = ({\theta} - \bar{\theta}) (e,f) = 2\mathrm{Im}(\theta) (e,f).
\qedhere
 \end{equation*}
\end{proof}

From now on, we denote by 
\begin{equation} \label{Enewpi}
\pi: \bH \to \Shg(\CC),
\end{equation}
the composition of the bijective complex analytic map
$I:\bH\to\bD$ from Lemma \ref{H_lemma}
with $\pi:\bD\to \Shg(\CC)$ from Definition~\ref{def-pi}.

\begin{lemma}\label{action-on-H}
Consider the action of $X \in \GL_2(\CC)$ on $\bP(V \otimes_{F_0, \tau_1}\RR)$ induced from the action on $V \otimes_{F_0, \tau_1}\RR$ with respect to the isotropic basis $\{e,f\}$. That is, for $X = \begin{pmatrix}
	x_{11} & x_{12} \\ x_{21} & x_{22}
\end{pmatrix} \in \GL_2(\mathbb{C})$, let $X(e) =  x_{11} e+x_{21} f$ and $X(f)=x_{12} e+x_{22} f$.  If $\theta \in \bH$, then: 
\begin{enumerate}
\item The induced action of $X$ 
on $\theta \in \bH$ is given by $X(\theta) = \frac{x_{11} \theta +x_{12}}{x_{21} \theta+x_{22}}$.
\item In particular, $X(\theta) = \theta$ if and only if 
$\theta \in \bH$ satisfies 
$x_{21} \theta^2 + (x_{22} -x_{11})\theta - x_{12} = 0$.
\end{enumerate}
\end{lemma}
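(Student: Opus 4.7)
The plan is to argue by direct computation, unwinding the definitions of the parametrization $I$ from \eqref{DmapI} and the matrix action on the basis $\{e,f\}$. By $\mathbb{C}$-linearity, applying $X$ to $v_\theta = \theta e + f$ gives
\[
X(v_\theta) = \theta X(e) + X(f) = (x_{11}\theta + x_{12})\, e + (x_{21}\theta + x_{22})\, f.
\]
For part (1), since $w_\theta = \mathbb{C}\, v_\theta$ and the induced action on $\mathbb{P}(V\otimes_{F_0,\tau_1}\mathbb{R})$ is $[v]\mapsto [Xv]$, I would factor out the coefficient of $f$ (which is nonzero for $\theta\in\bH$ under the nondegeneracy assumption; otherwise the image would be $w_\infty\notin\bD$) to rewrite
\[
X(v_\theta) = (x_{21}\theta + x_{22})\left( \tfrac{x_{11}\theta + x_{12}}{x_{21}\theta + x_{22}}\, e + f\right),
\]
so that $X(w_\theta) = w_{\theta'}$ with $\theta' = (x_{11}\theta + x_{12})/(x_{21}\theta + x_{22})$. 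Pulling this back along $I$ (which by Lemma~\ref{H_lemma} is a bijection $\bH\to\bD$) yields the claimed Möbius formula.

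For part (2), the condition $X(\theta)=\theta$ translates via part (1) to the equation $(x_{11}\theta + x_{12})/(x_{21}\theta + x_{22}) = \theta$. Clearing the denominator (legitimate because $\theta \in \bH$ means $X(\theta)\in\bH$, in particular finite) and rearranging gives the quadratic $x_{21}\theta^2 + (x_{22}-x_{11})\theta - x_{12}=0$. There is no real obstacle here; the only subtlety to address explicitly is the edge case $x_{21}\theta + x_{22}=0$, which I would dispose of by noting it corresponds to $X(w_\theta)=w_\infty$ and hence cannot coincide with $w_\theta\in\bD$, so it is consistent with the stated fixed-point equation (indeed, in that case $x_{12}\neq 0$ by invertibility of $X$, and the quadratic has no solution equal to this $\theta$).
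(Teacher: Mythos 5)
Your proof is correct and is essentially the paper's argument spelled out in full: the paper dispatches the lemma in one line ("Part (1) follows from Lemma~\ref{H_lemma} and part (2) follows by setting $X(\theta)=\theta$"), and what you have written is precisely the computation that one-liner is pointing at — applying $X$ to $v_\theta = \theta e + f$, normalizing the coefficient of $f$ to recover a point $w_{\theta'}$ with $\theta'$ the Möbius image, and then clearing denominators for the fixed-point equation. Your explicit handling of the $x_{21}\theta + x_{22} = 0$ case is a nice addition that the paper leaves implicit.
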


\begin{proof}
    Part (1) follows from Lemma~\ref{H_lemma}.
and part (2) follows by setting $X(\theta)=\theta$.
\end{proof}

\subsection{The real points on the Shimura curve} \label{Scomplexuniformreal} 

We characterize a set of points in $\mathbb{H}$ whose image under $\pi$ are all of the real points of $\Shg$.

\begin{lemma}\label{lem-complex-conjugation}
	If $\theta \in \mathbb{H}$, then $\overline{\pi(\theta)}= \pi(\bar{\theta})$.
	In other words, complex conjugation on $\Shg(\CC)$ agrees with complex conjugation on $\bH$.	
\end{lemma}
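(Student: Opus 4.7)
The plan is to compute $\pi(\bar\theta)$ directly from the definition and compare it, as a PEL moduli datum, with the complex conjugate of $\pi(\theta)$.

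The first step is to pin down the image of $\bar\theta$ in $\bD$. Writing $w_\theta = \CC(\theta e + f)$ and $w_{\bar\theta} = \CC(\bar\theta e + f)$, I would verify the orthogonality
\[
(\theta e + f,\; \bar\theta e + f) \;=\; \theta \,\overline{\bar\theta}\,(e,e) \;+\; \theta\,(e,f) \;+\; \overline{\bar\theta}\,(f,e) \;+\; (f,f) \;=\; 0,
\]
using that $(e,e)=(f,f)=0$ by the isotropy property \eqref{isotropic_def} and that $(e,f) + (f,e) = 0$ by the same relation (equivalently, $(e,f)$ is purely imaginary, which is forced by $(e,f) = -(f,e) = -\overline{(e,f)}$ for a Hermitian form). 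Since $w_\theta$ and $w_{\bar\theta}$ are one-dimensional lines in the two-dimensional space $V \otimes_{F_0,\tau_1}\RR$, this forces $w_{\bar\theta} = w_\theta^\perp$.

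The second step is to unravel how the complex structure changes when one replaces $w$ by $w^\perp$. In all cases (for $w\in\bD^-$ by the case-by-case formula, and for $w\in\bD^+$ by the definition that $V_w$ is the conjugate of $V_{w^\perp}$), the complex structure $J_{w^\perp}$ on $V \otimes_\QQ \RR$ equals $-J_w$. In particular, $V_{w_{\bar\theta}}$ is the complex-conjugate vector space of $V_{w_\theta}$: same underlying real space, negated complex structure.

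The third step is to match the remaining PEL data. The lattice $\Lambda \subset V$, the symplectic form $\langle \cdot, \cdot \rangle$ on $V \otimes_\QQ \RR$, and the $\cO_F$-action on $V \otimes_\QQ \RR$ are all intrinsic to the underlying rational/real structure and are unchanged by the passage $w \mapsto w^\perp$. Consequently $A_{w_{\bar\theta}} = V_{w_{\bar\theta}}/\Lambda$ is the complex-conjugate complex torus of $A_{w_\theta}$, carrying the same (real) polarization and the same (real) $\cO_F$-endomorphisms; this is exactly the moduli-theoretic description of $\overline{\pi(\theta)} = \overline{(A_{w_\theta}, \lambda_{w_\theta}, \iota_{w_\theta})}$.

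The only subtle point is conceptual rather than computational: one must observe that the Shimura datum is preserved by complex conjugation even though the naive signature $\cf$ is not symmetric, because the Shimura variety is defined modulo the $\Gal(F/\QQ)$-action on the $\cO_F$-embedding (as discussed in Section \ref{M11asSh}). Complex conjugation on $F$ sends the signature $\cf$ to its reverse, which is a Galois conjugate signature, so $(\bar A_{w_\theta}, \bar\iota_{w_\theta})$ still defines a point of $\Shg(\CC)$, and the identification $\pi(\bar\theta) = \overline{\pi(\theta)}$ then follows.
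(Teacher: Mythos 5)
Your proof is correct and follows the same route as the paper's own (very terse) argument, which simply observes that $\overline{\pi(w)} = \pi(w^\perp)$ by \eqref{Ewperp} and Definition~\ref{def-pi} and that $\CC(\theta e+f)^\perp = \CC(\bar{\theta}e+f)$. Your write-up fills in the details of each step — the orthogonality computation, the sign-flip of the complex structure under $w\mapsto w^\perp$, the invariance of $\Lambda$, $\langle\cdot,\cdot\rangle$, and the $\cO_F$-action, and the $\Gal(F/\QQ)$-equivalence needed so that the conjugate point actually lands in $\Shg(\CC)$ — all of which the paper leaves implicit.
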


\begin{proof}
	By \eqref{Ewperp} and Definition \ref{def-pi}, for any $w\in\bD$, 
	the complex conjugate of $\pi(w)$ is $\pi(w^\perp)$. 
	Since $\CC(\theta e+f)^\perp = \CC(\bar{\theta} e+f)$, 
	for any $\theta\in\bH$, the complex conjugate of $\pi(\theta)$ is $\pi(\overline{\theta})$.
\end{proof}

Using the notation of Lemma~\ref{Lmatrix}, 
the matrix $M$ in the standard basis is in $\GU_2(\RR)$ 
if and only if the matrix $X$ in the isotropic basis is in $\CC^*\GL_2(\RR)$. 
In other words, 
with respect to the isotropic basis $\{e,f\}$ of $V\otimes_{F_0,\tau_1}\RR$, we identify
\[\GU_2(\RR)=\CC^*\mathrm{GL}_2(\RR)\subset \mathrm{GL}_2(\CC).\]
 
We write $\GU_2(F_0) = \GL_2(F) \cap \GU_2(\RR)$; 
a matrix is in $\GU_2(F_0)$ if it is defined over $F$ in the standard basis 
and is in $\CC^*\GL_2(\RR)$ in the isotropic basis.
We write $\GU_2(\mathcal{O}_{F_0}) = \GU_2(F_0) \cap M_2(\mathcal{O}_{F})$ (with respect to the standard basis).

We denote by $Z$ the center of $\GL_2(\CC)$. 
For any subgroup $G \subset \GL_2(\CC)$, we denote by $G/Z$ the quotient $G/(G\cap Z)$. 
Consider the Fuchsian group 
\begin{equation} \label{Edelta}
\Delta:=(\GU_2(\mathcal{O}_{F_0})/Z) \cap (\SL_2(\RR)/Z).
\end{equation}

\begin{proposition}
The map $\pi: \mathbb{H} \to \Shg(\CC)$ from \eqref{Enewpi}
is the quotient of $\mathbb{H}$ by the action of $\Delta$.
\end{proposition}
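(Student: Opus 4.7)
The plan is to establish three standard facts which together identify $\pi$ as the quotient map $\bH\to\Delta\backslash\bH$: (i) $\pi$ is $\Delta$-invariant; (ii) $\pi$ is surjective; and (iii) if $\pi(\theta_1)=\pi(\theta_2)$, then $\theta_1$ and $\theta_2$ lie in the same $\Delta$-orbit. Since $\Delta$ is a Fuchsian subgroup of $\mathrm{PSL}_2(\RR)$ acting properly discontinuously on $\bH$, these three facts are enough.

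For invariance (i), I would take $g\in\GU_2(\mathcal{O}_{F_0})$ whose image in $\GU_2(\RR)/Z$ lies in $\SL_2(\RR)/Z$ under the isotropic-basis identification. Viewed as a matrix in $\GL_2(F)$ acting on $V$, $g$ preserves the lattice $\Lambda=\mathcal{O}_F^2$ and the symplectic form $\langle\cdot,\cdot\rangle$ up to a totally positive unit of $\mathcal{O}_{F_0}$; the restriction to $\SL_2(\RR)/Z$ forces this similitude factor to be $1$ after scaling by an element of $Z$. By Lemma~\ref{action-on-H}, on $V\otimes_{F_0,\tau_1}\RR$ the matrix $g$ sends the line $w_\theta$ to $w_{g\theta}$ and its orthogonal $w_\theta^\perp$ to $w_{g\theta}^\perp$, while on each factor $V\otimes_{F_0,\tau_j}\RR$ with $j\geq 2$ it is already complex-linear for the complex structure prescribed by the signature at $\tau_j$. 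Hence $g$ intertwines the complex structures on $V_\theta$ and $V_{g\theta}$, inducing the required isomorphism $\pi(\theta)\cong\pi(g\theta)$ of principally polarized $\mathcal{O}_F$-abelian varieties.

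For surjectivity and the converse (ii)--(iii), I would invoke the standard PEL complex uniformization of $\Shg(\CC)$ as a double coset space $\GU_2(\QQ)\backslash(\bD\times \GU_2(\mathbb{A}_f)/K)$ with $K=\GU_2(\widehat{\mathcal{O}}_{F_0})$, as in \cite{kottwitz92} and \cite{Lan}. Given $(A,\lambda,\iota)\in\Shg(\CC)$, the rational homology $H_1(A,\QQ)$ with its polarization form and $F$-action is isomorphic to $(V,\langle\cdot,\cdot\rangle)$; choosing such an isomorphism transports the Hodge structure on $H_1(A,\RR)$ to a complex structure on $V\otimes_\QQ\RR$ which, by the signature condition, is encoded by some line $w\in\bD$, and hence by $\theta\in\bH$ via Lemma~\ref{H_lemma}. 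Two such identifications differ by an element of $\GU_2(\mathcal{O}_{F_0})$ preserving the polarization exactly, i.e.\ by an element of $\Delta$, so $\theta$ is well-defined up to the $\Delta$-action.

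The hard part is verifying the implicit class-set triviality used above: the double coset set indexes the genus class of the Hermitian $\mathcal{O}_F$-lattice $(\Lambda,\langle\cdot,\cdot\rangle)$, equivalently (after fixing a connected component of $\bD$) the geometric connected components of $\Shg$. Its triviality is exactly what collapses $\Shg(\CC)$ to a single quotient $\Delta\backslash\bH$ rather than a disjoint union of such quotients indexed by this set, and I would deduce it from the connectedness of $\Shg$ established in Lemma~\ref{LM11defQ}.
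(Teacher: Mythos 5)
Your proposal is correct in outline and is genuinely more thorough than the paper's own one-line argument. The paper's proof simply observes that the ``kernel'' of $\pi$ --- the subgroup of $\SL_2(\RR)/Z$ that preserves the $F$-structure, the lattice, and the polarization up to scalar --- is $\Delta$ by definition; it does not separately address $\Delta$-invariance or, more importantly, surjectivity of $\pi$. You spell out all three ingredients (invariance, surjectivity, identification of fibers with $\Delta$-orbits) and correctly flag that surjectivity is not automatic: it comes down to the triviality of the class set of the Hermitian lattice $(\Lambda, \langle\cdot,\cdot\rangle)$, which is what forces a single double coset in the adelic uniformization. Tying that to the connectedness of $\Shg$ (Lemma~\ref{LM11defQ}) is the right move, though that lemma only applies to the $m=5,7$ cyclotomic cases while the proposition is stated for a general one-dimensional unitary $\Shg$; the paper elides this hypothesis, so you are filling a gap rather than introducing one.

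A few small imprecisions worth noting, none fatal. In step (i), ``$g$ preserves $\Lambda$'' requires $g^{-1}\in M_2(\mathcal{O}_F)$ and hence that the similitude factor be a unit, which is true but not stated; and the claim that restriction to $\SL_2(\RR)/Z$ ``forces the similitude factor to be $1$ after scaling by $Z$'' is the right heuristic but deserves a line of verification in isotropic coordinates. In step (ii)--(iii), the identification of $G(\QQ)\backslash G(\mathbb{A}_f)/K$ with $\pi_0(\Shg)$ is not exact: $\pi_0$ is $G(\QQ)_+\backslash G(\mathbb{A}_f)/K$, and for unitary PEL problems there can in addition be failure of the Hasse principle so that the Kottwitz moduli problem is a disjoint union over several forms; both are finite discrepancies, and once $\Shg$ is known connected the argument collapses to the desired conclusion regardless. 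Finally, when you say two trivializations of $H_1(A,\ZZ)\cong\Lambda$ differ by an element of $\GU_2(\mathcal{O}_{F_0})$ ``preserving the polarization exactly, i.e.\ by an element of $\Delta$,'' one should check that the corresponding matrix in isotropic coordinates actually lands in $\CC^*\SL_2(\RR)$ modulo $Z$ (the intersection condition in \eqref{Edelta}); a similitude that scales the principal polarization by a non-unit cannot arise from an isomorphism of $(A,\lambda,\iota)$, and an orientation-reversing one would map to $\bH^-$, so this is fine but worth stating.
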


\begin{proof}
By Definition \ref{def-pi}, elements in $\ker \pi \subset \SL_2(\RR)/Z$ are $F$-linear maps on $V$ which preserve the polarization/Hermitian form up to scalar and the lattice $\Lambda$. This is exactly $\Delta$ by definition.
\end{proof}

Recall that a geodesic in $\mathbb{H}$ is either a semi-circle whose center is on the real line or a ray orthogonal to the real line.  

\begin{proposition}\label{geodesic} 
	Let $\pi:\bH \rightarrow \Shg(\CC)$ be the complex uniformization map from \eqref{Enewpi}. 
	Then $\pi^{-1}(\Shg(\RR))$ is a union of geodesics.
\end{proposition}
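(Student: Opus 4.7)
The plan is to apply Lemma~\ref{lem-complex-conjugation} to convert the question into one about fixed loci of anti-holomorphic isometries of $\bH$, then invoke the standard fact that the fixed locus of such an isometry is either empty or a complete hyperbolic geodesic. By Lemma~\ref{lem-complex-conjugation}, $\pi(\theta)\in\Shg(\RR)$ if and only if $\pi(\bar\theta)=\pi(\theta)$, and this fiber coincidence is witnessed by some $\gamma$ (in $\Delta$ or, more precisely, in the natural enlargement of $\Delta$ that accounts for the identification of $\bH^+$ with $\bH^-$) with $\gamma\theta=\bar\theta$. Thus
\[\pi^{-1}(\Shg(\RR))=\bigcup_\gamma L_\gamma,\qquad L_\gamma:=\{\theta\in\bH:\gamma\theta=\bar\theta\},\]
and it suffices to show that each non-empty $L_\gamma$ is a geodesic.

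For this geometric step I would compute directly. Writing $\gamma=\begin{pmatrix}a&b\\c&d\end{pmatrix}$ with real entries and $\theta=x+iy$, clearing denominators in $\gamma\theta=\bar\theta$ and separating real and imaginary parts yields
\[(a+d)y=0\quad\text{and}\quad c(x^2+y^2)+(d-a)x-b=0.\]
Since $y\neq 0$ on $\bH$, the imaginary part forces $a+d=0$; with this constraint, the real part becomes either the equation of a Euclidean circle centered on the real axis (when $c\neq 0$) or a vertical line (when $c=0$). In both cases the intersection with $\bH$ is a complete hyperbolic geodesic. Conceptually this is the standard observation that $\theta\mapsto\overline{\gamma\theta}$ is an anti-holomorphic isometry of $\bH$, hence its fixed locus is empty or a geodesic. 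Taking the countable union over the relevant $\gamma$ yields the desired conclusion.

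The main subtlety is orientation bookkeeping: since $\Delta\subset\SL_2(\RR)/Z$ preserves $\bH^+$ and $\bH^-$ separately, the $\gamma$ producing $\pi(\bar\theta)=\pi(\theta)$ cannot literally lie in $\Delta$ when $\theta\in\bH^+$. The correct $\gamma$ comes from the orientation-reversing coset inside $(\GU_2(\mathcal{O}_{F_0})/Z)\cap\mathrm{PGL}_2(\RR)$, corresponding to an $\mathcal{O}_F$-linear isomorphism of the PEL data at $\theta$ and $\bar\theta$ (equivalently, a symmetry of the Hermitian form on $V\otimes_{F_0,\tau_1}\RR$ interchanging $w_\theta$ and its perpendicular $w_\theta^\perp=w_{\bar\theta}$). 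Identifying this enlargement, and checking that precisely such $\gamma$ mediate the fibers of $\pi$ over real points, is where the real content of the argument lies; once that is established, the geodesic computation above applies verbatim and completes the proof.
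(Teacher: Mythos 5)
Your approach and computation are essentially the same as the paper's: apply Lemma~\ref{lem-complex-conjugation} to reduce to finding a real matrix $\gamma$ (in isotropic coordinates) with $\gamma\theta=\bar\theta$, then separate real and imaginary parts to force $\mathrm{tr}(\gamma)=0$ and conclude that the locus is a vertical ray or a Euclidean circle centered on the real line. The orientation subtlety you flag is a genuine imprecision in the paper's own proof: for $\theta\in\bH^+$ the equation $\gamma\theta=\bar\theta\in\bH^-$ forces $\gamma$ to reverse orientation, i.e.\ $\det\gamma<0$ (indeed the paper's circle formula is non-empty only when $x_{12}x_{21}+x_{22}^2>0$, which combined with $x_{11}=-x_{22}$ gives $\det\gamma<0$), so $\gamma$ cannot lie in $\SL_2(\RR)/Z=\mathrm{PSL}_2(\RR)\supset\Delta$ as the paper writes. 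You correctly place the needed $\gamma$ in the orientation-reversing part of $(\GU_2(\mathcal{O}_{F_0})/Z)\cap\mathrm{PGL}_2(\RR)$, which swaps $\bH^+$ and $\bH^-$ and carries $w_\theta$ to $w_\theta^\perp=w_{\bar\theta}$. The step you defer --- verifying that precisely these $\gamma$ mediate the fibers of $\pi$ over real points --- is genuine content, but it is the same content the paper's quotient-by-$\Delta$ statement elides; so your write-up is no less complete than the source and is more careful about which group actually implements the reality condition.
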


\begin{proof}
Consider the action of 
the discrete subgroup 
$\Delta \subset \mathrm{GL}_2({\mathbb R})/Z$ on $\mathbb{H}$.
By Lemma~\ref{lem-complex-conjugation}, complex conjugation on $\Shg$ is given by complex conjugation on $\mathbb{H}$.
The condition $\theta \in \pi^{-1}(\Shg(\RR))$ means that $\bar{\theta}$ is in the orbit of $\theta$ under the action of 
$\Delta$.
We claim that the set of points $\theta \in \mathbb{H}$ satisfying this condition is a union of geodesics.

To see this, consider $\theta=z_1+iz_2 \in \mathbb{H}$ such that $\bar{\theta} = \frac{x_{11}\theta + x_{12}}{x_{21} \theta+x_{22}}$, 
for some $\begin{pmatrix}
	x_{11} &x_{12}\\x_{21}&x_{22}
\end{pmatrix} \in \Delta$.
Then \[0 = x_{21} (z_1^2 +z_2^2) + (x_{22}-x_{11}) z_1  -iz_2 (x_{22}+x_{11}) -x_{12}.\]
Since $z_1,z_2$ are real and $x_{11},x_{12},x_{21},x_{22}$ are real, 
it follows that $x_{11}=-x_{22}$.
If $x_{21} =0$, then $z_1 = x_{12}/2x_{22}$, giving a ray orthogonal to the real line.
If $x_{21} \not = 0$, then 
$(z_1+(x_{22}/x_{21}))^2+z_2^2=(x_{12}x_{21}+x_{22}^2)/x_{21}^2$, which is a circle centered on the real line.
\end{proof}

\subsection{The unitary similitude group} \label{S4unitarysimilitude}

Information about the Shimura variety $\Shg$
is naturally expressed in terms of the unitary similitude group
$\GU_2=\GU(V, (\cdot, \cdot))$.  
By definition, $\GU_2$ is the subgroup
of elements of $\GL(V)$ which preserve the Hermitian form $(\cdot,\cdot)$ in \eqref{Ebbi} up to a scalar.
We explicitly compute $\GU_2$ as a subgroup of $\Res^F_{F_0}\GL(V)=\Res^F_{F_0}\GL_{2,F}$ with respect to an isotropic basis. 
Recall the definitions of $\beta_0$, $\bb_1$, and $\bb_2$ from Notation~\ref{Dbeta0Hermitian}.

\begin{lemma}\label{basis_lemma}
With respect to the standard basis for $V$ and the embedding $\tau_1: F_0 \hookrightarrow \RR$, 
an isotropic basis for $V \otimes_{F_0, \tau_1} \RR$ is given by
\begin{equation} \label{Edefef}
e=(\sqrt{-\bb_2},\sqrt{\bb_1}) \text{ and } f=(-\beta_0\sqrt{-\bb_2}, \beta_0\sqrt{\bb_1}),
\end{equation}
where we recall that $v_i\in F_0$ and we view them in $\RR$ via $\tau_1$.

Furthermore, $(e, f) = -2 (\bb_2 \bb_1) \beta_0$.
\end{lemma}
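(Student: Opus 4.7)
The plan is a direct computation using the diagonal form of the Hermitian pairing. I would split the verification into three pieces: (i) that $e,f$ are well-defined elements of $V\otimes_{F_0,\tau_1}\RR$ and form a $\CC$-basis, where $\CC\cong F\otimes_{F_0,\tau_1}\RR$; (ii) that $(e,e)=(f,f)=0$; and (iii) that $(e,f)=-2\bb_1\bb_2\beta_0\neq 0$, so that the antisymmetry in \eqref{isotropic_def} is forced by Hermitian symmetry.

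For step (i), the signature assumption (Notation~\ref{Nsignature}) says $\tau_1(\bb_1)>0$ and $\tau_1(\bb_1\bb_2)<0$, hence $\tau_1(-\bb_2)>0$; so both $\sqrt{\bb_1}$ and $\sqrt{-\bb_2}$ exist as positive real numbers. Identifying $F\otimes_{F_0,\tau_1}\RR$ with $\CC$ via a chosen extension of $\tau_1$, the coordinates of $e$ are real while those of $f$ are purely imaginary, since $\beta_0$ is totally imaginary by Notation~\ref{Dbeta0Hermitian}. The $2\times 2$ change-of-basis determinant between $\{e,f\}$ and the standard basis is then $2\beta_0\sqrt{-\bb_1\bb_2}\neq 0$, so $\{e,f\}$ is a $\CC$-basis.

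For steps (ii) and (iii), I would substitute directly into the diagonal pairing $(x,y)=\bb_1 x_1\bar y_1+\bb_2 x_2\bar y_2$ coming from \eqref{Ebbi}. The one nontrivial input is that the generator of $\mathrm{Gal}(F/F_0)$ sends $\beta_0\mapsto -\beta_0$ while fixing $\bb_1,\bb_2\in F_0$; in particular $\beta_0\bar\beta_0=-\beta_0^2$. Given this, $(e,e)=\bb_1(-\bb_2)+\bb_2\bb_1=0$ and the analogous one-line computation gives $(f,f)=0$. The cross term collapses to $(e,f)=\bb_1\bb_2\bar\beta_0+\bb_1\bb_2\bar\beta_0=2\bb_1\bb_2\bar\beta_0=-2\bb_1\bb_2\beta_0$, which is the stated value and is nonzero since $\bb_1,\bb_2,\beta_0$ are all nonzero. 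The required relation $(f,e)=-(e,f)$ in \eqref{isotropic_def} then drops out from Hermitian symmetry $(f,e)=\overline{(e,f)}$ combined with $\overline{\beta_0}=-\beta_0$.

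I do not anticipate a genuine obstacle: the lemma is a normalization statement producing an explicit isotropic basis in closed form, motivated by the later need for a change-of-basis matrix between the standard and isotropic presentations of $\GU_2$. The only place to be careful is keeping track of conjugation signs — namely that $\bb_i\in F_0$ is fixed while $\beta_0\in F\setminus F_0$ is negated — so as not to drop a sign in the formula for $(e,f)$.
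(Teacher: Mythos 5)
Your proposal is correct and takes essentially the same approach as the paper: a direct substitution into the diagonal Hermitian pairing $(x,y)=\bb_1 x_1\bar y_1+\bb_2 x_2\bar y_2$, using that $\bb_1,\bb_2\in F_0$ are fixed and $\bar\beta_0=-\beta_0$. The extra remarks you add — the signature condition guaranteeing $\sqrt{\bb_1},\sqrt{-\bb_2}\in\RR^+$, the nonzero change-of-basis determinant, and deducing $(f,e)=-(e,f)$ from Hermitian symmetry — are all correct elaborations of the same computation the paper records more tersely.
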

\begin{proof}
The vectors $e$ and $f$ are defined over $\RR$ and are linearly independent.
Using \eqref{Ebbi}, we directly compute $(e,e)=(f,f)=0$, and 
\[(e,f) = \bb_1 \sqrt{-\bb_2}  (-\bar{\beta}_0 \sqrt{- \bb_2}) + \bb_2 \sqrt{\bb_1} (\bar{\beta}_0 \sqrt{\bb_1})=-2 (\bb_1 \bb_2)\beta_0 
= -(f, e).
\qedhere
\]  
\end{proof}

Recall that $\tau_1({-\bb_1\bb_2})>0$.  Define 
\begin{equation}\label{omega_eq}
\omega=\sqrt{-\bb_2/\bb_1}\in \RR^+.
\end{equation} 

\begin{notation} \label{Nrsjk}
For any $a,b,c,d\in \CC$, set 
\begin{equation} \label{Evarchange}
r=a+d, \, s=d-a, \, \jj=\omega^2 c +b, \text{ and } \kk=\omega^2 c-b.
\end{equation}
\end{notation}

By definition, $\{a,b,c,d\}\subset F$ if and only if $\{r,s,\jj,\kk\} \subset F$. 

\begin{lemma} \label{Lmatrix} 
A matrix $M=\begin{bmatrix} a & b \\  c & d \end{bmatrix}$ in $\GL_2(F)$ 
transforms, with respect to the isotropic basis $\{e,f\}$, to 
\begin{equation} \label{EmatrixX}
X=\frac{1}{2\omega}\begin{bmatrix} \omega r + \jj & \beta_0(\omega s - \kk) \\  \beta_0^{-1}(\omega s +\kk) & \omega r-\jj \end{bmatrix}.
\end{equation}

Then:
\begin{equation} \label{EdetX}
\mathrm{tr}(X) =r, \text{ and } \mathrm{det}(X) = (\omega^2r^2-\jj^2-\omega^2s^2+\kk^2)/4\omega^2.
\end{equation}
\end{lemma}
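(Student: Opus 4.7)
The proof is a direct change-of-basis calculation; I sketch how I would organize it.

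First, I would set up the change of basis explicitly. With $e=(\sqrt{-\bb_2},\sqrt{\bb_1})$ and $f=(-\beta_0\sqrt{-\bb_2},\beta_0\sqrt{\bb_1})$ (written in the standard basis $e_1,e_2$ of $V\otimes_{F_0,\tau_1}\RR$), I would apply $M$ to each and compute
\[
M(e)=\bigl(a\sqrt{-\bb_2}+b\sqrt{\bb_1},\ c\sqrt{-\bb_2}+d\sqrt{\bb_1}\bigr),\qquad
M(f)=\beta_0\bigl(-a\sqrt{-\bb_2}+b\sqrt{\bb_1},\ -c\sqrt{-\bb_2}+d\sqrt{\bb_1}\bigr).
\]
Then I would write $M(e)=x_{11}e+x_{21}f$ and $M(f)=x_{12}e+x_{22}f$ and solve the resulting $2\times 2$ linear systems for the coefficients $x_{ij}$ (the columns of $X$), following the convention fixed in Lemma~\ref{action-on-H}.

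The key algebraic simplification uses $\omega=\sqrt{-\bb_2/\bb_1}\in\RR^+$ from \eqref{omega_eq}: the ratios $\sqrt{\bb_1}/\sqrt{-\bb_2}$ and $\sqrt{-\bb_2}/\sqrt{\bb_1}$ collapse to $1/\omega$ and $\omega$, respectively. After adding and subtracting the two linear equations for each system, I expect to obtain
\[
2x_{11}=a+d+\tfrac{b}{\omega}+c\omega,\qquad 2\beta_0 x_{21}=-(a-d)+\tfrac{b}{\omega}-c\omega,
\]
and analogous formulas for $x_{12},x_{22}$ with an overall factor of $\beta_0$. Using $r=a+d$, $s=d-a$, $\jj=\omega^2c+b$, $\kk=\omega^2c-b$ from Notation~\ref{Nrsjk}, these rewrite as $(\omega r\pm\jj)/(2\omega)$ and $\beta_0^{\pm1}(\omega s\pm\kk)/(2\omega)$, matching \eqref{EmatrixX}.

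For the trace and determinant, I would just read off $\mathrm{tr}(X)=\bigl((\omega r+\jj)+(\omega r-\jj)\bigr)/(2\omega)=r$, and
\[
\det(X)=\frac{(\omega r+\jj)(\omega r-\jj)-\beta_0\beta_0^{-1}(\omega s-\kk)(\omega s+\kk)}{(2\omega)^2}=\frac{\omega^2r^2-\jj^2-\omega^2s^2+\kk^2}{4\omega^2},
\]
verifying \eqref{EdetX}. There is no genuine obstacle in this lemma; the only thing requiring care is the bookkeeping of the column-vs-row convention for the matrix entries $x_{ij}$ established in Lemma~\ref{action-on-H}, and consistent handling of the $\beta_0$ and $1/\omega$ factors so that the final answer is packaged in the form \eqref{EmatrixX}.
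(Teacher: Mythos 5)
Your approach is the right one and is what the paper leaves implicit (the lemma is stated without proof): express $M(e)$ and $M(f)$ in the standard basis, solve the $2\times 2$ systems for the coefficients $x_{ij}$ in the isotropic basis using $\sqrt{\bb_1}/\sqrt{-\bb_2}=1/\omega$, and then rewrite in the variables $r,s,\jj,\kk$ of Notation~\ref{Nrsjk}. The trace and determinant computations at the end are correct.

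However, the displayed intermediate formula for $x_{21}$ has a sign slip, and as written it does \emph{not} match \eqref{EmatrixX}. From $M(e)=x_{11}e+x_{21}f$ one gets, comparing the two components,
\[
x_{11}-\beta_0 x_{21}=a+\tfrac{b}{\omega},\qquad x_{11}+\beta_0 x_{21}=c\omega+d,
\]
so subtracting the first from the second gives $2\beta_0 x_{21}=(d-a)+(c\omega-\tfrac{b}{\omega})=s+\tfrac{\kk}{\omega}$, i.e.\ $x_{21}=\beta_0^{-1}(\omega s+\kk)/(2\omega)$, as in the lemma. Your version, $2\beta_0 x_{21}=-(a-d)+\tfrac{b}{\omega}-c\omega$, equals $s-\tfrac{\kk}{\omega}$ and would give $\beta_0^{-1}(\omega s-\kk)/(2\omega)$, the $(1,2)$-entry up to a $\beta_0^2$ factor rather than the $(2,1)$-entry. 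It is only a sign bookkeeping error (subtracting the equations in the wrong order), not a conceptual gap, but the claim that your displayed formulas ``match \eqref{EmatrixX}'' should be corrected.
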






\subsection{Trace zero stabilizers}\label{tzsqf}

We determine the points of $\mathbb{H}$ which have a non-trivial stabilizer in $\GU_2(\mathcal{O}_{F_0})$ under the action of $\Delta$.

For any $z\in\bH$, denote by $\mathrm{Stab}(z) \subset \GL_{2}(\RR)^+$ the subgroup of 
elements that stabilize $z$ under the action of linear fractional transformations.

We can find the following facts in \cite[Chapter 1]{Miyake}.

\begin{lemma} \label{Lstabz} If $z \in \bH$, then:
\begin{enumerate}
\item Then $\mathrm{Stab}(z) \simeq \RR^* \SO_2(\RR)$;
\item There exists $\gamma_z\in \mathrm{Stab}(z)$, unique up to scalar multiplication, 
satisfying $\mathrm{tr}(\gamma_z)=0$.  
\item The map $z\mapsto\gamma_z $ defines a bijection between $\bH$ and $\{\gamma\in \GL_2(\RR)^+ \mid \mathrm{tr}(\gamma)=0\}$ modulo scalar multiplication by $\RR^*$. 
\end{enumerate}
\end{lemma}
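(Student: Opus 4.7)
The plan is to establish all three statements by combining the transitive action of $\SL_2(\RR)$ on $\bH$ with a direct analysis of the fixed-point equation already computed in Lemma \ref{action-on-H}.

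For part (1), I would first recall that $\SL_2(\RR)$ (and hence $\GL_2(\RR)^+$) acts transitively on $\bH$ by M\"obius transformations. A direct computation with $\begin{pmatrix} a & b \\ c & d \end{pmatrix} \cdot i = i$ (i.e.\ $ai+b = -c + di$) forces $a=d$ and $b=-c$, so the stabilizer of $i$ inside $\GL_2(\RR)^+$ is exactly $\RR^*\SO_2(\RR)$. By transitivity, the stabilizer of any other $z \in \bH$ is a conjugate, hence also isomorphic to $\RR^*\SO_2(\RR)$.

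For part (2), I would apply Lemma \ref{action-on-H}(2): $\gamma=\begin{pmatrix} a & b \\ c & d \end{pmatrix}$ fixes $z$ iff $cz^2+(d-a)z-b=0$. Imposing $\mathrm{tr}(\gamma)=0$ gives $d=-a$, so $b=cz^2-2az$. Writing $z=x+iy$ with $y>0$ and using that $a,b,c \in \RR$, the vanishing of the imaginary part yields $a=cx$, and then $b=-c|z|^2$. So any trace-zero element of $\mathrm{Stab}(z)$ is a real scalar multiple of
\[\gamma_z:=\begin{pmatrix} x & -|z|^2 \\ 1 & -x \end{pmatrix},\]
and $\det(\gamma_z)=|z|^2-x^2=y^2>0$ confirms $\gamma_z\in \GL_2(\RR)^+$. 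I would need to dispose of the degenerate case $c=0$ separately: together with $d=-a$ and $z\notin\RR$, the equation $b=-2az$ forces $a=b=d=0$, which would make $\gamma$ non-invertible.

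For part (3), injectivity of $z\mapsto[\gamma_z]$ follows from the explicit recovery of $z$ from $\gamma_z$. For surjectivity, given any $\gamma=\begin{pmatrix} a & b \\ c & -a \end{pmatrix}\in \GL_2(\RR)^+$, the hypothesis $\det(\gamma)=-(a^2+bc)>0$ ensures that the quadratic $ct^2-2at-b=0$ (or the linear equation when $c=0$, once handled as above) has non-real complex conjugate roots, exactly one of which lies in $\bH$. That root $z$ satisfies $\gamma\in \RR^*\gamma_z$ by the argument in part (2), proving the bijection.

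The main step requiring care is the analysis in part (2), since one must keep track of the two real equations imposed by reality of $b$ and rule out the $c=0$ case cleanly; once this is done, parts (1) and (3) follow routinely from the formulas.
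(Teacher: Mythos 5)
Your proof is correct and complete. The paper does not prove this lemma itself; it simply cites Miyake, ``Modular Forms,'' Chapter~1, for these standard facts about $\SL_2(\RR)$ acting on $\bH$. Your argument (transitivity reduces part~(1) to the stabilizer of $i$; the fixed-point quadratic $cz^2+(d-a)z-b=0$ with the trace-zero constraint pins down $\gamma_z=\begin{pmatrix} x & -|z|^2 \\ 1 & -x\end{pmatrix}$ up to $\RR^*$, with the $c=0$ degeneracy ruled out; and the negativity of the discriminant $4(a^2+bc)=-4\det\gamma$ gives surjectivity) is the standard derivation of exactly these facts and matches what one finds in the cited reference. One small cosmetic point: you invoke Lemma~\ref{action-on-H}(2), which is stated in the isotropic basis, but since that is just the generic fixed-point equation for a M\"obius transformation it applies verbatim in the standard coordinates used here.
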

For a subfield $K\subset \RR$, we use $\GU_2(K)^+$ to denote $\GU_2(K) \cap \GL_2(\RR)^+$. When we consider $\gamma$ with respect to the standard basis, we write $\gamma \in \GU_2(\RR)^+$ instead of $\gamma \in \GL_2(\RR)^+$.

We deduce the following result. 

\begin{lemma}\label{detgamma}
Let $z_1, z_2 \in \bH$.  
For $i=1,2$, in standard coordinates, 
let $\gamma_i \in \GU_2(\RR)^+$ be the element 
with trace zero in $\mathrm{Stab}(z_i)$,
(which is well-defined in $\GU_2(\RR)^+=\GL_2(\RR)^+$ up to multiplication by a scalar in $\RR^*$).
Then the hyperbolic geodesic containing $z_1$ and $z_2$ is given by the fixed points of 
$M_{x,y}=x \gamma_1 + y \gamma_2$, 
for $x,y\in \RR$ such that $\mathrm{det}(M_{x,y}) >0$.

Furthermore, for a number field $K \subset \RR$ with $F_0 \subset K$, 
if $\gamma_1, \gamma_2 \in \GU_2(K)^+$ and $x,y \in K$, 
then $x \gamma_1 + y \gamma_2 \in \GU_2(K)$.
\end{lemma}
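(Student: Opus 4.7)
\emph{Plan.} Both parts follow from the identification in Lemma~\ref{Lstabz}(3) of $\bH$ with the $\RR^*$-projectivization of the cone of trace-zero, positive-determinant real $2\times 2$ matrices, together with a direct computation in the isotropic basis. Throughout I work with respect to the isotropic basis $\{e,f\}$ of Lemma~\ref{basis_lemma}, under which (by the convention of Section~\ref{unif_sec}) each $\gamma_i\in\GU_2(\RR)^+$ is represented as a genuine real matrix $\gamma_i=\bigl[\begin{smallmatrix} a_i & b_i \\ c_i & -a_i\end{smallmatrix}\bigr]\in\GL_2(\RR)^+$ with $a_i,b_i,c_i\in\RR$.

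\emph{Geodesic.} By Lemma~\ref{action-on-H}(2), the fixed point of such a trace-zero $\gamma$ in $\bH$ solves $P_\gamma(\theta):=c\theta^2-2a\theta-b=0$, so the fixed points of $M_{x,y}$ are the roots of $xP_{\gamma_1}(\theta)+yP_{\gamma_2}(\theta)=0$. Hence, as $(x,y)$ varies over $\RR^2$ with $\det(M_{x,y})>0$, the locus of fixed points in $\bH$ is
\[
\Gamma=\{\theta\in\bH \colon \mathrm{Im}\bigl(P_{\gamma_1}(\theta)\overline{P_{\gamma_2}(\theta)}\bigr)=0\}.
\]
Writing $\theta=u+iv$ and expanding, the imaginary part factors as $2v$ times
\[
(a_1c_2-a_2c_1)(u^2+v^2)+(b_1c_2-b_2c_1)u+(a_1b_2-a_2b_1),
\]
which, on $\bH$ (where $v>0$), cuts out a subset defined by an equation of the form $\alpha(u^2+v^2)+\beta u+\delta=0$ with $\alpha,\beta,\delta\in\RR$: either a semicircle centered on the real axis or a vertical line, i.e., a hyperbolic geodesic. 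Since $z_1,z_2\in\Gamma$ by construction and a geodesic in $\bH$ is uniquely determined by two of its points, $\Gamma$ is the geodesic through $z_1$ and $z_2$.

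\emph{Furthermore.} For $\gamma_1,\gamma_2\in\GU_2(K)^+$ and $x,y\in K$, in the isotropic basis $M_{x,y}$ is a real matrix (being an $\RR$-linear combination of real matrices), so $M_{x,y}\in\GL_2(\RR)\subset\CC^*\GL_2(\RR)=\GU_2(\RR)$. In the standard basis, each $\gamma_i$ has entries in $F_K:=F\otimes_{F_0}K$, hence so does $M_{x,y}$. The similitude factor $c(M_{x,y})$ equals $\det(M_{x,y})$ by the direct check $\bar M^T A' M=\det(M) A'$ for real $M$ (where $A'$ is the matrix of the Hermitian form in the isotropic basis), so it lies in $F_K\cap\RR=K$. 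Combining these observations, $M_{x,y}\in\GU_2(K)$ whenever it is invertible. The main calculational point, and the only substantive step, is verifying that $\mathrm{Im}(P_{\gamma_1}\overline{P_{\gamma_2}})$ factors as displayed above — this crucially uses the realness of $a_i,b_i,c_i$ afforded by the isotropic-basis identification $\GU_2(\RR)^+=\GL_2(\RR)^+$.
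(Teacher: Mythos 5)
Your proof is correct but takes a genuinely different route from the paper's. The paper conjugates by an explicit $\rho\in\GL_2(\RR)^+$ that sends the geodesic through $z_1,z_2$ to the imaginary half-axis, observes there (using the concrete form of a trace-zero stabilizer of a point $ti$) that the fixed points of $x\gamma_1'+y\gamma_2'$ with $\gamma_\ell'=\rho\gamma_\ell\rho^{-1}$ sweep out exactly that half-axis, and transports back; the determinant criterion is then checked separately by comparing the discriminant of the fixed-point quadratic in isotropic coordinates to $\det(M_{x,y})$ via Lemma~\ref{Lmatrix} and Notation~\ref{Nrsjk}. You instead argue in place: fixed points of the pencil $M_{x,y}$ are zeros of the pencil $xP_{\gamma_1}+yP_{\gamma_2}$ of real quadratics, and a point $\theta\in\bH$ is such a zero for some $(x,y)$ precisely when $P_{\gamma_1}(\theta)$ and $P_{\gamma_2}(\theta)$ are $\RR$-linearly dependent, i.e.\ $\mathrm{Im}\bigl(P_{\gamma_1}(\theta)\overline{P_{\gamma_2}(\theta)}\bigr)=0$; expanding shows this has the shape $\alpha(u^2+v^2)+\beta u+\delta=0$, a geodesic, which contains $z_1,z_2$ since $P_{\gamma_i}(z_i)=0$. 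Your route avoids the conjugation and exposes the geodesic equation explicitly, at the cost of a slightly longer expansion; one small step worth recording (which in the paper is done via Lemma~\ref{Lmatrix}) is that for a real trace-zero $\gamma=\bigl[\begin{smallmatrix} a & b \\ c & -a\end{smallmatrix}\bigr]$ the discriminant of $P_\gamma(\theta)=c\theta^2-2a\theta-b$ equals $4a^2+4bc=-4\det\gamma$, which is what ties ``root in $\bH$'' to $\det(M_{x,y})>0$. Your treatment of the ``Furthermore'' clause is correct and fuller than the paper's one-line ``clear''; in particular the observation that $M A'\,{}^tM=\det(M)\,A'$ for real $M$ (with $A'$ proportional to the standard alternating matrix) cleanly identifies the similitude factor with the determinant.
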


\begin{proof}
Without loss of generality, we suppose that 
$\mathrm{Re}(z_2) \geq \mathrm{Re}(z_1)$.

Let $\rho \in \GL_2(\RR)^{+}$ be a matrix that sends the geodesic segment $z_1z_2$ to a segment $T$ contained in the 
vertical ray $\RR^+i$.  Specifically, if $z_1$ and $z_2$ have the same real coordinate $x$, set 
$\rho = \begin{pmatrix}
	1 & -x \\ 0 & 1
\end{pmatrix}$.
If not, let $x_1,x_2\in \RR $ be the two end points of the semi circle containing the
geodesic segment $z_1z_2$, labeled so that $x_2 > x_1$,  
and set $\rho= \begin{pmatrix}
	1 & -x_2 \\ 1 & -x_1
\end{pmatrix}$.

In either case, 
$\rho(z_1) = t_1 i$ and $\rho(z_2) = t_2 i$ for some
$t_1, t_2 \in \RR^{+}$ with $t_1 > t_2$.
Here $t_1i$ and $t_2 i$ are the endpoints of the segment $T\subset \RR^+i$.
After scaling $\rho$, we can suppose that $t_1 = 1$ and 
$0 < t_2 < 1$.

For $\ell=1,2$, then $\gamma'_\ell=\rho\gamma_\ell\rho^{-1}$ stabilizes $t_\ell i$. 
Suppose $z \in \bH$.
Then $z$ is on the ray $\RR^+ i$ if and only if $z \in \bH$ and $z$ is 
stabilized by
$x\gamma_1' + y \gamma_2'$ for some $x,y \in \RR$.
Using the transformation $w = \rho^{-1}(z)$, 
it follows that $w$ is on the geodesic containing $z_1$ and $z_2$ 
if and only if it
is stabilized by $\rho^{-1} (x\gamma_1' + y \gamma_2') \rho = x \gamma_1 + y \gamma_2$ for the same $x,y \in \RR$.

Write $M_{x,y} = \begin{pmatrix}
a & b \\c &d
\end{pmatrix}$.  Then $a=-d$ since 
$\gamma_1$ and $\gamma_2$ have trace $0$.
Let $X=\begin{pmatrix}
x_{1,1} & x_{1,2} \\ 
x_{2,1} & x_{2,2}
\end{pmatrix}$ be the matrix for $M_{x,y}$ in isotropic coordinates.
Then $M_{x,y}$ fixes $z \in \bH$ if and only if 
$X \circ z = z$, which is equivalent to 
$0 = x_{2,1} z^2 + (x_{2,2}-x_{1,1})z -x_{1,2}$.
The condition $z \in \bH$ is equivalent to
$(x_{2,2} - x_{1,1})^2 + 4x_{1,2}x_{2,1} <0$.
Using Lemma~\ref{Lmatrix}, 
this is equivalent to 
$(-2j)^2 + 4(\omega^2 s^2 - k^2) <0$. 
By Notation~\ref{Nrsjk}, 
this condition simplifies to $\mathrm{det}(M_{x,y}) >0$.

The last statement over $K$ is clear.
\end{proof}

\subsection{Genus zero Shimura curves with three marked points} \label{Srestricttocyc}

Let $m=5$ (resp.  $m=7$) and $\Shg=\Sh$ be as defined in Section~\ref{M11asSh}.
Then $\Sh$ has genus $0$ and is defined over $\QQ$ by Lemma~\ref{LM11defQ}.

Let $P,Q,R$ be the special points of $\Sh$ corresponding to Jacobians of curves with extra automorphisms, 
as defined in Section \ref{subsec-special-points}).
By Lemma~\ref{Lfomfod}, $P$, $Q$, $R$ are in $\Sh(\QQ)$. 

By cutting $\Sh(\CC)$ via the real line $\Sh(\RR)$, 
we obtain two simply connected domains in $\bP^1(\CC)$. 
Pulling back by $\pi$, we triangulate $\bH$ into simply connected regions whose boundaries lie on geodesics, and 
whose vertices lie above the points representing curves with extra automorphisms, namely $P$, $Q$, and $R$.

\begin{proposition}\label{edges}
	The preimage of $\Sh(\RR)$ in $\bH$ is the union of all edges of hyperbolic
	triangles whose vertices are in $\pi^{-1}(P)$, $\pi^{-1}(Q)$, and $\pi^{-1}(R)$.
\end{proposition}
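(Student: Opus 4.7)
The plan is to transfer the real structure on $\Sh(\CC)$ upstairs through the covering $\pi$, showing that the natural cell decomposition of the Riemann sphere $\Sh(\CC)$ determined by the three points $P,Q,R$ lifts to a triangulation of $\bH$ by hyperbolic triangles whose vertices lie in $\pi^{-1}(P)\cup\pi^{-1}(Q)\cup\pi^{-1}(R)$.

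First I would use Lemma~\ref{LM11defQ} to identify $\Sh(\CC)$ with the Riemann sphere and $\Sh(\RR)$ with a real circle on it. The three $\QQ$-rational points $P,Q,R$ split $\Sh(\RR)$ into three open arcs, and $\Sh(\CC)\setminus\Sh(\RR)$ is a disjoint union of two open topological discs $D^+$ and $D^-$, each simply connected and sharing the boundary $\Sh(\RR)$. By Proposition~\ref{geodesic}, the preimage $\pi^{-1}(\Sh(\RR))$ is a union of hyperbolic geodesics in $\bH$, so $\bH\setminus\pi^{-1}(\Sh(\RR))$ is a disjoint union of open geodesic polygons.

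Next, for a connected component $U$ of $\bH\setminus\pi^{-1}(\Sh(\RR))$, I would show that $U$ is an open topological disc mapping homeomorphically to either $D^+$ or $D^-$. By Proposition~\ref{Pnomorebigaut} together with Section~\ref{subsec-special-points}, the only points on $\Sh$ whose underlying abelian variety carries extra automorphisms are $P,Q,R$, so the elliptic points of $\Delta$ in $\bH$ lie entirely in $\pi^{-1}(P)\cup\pi^{-1}(Q)\cup\pi^{-1}(R)$. Thus the restriction of $\pi$ to $\bH\setminus\pi^{-1}(\{P,Q,R\})$ is an unramified covering of $\Sh(\CC)\setminus\{P,Q,R\}$; restricting further to $U$ gives an unramified cover of the simply connected $D^\pm$, which must be a homeomorphism.

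I would then analyze the closure $\overline U$ in $\bH$: since $\Sh$ is compact (by the lemma following Notation~\ref{Nsignature}), $\overline U$ is compact, and $\pi|_U$ extends continuously to a surjection $\overline U\to\overline{D^{\pm}}$ that is a bijection on interiors. The topological boundary $\partial U$ maps onto the circle $\partial D^\pm$, which meets the marked set $\{P,Q,R\}$ in exactly three points; hence $\partial U$ is cut into exactly three geodesic arcs meeting at three vertices, each vertex mapping to one of $P,Q,R$. Thus $\overline U$ is a hyperbolic triangle with vertices in $\pi^{-1}(\{P,Q,R\})$ and edges in $\pi^{-1}(\Sh(\RR))$. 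Since every point of $\pi^{-1}(\Sh(\RR))$ lies on the common boundary of such a pair of triangles (one lying over $D^+$ and one over $D^-$, interchanged by complex conjugation via Lemma~\ref{lem-complex-conjugation}), $\pi^{-1}(\Sh(\RR))$ is exactly the union of all edges of triangles with vertices in $\pi^{-1}(P)\cup\pi^{-1}(Q)\cup\pi^{-1}(R)$.

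The main technical obstacle is the compactness of $\overline U$ in $\bH$, which is needed to ensure that each triangle is bounded with three honest vertices in $\bH$ rather than cusps on $\partial\bH$; this uses compactness of $\Sh$ together with the fact that $\pi$ is locally a finite-order quotient at each elliptic point, so that $\pi|_U$ admits a continuous extension to $\overline U$ matching the boundary circle $\partial D^\pm$ with the triangular boundary $\partial U$.
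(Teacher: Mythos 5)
Your proposal is correct in outline but takes a genuinely different route from the paper. The paper's proof is short and algebraic: starting from Proposition~\ref{geodesic} (preimage is a union of geodesics), it argues that a point $\theta$ lying on two of those geodesics satisfies $\sigma_1\theta = \sigma_2\theta = \bar\theta$ for two distinct $\sigma_1,\sigma_2 \in \Delta$, so $\sigma_1^{-1}\sigma_2$ is a non-trivial stabilizer of $\theta$; hence $\pi(\theta)$ is a point with extra automorphisms, and by Proposition~\ref{Pnomorebigaut} those are exactly $P,Q,R$. That is, the paper characterizes the vertex set of the geodesic arrangement as $\pi^{-1}(\{P,Q,R\})$ and lets the preceding discussion of the decomposition of $\Sh(\CC)$ along $\Sh(\RR)$ carry the triangulation structure. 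You instead work topologically: you pull back the two open discs $D^\pm = \Sh(\CC)\setminus\Sh(\RR)$ via the orbifold covering, show a component $U$ of the preimage is a trivial cover of the simply connected $D^\pm$, and then identify $\overline U$ as a geodesic triangle with vertices over $P,Q,R$. What the paper's approach buys is brevity and a clean identification of the vertex set; what yours buys is a more complete and self-contained argument that the complementary regions are actually triangles, with exactly one vertex over each of $P$, $Q$, $R$.

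Two places in your argument deserve more care, one of which you flag yourself. First, the compactness of $\overline U$ in $\bH$ does not follow simply from compactness of $\Sh$; the quotient map $\pi$ is not proper. The clean way to get it is to note that $\pi$ is a local isometry for the hyperbolic orbifold metric on $\Sh$ away from the cone points, so $\pi|_U : U \to D^\pm$ is an isometry onto a set of finite hyperbolic diameter (bounded by the diameter of the compact orbifold $\Sh$), hence $U$ is a bounded subset of $\bH$ and $\overline U$ is compact. Second, to conclude that $\partial U$ has \emph{exactly} three vertices, one over each of $P$, $Q$, $R$, you need that the extension $\pi|_{\overline U} : \overline U \to \overline{D^\pm}$ is injective on the boundary (equivalently degree one on $\partial U \to \Sh(\RR)$), not merely surjective; this follows from the bijectivity on interiors together with compactness and a degree argument, but it is not automatic from what you wrote. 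Neither of these is a fatal gap --- the conclusion is correct and the approach sound --- but both steps should be made explicit for a complete proof.
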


\begin{proof}
By Proposition \ref{geodesic}, $\pi^{-1}(\Sh(\RR))$ is a union of geodesics.
A point $\theta$ lies on two of these geodesics if and only if there exist distinct $\sigma_1,\sigma_2 \in \Delta$
such that $\sigma_1\theta=\sigma_2\theta=\bar{\theta}$.
This implies that $\sigma^{-1}_1\sigma_2 \in \Delta \cap \mathrm{Stab}(\theta)$.
Since $\sigma^{-1}_1 \sigma_2 \not = \mathrm{id}$, it follows that
$\pi(\theta)$ represents a curve with extra automorphisms; 
the only such curves in this family 
are represented by the points $P,Q,R$ by Proposition~\ref{Pnomorebigaut}.
\end{proof}

\begin{notation} \label{Ngamma}
By Proposition \ref{edges}, we can choose a fundamental triangle $\mathfrak T$ in $\bH$ for the action of $\Delta$
whose boundaries are geodesics. 
Let $\tilde{P}$ (resp.\ $\tilde{Q}$, $\tilde{R}$) 
be the vertex of $\mathfrak T$
whose image under $\pi$ is the point $P$ (resp.\ $Q$, $R$).
For $z=\tilde{P},\tilde{Q}, \tilde{R}$, 
choose $\gamma_z\in \mathrm{Stab}(z)$ as in Lemma \ref{Lstabz}; 
we may choose $\gamma_z\in M_2(\cO_F)$. For simplicity, we write $\gamma_P=\gamma_{\tilde{P}}$, $\gamma_Q=\gamma_{\tilde{Q}}$, and $\gamma_R=\gamma_{\tilde{R}}$.
\end{notation}

\subsection{Complex multiplication and quadratic forms}
\label{SCMquadforms}

In Proposition~\ref{edges}, we characterized the points in $\mathbb{H}$ whose images under $\pi$ are the real points $\Sh(\RR)$. 
Next, we describe a subset of points whose images under $\pi$
are CM points in $\Sh(\RR)$. 
We revisit this material when $m=5$ in Section~\ref{SCMquadformsm5}.

Recall that Assumption~\ref{def:good} states that
$\lambda\in {\mathcal O}_{F_0}$ is totally positive, 
is relatively prime to $m$, 
generates a prime ideal, and has odd norm;
also $-\lambda$ is a square modulo $4{\mathcal O}_{F_0}$.

From \eqref{EdefEfield}, 
recall that $E=F(\sqrt{-\lambda})$ is a CM field. 
Let $\mathcal{O}_E$ denote the ring of integers of $E$. 
Then $\mathcal{O}_E\supseteq \mathcal{O}_F[\sqrt{-\lambda}]$.  
If $\lambda\equiv -1\bmod 4 \mathcal{O}_{F_0}$, then $\mathcal{O}_E= \mathcal{O}_F[(1+\sqrt{-\lambda})/2]$.

Recall the definition of $\gamma_P,\gamma_Q,\gamma_R$ from 
Notation~\ref{Ngamma}.
Given a pair $\gamma_1,\gamma_2$ of these,  
consider the quadratic form 
\begin{equation} \label{Equadform12}
q_{1,2}(x,y)=\det (x\gamma_1 + y \gamma_2).
\end{equation}

\begin{definition}
Under Assumption~\ref{def:good},
  we say that $q_{1,2}$ represents $\lambda$
  if $q_{1,2}(x,y) = \lambda$ for some $x,y \in F_0$ such 
  that $x\gamma_1 + y \gamma_2 \in \GU_2(\mathcal{O}_{F_0})$.
\end{definition}

We say that a point $\eta$ of $\Sh(\CC)$ 
has complex multiplication by an order $R$ in a CM field
if it represents 
a principally polarized abelian variety with 
complex multiplication by $R$.

\begin{proposition}\label{prop:QuadraticFormToCMpoints}
Under Assumption~\ref{def:good},
there exists a point in $\Sh (\RR)$ with complex multiplication by ${\mathcal O}_F[\sqrt{-\lambda}]$
if the quadratic form $q_{1,2}(x,y)=\det (x\gamma_1 + y \gamma_2)$
represents $\lambda$, for at least one pair $\gamma_1,\gamma_2$ of $\gamma_P,\gamma_Q,\gamma_R$.
Furthermore, suppose $\lambda\equiv -1\bmod 4 \mathcal{O}_{F_0}$. 
Then this point has complex multiplication by $\mathcal{O}_E$ 
if 
$\frac{1}{2}(\mathbb{I}+x\gamma_1 + y \gamma_2)\in M_2(\mathcal{O}_F)$.
\end{proposition}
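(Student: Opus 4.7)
The plan is to convert a representation of $\lambda$ by $q_{1,2}$ into an explicit endomorphism $M$ of the abelian variety attached to a carefully chosen point $z\in\bH$, and to verify that $M^2=-\lambda$ so that $\mathcal{O}_F[\sqrt{-\lambda}]$ embeds into $\End(A_z)$. Concretely, given a pair $(x,y)\in F_0^2$ with $M:=x\gamma_1+y\gamma_2\in \GU_2(\mathcal{O}_{F_0})$ and $q_{1,2}(x,y)=\lambda$, I would note immediately that $\Tr(M)=0$ (since each $\gamma_i$ has trace zero by Notation~\ref{Ngamma} and Lemma~\ref{Lstabz}) and that $\det(M)=q_{1,2}(x,y)=\lambda$, so the Cayley--Hamilton theorem yields $M^{2}=-\lambda\,\mathbb{I}$.

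Next, to locate the CM point, I would apply Lemma~\ref{detgamma}: because $\lambda$ is totally positive, $\tau_1(\det M)>0$, so $M$ has a fixed point $z\in\bH$ lying on the hyperbolic geodesic joining the two vertices of $\mathfrak T$ stabilized by $\gamma_1$ and $\gamma_2$. That geodesic is an edge of the fundamental triangle $\mathfrak T$, so by Proposition~\ref{edges} it lies in $\pi^{-1}(\Sh(\RR))$, and hence $\pi(z)\in\Sh(\RR)$.

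Then I would verify that $M$ induces an endomorphism of the principally polarized abelian variety $\pi(z)=(A_z,\lambda_z,\iota_z)$. The containment $\GU_2(\mathcal{O}_{F_0})\subset M_2(\mathcal{O}_F)$ guarantees that $M$ preserves the lattice $\Lambda=\mathcal{O}_F^2$; viewing $M$ in the standard basis shows it is $F$-linear, so it commutes with $\iota_z$; and the fact that $M\in\GU_2$ stabilizes $z$ implies that $M$ preserves both $w$ and $w^\perp$ at the place $\tau_1$, so that $M$ is complex-linear with respect to the complex structure defining $V_z$. Combined with $M^2=-\lambda$, this produces an embedding $\mathcal{O}_F[\sqrt{-\lambda}]\hookrightarrow \End(A_z)$, forcing CM by $\mathcal{O}_F[\sqrt{-\lambda}]$. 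For the refinement, when $\lambda\equiv -1\bmod 4\mathcal{O}_{F_0}$ we have $\mathcal{O}_E=\mathcal{O}_F[(1+\sqrt{-\lambda})/2]$; the hypothesis $\tfrac12(\mathbb{I}+M)\in M_2(\mathcal{O}_F)$ then supplies the missing integral endomorphism $(1+M)/2$ realizing $(1+\sqrt{-\lambda})/2$, promoting the CM order to the maximal order $\mathcal{O}_E$.

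The main technical point I expect to require care is the complex-linearity of $M$ at the archimedean place $\tau_1$: an $F$-linear map that fixes the line $w$ need not preserve $w^\perp$, and so a priori $M$ could mix the two complex-structure components and fail to be an endomorphism of $A_z$ as a complex torus. It is precisely the hypothesis $M\in\GU_2$ --- that $M$ preserves the Hermitian form $(\cdot,\cdot)$ up to a scalar --- that forces preservation of the orthogonal decomposition $V\otimes_{F_0,\tau_1}\RR=w\oplus w^\perp$, and this explains why the quadratic-form framework is the right language for detecting real CM points on $\Sh$.
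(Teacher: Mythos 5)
Your proof is correct and follows essentially the same route as the paper's: locate the fixed point $z$ of $M=x\gamma_1+y\gamma_2$ on an edge of $\mathfrak T$ via Lemma~\ref{detgamma} and Proposition~\ref{edges}, observe $\Tr(M)=0$, $\det(M)=\lambda$ implies $M^2=-\lambda\mathbb{I}$, and promote to $\mathcal{O}_E$ using $(\mathbb{I}+M)/2$. Where you go beyond the paper is in the last paragraph: the published proof simply asserts that ``the matrix $M$ acting on $V$ induces an endomorphism $s$ on $A_{\tilde{\eta}}$'' without further comment, whereas you explicitly verify lattice preservation, $F$-linearity, and --- the genuinely subtle point --- complex-linearity at $\tau_1$, correctly isolating that $M\in\GU_2$ (not merely $M\in\GL_2(F)$) is what guarantees $M$ preserves the decomposition $w\oplus w^\perp$ defining the twisted complex structure on $V_z$. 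This is a worthwhile clarification of a step the paper leaves implicit.
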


\begin{proof}
By assumption, there exists $M \in \GU_2(\mathcal{O}_{F_0})$ being a linear combination $x \gamma_1 + y \gamma_2$ such that $\det(M)=\lambda$. Via $\tau_1:F_0\rightarrow \RR$, we have $\det(M)\in \RR^+$ and note that $\Tr(M)=0$ and thus $M$ has a fixed $\tilde{\eta}\in \bH$. By Lemma \ref{detgamma}, $M=x \gamma_1 + y \gamma_2$ implies that $\tilde{\eta}$ lies on on a geodesic between the lifts of $P$, $Q$, and $R$. 
By Proposition~\ref{edges}, $\eta = \pi(\tilde{\eta}) \in \Sh(\RR)$. The matrix $M$ acting on $V$ induces an endomorphism $s$ on $A_{\tilde{\eta}}$, where $A_{\tilde{\eta}}$ denote the principally polarized abelian variety represented by ${\tilde{\eta}}$. $M\in \GL_2(F)$ implies that $s$ commutes with $\cO_F$-action; $\det(M)=\lambda$ and $\Tr(M)=0$ imply that $M^2=-\lambda \mathbb{I}$ and hence $s\circ s = -\lambda$. We conclude that $A_{\tilde{\eta}}$ has CM by ${\mathcal O}_F[\sqrt{-\lambda}]$.

Suppose $\lambda\equiv-1\bmod 4 {\mathcal O}_{F_0}$, and $q_{1,2}(x,y)=\lambda$ for some $x,y \in F_0$ 
having the property that $\frac{1}{2}(\mathbb{I}+x\gamma_1 + y \gamma_2)\in M_2(\mathcal{O}_F)$.  
Consider the inclusion $\phi: {\mathcal O}_F[\sqrt{-\lambda}] \hookrightarrow \mathrm{End}(A_{\tilde{\eta}})$ 
given by  $\sqrt{-\lambda}\mapsto x\gamma_1 + y\gamma_2$.
Then $\phi$ extends to an inclusion
${\mathcal O}_E
 \hookrightarrow\mathrm{End}(A_{\tilde{\eta}})$, with $(1+\sqrt{-\lambda})/2$ mapping to
$(1+x\gamma_1 + y \gamma_2)/2$.
\end{proof}

\begin{remark}\label{CMonlyif}
Under Assumption~\ref{def:good}, there exists a real point with CM by $\cO_F[\sqrt{-\lambda}]$ if and only if $\lambda$ is represented by $q_{1,2}(x,y)$. We give a sketch of the proof of the ``only if'' part; we do not need this claim for the proof of our main theorem.

Suppose a point $\eta$ of $\Sh(\CC)$ has complex 
multiplication by ${\mathcal O}_F[\sqrt{-\lambda}]$. Let $s$ denote the endomorphism on the abelian variety $A_{\eta}$ corresponding to $\sqrt{-\lambda}$. By Lemma \ref{LsimpleCMtype}, we have that $s^\dagger=-s$, where $\dagger$ denotes the Rosati involution. Hence $s^\dagger \circ s = \lambda$ acting on $A_\eta$ and then $(sv, sw)=(s^\dagger\circ s v, w)=\lambda(v,w)$, where $(\cdot, \cdot)$ is the Hermitian form on $V \cong H^1(A_{\eta}, \QQ)$ as an $F$-vector space and $v, w \in V$. Therefore the matrix on $V$ associated to $s$ lies in $\GU_2(\cO_{F_0})$.
In other words, under the map $\pi:\bH\to \Sh(\CC)$, the point $\eta$ is the image of a point $\tilde{\eta}$ of $\bH$ 
which is stabilized by a matrix
$M\in \GU_2(\mathcal{O}_{F_0})$ corresponding to $s$.
Using our assumption that $\eta\in \Sh(\RR)$ and Proposition \ref{edges}, we may pick $\tilde{\eta}$ to lie on one of the geodesics connecting $\tilde{P}, \tilde{Q}, \tilde{R}$.
Since $s\circ s= -\lambda$, we have $M^2+\lambda \mathbb{I}=0$.
Since $M$ cannot be a scalar matrix, the condition $M^2+\lambda \mathbb{I}=0$ is equivalent to $\mathrm{tr}(M)=0$ and $\det(M)=\lambda$. 

Recall that $\GU_2(\RR)=\CC^* \GL_2(\RR)$ and we then write $M=cM'$, where $c\in \CC^*$ and $M'\in \GL_2(\RR)$. Since $M$ fixes $\tilde{\eta}\in \bH$, we have $\det(M')>0$; since $\det(M)=\lambda >0$, we have $c\in \RR^*$ and we conclude that $M\in \GL_2(\RR)^+$. Then by Lemmas \ref{Lstabz} and \ref{detgamma}, since $\tilde{\eta}$ lies on one of the geodesics connecting $\tilde{P}, \tilde{Q}, \tilde{R}$, we have that $\lambda$ is represented by $q_{1,2}(x,y)$ with $\gamma_1\neq \gamma_2 \in \{\gamma_P, \gamma_Q, \gamma_R\}$. 
\end{remark}


\section{A fundamental triangle} \label{FundTr}

In this section, we determine a fundamental triangle 
for the action of a unitary similitude group on the upper half plane $\bH$.
The main output of the section is Corollary \ref{Lquadm5sec5},
in which we compute the quadratic forms that appear in Proposition~\ref{prop:QuadraticFormToCMpoints}.

\subsection{The triangle group} \label{S5atriangle}

Let $m=5$ and let $\zeta = \zeta_5$.
Let $F=\QQ(\zeta_5)$ and $F_0=\QQ(\zeta_5 + \zeta_5^{-1})$.
We determine three matrices in $\GL_2(F)$ that generate the triangle group $\Delta=\Delta(2,3,10)$. 

\begin{notation} \label{Nepsilonalpha}
Let $\zeta = \zeta_5$.
Let $\epsilon = \zeta+\zeta^{-1}$.  Let $\alpha = \zeta - \zeta^{-1}$.  Let $u=(1+\sqrt{5})/2$.
\end{notation}

\begin{lemma} \label{Lepsilonfacts} We note that:
\begin{enumerate}
\item $\epsilon = (-1+\sqrt{5})/2$ and $1/\epsilon = u = -(\zeta^3+\zeta^2)$;
\item $\epsilon^2 = (3- \sqrt{5})/2$, $\alpha^2 = -(5+\sqrt{5})/2$, and $\alpha^2= \epsilon^2-4$.
\end{enumerate}
\end{lemma}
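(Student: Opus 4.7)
The plan is to derive everything from two basic inputs: the cyclotomic relation $1 + \zeta + \zeta^2 + \zeta^3 + \zeta^4 = 0$ and the positivity $\zeta + \zeta^{-1} = 2\cos(2\pi/5) > 0$. Dividing the cyclotomic relation by $\zeta^2$ and grouping gives $(\zeta+\zeta^{-1})^2 + (\zeta+\zeta^{-1}) - 1 = 0$, that is, $\epsilon^2 + \epsilon - 1 = 0$. The quadratic formula yields $\epsilon = (-1 \pm \sqrt{5})/2$, and the positivity selects $\epsilon = (-1+\sqrt{5})/2$.

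For the identity $1/\epsilon = u$, I would simply compute
\[
\epsilon \cdot u = \frac{(-1+\sqrt{5})(1+\sqrt{5})}{4} = \frac{(\sqrt{5})^2 - 1}{4} = 1.
\]
For $u = -(\zeta^3 + \zeta^2)$, I would use the cyclotomic relation in the form $\zeta^2 + \zeta^3 = -1 - (\zeta + \zeta^{-1}) = -1 - \epsilon = (-1 - \sqrt{5})/2 = -u$.

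For part (2), I would square the expression for $\epsilon$ directly:
\[
\epsilon^2 = \frac{(-1+\sqrt{5})^2}{4} = \frac{6 - 2\sqrt{5}}{4} = \frac{3-\sqrt{5}}{2}.
\]
The identity $\alpha^2 = \epsilon^2 - 4$ is immediate from expanding $(\zeta - \zeta^{-1})^2 = \zeta^2 - 2 + \zeta^{-2}$ and $(\zeta + \zeta^{-1})^2 = \zeta^2 + 2 + \zeta^{-2}$, and subtracting. Combining these gives $\alpha^2 = (3-\sqrt{5})/2 - 4 = -(5+\sqrt{5})/2$.

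Since every claim reduces to a one-line algebraic manipulation using only the minimal polynomial of $\zeta_5$ and the positivity of $2\cos(2\pi/5)$, there is no real obstacle; the only thing to watch is the sign choice for $\epsilon$, which is fixed by the standard embedding $\sigma_1$ sending $\zeta_5 \mapsto e^{2\pi i/5}$.
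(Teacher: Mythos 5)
Your proof is correct and follows the same approach the paper indicates: derive $\epsilon^2+\epsilon-1=0$ from the cyclotomic relation, use $\epsilon>0$ to fix the root, and then carry out the short algebraic verifications for the remaining identities. The paper's proof is just a terser version of exactly this.
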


\begin{proof}
The first part follows from the facts that $\epsilon >0$, $\epsilon$ is a root of $X^2+X-1$, 
and $0 = \zeta^4+\zeta^3+\zeta^2+\zeta+1$. The rest is a short calculation.
\end{proof}

\begin{definition} \label{DdefmatrixA}
Define the following matrices in standard coordinates:
\begin{align*}\label{Aeq}A_P=\begin{bmatrix}  -\zeta^{-1} & 0  \\  0&  \zeta \end{bmatrix},\,
A_Q=
\begin{bmatrix} -\zeta/\epsilon & {1} \\  1/\epsilon & -\zeta^{-1}/\epsilon \end{bmatrix},\,
\text{ and } 
A_R= 
\begin{bmatrix}
1/\epsilon & 
-\zeta^{-1} \\
\zeta/\epsilon
& -1/\epsilon 
\end{bmatrix}. 
\end{align*}
\end{definition}

\begin{proposition}\label{DeltaPQR}
The matrices in Definition~\ref{DdefmatrixA} have the following properties:
 $A_P,A_Q,A_R\in \GL_2(\mathcal{O}_F)$ have orders $10$, $3$ and $2$ respectively, and satisfy $A_PA_QA_R=\mathrm{Id}$.
 \end{proposition}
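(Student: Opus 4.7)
The plan is to verify the three assertions by direct computation with the matrices of Definition~\ref{DdefmatrixA}, using the identities collected in Lemma~\ref{Lepsilonfacts}.

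First I will check that $A_P, A_Q, A_R \in \GL_2(\mathcal{O}_F)$. The only entries that are not visibly in $\mathcal{O}_F$ are those involving $1/\epsilon$, but Lemma~\ref{Lepsilonfacts} gives $1/\epsilon = u = -(\zeta^3 + \zeta^2) \in \mathcal{O}_F$. For the determinants, $\det(A_P) = (-\zeta^{-1})\cdot \zeta = -1$ is immediate; for $A_Q$ one computes $\det(A_Q) = 1/\epsilon^2 - 1/\epsilon = (1-\epsilon)/\epsilon^2$, which equals $1$ using the identity $1 - \epsilon = \epsilon^2$ derived from Lemma~\ref{Lepsilonfacts}; for $A_R$ the same manipulation gives $\det(A_R) = -1/\epsilon^2 + 1/\epsilon = -1$. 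All three determinants are units in $\mathcal{O}_F$, so the matrices lie in $\GL_2(\mathcal{O}_F)$.

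Next I will read off the orders via Cayley--Hamilton. The matrix $A_P$ is diagonal with entries $-\zeta^{-1}$ and $\zeta$; the former is a primitive $10$th root of unity (its fifth power is $-1$ and its square is $\zeta^{-2} \neq 1$), while the latter is a primitive $5$th root, so $A_P$ has order $\mathrm{lcm}(10,5) = 10$. For $A_Q$ the trace is $-(\zeta + \zeta^{-1})/\epsilon = -\epsilon/\epsilon = -1$ and the determinant is $1$, so the characteristic polynomial is $X^2 + X + 1$; Cayley--Hamilton then gives $A_Q^3 = \mathrm{Id}$, and since $A_Q \ne \mathrm{Id}$ the order equals $3$. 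For $A_R$ the trace is $0$ and the determinant is $-1$, so the characteristic polynomial is $X^2 - 1$; hence $A_R^2 = \mathrm{Id}$ and $A_R \ne \mathrm{Id}$, so the order is $2$.

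The relation $A_P A_Q A_R = \mathrm{Id}$ is equivalent, via the order-two relation just verified, to $A_P A_Q = A_R$. This reduces to a single block of matrix arithmetic: the diagonal factor $A_P$ multiplies the first row of $A_Q$ by $-\zeta^{-1}$ and the second row by $\zeta$, and each of the four resulting entries collapses (using $\zeta\cdot\zeta^{-1} = 1$) to the corresponding entry of $A_R$ as displayed in Definition~\ref{DdefmatrixA}. No step of the argument involves any real obstacle; the proposition is a concrete bookkeeping check that the chosen matrices realize the standard presentation of the triangle group $\Delta(2,3,10)$, and the only mildly non-obvious ingredient is the identity $1 - \epsilon = \epsilon^2$ that intervenes twice in the determinant computations.
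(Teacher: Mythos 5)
Your proof is correct and takes exactly the approach the paper's proof gestures at (``This can be verified computationally with Lemma~\ref{Lepsilonfacts}''), simply spelling out the arithmetic. The integrality, determinant, trace/Cayley--Hamilton, and the reduction of $A_PA_QA_R=\mathrm{Id}$ to $A_PA_Q=A_R$ are all carried out correctly, including the key identity $1-\epsilon=\epsilon^2$.
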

 
\begin{proof}
This can be verified computationally with Lemma~\ref{Lepsilonfacts}.
We found these matrices by fixing $A_P$ and finding conditions on $A_Q$ such that $A_Q^{2}=A_Q^{-1}$ and 
$A_PA_Q$ has order $2$.
\end{proof} 

In Section~\ref{S5bunitarysimilitude}, we show that $A_P,A_Q,A_R \in \GU_2(\mathcal{O}_{F_0})$.

\subsection{The unitary similitude group when $m=5$} \label{S5bunitarysimilitude}

In this section,
we work with the unitary similitude group from Section~\ref{S4unitarysimilitude} to obtain additional information.

\begin{notation}
For $M[11]$, set $m=5$.
We fix the values $\bb_1, \bb_2$ defined in Notation~\ref{Dbeta0Hermitian}.
Set \[\bb_1=1 \text{ and } \bb_2= (1-\sqrt{5})/2.\] 
Then $\bb_1$ and $\bb_2$ satisfy the positivity conditions:
$\tau_1(\bb_1)>0$, $\tau_1(\bb_2)<0$, and $\tau_2(\bb_1\bb_2)>0$.

Recall $\beta_0$ from \eqref{Ebeta0}, $\omega$ from \eqref{omega_eq}, and
$\epsilon$ and $\alpha$ from Notation~\ref{Nepsilonalpha}.
Let $r_\circ=-1$ and $s_\circ = \alpha/\epsilon$.
\end{notation}

\begin{lemma}
Set $m=5$.
Then $\beta_0 = \sqrt{5} \alpha$ and $\omega^2 = \epsilon = (-1+\sqrt{5})/2$.
 
Also $\omega^2(3r_\circ^2 + s_\circ^2) = -4$,
and $s_\circ=\alpha u$. 
\end{lemma}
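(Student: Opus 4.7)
The plan is to verify each claim by direct computation from the definitions, using the identities collected in Lemma~\ref{Lepsilonfacts}.

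First, I would verify $\omega^2 = \epsilon$. By \eqref{omega_eq}, $\omega^2 = -\bb_2/\bb_1 = -(1-\sqrt{5})/2 = (-1+\sqrt{5})/2 = \epsilon$. No further input is needed.

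Next, I would compute $\beta_0$. By \eqref{Ebeta0}, $\beta_0 = 5/(\zeta^3 - \zeta^2)$. Squaring gives
\[
\beta_0^2 \;=\; \frac{25}{(\zeta^3-\zeta^2)^2} \;=\; \frac{25}{\zeta^6 - 2\zeta^5 + \zeta^4} \;=\; \frac{25}{\zeta + \zeta^{-1} - 2} \;=\; \frac{25}{\epsilon - 2},
\]
using $\zeta^5 = 1$ and $\zeta^4 = \zeta^{-1}$. Rationalizing with $\epsilon - 2 = (-5+\sqrt{5})/2$ yields $\beta_0^2 = -(25+5\sqrt{5})/2$. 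On the other hand, by Lemma~\ref{Lepsilonfacts}, $5\alpha^2 = -5(5+\sqrt{5})/2 = -(25+5\sqrt{5})/2$, so $\beta_0 = \pm\sqrt{5}\,\alpha$. To pin down the sign I would invoke the requirement in Notation~\ref{Dbeta0Hermitian} that $\mathrm{Im}(\sigma_1(\beta_0)) > 0$: since $\sigma_1(\alpha) = e^{2\pi i/5} - e^{-2\pi i/5} = 2i\sin(2\pi/5)$ has positive imaginary part and $\sqrt{5} > 0$, the correct sign is $\beta_0 = \sqrt{5}\,\alpha$.

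The identity $s_\circ = \alpha u$ is immediate: by definition $s_\circ = \alpha/\epsilon$ and by Lemma~\ref{Lepsilonfacts}(1), $1/\epsilon = u$. Finally, for the relation $\omega^2(3r_\circ^2 + s_\circ^2) = -4$, substitute $r_\circ^2 = 1$ and $s_\circ^2 = \alpha^2/\epsilon^2 = (\epsilon^2 - 4)/\epsilon^2 = 1 - 4/\epsilon^2$ (using $\alpha^2 = \epsilon^2 - 4$ from Lemma~\ref{Lepsilonfacts}). Then
\[
\omega^2(3r_\circ^2 + s_\circ^2) \;=\; \epsilon\bigl(3 + 1 - 4/\epsilon^2\bigr) \;=\; 4(\epsilon - 1/\epsilon) \;=\; 4\bigl((-1+\sqrt{5})/2 - (1+\sqrt{5})/2\bigr) \;=\; -4.
\]

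None of the steps presents a genuine obstacle; the only mildly subtle point is selecting the correct sign of $\beta_0$, which is fixed unambiguously by the positivity convention for $\mathrm{Im}(\sigma_1(\beta_0))$.
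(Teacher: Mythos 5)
Your proof is correct, and the overall approach — direct computation from the definitions together with the identities in Lemma~\ref{Lepsilonfacts} — matches the paper's. The arguments for $\omega^2 = \epsilon$, for $s_\circ = \alpha u$, and for $\omega^2(3r_\circ^2 + s_\circ^2) = -4$ are essentially identical to what the paper does. The one place you deviate slightly is in the computation of $\beta_0$: you square $5/(\zeta^3-\zeta^2)$, simplify to $\beta_0^2 = -(25+5\sqrt{5})/2 = 5\alpha^2$, and then pin down the sign via the convention $\mathrm{Im}(\sigma_1(\beta_0)) > 0$; the paper instead multiplies numerator and denominator by $\alpha = \zeta - \zeta^4$ and simplifies $(\zeta^3-\zeta^2)(\zeta-\zeta^4) = \zeta + \zeta^4 - \zeta^2 - \zeta^3 = \sqrt{5}$ directly, thereby avoiding the sign ambiguity entirely. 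Both are valid; the paper's route is a bit leaner, but yours is a sound alternative.
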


\begin{proof}
We compute 
\[\beta_0 = \left(\frac{5}{\zeta^3 - \zeta^2}\right) \left( \frac{\zeta - \zeta^4}{\zeta - \zeta^4} \right)
= \frac{5 \alpha}{\zeta^4 - \zeta^3 - \zeta^2 + \zeta} = \frac{5 \alpha}{\sqrt{5}} = \sqrt{5} \alpha.\]
The second claim is true since $\omega^2 = - \bb_2$.
The third claim follows from $\omega^2(3r_\circ^2 + s_\circ^2) =  \epsilon (3 + \alpha^2/\epsilon^2)
= 4(\epsilon^2 -1)/\epsilon = -4$.
The fourth is a short calculation.
\end{proof}

\begin{proposition} \label{PdefXmatrixPQR}
Let $m=5$.  The three matrices $A_P$, $A_Q$, and $A_R$ from Definition~\ref{DdefmatrixA} are in $\GU_2(\mathcal{O}_{F_0})$.
In isotropic coordinates, these matrices are given by:
\begin{equation}\label{EXP5fix}
	X_P= \frac{\alpha}{2}
	\begin{bmatrix} 1  & (5-\sqrt{5})/2  \\  (-3+\sqrt{5})/10& 1
	\end{bmatrix};
\end{equation}

\begin{equation}\label{EXQ5fix}
	X_Q= \frac{1}{2}
	\begin{bmatrix} -1+2/\omega  & -\sqrt{5}(5+3\sqrt{5})/2 \\  (5+\sqrt{5})/10& -1 - 2/\omega
	\end{bmatrix}; \ \mbox{ and }
\end{equation}

\begin{equation} \label{EXR5fix}
	X_R= \frac{\alpha}{2\omega}
	\begin{bmatrix} 1  &  \sqrt{5}(- \epsilon - \omega(1+\sqrt{5})) \\  
	\frac{1-\sqrt{5}}{10} (\epsilon -\omega(1+\sqrt{5})) & -1
	\end{bmatrix}.
\end{equation}
\end{proposition}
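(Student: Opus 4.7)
The plan is to establish the two assertions separately. For the first, Proposition~\ref{DeltaPQR} already gives $A_P, A_Q, A_R \in \GL_2(\cO_F)$, so it suffices to verify the unitary similitude condition $\bar{M}^{\mathrm t} A M = \lambda A$ with respect to $A = \mathrm{diag}(\bb_1, \bb_2) = \mathrm{diag}(1, -\epsilon)$, for some $\lambda \in F_0^*$. For the diagonal matrix $A_P$, this holds trivially with $\lambda = 1$ since $|{-\zeta^{-1}}| = |\zeta| = 1$. For $A_Q$ and $A_R$, an entry-by-entry computation of $\bar{M}^{\mathrm t} A M$ uses only the identities $\zeta\bar\zeta = 1$, $\zeta + \zeta^{-1} = \epsilon$, and $1 - \epsilon = \epsilon^2$ from Lemma~\ref{Lepsilonfacts}; the last identity is exactly what reduces the $(1,1)$ entry (which takes the shape $(1-\epsilon)/\epsilon^2$) to $1$, and analogous cancellations produce $\lambda = 1$ in both cases.

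For the second assertion, the plan is to apply Lemma~\ref{Lmatrix} directly. For each matrix $M \in \{A_P, A_Q, A_R\}$, one reads off the entries $(a,b,c,d)$ and then computes the parameters of Notation~\ref{Nrsjk}. A short calculation, using $\zeta + \zeta^{-1} = \epsilon$ and $\zeta - \zeta^{-1} = \alpha$, gives $(r,s,\jj,\kk) = (\alpha, \epsilon, 0, 0)$ for $A_P$; and, using additionally $\omega^2 = \epsilon$, gives $(r,s,\jj,\kk) = (-1, \alpha/\epsilon, 2, 0)$ for $A_Q$ and $(r,s,\jj,\kk) = (0, -2/\epsilon, \alpha, \epsilon)$ for $A_R$. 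Substituting each of these quadruples into the change-of-basis formula~\eqref{EmatrixX}, together with $\beta_0 = \sqrt{5}\alpha$ and $\omega^2 = \epsilon$, produces the claimed expressions after rationalization of the denominators. For $A_R$, the supplementary identity $2/\epsilon = 1 + \sqrt{5}$ is used to rewrite the off-diagonal entries in the form displayed in~\eqref{EXR5fix}.

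The main labor lies in the algebraic bookkeeping: the rationalizations involving $\alpha^2 = -(5+\sqrt{5})/2$ and $\sqrt{5}\epsilon = (5-\sqrt{5})/2$ must be performed carefully, but no conceptual obstacle arises beyond the explicit change-of-basis formula of Lemma~\ref{Lmatrix} together with the basic algebraic identities of Lemma~\ref{Lepsilonfacts}.
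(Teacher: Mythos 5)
Your proposal is correct, and for the first assertion it takes a genuinely different route from the paper. The paper never verifies the unitary similitude relation $\bar{M}^{\mathrm t}\,\mathrm{diag}(\bb_1,\bb_2)\,M=\lambda\,\mathrm{diag}(\bb_1,\bb_2)$ directly; instead it first computes $X_P,X_Q,X_R$ from Lemma~\ref{Lmatrix}, notes that each lies in $\CC^{*}\GL_2(\RR)$ because the auxiliary quantities $\omega,\epsilon$ are real and positive, and then invokes the identification $\GU_2(\RR)=\CC^{*}\GL_2(\RR)$ in isotropic coordinates (Section~\ref{Scomplexuniformreal}) to deduce $A_P,A_Q,A_R\in\GU_2(\RR)$; integrality then follows from Proposition~\ref{DeltaPQR}. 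Your direct verification of the Hermitian condition in the standard basis, using only $\zeta\bar\zeta=1$, $\zeta+\zeta^{-1}=\epsilon$, and $1-\epsilon=\epsilon^2$, is an equally valid and arguably cleaner argument: it establishes membership in $\GU_2$ without appealing to the $\CC^{*}\GL_2(\RR)$ criterion, at the cost of an explicit matrix multiplication for $A_Q$ and $A_R$. (Your computed similitude factor $\lambda=1$ is consistent with $\det A_Q=1$ and $\det A_P=\det A_R=-1$, all of which have norm $1$ over $F_0$.) For the second assertion, both your proof and the paper's simply feed the entries into the change-of-basis formula \eqref{EmatrixX}; your intermediate quadruples $(r,s,\jj,\kk)=(\alpha,\epsilon,0,0)$, $(-1,\alpha/\epsilon,2,0)$, $(0,-2/\epsilon,\alpha,\epsilon)$ are correct, and making them explicit is a useful addition that the paper omits.
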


\begin{proof} 
The matrices $A_P$, $A_Q$, and $A_R$ from Definition~\ref{DdefmatrixA} are in $\GL_2(F)$.
The formulas for $X_P$, $X_Q$, and $X_R$ follow from Lemma~\ref{Lmatrix}. 
Note that $X_P$, $X_Q$, and $X_R$ are in $\CC^* \GL_2(\RR)$ since $\omega, \epsilon \in \RR^+$.
Thus $A_P$, $A_Q$, and $A_R$ are in $\GU_2(\RR)$.
Since $\epsilon$ is a unit, the entries of $A_P$, $A_Q$, and $A_R$ are in $\mathcal{O}_{F}$.
Thus $A_P$, $A_Q$, and $A_R$ are in $\GU_2(\mathcal{O}_{F_0})$.
\end{proof}

\subsection{Vertices of fundamental triangles} \label{S5cstabilizers}

We determine the vertices of a fundamental triangle for the action of $\GU_2$ on $\bH$. 

Let $m=5$.
Recall the Deligne--Mostow Shimura variety $\Sh =M[11]$ from Section~\ref{M11asSh}.
Recall from Sections~\ref{SdefQR} and \ref{SdefP} that $P$ (resp.\ $Q$, $R$) is the point on $\Sh$ 
that represents the curve in the family having an extra automorphism of order $10$ (resp.\ $3$, $2$).

\begin{remark} \label{RdefPQRtilde}
We find the fixed points $\tilde{P}$, $\tilde{Q}$, and $\tilde{R}$ in $\bH$ of $X_P$, $X_Q$, and $X_R$:
\begin{eqnarray}\label{EtildePQR}
\tilde{P} & := & \beta_0 \approx  0 + 4.253 i; \\
\tilde{Q} & := & (5-\sqrt{5})\left(\frac{1}{\omega} + \frac{\sqrt{-3}}{2}\right)
\approx  3.516 + 2.394 i; \\ 
\tilde{R} & := & -5u \frac{(1-2\omega/\alpha)}{(\epsilon - \omega(1+\sqrt{5}))}
\approx  4.200 + 3.472 i.
\end{eqnarray}
\end{remark}

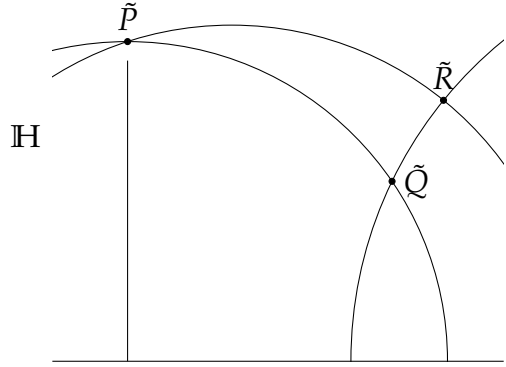
\begin{figure}[h!]
\begin{center}
\begin{tikzpicture}
\draw (-1,0)--(5,0);
\draw (0,0)--(0,4);

\node[above] at (0,4.253) {$\tilde{P}$};
\draw [fill] (0,4.253) circle [radius=0.04];
\node[right] at (3.516,2.394) {$\tilde{Q}$};
\draw [fill] (3.516,2.394) circle [radius=0.04];
\node[above] at (4.200,3.472) {$\tilde{R}$};
\draw [fill] (4.200,3.472) circle [radius=0.04];

\node[left] at (-1,3) {$\mathbb{H}$};

\clip (-1,0) rectangle (5,5);
\draw (0,0) circle(4.253);
\draw (8.478,0) circle(5.509);
\draw (1.382,0) circle(4.472);

\end{tikzpicture}
\end{center}
\caption{The hyperbolic triangle with vertices $\tilde{P}$, $\tilde{Q}$, and $\tilde{R}$}
    \label{fig:hyperbolictriangle}
\end{figure}

\begin{proposition} 
The images of the points $\tilde{P}$, $\tilde{Q}$, and $\tilde{R}$ under $\pi: \bH \to \Sh(\CC)$ are $P$, $Q$, and $R$ respectively.
A fundamental domain in $\bH$ for the action of $\Delta$ 
is given by the union of two adjacent copies of the hyperbolic triangle 
$\triangle$ in $\bH$, 
whose vertices are the points $\tilde{P}$, $\tilde{Q}$, and $\tilde{R}$.
\end{proposition}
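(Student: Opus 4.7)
The plan is to establish the two assertions in turn: the identification $\pi(\tilde P)=P$, $\pi(\tilde Q)=Q$, $\pi(\tilde R)=R$, and the fundamental-domain statement. For the identifications, each $A_X\in \GU_2(\cO_{F_0})$ lies in $\Delta$ by Proposition~\ref{PdefXmatrixPQR} and fixes $\tilde X$ by construction (Notation~\ref{Ngamma}, Remark~\ref{RdefPQRtilde}). Thus each $\pi(\tilde X)\in\Sh(\CC)$ has nontrivial orbifold stabilizer, so by Proposition~\ref{Pnomorebigaut} it lies in $\{P,Q,R\}$. Since the orders of $A_P, A_Q, A_R$ modulo $F^\times$ are $10, 3, 2$ (directly from $A_P=\mathrm{diag}[-\zeta^{-1},\zeta]$ and Proposition~\ref{DeltaPQR}), the three images are distinct. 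The specific matching comes from the moduli description: $A_P$ realizes exactly the extra order-$10$ automorphism of $J_P$ described in Section~\ref{SdefP}, and $A_Q, A_R$ realize the extra order-$3$ and order-$2$ automorphisms of $J_Q, J_R$ from Section~\ref{SdefQR}, giving the claimed correspondences.

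For the fundamental-domain statement I would compare hyperbolic covolumes. Since $A_X$ of order $n$ acts near $\tilde X$ as rotation by $2\pi/n$, the interior angle of $\triangle=\tilde P\tilde Q\tilde R$ at $\tilde X$ is $\pi/n$; hence the angles are $\pi/10$, $\pi/3$, $\pi/2$, and Gauss--Bonnet gives $\mathrm{Area}(\triangle)=\pi/15$. The relations $A_P^{10}=A_Q^3=A_R^2=A_PA_QA_R=\Id$ (Proposition~\ref{DeltaPQR}) together with the computed orders identify $\Delta':=\langle A_P,A_Q,A_R\rangle\subseteq\Delta$ as a discrete faithful image of the von Dyck triangle group of signature $(2,3,10)$, with fundamental domain equal to $\triangle$ together with a reflected copy glued along an edge, of total area $2\pi/15$. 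On the other hand, $\bH/\Delta=\Sh(\CC)\cong\bP^1$ by Lemma~\ref{LM11defQ}, and from the first part it carries three elliptic points with stabilizer orders $10, 3, 2$; its orbifold Euler characteristic is therefore $2-\tfrac{9}{10}-\tfrac{2}{3}-\tfrac{1}{2}=-\tfrac{1}{15}$, giving covolume $2\pi/15$. Equality of covolumes forces $[\Delta:\Delta']=1$, and so the union of two adjacent copies of $\triangle$ is a fundamental domain for $\Delta$.

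The main obstacle is the covolume computation for $\Delta$, which requires that the orbifold stabilizer orders at $P, Q, R$ be exactly $10, 3, 2$ in $\bH/\Delta$, not the a priori larger orders coming from the full automorphism groups of the Jacobians. The key observation is that the kernel of $\GU_2(\cO_{F_0})\twoheadrightarrow\Delta$ is the group of scalar matrices $F^\times\cap\GU_2(\cO_{F_0})$, which contains the $\cO_F$-scalar action $\mu_{10}\cdot\Id$; this absorbs the "internal" $\cO_F$-equivariant part of $\mathrm{Aut}(J_X,\iota,\lambda)$, leaving precisely the cyclic stabilizer generated by the extra automorphism $A_X$ of the indicated order. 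Once this bookkeeping is in place, the area comparison is immediate and the proposition follows.
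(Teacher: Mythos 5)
Your identification $\pi(\tilde X)=X$ is correct and more carefully spelled out than the paper's (in particular, the scalar bookkeeping showing the $\Delta$-stabilizer orders at $P,Q,R$ are exactly $10,3,2$ is a genuine clarification). Your fundamental-domain argument is also a different route from the paper's: the paper essentially invokes triangle-group theory and backs up the area statement with a direct computation in the Remark that follows; you instead compare the covolume of $\bH/\Delta$ (via the orbifold Euler characteristic of a genus-$0$ orbifold with cone orders $10,3,2$, giving $2\pi/15$) against the covolume of $\Delta'=\langle A_P,A_Q,A_R\rangle$, and conclude $[\Delta:\Delta']=1$. That is a clean and legitimate alternative, provided the area of $\triangle$ really is $\pi/15$.

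That is exactly where there is a gap. The sentence ``Since $A_X$ of order $n$ acts near $\tilde X$ as rotation by $2\pi/n$, the interior angle of $\triangle$ at $\tilde X$ is $\pi/n$'' is a non-sequitur: the rotation angle of a point stabilizer says nothing by itself about the interior angle of an arbitrary geodesic triangle through that point, and even the count of $n$ geodesics through $\tilde X$ does not force equal spacing when $n$ is even. Everything downstream (Gauss--Bonnet, the identification of $\Delta'$ as $D(2,3,10)$, the covolume equality) depends on this angle claim. The claim is in fact true, but it needs an argument: either compute the angles directly from the coordinates of $\tilde P,\tilde Q,\tilde R$ (this is precisely what the paper's Remark after the proposition does), or invoke the three-rotations theorem --- if elliptic rotations about the vertices of a hyperbolic triangle compose to the identity, then each rotation angle is $\pm 2$ times the corresponding interior angle --- combined with the constraint that the angle sum is less than $\pi$, which rules out $\theta_X=\pi k_X/n_X$ for any $k_X>1$ and pins down $\pi/10,\pi/3,\pi/2$. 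Once that step is supplied, the rest of your covolume comparison goes through.
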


\begin{proof}
Recall that $\pi: \bH \to \Sh(\CC)$ is the quotient by $\Delta=(\GU_2(\cO_{F_0})/Z)\cap(\SL_2(\RR)/Z)$.
For each of $P,Q,R$, the stabilizer in $\Delta$ of a lift in $\bH$ of the point
is a finite cyclic subgroup of $(\GU_2(\RR)/Z) \cap (\mathrm{SL}_2(\RR)/Z)$.
By Proposition~\ref{DeltaPQR}, 
the vertices of a fundamental triangle are the fixed points of the three matrices 
$A_P, A_Q, A_R\in \Delta$ from Definition~\ref{DdefmatrixA}. 
Using Lemma~\ref{action-on-H}, we compute the fixed point in $\bH$ of $X_P$, $X_Q$, and $X_R$.
\end{proof}


\begin{remark}
Let $\alpha$ be the area of a fundamental region for $\Delta(2,3,10)$.
By the Gauss--Bonet theorem, 
$\alpha = \pi - (\pi/2 + \pi/3 + \pi/10)$. 
We checked that $\alpha = \mathrm{Area}(\triangle)$, by finding the hyperbolic distances between $\tilde{P}$, $\tilde{Q}$, and $\tilde{R}$, and using 
the formula for the area of a triangle with 
a right angle at $\tilde{R}$. \end{remark}



\subsection{Stabilizing elements with trace zero}

Following Section \ref{tzsqf}, for each of $z=\tilde{P},\tilde{Q},\tilde{R}$, 
we compute $\gamma_z\in\mathrm{Stab}(z)\subset \GL_2(\RR)^+$ satisfying $\mathrm{tr}(\gamma_z)=0$.
(Here $\gamma_z$ is well-defined in $\GU_2(\RR)^+$ up to multiplication by a scalar in $\RR^*$.)
Recall that $s_\circ = \alpha/\epsilon$.

\begin{lemma} \label{gammaPQR_lem}
In standard coordinates: 
\begin{equation} \label{Egammafix}
\gamma_P=
\begin{bmatrix} -\alpha &0\\0&\alpha \end{bmatrix},\,
\gamma_Q =
\begin{bmatrix}
-s_\circ & 2 \\
2/\epsilon & s_\circ
\end{bmatrix},  \text{ and }
\gamma_R= \begin{bmatrix} s_\circ & -\zeta^{-1}\alpha \\\zeta \alpha/\epsilon& -s_\circ \end{bmatrix}.
\end{equation}
In particular, $\gamma_P, \gamma_Q, \gamma_R$ are in $\mathrm{GL}_2({\mathcal O}_{F})$.
In fact, $\gamma_P, \gamma_Q, \gamma_R$ are in ${\mathrm{GU}_2}({\mathcal O}_{F_0})\cap \GL_2(\RR)^+$.
\end{lemma}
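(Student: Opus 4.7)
The strategy is to realize each $\gamma_z$ as a short $\mathcal{O}_F$-polynomial expression in the matrix $A_z$ from Definition~\ref{DdefmatrixA}. Direct matrix computation using $u\epsilon = 1$, $\alpha = \zeta - \zeta^{-1}$, and $\epsilon = \zeta + \zeta^{-1}$ establishes
\[
\gamma_P \;=\; u\bigl(A_P^{2} - A_P^{-2}\bigr), \qquad \gamma_Q \;=\; 2A_Q + \mathrm{Id}, \qquad \gamma_R \;=\; \alpha\,A_R;
\]
for instance, $A_P^{2} - A_P^{-2} = \mathrm{diag}(-\alpha\epsilon,\alpha\epsilon)$, and multiplying by $u = 1/\epsilon$ recovers $\gamma_P$; the identities for $\gamma_Q$ and $\gamma_R$ are similar one-line verifications. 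Given these formulas, vanishing of the trace is immediate from $\mathrm{tr}(A_P^{\pm 2})$ being a symmetric sum of two terms, from $\mathrm{tr}(A_Q) = -u\epsilon = -1$, and from $\mathrm{tr}(A_R) = 0$ by inspection; entries in $\mathcal{O}_F$ follow using $s_\circ = \alpha u \in \mathcal{O}_F$; and fixing of $\tilde z$ on $\mathbb H$ follows from the fact that the linear fractional action depends linearly on matrix entries, so any invertible linear combination of matrices that fix $\tilde z$ again fixes $\tilde z$, and each constituent above fixes the appropriate $\tilde z$ by construction.

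Next I show $\gamma_z \in \mathrm{GU}_2(\mathcal{O}_{F_0}) \cap \mathrm{GL}_2(\RR)^+$, for which the isotropic basis of Proposition~\ref{PdefXmatrixPQR} is the natural setting. The formulas \eqref{EXP5fix} and \eqref{EXR5fix} display $X_P$ and $X_R$ as a purely imaginary scalar times a real matrix, while \eqref{EXQ5fix} shows $X_Q$ is already real. Consequently $X_P^{\pm 2}$ is real, so $u(X_P^{2} - X_P^{-2})$ is real; $2X_Q + \mathrm{Id}$ is real; and $\alpha X_R$ is real because $\alpha \in i\RR$ cancels the imaginary scalar in $X_R$. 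Thus each $\gamma_z$ lies in $\mathrm{GL}_2(\RR) \subset \CC^*\mathrm{GL}_2(\RR) = \mathrm{GU}_2(\RR)$ in the isotropic basis, which, combined with entries in $\mathcal{O}_F$ in the standard basis, places $\gamma_z$ in $\mathrm{GU}_2(\mathcal{O}_{F_0})$.

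For positivity of the isotropic-basis determinant, the identity $\det X = \det M$ extracted from Lemma~\ref{Lmatrix} reduces the computation to the standard basis: one finds $\det \gamma_P = \det \gamma_R = -\alpha^{2} = (5+\sqrt 5)/2 > 0$ and $\det \gamma_Q = -s_\circ^{2} - 4u = -\alpha^{2}u^{2} - 4u$, which simplifies to $3 > 0$ using $u^{2} = u+1$ together with $\alpha^{2} = \epsilon^{2} - 4$ and $\epsilon u = 1$. The main obstacle throughout is the careful tracking of reality and signs through the change to the isotropic basis; no essential new idea is required beyond the explicit input of Proposition~\ref{PdefXmatrixPQR}.
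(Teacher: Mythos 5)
Your proof is correct, and the formulas $\gamma_Q = 2A_Q + \mathrm{Id}$ and $\gamma_R = \alpha A_R$ are exactly the paper's own construction (take $\gamma = 2A - \mathrm{tr}(A)\mathrm{Id}$ when the trace is nonzero, then scale into $\GL_2(\RR)^+$). Two points differ. First, for $\gamma_P$ the paper computes $(\alpha/\epsilon)(2A_P - \alpha\,\mathrm{Id}) = (\alpha/\epsilon)\epsilon\,\mathrm{diag}(-1,1) = \mathrm{diag}(-\alpha,\alpha)$, while you use $u(A_P^2 - A_P^{-2})$; both land on the same matrix, but note your stabilizer argument (``linear fractional action depends linearly on matrix entries'') is loosely phrased — the correct justification is that $\mathrm{Stab}(\tilde z)$ is an $\RR$-linear subalgebra of $M_2(\RR)$ (the commutant of the complex structure determined by $\tilde z$), so $\RR$-linear combinations of stabilizing matrices stabilize. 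Second, for the assertion that $\gamma_P,\gamma_Q,\gamma_R \in \GU_2(\cO_{F_0})\cap\GL_2(\RR)^+$, the paper simply cites Lemma~\ref{detgamma}, which is terse (that lemma presupposes rather than delivers the fact at hand); your argument — reading off from \eqref{EXP5fix}--\eqref{EXR5fix} that each $X_z$ is an imaginary or real scalar times a real matrix, observing that your specific combinations return the isotropic-basis representative to $\GL_2(\RR)$, and then computing $\det\gamma_P = \det\gamma_R = -\alpha^2 = \sqrt{5}u > 0$ and $\det\gamma_Q = 3 > 0$ in agreement with \eqref{Edeterminantgamma} — is more explicit and self-contained. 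The identity $\det X = \det M$ you invoke is just conjugation-invariance of the determinant and is indeed implicit in Lemma~\ref{Lmatrix}.
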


\begin{proof}
For any $z\in\bH$, if $A\in\CC^* \cdot\mathrm{Stab}(z)$ with $\mathrm{tr}(A) \not = 0$, 
then $\gamma=2A-\mathrm{tr}(A)\mathrm{Id}\in \CC^* \cdot \mathrm{Stab}(z)$,  
and $\mathrm{tr}(\gamma)=0$. 

From Definition~\ref{DdefmatrixA} and Proposition~\ref{DeltaPQR}, we compute that
$\mathrm{tr} (A_P)= \alpha$.  Thus we compute $\gamma_P= 2A_P-\alpha\mathrm{Id}=
\epsilon \cdot \mathrm{Diag}(-1,1)$.
This has determinant $-\epsilon^2<0$; we obtain the given representative for $\gamma_P$ in 
$\GU_2(\RR)^+$ by scaling the above matrix by $\alpha/\epsilon \in \CC^*$.


Similarly, $\mathrm{tr}(A_Q) = -1$ and we compute 
$\gamma_Q = 2A_Q-\mathrm{tr}(A_Q)\mathrm{Id}$, which has positive determinant $3$.
Note that $\mathrm{tr}(A_R)=0$ and $\mathrm{det}(A_R) = -1$.
We obtain the given representative for $\gamma_R$ in $\GU_2(\RR)^+$ by scaling $A_R$ by $\alpha \in \CC^*$.

The last assertion follows from Lemma \ref{detgamma}.
\end{proof}

Write $u=(1+\sqrt{5})/2$.
By Lemma~\ref{gammaPQR_lem}:
\begin{eqnarray} \label{Edeterminantgamma}
\det(\gamma_P) & = & -s_\circ^2\epsilon^2 = - \alpha^2 = \sqrt{5} u >0, \\ 
\det(\gamma_Q) & = & -(s_\circ^2+4/\epsilon)=3>0, \mbox{ and } \\
\det(\gamma_R) & = & -s_\circ^2 + \alpha^2/\epsilon= \sqrt{5} u  >0.
\end{eqnarray}

\subsection{Computation of quadratic forms} \label{Scomputequadraticform}

We find the geodesics that determine the edges of the fundamental triangle.
Recall, for a pair $\gamma_1,\gamma_2$ of $\gamma_P,\gamma_Q,\gamma_R$, that 
\begin{equation} \label{Erepeatdet}
q_{1,2}(x,y)=\det (x\gamma_1 + y \gamma_2).
\end{equation}

\begin{corollary} \label{Lquadm5sec5}
Let $u = (1+\sqrt{5})/2$.  
The three quadratic forms for $M[11]$ are:
\begin{enumerate}
\item $q_{Q,R}(x,y)= 
3 x^2  - 2 \sqrt{5} u xy  + \sqrt{5} u y^2$,
with discriminant $\Delta_{Q,R}= 
4\sqrt{5}$.

\item $q_{Q,P}(x,y)= 
3 x^2 + 2\sqrt{5} u^2 xy +  \sqrt{5} u y^2$, with discriminant 
$\Delta_{P,Q}=16\sqrt{5} u^2$.

\item 
$q_{P,R}(x,y) = \sqrt{5}u(x^2 - 2uxy + y^2)$, with discriminant 
$\Delta_{P,R} = 20u^3$.  
\end{enumerate}
\end{corollary}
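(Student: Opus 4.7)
The strategy is direct computation, but it is worth isolating one structural observation that converts the three determinants into a small set of trace computations. By Lemma~\ref{Lstabz}, each of $\gamma_P,\gamma_Q,\gamma_R$ has trace zero, so every linear combination $x\gamma_1+y\gamma_2$ is also trace zero. For any trace-zero $2\times 2$ matrix $M$ the Cayley--Hamilton identity specializes to $M^2=-\det(M)\,\mathbb{I}$, hence $\det(M)=-\tfrac12\Tr(M^2)$. Applying this to $M=x\gamma_1+y\gamma_2$ and using the symmetry of $\Tr$ under transposition of factors, we obtain the clean identity
\begin{equation*}
q_{1,2}(x,y)=\det(x\gamma_1+y\gamma_2)=\det(\gamma_1)\,x^{2}-\Tr(\gamma_1\gamma_2)\,xy+\det(\gamma_2)\,y^{2}.
\end{equation*}
The coefficients of $x^{2}$ and $y^{2}$ are already recorded in \eqref{Edeterminantgamma}, so the entire content of the corollary reduces to computing the three cross terms $\Tr(\gamma_Q\gamma_R)$, $\Tr(\gamma_Q\gamma_P)$, $\Tr(\gamma_P\gamma_R)$, and then expressing the discriminants.

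First I would compute the three traces from Lemma~\ref{gammaPQR_lem}. Using $s_\circ=\alpha/\epsilon$ and $\zeta-\zeta^{-1}=\alpha$, a direct expansion gives
\begin{equation*}
\Tr(\gamma_Q\gamma_R)=-2s_\circ^{2}+\tfrac{2\alpha(\zeta-\zeta^{-1})}{\epsilon}=-2\alpha^{2}/\epsilon^{2}+2\alpha^{2}/\epsilon,\qquad \Tr(\gamma_P\gamma_R)=\Tr(\gamma_Q\gamma_P)=-2\alpha^{2}/\epsilon;
\end{equation*}
the last two coincide because $\gamma_P$ is diagonal with eigenvalues $\pm\alpha$ and both $\gamma_Q$ and $\gamma_R$ have the same diagonal entries $\mp s_\circ$ up to sign.

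Next I would translate these expressions into $\QQ(\sqrt{5})$ using Lemma~\ref{Lepsilonfacts}, specifically $\alpha^{2}=\epsilon^{2}-4=-(5+\sqrt{5})/2$ and $1/\epsilon=u=(1+\sqrt{5})/2$, together with the relation $u^{2}=u+1$. For instance $-2\alpha^{2}/\epsilon=(5+\sqrt{5})u=5+3\sqrt{5}=2\sqrt{5}\,u^{2}$, giving the $xy$-coefficient $-2\sqrt{5}\,u^{2}$ of $q_{Q,P}$ and $q_{P,R}$; and $-2\alpha^{2}/\epsilon^{2}+2\alpha^{2}/\epsilon$ reduces similarly to $2\sqrt{5}\,u$, accounting for $q_{Q,R}$. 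Combining with $\det(\gamma_P)=\det(\gamma_R)=\sqrt{5}\,u$ and $\det(\gamma_Q)=3$ yields the three quadratic forms as stated.

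Finally, I would compute the discriminants $\Delta=B^{2}-4AC$ using $u^{2}=u+1$, $u^{3}=2u+1$, $u^{4}=3u+2$. For $\Delta_{Q,R}$ one gets $20u^{2}-12\sqrt{5}\,u=2u(5-\sqrt{5})(1)=(1+\sqrt{5})(5-\sqrt{5})\cdot 2=4\sqrt{5}$; for $\Delta_{P,Q}$ one gets $20u^{4}-12\sqrt{5}\,u=40+24\sqrt{5}=16\sqrt{5}\,u^{2}$; and for $\Delta_{P,R}$ one gets $20u^{2}(u^{2}-1)=20u^{3}$. There is no genuine obstacle beyond keeping track of these $\QQ(\sqrt{5})$ identities; the main pitfall is simply ensuring that each simplification is expressed in the same basis (either $\{1,\sqrt{5}\}$ or $\{1,u\}$) before comparing with the stated form.
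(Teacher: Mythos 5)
Your strategy — expanding $\det(x\gamma_1+y\gamma_2)$ into a quadratic form whose extreme coefficients are $\det(\gamma_1)$, $\det(\gamma_2)$ — is exactly the paper's. Your added observation that, when $\Tr(\gamma_1)=0$, the cross term can be packaged as $-\Tr(\gamma_1\gamma_2)$ (because then $\gamma_1^{\mathrm{adj}}=-\gamma_1$) is correct and slightly cleaner than writing out the entries; the paper instead records the cross term directly from the matrix entries, e.g.\ as $-2s_\circ\alpha$ for $q_{Q,P}$.

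There is, however, a genuine sign error in your trace computation. You assert $\Tr(\gamma_P\gamma_R)=\Tr(\gamma_Q\gamma_P)$ on the grounds that $\gamma_Q$ and $\gamma_R$ ``have the same diagonal entries $\mp s_\circ$ up to sign.'' But that global sign is exactly the problem: $\gamma_Q$ has diagonal $(-s_\circ,\,s_\circ)$ while $\gamma_R$ has diagonal $(s_\circ,\,-s_\circ)$, and since $\gamma_P=\operatorname{diag}(-\alpha,\alpha)$ is itself diagonal and trace-zero, these produce \emph{opposite} traces:
\[
\Tr(\gamma_Q\gamma_P)=2s_\circ\alpha=2\alpha^2/\epsilon=-2\sqrt{5}\,u^2,
\qquad
\Tr(\gamma_P\gamma_R)=-2s_\circ\alpha=-2\alpha^2/\epsilon=+2\sqrt{5}\,u^2.
\]
With your (wrong) sign you would get cross term $-\Tr(\gamma_Q\gamma_P)xy=-2\sqrt{5}u^2\,xy$ for $q_{Q,P}$, contradicting the stated $+2\sqrt{5}u^2\,xy$ (compare the paper's \eqref{EqQP}, whose cross term is $-2s_\circ\alpha\cdot xy = +2\sqrt{5}u^2\,xy$). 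Your discriminant calculations happen to survive the error since $\Delta=B^2-4AC$ is insensitive to the sign of $B$, but the quadratic form $q_{Q,P}$ itself is off by a sign in the middle coefficient; you should just compute $\Tr(\gamma_Q\gamma_P)$ and $\Tr(\gamma_P\gamma_R)$ separately rather than invoking the (false) equality.
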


\begin{proof} This follows from Lemma~\ref{gammaPQR_lem} and \eqref{Edeterminantgamma} - \eqref{Erepeatdet}.
For example:
\begin{equation} \label{EqQP}
q_{Q,P}(x,y)  =  \mathrm{det}(x \gamma_Q + y \gamma_P)
=\mathrm{det}(\gamma_Q) x^2 - 2xy( s_\circ\alpha) + \mathrm{det}(\gamma_P) y^2; \text{ and}
\end{equation}
\begin{equation} \label{EqQR}
q_{Q,R}(x,y) = \mathrm{det}(x \gamma_Q + y \gamma_R)
=\mathrm{det}(\gamma_Q) x^2 + 2xy( s_\circ^2 - \alpha s_\circ) + \mathrm{det}(\gamma_R) y^2. \qedhere
\end{equation}
\end{proof}


\begin{remark}\label{notPR}
The quadratic form $q_{P,R}(x,y)$ is not primitive, and 
we do not use it in later sections.\footnote{The reason $q_{P,R}$ is not primitive that we scaled by elements in $(\alpha)\subset \cO_F$ to obtain $\gamma_P,\gamma_R\in \GL_2(\RR)^+$. One can obtain statements analogous to Lemma~\ref{detgamma} and Proposition~\ref{prop:QuadraticFormToCMpoints} by working with elements in $i\GL_2(\RR)^+$; then we do not need to scale $2A_P-\alpha\mathrm{Id}$ and $A_R$, and can work with a primitive quadratic form.}
In particular, if $\lambda\in {\mathcal O}_{F_0}$ is a totally positive irreducible element, 
$\lambda\not \in \langle \sqrt{5} \rangle$,
then $\lambda$ is not represented by $q_{P,R}$.  
By Remark~\ref{CMonlyif}, $G_{PR}$ does not contain special points with complex multiplication by $\mathcal{O}_F[\sqrt{-\lambda}]$. 
\end{remark}

The quadratic form $q_{Q,R}$ is fundamental, in the sense of 
Zemkova \cite{zemkova}.
The quadratic form $q_{Q,P}$ is not fundamental, 
because of the power of $2$ dividing $\Delta_{P,Q}$.
We change variables to write $q_{Q,P}$ in a more simple form.

\begin{lemma} \label{solmod2QP}
Suppose $x,y \in F_0$.  
Write $x_1=2x$, $y_1 =2y$, and $d_1 = y+ux$.
\begin{enumerate}
\item The matrix $x\gamma_Q + y \gamma_P$ is in 
${\mathrm{GU}}_2({\mathcal O}_{F_0})$ if and only if 
$x_1,y_1 \in {\mathcal O}_{F_0}$ and 
$y_1 \equiv -ux_1 \bmod 2{\mathcal O}_{F_0}$, 
which is equivalent to $x_1, d_1 \in \mathcal{O}_{F_0}$.
\item With respect to this change of variables,
\begin{equation} \label{EdiagqQP}
    q_{Q,P}(x,y) = -u (x_1^2 - \sqrt{5}d_1^2). 
\end{equation}
\item Then $q_{Q,P}(x,y) \equiv -1 \bmod 4\mathcal{O}_{F_0}$ 
if and only if either 
(a) $(x_1,d_1) =(0,u) \bmod 2\mathcal{O}_{F_0}$ or 
(b) $(x_1,d_1) = (u^2,1) \bmod 2\mathcal{O}_{F_0}$.
\item The entries of ${\mathrm{Id}}+x\gamma_Q + y \gamma_P$ are all zero modulo $2\mathcal{O}_{F}$ in case (a) and are not all zero 
modulo $2\mathcal{O}_{F}$ in case (b).
\end{enumerate}
\end{lemma}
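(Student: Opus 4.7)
The plan is to derive a compact matrix expression for $x\gamma_Q + y\gamma_P$ and then read off each of the four claims in turn. First I would compute, using $s_\circ = \alpha u$, $2/\epsilon = 2u$, and $xs_\circ + y\alpha = \alpha(ux+y) = \alpha d_1$, that
\[
x\gamma_Q + y\gamma_P \;=\; \begin{bmatrix} -\alpha d_1 & x_1 \\ x_1 u & \alpha d_1 \end{bmatrix}.
\]
This single identity drives the entire argument.

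For part (1), Lemma~\ref{detgamma} already places $x\gamma_Q + y\gamma_P$ inside $\GU_2(F_0)$ for $x,y \in F_0$, so the only remaining issue is integrality. Since $u \in \cO_{F_0}^\times$, the entries $x_1$ and $x_1 u$ lie in $\cO_F$ iff $x_1 \in \cO_{F_0}$. For the $\pm \alpha d_1$ entries, I would use $\alpha^2 = -\sqrt{5}\,u$ together with $N_{F/\QQ}(\alpha) = 5$ to see that $(\alpha)$ is the unique prime of $\cO_F$ above $5$, totally ramified of index $2$ over $(\sqrt 5) \subset \cO_{F_0}$; a local check at $(\sqrt 5)$, together with the observation that $\alpha$ is a unit at every other prime, gives $\alpha d_1 \in \cO_F$ if and only if $d_1 \in \cO_{F_0}$, for any $d_1 \in F_0$. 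Substituting $d_1 = (y_1 + u x_1)/2$ then yields the claimed congruence $y_1 \equiv -u x_1 \bmod 2\cO_{F_0}$. Part (2) is the immediate determinant computation $\det = -\alpha^2 d_1^2 - u x_1^2 = -u(x_1^2 - \sqrt 5\, d_1^2)$.

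The heart of the proof is part (3). Multiplying by $\epsilon = u^{-1}$, the congruence $q_{Q,P}(x,y) \equiv -1 \bmod 4\cO_{F_0}$ becomes $x_1^2 - \sqrt 5\, d_1^2 \equiv \epsilon \equiv u + 3 \bmod 4\cO_{F_0}$. Because $a^2 \bmod 4\cO_{F_0}$ depends only on $a \bmod 2\cO_{F_0}$, and the four residues $0,1,u,1+u$ square to $0,1,1+u,2+3u$ respectively (using $u^2 = u+1$), I would tabulate $x_1^2 - \sqrt 5\, d_1^2$ across the sixteen pairs $(x_1, d_1) \bmod 2$, writing $\sqrt 5 = 2u-1$. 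The outcome is that exactly the two pairs $(x_1,d_1) \equiv (0,u)$ and $(x_1,d_1) \equiv (1+u,1) = (u^2,1)$ produce $u+3$. This enumeration is routine but is the main computational step.

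For part (4), the matrix $\mathrm{Id} + x\gamma_Q + y\gamma_P$ has off-diagonal entries $x_1,\, x_1 u$ and diagonal entries $1 \mp \alpha d_1$. In case (a), $x_1 \in 2\cO_{F_0}$ immediately forces the off-diagonal entries into $2\cO_F$; for the diagonal, I would expand $\alpha u = \zeta + \zeta^2 - \zeta^3 - \zeta^4$ and use $1 + \zeta + \zeta^2 + \zeta^3 + \zeta^4 = 0$ to verify $1 \pm \alpha u \equiv 0 \bmod 2\cO_F$, which together with $d_1 \equiv u \bmod 2\cO_{F_0}$ gives $1 \pm \alpha d_1 \equiv 0 \bmod 2\cO_F$. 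In case (b), $x_1 \equiv 1+u = u^2 \bmod 2\cO_{F_0}$, and since $u^2 \in \cO_F^\times$ cannot lie in the proper ideal $2\cO_F$, the entry $x_1$ is nonzero modulo $2\cO_F$, so not every entry vanishes. The main obstacle is the case analysis in part (3); everything else follows mechanically from the matrix formula.
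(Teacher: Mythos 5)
Your proposal is correct and follows essentially the same route as the paper: derive the explicit matrix form $\begin{bmatrix} -\alpha d_1 & x_1 \\ u x_1 & \alpha d_1 \end{bmatrix}$, read off the determinant, enumerate residues of $(x_1,d_1)$ modulo $2\cO_{F_0}$, and check the diagonal/off-diagonal entries mod $2\cO_F$. The one noticeable variation is in part (1), where you justify $\alpha d_1 \in \cO_F \iff d_1 \in \cO_{F_0}$ directly from the ramification of $(\alpha)$ over $(\sqrt{5})$ plus the parity of $F_0$-valuations, whereas the paper reduces to a check at $2$ by citing the primes dividing the discriminant of $q_{Q,P}$; both are valid, and your version is a bit more self-contained.
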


\begin{proof}
\begin{enumerate}
\item Since $x,y \in F_0$, these matrices are in 
${\mathrm{GU}}({\mathcal O}_{F_0})$
if and only if their entries are in $\mathcal{O}_F$.
By Corollary~\ref{Lquadm5sec5}, $2$ and $\sqrt{5}$ are the only primes dividing the discriminant of $q_{Q,P}(x,y)$,   
and the multiplicity of $\sqrt{5}$ in the discriminant is odd.
Thus it suffices to check integrality at $2$. 
The coefficients of $x \gamma_Q + y \gamma_P$
are $\pm \alpha(ux+y)$, $2x$, and $2xu$.  Thus the integrality
condition is equivalent to $2x \in {\mathcal O}_{F_0}$
and $ux+y \in {\mathcal O}_{F_0}$.

\item We compute that
    \begin{eqnarray*}
        q_{Q,P}(x,y) & = & 3x^2 + 2\sqrt{5}u^2 xy + \sqrt{5}uy^2 \\
        & = & 
        \left(x_1^2(3-\sqrt{5}u^3) + x_1d_1(0) + d_1^2(4\sqrt{5}u)\right)/4\\
        & = & -u(x_1^2 - \sqrt{5} d_1^2).
    \end{eqnarray*}
    \item Note that $\Omega = \{0,1,u,u^2\}$ is a set of representatives
for the cosets of $\cO_{F_0}$ modulo $2\cO_{F_0}$.
By part (2), the congruence of 
$q_{P, Q}(x,y) \bmod 4\cO_{F_0}$ is determined by the 
congruences of $x_1$ and $d_1$ modulo $2\cO_{F_0}$.
We check the 16 pairs $(x_1,d_1)$ with $x_1,d_1 \in \Omega$
to determine if $q_{P, Q}(x,y) \equiv -1 \bmod 4\cO_{F_0}$, 
leading to the 2 listed pairs.

\item 
Note that 
$s_\circ = \alpha u = 2 \zeta^2 + 2\zeta + 1 \equiv 1 \bmod 2{\mathcal O}_{F_0}$.
So the entries of ${\mathrm{Id}}+x\gamma_Q + y \gamma_P$
are $0$ and $1 + x \pm y \alpha \bmod 2{\mathcal O}_{F_0}$.
It suffices to check the case $(x,y) = (0,u)$ in case~(a), 
and the case $(x,y) = (u^2/2, 1-u^3/2)$ in case~(b).
    \end{enumerate}
\end{proof}

\subsection{More information about the geodesic}

\begin{lemma}
The geodesic $G_{PQ}$ is the half circle centered at $0$ with radius $r:=\beta_0 (-i)$.  
\end{lemma}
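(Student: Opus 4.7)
The plan is to show that $\tilde{P}$ and $\tilde{Q}$ both lie on the Euclidean circle of radius $r$ centered at $0 \in \RR$, and that this circle meets $\RR$ orthogonally, so it coincides with the unique hyperbolic geodesic in $\bH$ through $\tilde{P}$ and $\tilde{Q}$.

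First, I would record that under the embedding $\sigma_1$, the element $\beta_0 = \sqrt{5}\,\alpha = \sqrt{5}(\zeta - \zeta^{-1})$ is purely imaginary with positive imaginary part (since $\alpha \mapsto 2i\sin(2\pi/5)$). Hence $\tilde{P} = \sigma_1(\beta_0)$ lies on the positive imaginary axis, so $|\tilde{P}| = -i\sigma_1(\beta_0) = r$ by the very definition of $r$.

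Next, I would verify $|\tilde{Q}| = r$ directly from the formula in Remark~\ref{RdefPQRtilde}. Since $\tilde{Q} = (5-\sqrt{5})\bigl(\tfrac{1}{\omega} + \tfrac{\sqrt{-3}}{2}\bigr)$ and the two displayed terms contribute orthogonally to the modulus, one gets
\[
|\tilde{Q}|^2 = (5-\sqrt{5})^2\Bigl(\tfrac{1}{\omega^2} + \tfrac{3}{4}\Bigr).
\]
Using $\omega^2 = \epsilon$ and $1/\epsilon = u = (1+\sqrt{5})/2$ (Lemma~\ref{Lepsilonfacts}), the right-hand side simplifies to $(25 + 5\sqrt{5})/2$. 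Separately, from $\beta_0 = \sqrt{5}\alpha$ together with $\alpha^2 = \epsilon^2 - 4$ and $\epsilon^2 = (3-\sqrt{5})/2$, one computes $\sigma_1(\beta_0^2) = 5\bigl((3-\sqrt{5})/2 - 4\bigr) = -(25+5\sqrt{5})/2$, so
\[
r^2 = -\sigma_1(\beta_0^2) = \tfrac{25+5\sqrt{5}}{2} = |\tilde{Q}|^2,
\]
as required.

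Finally, $\tilde{Q}$ has nonzero real part by Remark~\ref{RdefPQRtilde}, whereas $\tilde{P}$ lies on the imaginary axis, so $\tilde{P}$ and $\tilde{Q}$ are distinct and do not share a vertical line. The unique hyperbolic geodesic in $\bH$ through them is therefore a Euclidean half-circle centered on $\RR$. Since the half-circle $\{z \in \bH : |z| = r\}$ is centered at $0 \in \RR$ and contains both $\tilde{P}$ and $\tilde{Q}$, it must be $G_{PQ}$. The only real work is the arithmetic check that $|\tilde{Q}|^2 = r^2$, which is a routine computation in $\QQ(\sqrt{5})$; no genuine obstacle arises.
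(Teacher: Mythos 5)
Your proof is correct and takes the same approach as the paper: check that $\tilde{Q}$ lies on the circle $|z|=r$ (and that $\tilde{P}$, being on the positive imaginary axis, does too), then invoke uniqueness of the geodesic through two points. The paper's own proof is the one-line remark that $\tilde{Q}$ is on the circle of radius $r$; you have simply written out the routine verification in $\QQ(\sqrt{5})$.
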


\begin{proof}
This is true because the point $\tilde{Q}$ is on the circle with radius $r$. 
\end{proof}



\begin{proposition} \label{PfixedpointQP}
Let $M_{x,y} = x \gamma_Q + y \gamma_P$ for $x,y \in \RR$ such that 
$\mathrm{det}(M_{x,y}) >0$.
Let $t=x_1/d_1 =2x/(y+ux)$.
Then the fixed point $z \in \bH$ of $M_{x,y}$ is
\begin{equation} \label{EfixedpointQP}
z=\frac{\beta_0}{\omega \alpha} 
\left(t + \sqrt{t^2 - \sqrt{5}}\right).
\end{equation}
\end{proposition}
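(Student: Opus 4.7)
The plan is to compute the fixed point directly. First, by Lemma~\ref{gammaPQR_lem}, both $\gamma_Q$ and $\gamma_P$ have trace zero, so $M_{x,y} = x\gamma_Q + y\gamma_P$ also has trace zero and, in standard coordinates, equals
$$M_{x,y} = \begin{bmatrix} -(xs_\circ + y\alpha) & 2x \\ 2x/\epsilon & xs_\circ + y\alpha \end{bmatrix}.$$
Since $\omega^2 = \epsilon$, I compute $\omega^2 c = \epsilon \cdot (2x/\epsilon) = 2x = b$ and hence, in the notation of Notation~\ref{Nrsjk}, $r=0$, $j = \omega^2 c + b = 4x = 2x_1$, and $k = \omega^2 c - b = 0$. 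Using $s_\circ = \alpha u = \alpha/\epsilon$, I also get $s = d - a = 2(xs_\circ + y\alpha) = 2\alpha(ux + y) = 2\alpha d_1$.

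Transforming to the isotropic basis via Lemma~\ref{Lmatrix} and applying Lemma~\ref{action-on-H}(2), the fixed-point equation simplifies (using $k = 0$, $r = 0$) to
$$\frac{\beta_0^{-1} s}{2}\,z^2 - \frac{j}{\omega}\,z - \frac{\beta_0 s}{2} = 0.$$
The quadratic formula yields $z = (\beta_0/\omega s)\bigl(j \pm \sqrt{j^{\,2} + \omega^2 s^2}\bigr)$. Substituting $j = 2x_1$ and $s = 2\alpha d_1$ and pulling $d_1$ out of the square root gives
$$z = \frac{\beta_0}{\omega\alpha}\Bigl(t \pm \sqrt{t^2 + \omega^2\alpha^2}\Bigr).$$
Finally, I identify $\omega^2\alpha^2$: by Lemma~\ref{Lepsilonfacts}, $\omega^2 = \epsilon$ and $\alpha^2 = \epsilon^2 - 4$, and a direct computation with $\epsilon = (-1+\sqrt 5)/2$ gives $\omega^2\alpha^2 = \epsilon(\epsilon^2-4) = -\sqrt 5$. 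This produces the stated formula. The $+$ branch is selected by the condition $z \in \bH$, which is equivalent to $\det(M_{x,y}) > 0$ by Lemma~\ref{detgamma}; concretely, $\beta_0/(\omega\alpha) = \sqrt{5}/\omega$ is a positive real under $\tau_1$, so taking the principal branch of $\sqrt{t^2 - \sqrt 5}$ when $t^2 < \sqrt 5$ yields a point in the upper half-plane.

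There is no significant obstacle — the argument is pure algebraic bookkeeping. The only non-obvious ingredient is the identity $\omega^2\alpha^2 = -\sqrt 5$, which is essentially already encoded in the relation $\omega^2(3r_\circ^2 + s_\circ^2) = -4$ recorded just before Proposition~\ref{PdefXmatrixPQR}, and which is what makes the clean expression $\sqrt{t^2 - \sqrt 5}$ appear.
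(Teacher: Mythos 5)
Your proof is correct and takes essentially the same route as the paper: write $M_{x,y}$ in terms of $(x_1, d_1)$, pass to isotropic coordinates via Lemma~\ref{Lmatrix} (observing $r=k=0$, $j=2x_1$, $s=2\alpha d_1$), solve the resulting quadratic, and simplify using $\omega^2\alpha^2 = \epsilon(\epsilon^2-4) = -\sqrt{5}$. The only quibble is a citation slip --- the identity $\omega^2 = \epsilon$ comes from the unnumbered lemma preceding Proposition~\ref{PdefXmatrixPQR}, not from Lemma~\ref{Lepsilonfacts} --- and your explicit discussion of branch choice and positivity of $\beta_0/(\omega\alpha)$ spells out what the paper compresses into ``the quadratic formula implies \eqref{EfixedpointQP}.''
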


Note that $\mathrm{det}(M_{x,y}) >0$ if and only if 
$t^2 < \sqrt{5}$.
Note that $\beta_0/(\alpha \omega)\in \RR^+$.

\begin{proof}
In terms of the coordinates $x_1$ and $d_1$, then 
\begin{equation} \label{EparametrizeGQPmatrix}
M_{x,y} =
\begin{bmatrix} -\alpha (ux+y) & 2x \\
2xu &\alpha (ux+y) \end{bmatrix} =
\begin{bmatrix} -\alpha d_1 & x_1 \\
ux_1 &\alpha d_1 \end{bmatrix}.
\end{equation}
Note that $\alpha^2/u = - \sqrt{5}$.
By Lemma~\ref{Lmatrix},
$M_{x,y}$ is given in isotropic coordinates by
\[X_{x,y} = \frac{1}{2\omega} 
\begin{bmatrix} 2x_1 & \beta_0(2 \omega \alpha d_1) \\
\beta_0^{-1}(2 \omega \alpha d_1) & -2x_1 \end{bmatrix}.\]
The fixed point $z$ is the root in $\bH$ of 
$f_{x,y} = \beta_0^{-1} (2 \omega \alpha d_1)z^2
-4x_1 z - \beta_0 (2 \omega \alpha d_1)$.
The quadratic formula implies \eqref{EfixedpointQP}. 

\end{proof}



\section{Existence of real CM points}  \label{SrealCM}

In this section, for $\Sh=M[11]$, 
we identify totally positive irreducible elements $\lambda\in \mathcal{O}_{F_0}$ 
for which there exist two points of $\Sh(\RR)$, 
one having complex multiplication by $\mathcal{O}_F[\sqrt{-\lambda}]$ and the other having complex multiplication 
by ${\mathcal O}_E$, where $E=F(\sqrt{-\lambda})$.
Furthermore, we show the existence of (infinitely many)
such $\lambda$ satisfying the congruence conditions 
that guarantee the uniqueness of such points, by Theorem~\ref{Tuniquecong}. 

From Remark~\ref{RdefPQRtilde}, recall that 
$\tilde{P}$, $\tilde{Q}$, $\tilde{R}$ are the vertices of the chosen fundamental triangle $\mathfrak T$ for the action of 
$\Delta$ on $\bH$. 
They are the fixed points of the matrices 
$X_P$, $X_Q$, and $X_R$ from Proposition~\ref{PdefXmatrixPQR}, 
and their images under $\pi$ in $\Sh(\QQ)$ 
are $P$, $Q$, and $R$ respectively.
Let $G_{QP}$, $G_{QR}$, and $G_{PR}$ denote the
three geodesics in $\bH$ that form the edges of $\mathfrak T$.
By Proposition~\ref{edges}, the images under $\pi$ of these geodesics cover $\Sh(\RR)$.
We focus on the geodesic 
$G_{QP}$ which contains $\tilde{Q}$ and $\tilde{P}$.

\subsection{Geodesics covering two arches of $\Sh$}

\begin{notation}
There is a continuous map between $\Sh(\RR)$ and a circle.  
Then $\Sh(\RR) - \{P,R\}$ has two connected components,
$C_1$ and $C_2$, where $C_1$ contains $Q$.
We define two arches covering $\Sh(\RR)$, 
namely $\overset{\frown}{PQR} = C_1 \cup \{P, R\}$ 
and $\overset{\frown}{PR} = C_2 \cup \{P,R\}$.
Let $V = \pi^{-1}\{P,Q,R\}$.
\end{notation}

\begin{lemma}\label{geopartition}
The restriction of $\pi: \mathbb{H} \to \Sh(\mathbb{C})$
to the geodesic $G_{QP}$ (resp.\ $G_{QR}$) maps onto the arch $\overset{\frown}{PQR}$ in $\Sh(\RR)$.
The restriction of $\pi$ to $G_{PR}$ maps onto the arch $\overset{\frown}{PR}$ instead.
\end{lemma}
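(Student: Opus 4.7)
The plan is to identify $\Sh(\RR)$ topologically with a circle cut into three open arcs $\alpha, \beta, \gamma$ by $P, Q, R$, where $\alpha$ joins $P$ to $Q$ avoiding $R$, $\beta$ joins $Q$ to $R$ avoiding $P$, and $\gamma$ joins $P$ to $R$ avoiding $Q$. Then $\overset{\frown}{PQR} = \overline\alpha \cup \overline\beta$ and $\overset{\frown}{PR} = \overline\gamma$, so the task reduces to identifying which labels $(P,Q), (Q,R), (P,R)$ appear among the edges of the triangulation that constitute each full geodesic. A preliminary observation is that the three sides of the fundamental triangle $\mathfrak T$ map under $\pi$ to these three arcs respectively: the interior of each side is disjoint from the other $\Delta$-translates of $\tilde P, \tilde Q, \tilde R$ (else $\mathfrak T$ would not be fundamental), so $\pi$ sends each open side into $\Sh(\RR) \setminus \{P, Q, R\}$; connectedness and the endpoint values then pin down the image. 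Furthermore, $\mathrm{Stab}_\Delta(V)$ acts transitively on the edges of each label at each vertex $V \in \pi^{-1}(\{P, Q, R\})$, so there is exactly one $\Delta$-orbit of edges for each label in the entire triangulation of Proposition~\ref{edges}, and each orbit maps under $\pi$ to the corresponding closed arc.

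The main step is to read off the edge-labels along each full geodesic. At a vertex $V$ whose $\Delta$-stabilizer has order $n \in \{2, 3, 10\}$, the $2n$ incident edges are equally spaced at angles $\pi/n$ apart and alternate between the two edge-labels incident at $V$. A geodesic through $V$ enters along one edge and exits along the edge at angle $\pi$, hence $n$ positions away in the cyclic order; by the alternating pattern, the edge-label is preserved when $n$ is even and swapped when $n$ is odd. At $\tilde P$- and $\tilde R$-type vertices the label is preserved ($n = 10, 2$), while at $\tilde Q$-type vertices the label is swapped between $(Q, P)$ and $(Q, R)$ ($n = 3$). Starting from $[\tilde P, \tilde Q]$ and iterating, the edges of $G_{QP}$ cycle through labels $(P, Q), (Q, R), (R, Q), (Q, P), \ldots$ with period four, so only the labels $(P, Q)$ and $(Q, R)$ occur; hence $\pi(G_{QP}) = \overline\alpha \cup \overline\beta = \overset{\frown}{PQR}$. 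A symmetric trace starting from $[\tilde Q, \tilde R]$ yields $\pi(G_{QR}) = \overset{\frown}{PQR}$, while $G_{PR}$ passes only through $\tilde P$- and $\tilde R$-type vertices (both preserving the label), so every edge carries label $(P, R)$ and $\pi(G_{PR}) = \overline\gamma = \overset{\frown}{PR}$.

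The main obstacle is the parity argument for geodesics crossing the odd-order vertex $\tilde Q$, which is what breaks the symmetry and separates the three geodesics into two different arch-images; the rest of the proof is formal bookkeeping about $\Delta$-orbits of edges and the combinatorics of the $(2,3,10)$-tiling. As an alternative confirmation of the behavior at each vertex, one could compute the action of the generating matrices $A_P, A_Q, A_R$ from Definition~\ref{DdefmatrixA} directly on a representative edge and verify the label-preservation or -swap explicitly.
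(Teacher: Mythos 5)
Your proposal is correct and follows essentially the same approach as the paper: both arguments exploit the fact that at each vertex of the triangulation the $2n$ incident edges alternate between the two labels, and that a geodesic exits at the diametrically opposite edge (the one $n$ positions away), so the label is preserved when $n$ is even (at $\tilde P$, $\tilde R$) and swapped when $n$ is odd (at $\tilde Q$). Your extraction of this into an explicit parity rule and the resulting period-4 tracing along $G_{QP}$ is a cleaner, more systematic formulation of the same local-to-global argument the paper carries out vertex by vertex, but there is no genuine difference in method.
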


\begin{proof}
Consider the geodesic $G_{QP}$ containing $\tilde{P}$ and $\tilde{Q}$.
Let $z \in V \cap G_{QP}$.
Let $z_l$ (resp.\ $z_r$) be the point on $G_{QP}$ to the left (resp.\ right) 
of $z$, which is the closest point to $z$ in $V$.

Suppose $\pi(z)=Q$.   
The number of geodesics in $\pi^{-1}(\Sh(\RR))$ passing through $z$ equals the 
order of $A_Q$ (which is $3$).  There are 6 hyperbolic edges emanating from $z$
and the points in $V$ closest to $z$ on these edges 
alternate between pre-images of $P$ and $R$.  So the two of these 
points on $G_{QP}$ satisfy $\pi(z_l) = P$ and $\pi(z_r) = R$ or vice-versa.

Suppose $\pi(z)=P$ (resp.\ $\pi(z)=R$). 
The number of geodesics in $\pi^{-1}(\Sh(\RR))$ passing through $z$ 
equals $10$ (resp.\ $2$).
The points in $V$ closest to $z$ on these edges alternate between pre-images of $Q$ and $R$ (resp.\ $Q$ and $P$).
So the two of these 
points on $G_{QP}$ satisfy $\pi(z_l) = \pi(z_r) = Q$. 

Thus $G_{QP}$ is the union of pre-images of the arch $\overset{\frown}{PQR}$.
Similar arguments apply for the geodesic $G_{QR}$.
In contrast, $G_{PR}$ is the union of preimages of the arch $\overset{\frown}{PR}$.
\end{proof}

\begin{notation}
Define $R_1$ (resp.\ $Q_1$), (resp.\ $P_1$) 
to be the point in $\bH$ fixed by 
$A_{R_1}:=A_Q^{-1} \gamma_R A_Q$, 
(resp.\ $A_{Q_1}:=(A_Q^{-1} A_R A_Q) \gamma_Q (A_Q^{-1} A_R A_Q)^{-1}$), 
(resp.\ $A_{P_1}:= (A_Q^{-1} A_R A_Q)\gamma_P (A_Q^{-1} A_R A_Q)^{-1}$).
\end{notation}

\begin{lemma}
The points $R_1$, $Q_1$, and $P_1$ are on the geodesic $G_{QP}$, 
with the parameters $t_{R_1} = -u + 3$, $t_{Q_1} = (2u+4)/5$, and 
$t_{P_1} = (4u-2)/3$. 
They lie to the right of $\tilde{Q}$, and are the closest points on the right of $\tilde{Q}$ that lie above $R$, $Q$, and $P$ respectively.
\end{lemma}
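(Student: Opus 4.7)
The proof combines explicit computation with a combinatorial argument based on Lemma~\ref{geopartition}. The general fact to be used is: if $g \in \GL_2(\RR)^+$ and $h$ stabilizes $z_0 \in \bH$, then $g h g^{-1}$ stabilizes $g(z_0)$. Applied to the definitions of $A_{R_1}$, $A_{Q_1}$, $A_{P_1}$, this yields
\begin{align*}
R_1 &= A_Q^{-1}(\tilde{R}),\\
Q_1 &= A_Q^{-1} A_R A_Q(\tilde{Q}) \;=\; A_Q^{-1} A_R(\tilde{Q}),\\
P_1 &= A_Q^{-1} A_R A_Q(\tilde{P}),
\end{align*}
where for $Q_1$ we used that $A_Q$ fixes $\tilde{Q}$.

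My first step would be to compute the three points explicitly by composing the M\"obius transformations determined by $A_Q^{\pm 1}$ and $A_R$. Using the isotropic-coordinate forms $X_P, X_Q, X_R$ from Proposition~\ref{PdefXmatrixPQR}, the explicit expressions for $\tilde{P}, \tilde{Q}, \tilde{R}$ in Remark~\ref{RdefPQRtilde}, and the linear-fractional formula in Lemma~\ref{action-on-H}, the computation reduces to algebra in $\QQ(\sqrt{5})$ via the identities in Lemma~\ref{Lepsilonfacts}. To verify that each point lies on $G_{QP}$ with the claimed parameter, I would prefer to work at the level of stabilizers: exhibit $x, y \in F_0$ such that $A_{R_1}$ (respectively $A_{Q_1}, A_{P_1}$) is a non-zero scalar multiple of $x \gamma_Q + y \gamma_P$. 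By Lemma~\ref{detgamma}, this automatically places the fixed point on $G_{QP}$, and by Proposition~\ref{PfixedpointQP} the parameter is $t = 2x/(y + ux)$. The resulting $t$-values should simplify to $3-u$, $(2u+4)/5$, and $(4u-2)/3$; as a consistency check, each satisfies $t^2 < \sqrt{5}$, so the fixed points lie in the parametrized range.

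The statement that $R_1, Q_1, P_1$ are the closest preimages of $R, Q, P$ to the right of $\tilde{Q}$ follows from Lemma~\ref{geopartition} applied at $\tilde{Q}, R_1, Q_1$ in turn. At $\tilde{Q}$, three geodesics of $\pi^{-1}(\Sh(\RR))$ meet, giving six rays, and the six nearest $V$-neighbors along these rays alternate cyclically as $R, P, R, P, R, P$. The two rays of $G_{QP}$ at $\tilde{Q}$ are opposite in this cycle, hence of opposite types, so since $\tilde{P}$ is the nearest neighbor on the left, the nearest on the right is a preimage of $R$, namely $R_1$. At $R_1$ two geodesics meet, giving four rays with nearest $V$-neighbors alternating cyclically $Q, P, Q, P$; the two rays of $G_{QP}$ are opposite and hence of the same type, so since $\tilde{Q}$ is the left neighbor, the closest on the right is a preimage of $Q$, namely $Q_1$. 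The same argument at $Q_1$ identifies $P_1$ as the next $V$-neighbor. Finally, the inequalities
\[ \sqrt{5}-1 \;<\; 3-u \;<\; \tfrac{2u+4}{5} \;<\; \tfrac{4u-2}{3} \;<\; 5^{1/4} \]
confirm the rightward ordering on $G_{QP}$.

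The main obstacle is the bookkeeping in the matrix and M\"obius computations, in particular verifying that the conjugated stabilizers admit the desired presentation as combinations of $\gamma_Q$ and $\gamma_P$; no new ideas are required beyond those already in Sections~\ref{Scomplexunif} and~\ref{FundTr}.
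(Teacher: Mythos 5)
Your computational strategy matches the paper exactly: express each $A_{R_1}, A_{Q_1}, A_{P_1}$ (up to scalar) in the form \eqref{EparametrizeGQPmatrix} as $x\gamma_Q + y\gamma_P$, read off $x_1, d_1$, and obtain the parameter $t = x_1/d_1$ from Proposition~\ref{PfixedpointQP}; the values $3-u$, $(2u+4)/5$, $(4u-2)/3$ and the ordering on $G_{QP}$ are the paper's too.

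For the ``closest'' assertions you use the alternation structure of Lemma~\ref{geopartition} at $\tilde Q$, $R_1$, $Q_1$ in turn, whereas the paper instead computes that $A_Q$ acts as $e^{2\pi i/3}$ on the tangent space at $\tilde Q$. Both routes rest on the same underlying fact, but your write-up has a small logical gap: the alternation pins down only the \emph{type} of the nearest $V$-point on the right ray of $G_{QP}$ at $\tilde Q$ (a preimage of $R$), and the explicit computation shows $R_1$ is \emph{some} preimage of $R$ on that ray --- it does not by itself show $R_1$ is the nearest one, so ``namely $R_1$'' is unsupported as written (and the same issue propagates to $Q_1$ and $P_1$). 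The fix is one more observation that both you and the paper are implicitly using: $\tilde R$ is the nearest $V$-point to $\tilde Q$ on its ray of $G_{QR}$, and $A_Q^{-1}\in\Delta$ is an isometry of $\bH$ that fixes $\tilde Q$ and preserves both $V$ and $\pi^{-1}(\Sh(\RR))$; hence $R_1 = A_Q^{-1}(\tilde R)$ is a nearest $V$-neighbor of $\tilde Q$ on the ray $A_Q^{-1}(G_{QR})$, and the parameter computation $t_{R_1} = 3-u > t_{\tilde Q} = \sqrt 5 - 1$ identifies that ray as the right ray of $G_{QP}$. Once this is stated, your iteration at $R_1$ and $Q_1$ goes through, and the approach is a sound (and self-contained) variant of the paper's.
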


\begin{proof}
We compute that $A_{R_1}$ is of the form in 
\eqref{EparametrizeGQPmatrix}, for $x_1 = -u-2$ and $d_1=-u-1$.
Thus $R_1$ is on the geodesic $G_{QP}$, with the parameter $t_{R_1} = -u + 3$. 

Also $A_{Q_1}$ is of the form in 
\eqref{EparametrizeGQPmatrix}, for $x_1 = 2u+2$ and 
$d_1=(1/\alpha)(2\zeta^3 + 4 \zeta^2 + 6 \zeta +3)$.
Thus $Q_1$ is on the geodesic $G_{QP}$, with the parameter $t_{Q_1} = (2u+4)/5$.

Also $A_{P_1}$ is of the form in 
\eqref{EparametrizeGQPmatrix}, for $x_1 = 8u+6$ and 
$d_1=3(2u+1)$.
Thus $P_1$ is on the geodesic $G_{QP}$, with the parameter $t_{P_1} = (4u-2)/3$.

Since $t_{\tilde{Q}} < t_{R_1} < t_{Q_1} < t_{P_1}$, these points lie to the 
right of $\tilde{Q}$.
There are three hyperbolic geodesics passing through $\tilde{Q}$ lying in 
$\pi^{-1}(\Sh(\RR))$. A direct computation shows that $A_Q$ acts on the tangent space of $\bH$ at $\tilde{Q}$ as $e^{2 \pi i /3}$, thus $A_Q^{-1}(\tilde{R})$ is the point lying above $R$ that is closest to $\tilde{Q}$ on the right side. Since $\gamma_R$ is the stabilizer of $\tilde{R}$, the stabilizer of this point is $A_Q^{-1} \gamma_R A_Q$. Then by definition, this point is $R_1$.

The matrix $A_Q^{-1} A_R A_Q$ has order $2$ and stabilizes $R_1$. 
Thus $A_{R_1}$ takes 
the point $\tilde{Q}$ to the point which is the closest point 
to the right of $\tilde{Q}$ lying above $Q$. Since $\gamma_Q$ is the stabilizer of $\tilde{Q}$, thus the stabilizer of this point is $A_{R_1} \gamma_Q A_{R_1}^{-1}$. Then by definition, this point is $Q_1$. Moreover, $A_Q^{-1} A_R A_Q$ also takes 
the point $\tilde{P}$ to the point which is the closest point 
to the right of $\tilde{Q}$ lying above $P$. Since $\gamma_P$ is the stabilizer of $\tilde{P}$, thus the stabilizer of this point is $A_{R_1} \gamma_Q A_{R_1}^{-1}$. Then by definition, this point is $P_1$.
\end{proof}

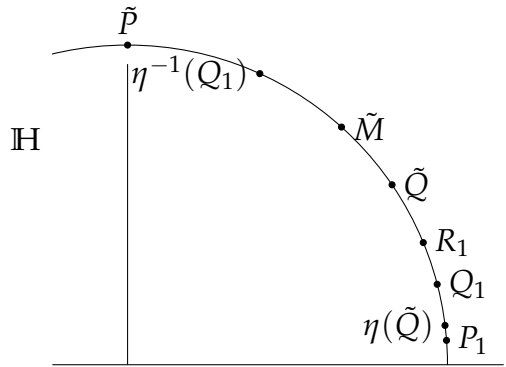
\begin{figure}[h!]
\begin{center}
\begin{tikzpicture}
\draw (-1,0)--(5,0);
\draw (0,0)--(0,4);

\node[above] at (0,4.253) {$\tilde{P}$};
\draw [fill] (0,4.253) circle [radius=0.04];
\node[right] at (3.516,2.394) {$\tilde{Q}$};
\draw [fill] (3.516,2.394) circle [radius=0.04];

\node[left] at (1.758, 3.873) {$\eta^{-1}(Q_1)$};
\draw [fill] (1.758, 3.873) circle [radius=0.04];

\node[right] at (2.844, 3.162)  {$\tilde{M}$};
\draw [fill] (2.844, 3.162)  circle [radius=0.04];

\node[right] at (3.931, 1.625)  {$R_1$};
\draw [fill] (3.931, 1.625) circle [radius=0.04];

\node[right] at (4.116, 1.070)   {$Q_1$};
\draw [fill] (4.116, 1.070)  circle [radius=0.04];

\node[left] at (4.220, .523)  {$\eta(\tilde{Q})$};
\draw [fill] (4.220, .523) circle [radius=0.04];

\node[right] at (4.240, .324) {$P_1$};
\draw [fill] (4.240, .324) circle [radius=0.04];

\node[left] at (-1,3) {$\mathbb{H}$};

\clip (-1,0) rectangle (5,5);
\draw (0,0) circle(4.253);
\end{tikzpicture}
\end{center}
\caption{Analytic position of points on the geodesic $G_{Q,P}$}
    \label{fig:pointsongeodesic}
\end{figure}

\subsection{Field extensions of $F_0$}
We restrict to $F_0=\QQ(\sqrt{5})$
with the implicit choice of two real embeddings: $\tau_1(\sqrt{5}) = \sqrt{5}$ and 
$\tau_2(\sqrt{5}) = -\sqrt{5}$. 
Let $\tau=\tau_2$ denote the non-trivial element of $\mathrm{Gal}(F_0/\QQ)$.
For $z \in F_0$, let $z^\tau$ denote its Galois conjugate.   
Recall that $u= (1+\sqrt{5})/2$ and $u^\tau=(1-\sqrt{5})/2$.
So $uu^\tau=-1$.

Direct computations with SAGE yield the following statements.
\begin{lemma} \label{LclassgroupQR}
Let $L=\QQ[t]/\langle t^4-5 \rangle$ which is a degree $2$ extension of $F_0$ with the nontrivial Galois action $t \mapsto -t$. 
\begin{enumerate}
\item $L$ has class number $1$;
\item $\mathcal{U}_L\simeq \{\pm 1\} \times \ZZ^2$,
is generated by $\{-1, u,\eta\}$ where
$\eta= (t^3+t^2+t+3)/2$.

\item $N_{L/F_0} (\eta) = 1$
and $\pm u,\pm u^\tau\not\in N_{L/F_0}(\cU_L)$.
\end{enumerate}
\end{lemma}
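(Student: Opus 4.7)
The plan is to verify each of the three assertions; the paper notes that these come from SAGE, and I outline the theoretical backbone.

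Setup: $L=\QQ(\sqrt[4]{5})$ is a quadratic extension of $F_0=\QQ(\sqrt 5)$ via $t^2=\sqrt 5$, with $\sigma\in\Gal(L/F_0)$ acting by $t\mapsto -t$. Since $t^4-5$ has two real roots $\pm 5^{1/4}$ and one complex conjugate pair, $L$ has signature $(2,1)$, and by Dirichlet's theorem $\cU_L\cong\{\pm 1\}\times\ZZ^2$ (the torsion is $\{\pm 1\}$ because $L$ has a real embedding). For (1), first check that $\cO_L=\cO_{F_0}[t]$ (e.g.\ via the relative version of Dedekind's criterion at the primes of $F_0$ dividing $2$), from which one computes $d_L=N_{F_0/\QQ}(4\sqrt 5)\cdot d_{F_0}^2=(-80)\cdot 25=-2000$. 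The Minkowski bound is then $M_L=\tfrac{4!}{4^4}\tfrac{4}{\pi}|d_L|^{1/2}\approx 5.34$, and one verifies principality of the primes of norm at most $5$: $(5)=(t)^4$ is already principal, the unique prime above $2$ is generated by a small explicit element of norm $\pm 2$, and $(3)$ factors into principal primes via $t^4-5\bmod 3$.

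Units: For (2), it remains to show $\{-1,u,\eta\}$ generates $\cU_L$. Note $u\in\cU_{F_0}\subset\cU_L$. For $\eta=(t^3+t^2+t+3)/2$, apply $\sigma$ and use $t^4=5$ to obtain
\begin{equation*}
\eta\cdot\sigma(\eta)=\frac{(t^2+3)^2-(t^3+t)^2}{4}=\frac{(14+6t^2)-(10+6t^2)}{4}=1,
\end{equation*}
so $\eta$ satisfies $x^2-(3+\sqrt 5)x+1=0$ over $F_0$; in particular $\eta\in\cO_L^\times$ and $N_{L/F_0}(\eta)=1$, which already settles the first half of (3). Multiplicative independence of $\{u,\eta\}$ modulo torsion follows from linear independence of their archimedean log-vectors: under the two real embeddings $u$ is sent to the same positive value, while $\eta$ takes reciprocal values $\eta_0$ and $\eta_0^{-1}$ with $\eta_0\approx 5.04>1$; under the complex embedding, $|u|=1/u$ but $|\eta|=1$ by the identity above. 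To promote independence to generation, compare the regulator of the sublattice $\langle u,\eta\rangle$ with the regulator of $L$, the latter being determined by the analytic class number formula now that $h_L=1$ and $w_L=2$ are known; equality of regulators forces the subgroup to be all of $\cU_L/\{\pm 1\}$.

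Norms: Since $\sigma$ fixes $u\in F_0$ and sends $\eta\mapsto \eta^{-1}$, one has $N_{L/F_0}(u)=u^2$ and $N_{L/F_0}(\eta)=1$, whence $N_{L/F_0}(\cU_L)=\langle u^2\rangle\subset \cU_{F_0}$. Every element $u^{2k}$ is totally positive, whereas each of $\pm u,\pm u^\tau$ is negative under at least one of the two real embeddings of $F_0$; hence none of them lies in the image.

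Main obstacle: The only non-mechanical step is promoting multiplicative independence of $\{u,\eta\}$ to generation of the full free part of $\cU_L$, as opposed to merely a finite-index subgroup. SAGE settles this via a Buchmann--Pohst style regulator comparison; by hand one would bound the possible index (for instance using the structure of the relative unit group $\ker N_{L/F_0}$, which must have rank $1$ and contains $\eta$) and exclude nontrivial indices by a height-based search for shorter norm-one units.
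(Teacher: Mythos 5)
The paper offers no argument at all here: Lemma~\ref{LclassgroupQR} is introduced with ``Direct computations with SAGE yield the following statements.'' Your write-up is therefore not a reconstruction of the paper's proof but a theoretical skeleton for what the computer check amounts to, and as such it is essentially correct. The signature $(2,1)$, the claim $\cO_L=\cO_{F_0}[t]$ (Eisenstein at $\sqrt5$, Dedekind at the inert prime $2$ of $F_0$), the resulting $d_L=-80\cdot 25=-2000$ and Minkowski bound $\approx 5.34$, the norm identity $\eta\sigma(\eta)=\bigl[(t^2+3)^2-(t^3+t)^2\bigr]/4=1$, and the archimedean log-vectors for $u$ and $\eta$ all check out. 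Your final paragraph honestly flags the one step that is genuinely computational in spirit --- promoting independence of $\{u,\eta\}$ to generation --- and points to the regulator comparison / bounded search that SAGE performs, so there is no hidden gap.

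Two small inaccuracies in the class-number paragraph are worth fixing. The unique prime $\mathfrak{p}_2$ above $2$ has residue degree $2$ over $\QQ$ (since $2$ is inert in $F_0$ and then ramifies in $L/F_0$), so its absolute norm is $4$, not $2$; a generator is $1+t$, with $N_{L/\QQ}(1+t)=(1-\sqrt5)(1+\sqrt5)=-4$. Relatedly, the primes above $3$ have norm $9>5.34$, so $3$ need not be examined at all; the only nontrivial check below the Minkowski bound is principality of $\mathfrak{p}_2$ and of $(t)$. Neither slip affects the conclusion.
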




\begin{lemma}\label{tildeL}
Let $\tilde{L}$ be the splitting field of $t^4-5\in\QQ[t]$.  Then $\tilde{L}$ is a degree 4 extension of $F_0$.
\begin{enumerate}
\item $\tilde{L}$ has class number $2$;
\item 
$[\cU_{F_0}^+:N_{\tilde{L}/F_0}(\cU_{\tilde{L}})]=1$.
\end{enumerate}
\end{lemma}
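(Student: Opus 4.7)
The plan is to verify the three claims by exploiting the biquadratic structure of $\tilde L / F_0$. The degree assertion is immediate: the splitting field $\tilde L = \QQ(\sqrt[4]{5}, i)$ has degree $8$ over $\QQ$, since $\QQ(\sqrt[4]{5})$ is totally real and hence does not contain $i$; thus $[\tilde L : F_0] = 4$, with $\mathrm{Gal}(\tilde L / F_0) \cong (\ZZ/2\ZZ)^2$ generated by $\alpha \mapsto -\alpha$ and complex conjugation $i \mapsto -i$, where $\alpha := \sqrt[4]{5}$. For the class number assertion $|\cl_{\tilde L}| = 2$, I would invoke a direct SAGE computation, parallel to the strategy in Lemma~\ref{LclassgroupQR}.

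The substantive step is the unit norm index. The fundamental unit $u = (1+\sqrt 5)/2$ of $F_0$ has norm $-1$, and is positive at $\tau_1$ but negative at $\tau_2$, so $\cU_{F_0}^+ = \langle u^2 \rangle$. The containment $N_{\tilde L / F_0}(\cU_{\tilde L}) \subseteq \cU_{F_0}^+$ is automatic because $\tilde L$ is totally complex: each real embedding of $F_0$ extends to a complex-conjugate pair of embeddings of $\tilde L$, so the local contribution to the norm at each archimedean place is $|\cdot|^2 > 0$. It therefore suffices to exhibit a unit $\gamma \in \cU_{\tilde L}$ with $N_{\tilde L/F_0}(\gamma) = u^2$. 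I propose
\[\gamma := \frac{(1+\alpha) + i(1-\alpha)}{2} \in \tilde L.\]
Writing $\bar\gamma$ for the image of $\gamma$ under $i \mapsto -i$ (the nontrivial element of $\mathrm{Gal}(\tilde L/L)$), one checks $\gamma + \bar\gamma = 1+\alpha$ and $\gamma\bar\gamma = \frac{(1+\alpha)^2+(1-\alpha)^2}{4} = \frac{1+\alpha^2}{2} = u$, so $\gamma$ satisfies $t^2 - (1+\alpha)t + u = 0$ over $L$. This polynomial is monic with coefficients in $\cO_L$ and has unit constant term, so $\gamma \in \cU_{\tilde L}$. By the tower law,
\[N_{\tilde L/F_0}(\gamma) = N_{L/F_0}(N_{\tilde L/L}(\gamma)) = N_{L/F_0}(u) = u^2,\]
since $u \in F_0 \subset L$ and $[L:F_0]=2$.

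The expected obstacle is spotting $\gamma$. The identity
\[u = \left(\frac{1+\alpha}{2}\right)^2 + \left(\frac{1-\alpha}{2}\right)^2\]
writes $u$ as a sum of two squares in $L$, but neither summand $(1\pm\alpha)/2$ is an algebraic integer (each satisfies only a degree-four polynomial with leading coefficient $4$), so this presentation alone does not produce a unit of $\tilde L$ mapping to $u$ under $N_{\tilde L/L}$. The key observation is that the mixed combination $\gamma$ is nonetheless integral over $\cO_L$, because integrality only requires the trace $\gamma+\bar\gamma = 1+\alpha$ and norm $\gamma\bar\gamma = u$ to lie in $\cO_L$, which they do: the non-integral halves cancel in both symmetric expressions. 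Once $\gamma$ is in hand, the remainder is routine norm arithmetic.
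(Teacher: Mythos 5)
Your proof is correct. The paper does not actually supply an argument: the line preceding Lemma~\ref{LclassgroupQR} says only ``Direct computations with SAGE yield the following statements,'' and this blanket citation covers Lemma~\ref{tildeL} as well. For part~(1) you likewise defer to machine computation, so there is nothing to compare there. For part~(2), however, you replace the opaque SAGE verification with a self-contained argument: you pin down $\cU_{F_0}^+ = \langle u^2\rangle$ from the signature of the fundamental unit, observe that $N_{\tilde L/F_0}(\cU_{\tilde L})\subseteq \cU_{F_0}^+$ because $\tilde L$ is totally complex, and then exhibit a unit $\gamma=\frac{(1+\alpha)+i(1-\alpha)}{2}\in\cO_{\tilde L}^\times$ with $N_{\tilde L/F_0}(\gamma)=u^2$, forcing the index to be~$1$. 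I checked the details: $\gamma$ satisfies $t^2-(1+\alpha)t+u=0$ with coefficients in $\cO_L$, so $\gamma\in\cO_{\tilde L}$; the unit constant term $u$ gives $\gamma^{-1}=\bar\gamma\,u^{-1}\in\cO_{\tilde L}$, so $\gamma$ is a unit; and $N_{\tilde L/L}(\gamma)=\gamma\bar\gamma=u$ pushes forward to $u^2$. What your route buys is transparency and machine-independence for the one part of the lemma that does the real work downstream (the unit index feeds directly into Proposition~\ref{qrep}); what it costs is that finding $\gamma$ requires a small trick (the cancellation of the non-integral halves of $(1\pm\alpha)/2$ in the trace and norm), whereas the SAGE call is a one-liner. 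Both are valid.
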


Let $L_1=\QQ(\sqrt[4]{5})$.
We fix an isomorphism $L \simeq L_1$ by sending 
$t \mapsto \sqrt[4]{5}$; this isomorphism sends 
$\eta$ to $\eta_1 := u(\sqrt[4]{5}+u)$.
Note that 
$L_1 = F_0(\sqrt{\Delta_{Q,P}}) = F_0(\sqrt{\Delta_{Q,R}})$.

Let $L_2 = F_0(i\sqrt[4]{5})$.
We fix an isomorphism $L \simeq L_2$ by sending 
$t \mapsto i \sqrt[4]{5}$; this isomorphism sends  
$\eta$ to $\eta_2:= u^\tau (i\sqrt[4]{5}+u^\tau)$.


\subsection{Adjusting by units}

We use multiplication by a unit $\eta_1$ of $L_1$ to switch between 
points having complex multiplication 
by the maximal and non-maximal order.

Define a linear transformation 
$\psi_{QP}: F_0^2 \to L_1$ 
by $(x_1,d_1) \mapsto \sigma_{QP} := x_1 + \sqrt[4]{5}d_1$.
Let $[\times \eta_1]: L_1 \to L_1$ be multiplication by $\eta_1$.
Let $F_{QP}:F_0^2  \to F_0^2$ denote the composition
$F_{QP}:= \psi_{QP}^{-1} \circ [\times \eta_1] \circ \psi_{QP}$.

\begin{lemma} \label{lintransQP}
The composition 
$F_{QP}:F_0^2 \to F_0^2$ is given by $(x_1,d_1) \mapsto (\underline{x}_1,\underline{d}_1)$, where
\begin{equation} \label{Edefxunderdunder}
\underline{x}_1:= u (u x_1 + \sqrt{5} d_1) 
\text{ and } \underline{d}_1:=u (x_1 + u d_1).
\end{equation}
Thus, 
\begin{equation} \label{EQPrationalmap}
F_{QP}\left(\frac{x_1}{d_1}\right)= \frac{\underline{x}_1}{\underline{d}_1}
= \frac{u (\frac{x_1}{d_1}) + \sqrt{5}}{(\frac{x_1}{d_1}) + u}. 
\end{equation}

\begin{enumerate}
\item Also $(x_1,d_1) \in \cO_{F_0}^2$ if and only if $(\underline{x}_1,\underline{d}_1)\in \cO_{F_0}^2$.
\item For $(x_1,d_1) \in \cO_{F_0}^2$: 
if $(x_1,d_1) \equiv (0,u) \bmod 2\cO_{F_0}$, 
then $(\underline{x}_1,\underline{d}_1) \equiv (u^2,1) \bmod 2\cO_{F_0}$; 
and 
if $(x_1,d_1) \equiv (u^2,1) \bmod 2\cO_{F_0}$, 
then $(\underline{x}_1,\underline{d}_1) \equiv (0,u) \bmod 2\cO_{F_0}$.
\end{enumerate}
\end{lemma}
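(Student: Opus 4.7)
The plan is to verify the explicit formulas by direct multiplication in $L_1$, and then extract part (1) from the fact that $\eta_1$ is a unit in the $\cO_{F_0}$-order $\cO_{F_0}[\sqrt[4]{5}]$, and part (2) by reduction modulo $2\cO_{F_0}$.

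First I would expand $\eta_1\sigma_{QP}$ using $\eta_1=u\sqrt[4]{5}+u^2$, $\sigma_{QP}=x_1+\sqrt[4]{5}\,d_1$, and $\sqrt[4]{5}^{\,2}=\sqrt{5}$, obtaining
\[
\eta_1\sigma_{QP}=(u^2x_1+u\sqrt{5}\,d_1)+\sqrt[4]{5}\,(ux_1+u^2d_1).
\]
Applying $\psi_{QP}^{-1}$ reads off the coefficients in the basis $\{1,\sqrt[4]{5}\}$, and factoring $u$ out of each coefficient gives the stated formulas for $(\underline{x}_1,\underline{d}_1)$. Dividing them yields the M\"obius expression in \eqref{EQPrationalmap}.

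For part (1), I note that $\psi_{QP}$ identifies $\cO_{F_0}^2$ with the $\cO_{F_0}$-module $M:=\cO_{F_0}\cdot 1\oplus\cO_{F_0}\cdot\sqrt[4]{5}\subset L_1$, so the claim reduces to showing that multiplication by $\eta_1$ is a bijection on $M$. The inclusion $\eta_1 M\subseteq M$ is immediate since $\eta_1\in M$ and $\eta_1\cdot\sqrt[4]{5}=u\sqrt{5}+u^2\sqrt[4]{5}\in M$. For the reverse, I would invoke Lemma~\ref{LclassgroupQR}(3), which gives $N_{L/F_0}(\eta)=1$; this forces $\eta_1^{-1}$ to equal its Galois conjugate $u^2-u\sqrt[4]{5}$, which also lies in $M$.

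For part (2), the argument is a bookkeeping computation modulo $2\cO_{F_0}$ based on the identities $u^2=u+1$ (from $x^2-x-1=0$) and $\sqrt{5}=2u-1\equiv 1\bmod 2\cO_{F_0}$. These together imply $u^3\equiv 1$, $u^4\equiv u$, and $u\sqrt{5}\equiv u\bmod 2\cO_{F_0}$. In case (a), substituting $(x_1,d_1)\equiv(0,u)$ gives $(\underline{x}_1,\underline{d}_1)=(u^2\sqrt{5},u^3)\equiv(u^2,1)$. In case (b), substituting $(x_1,d_1)\equiv(u^2,1)$ gives $(\underline{x}_1,\underline{d}_1)=(u^4+u\sqrt{5},u^3+u^2)\equiv(u+u,\,1+(u+1))\equiv(0,u)$. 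The only delicate point is this last mod-$2\cO_{F_0}$ reduction; there is no deeper obstacle, and the rest is routine.
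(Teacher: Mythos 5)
Your proof is correct. For the displayed formula and part (1), your argument is essentially the paper's: the paper verifies the same expansion $\eta_1\sigma_{QP}=u(ux_1+\sqrt{5}\,d_1)+\sqrt[4]{5}\,u(x_1+ud_1)$ and then observes that the matrix $\begin{bmatrix} u & \sqrt{5}\\ 1 & u\end{bmatrix}$ has integral entries and unit determinant $(u^\tau)^2$, whereas you phrase the same invertibility on the module side by noting $\eta_1^{-1}=\eta_1^\gamma\in M$ because $N_{L_1/F_0}(\eta_1)=1$ (Lemma~\ref{LclassgroupQR}(3)); since the determinant of multiplication by $\eta_1$ on $M$ \emph{is} that norm, these are the same fact. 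The paper explicitly omits the proof of part (2); your mod-$2\cO_{F_0}$ bookkeeping using $u^2=u+1$, $\sqrt{5}\equiv 1\bmod 2\cO_{F_0}$, and the resulting congruences $u^3\equiv 1$, $u^4\equiv u$ is correct and cleanly fills that gap.
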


\begin{proof}
    The statement about $\underline{x}_1,\underline{d}_1$ in 
    \eqref{Edefxunderdunder} follows from this computation:
    \begin{eqnarray*}
\eta_1 \sigma_{QP} & = &
u (u + \sqrt[4]{5})(x_1 + \sqrt[4]{5} d_1) \\
& = & u(u x_1 + \sqrt{5} d_1) + 
u (x_1 + u d_1) \sqrt[4]{5}.
\end{eqnarray*}
\begin{enumerate}
\item The rational function $F_{QP}$ is given by following matrix:
\[F_{QP} = \begin{bmatrix} u & \sqrt{5} \\
1 & u \end{bmatrix}.\]
The result follows since $F_{QP}$ has integral entries and unit determinant $(u^\tau)^2$.

\item We omit this proof.
\end{enumerate}
\end{proof}

One can view $F_{QP}$ as a rational linear map on $t\in [-\sqrt[4]{5}, \sqrt[4]{5}]$
which fixes the endpoints.

Via Proposition~\ref{PfixedpointQP}, we identify $t\in [-\sqrt[4]{5}, \sqrt[4]{5}]$
with the geodesic $G_{QP}$.

\begin{lemma} \label{Lhypmidpoint}
The action of $\eta$ on the geodesic $G_{QP}$ (i.e., the action induced by $F_{QP}$ on $G_{QP}$) is a hyperbolic isometry.

The value $t=1$ yields the hyperbolic 
midpoint $\tilde{M}$ of the geodesic segment between
$\tilde{P}$ and $R_1$.
\end{lemma}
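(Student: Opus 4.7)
The plan is to analyze the action on $G_{QP}$ (the half-circle of radius $r = |\beta_0|$ centered at $0$) via its angular parametrization $z = re^{i\theta}$ for $\theta \in (0, \pi)$, and to exploit the fact that the hyperbolic arc length along $G_{QP}$ has a simple closed form in $\theta$.

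First I would bridge the parameter $t$ of \eqref{EfixedpointQP} with the angle $\theta$. Since $\beta_0/(\omega\alpha)$ is a positive real (using $\beta_0 = \sqrt{5}\alpha$ and $\omega^2 = \epsilon$, noting that $\alpha$ and $\beta_0$ are purely imaginary under $\tau_1$), a short calculation shows $\beta_0/(\omega\alpha) = r/\sqrt[4]{5}$, and hence $\cos\theta = t/\sqrt[4]{5}$. The restriction of the hyperbolic metric on $\bH$ to $G_{QP}$ has arc-length element $ds = d\theta/\sin\theta$, integrating to $s = \ln\tan(\theta/2)$ measured from $\tilde P = ri$ (where $\theta = \pi/2$, $s=0$).

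For part (i), I would verify via half-angle identities that $F_{QP}(t) = (ut+\sqrt{5})/(t+u)$ induces $\tan(\theta'/2) = \sqrt{\kappa}\,\tan(\theta/2)$, where $\kappa := (u-\sqrt[4]{5})/(u+\sqrt[4]{5})$. The key intermediate step is the factorization
\[
\sqrt{5} - F_{QP}(t)^2 = \frac{(u^\tau)^2(\sqrt{5}-t^2)}{(t+u)^2},
\]
which follows from $u^2 - \sqrt{5} = (u^\tau)^2$. Taking positive square roots on the relevant range (where $t+u>0$) and dividing by $\sqrt[4]{5}$ yields, after a half-angle manipulation, $\tan^2(\theta'/2) = \kappa \tan^2(\theta/2)$. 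In terms of the hyperbolic distance this reads $s \mapsto s + \tfrac{1}{2}\ln \kappa$; since $u > \sqrt[4]{5}$, we have $\kappa \in (0,1)$, so the action is a nontrivial hyperbolic translation along $G_{QP}$.

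For part (ii), $\tilde P$ corresponds to $s = 0$, and by the preceding lemma $R_1$ corresponds to the parameter $t_{R_1} = 3-u$. The midpoint condition $s_M = s_{R_1}/2$ translates to $\tan^2(\theta_M/2) = \tan(\theta_{R_1}/2)$. Setting $t_M = 1$ gives $\cos\theta_M = 1/\sqrt[4]{5}$ and therefore $\tan^2(\theta_M/2) = (\sqrt[4]{5}-1)/(\sqrt[4]{5}+1)$, while $\tan^2(\theta_{R_1}/2) = (\sqrt[4]{5}-3+u)/(\sqrt[4]{5}+3-u)$; squaring the former and cross-multiplying with the latter reduces, after using $\sqrt[4]{5}^2 = \sqrt{5}$, to the identity $(u-1)\sqrt{5} + (u-3) = 0$, which is immediate from $u = (1+\sqrt{5})/2$. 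The main obstacle is the bookkeeping in the half-angle computation and the factorization of $\sqrt{5} - F_{QP}(t)^2$, both of which are mechanical once set up but require care with signs and with the distinction between $u$ and $u^\tau$ in the intermediate identities.
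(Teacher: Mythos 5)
Your proposal is correct and follows essentially the same route as the paper: both map $G_{QP}$ to a one-dimensional logarithmic parameter (you via $s=\ln\tan(\theta/2)$ from the angular parametrization $z=re^{i\theta}$, the paper via the explicit M\"obius map $\rho(t)=\sqrt{(\sqrt[4]{5}+t)/(\sqrt[4]{5}-t)}=\cot(\theta/2)$ to a vertical ray), and in that coordinate $F_{QP}$ acts by scaling, hence is a translation. Your intermediate factorization $\sqrt{5}-F_{QP}(t)^2=(u^\tau)^2(\sqrt{5}-t^2)/(t+u)^2$, the half-angle identity $\tan^2(\theta/2)=(\sqrt[4]{5}-t)/(\sqrt[4]{5}+t)$, and the final midpoint verification reducing to $\sqrt{5}(u-1)+(u-3)=0$ all check out and are the reciprocal form of the computations the paper carries out with $c=\rho(t)$.
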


\begin{proof}
We consider the hyperbolic isometry 
$\rho:G_{QP} \to \RR^{>0} i$ to a vertical half-ray which takes
$t=-\sqrt[4]{5}$ to $0 \cdot i$, $t=0$ to $1 \cdot i$, and $t=\sqrt[4]{5}$
to $\infty \cdot i$.
By \Cref{PfixedpointQP}, we have $z:=t+i \sqrt{\sqrt{5}-t^2}\in \frac{\omega\alpha}{\beta_0}G_{QP}$; we compute that $\rho: z \mapsto c\in \RR^{>0}$ is given by the matrix 
$\rho = -i \begin{bmatrix} 1 & \sqrt[4]{5} \\
-1 & \sqrt[4]{5} \end{bmatrix}$.
The action of $\eta$ maps $t$ to $t':=\frac{ut+\sqrt{5}}{t+u}$. 
Thus on ${\mathbb R}^{>0}$, we have
\[c:=\rho(t)=-i \frac{z+\sqrt[4]{5}}{-z+\sqrt[4]{5}}=\frac{\sqrt{\sqrt{5}-t^2}}{\sqrt[4]{5}-t}=\sqrt{\frac{\sqrt[4]{5}+t}{\sqrt[4]{5}-t}}\]
\[c':=\rho(t')=\sqrt{\frac{\sqrt[4]{5}+t'}{\sqrt[4]{5}-t'}}=\sqrt{\frac{\sqrt[4]{5}(t+u)+ut+\sqrt{5}}{\sqrt[4]{5}(t+u)-ut-\sqrt{5}}}=\sqrt{\frac{\sqrt[4]{5} + u}{-\sqrt[4]{5} +u}} \cdot c.\]

Let $d_h(z_1,z_2)$ denote the hyperbolic distance between 
two points $z_1,z_2 \in \bH$.
If $c_1, c_2 \in \mathbb{R}^{>0}$, 
then $d_h(c_1 \cdot i, c_2\cdot i) = |\mathrm{log}(c_2/c_1)|$.
Write $m=\sqrt{(\sqrt[4]{5} + u)(-\sqrt[4]{5} +u)^{-1}}$.
The first claim follows since $d_h(T(c_1 \cdot i), T(c_2 \cdot i)) =
|\mathrm{log}(m c_2/ m c_1)| = d_h(c_1\cdot i, c_2 \cdot i)$.

The hyperbolic midpoint of $c_1 \cdot i$ and $c_2 \cdot i$
is $\sqrt{c_1c_2} \cdot i$.
Note that $t(\tilde{P})=0$ and $t(R_1) = -u+3$.
Then $c(\tilde{P}) = \rho(0)=1$ and 
$c(R_1) = \sqrt{(-u+3 + \sqrt[4]{5})/(u-3 + \sqrt[4]{5})}$.
The hyperbolic midpoint $\tilde{M}$ has parameter $c_{\tilde{M}} = \sqrt{c(R_1)}$.
To show that $t_{\tilde{M}}=1$, it suffices to show that $c_{\tilde{M}} = \rho(1)$, 
or, equivalently, that $c(R_1) = (1+\sqrt[4]{5})/(-1 + \sqrt[4]{5})$, which is true.
\end{proof}

We remark that $\tilde{M}$ is also the hyperbolic midpoint of the 
geodesic segment between $\eta^{-1}(Q_1)$ and $\tilde{Q}$.



Let $M := \pi(\tilde{M}) \in \Sh(\RR)$.
Let $C_M$ be the curve represented by $M$.

\begin{proposition} \label{MisCM}
The Jacobian of the curve $C_M$ has complex multiplication by $\QQ(\sqrt{-2})$.
\end{proposition}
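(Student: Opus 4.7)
The plan is to exhibit an explicit matrix $\Sigma \in \GL_2(\cO_F)$ of trace zero and determinant $2$ that fixes $\tilde{M} \in \bH$, and then to invoke the construction underlying Proposition~\ref{prop:QuadraticFormToCMpoints} (with $\lambda = 2$) to extract an endomorphism $s$ of $\mathrm{Jac}(C_M)$ satisfying $s^2 = -2$.

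The first step is to translate $t_{\tilde{M}} = 1$ (Lemma~\ref{Lhypmidpoint}) into the parametrization of Proposition~\ref{PfixedpointQP}. From $t = x_1/d_1 = 2x/(y+ux)$, the minimal integral choice $(x_1, d_1) = (1,1)$ corresponds to $(x,y) = (1/2,\, 1 - u/2) \in F_0^2$. By Lemma~\ref{solmod2QP}(1), the matrix
\[
\Sigma := \tfrac{1}{2}\gamma_Q + \bigl(1 - \tfrac{u}{2}\bigr)\gamma_P
\]
lies in $\GU_2(\cO_{F_0})$, and substituting $s_\circ = \alpha u$ into the formulas of Lemma~\ref{gammaPQR_lem} gives
\[
\Sigma \;=\; \begin{bmatrix} -\alpha & 1 \\ u & \alpha \end{bmatrix} \;\in\; \GL_2(\cO_F).
\]

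Next, I would verify the numerical properties. Trace zero is immediate. For the determinant, Lemma~\ref{solmod2QP}(2) yields
\[
\det(\Sigma) \;=\; q_{Q,P}\bigl(1/2,\, 1 - u/2\bigr) \;=\; -u(1 - \sqrt{5}) \;=\; u(\sqrt{5}-1) \;=\; 2u(u-1) \;=\; 2(u^2 - u) \;=\; 2,
\]
using $u^2 = u+1$. Hence $\Sigma^2 = -2\,\mathrm{Id}$. By Proposition~\ref{PfixedpointQP} applied at $t = 1$, the unique fixed point of $\Sigma$ in $\bH$ is precisely the hyperbolic midpoint $\tilde{M}$ of Lemma~\ref{Lhypmidpoint}.

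Finally, following the argument in the proof of Proposition~\ref{prop:QuadraticFormToCMpoints}, the $F$-linear action of $\Sigma$ on $V$ descends to an endomorphism $s$ of $A_{\tilde{M}} = \mathrm{Jac}(C_M)$ satisfying $s^2 = -2$, giving an embedding $\ZZ[\sqrt{-2}] \hookrightarrow \End(\mathrm{Jac}(C_M))$. There is no substantive obstacle once the right $\Sigma$ is identified; the only mild subtlety is ensuring that the fixed point of $\Sigma$ really is $\tilde{M}$ (as opposed to a point on $G_{QR}$ or $G_{PR}$), which is forced by the fact that $\Sigma$ is an $F_0$-linear combination of $\gamma_Q$ and $\gamma_P$ (compare Lemma~\ref{detgamma}).
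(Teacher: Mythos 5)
Your proposal is correct and follows the paper's own proof essentially verbatim, differing only in that you spell out the explicit matrix $\Sigma = \begin{bmatrix}-\alpha & 1 \\ u & \alpha\end{bmatrix}$ rather than just recording the determinant computation $q_{Q,P}(1/2,\,1-u/2)=-u(1-\sqrt{5})=2$. The one thing worth adding (and which the paper notes explicitly) is that $\lambda=2$ has even norm and so violates Assumption~\ref{def:good}, but the odd-norm hypothesis plays no role in the part of the argument of Proposition~\ref{prop:QuadraticFormToCMpoints} you invoke, so the conclusion still stands.
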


\begin{proof}
The value $t=1$ occurs when $x_1=d_1=1$.  
By \eqref{EdiagqQP}, $q_{Q,P}(x,y)= - u(1-\sqrt{5}) = 2$.
The result follows by the same ideas as for 
Proposition~\ref{prop:QuadraticFormToCMpoints}, with the odd norm 
requirement in Assumption~\ref{def:good} being unnecessary.
\end{proof}

We divide $\overset{\frown}{PQR}$ into $\overset{\frown}{PM}$ and 
$\overset{\frown}{MR}$.

\begin{lemma}
Suppose $z_1, z_2 \in G_{QP}$ and $\eta(z_1) = z_2$.  
Then $\pi(z_1)$ is in $\overset{\frown}{PM}$ if and only 
if $\pi(z_2)$ is in $\overset{\frown}{MR}$.
\end{lemma}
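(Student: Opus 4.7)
My plan is to analyze $\eta$ as a pure hyperbolic translation along $G_{QP}$. I would first verify that $\eta$ acts as a translation: the fixed-point equation $F_{QP}(t) = t$ from \eqref{EQPrationalmap} reduces to $t^2 = \sqrt{5}$, so both fixed points $t = \pm\sqrt[4]{5}$ lie on $\partial\bH$, making $\eta$ a hyperbolic isometry of $\bH$ with axis $G_{QP}$. A direct computation yields $F_{QP}(0) = \sqrt{5}/u = 3 - u = t_{R_1}$, so $\eta(\tilde{P}) = R_1$; thus $\eta$ is translation by $D := d_h(\tilde{P}, R_1)$ along $G_{QP}$, and by Lemma~\ref{Lhypmidpoint} the midpoint $\tilde{M}$ sits at distance $D/2$ from each endpoint.

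Next I would show that $\pi$ restricts to a homeomorphism $[\tilde{P}, R_1] \to \overset{\frown}{PQR}$ sending $\tilde{P}, \tilde{M}, \tilde{Q}, R_1$ to $P, M, Q, R$ respectively. Indeed, $[\tilde{P}, \tilde{Q}]$ is an edge of the fundamental triangle $\mathfrak{T}$ and so $\pi$-maps homeomorphically onto the $P$-to-$Q$ arc of $\overset{\frown}{PQR}$, while $[\tilde{Q}, R_1] = A_Q^{-1}(G_{QR})$ maps homeomorphically onto the $Q$-to-$R$ arc; gluing at $\tilde{Q}$ yields the desired homeomorphism, under which $[\tilde{P}, \tilde{M}] \mapsto \overset{\frown}{PM}$ and $[\tilde{M}, R_1] \mapsto \overset{\frown}{MR}$. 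I would then introduce the subgroup $\Sigma \subset \Delta$ generated by the half-turns $\tau_P := A_P^5$ at $\tilde{P}$ (valid since $A_P$ has order $10$) and $\sigma_{R_1} := A_Q^{-1} A_R A_Q$ at $R_1$. Both preserve $G_{QP}$, and in the arc-length coordinate on $G_{QP}$ with $\tilde{P} = 0$ and $R_1 = D$ they act as $x \mapsto -x$ and $x \mapsto 2D - x$, generating an infinite dihedral group whose orbits tile $\RR$ with fundamental domain $[0, D]$. Combined with the injectivity of $\pi|_{[\tilde{P}, R_1]}$, this assigns each $z \in G_{QP}$ a unique $\Sigma$-representative $y(z) \in [0, D]$ with $\pi(z) = \pi(y(z))$.

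The conclusion will then follow from the identity $y(\eta(z_1)) = D - y(z_1)$. If $z_1$ has arc-length coordinate $x$, then $\eta(z_1)$ has coordinate $x + D$; writing $x = 2k_0 D \pm y(z_1)$ for some $k_0 \in \ZZ$, a short check shows that $D - y(z_1)$ lies in the $\Sigma$-orbit of $x + D$ and belongs to $[0, D]$. Hence
\[
\pi(z_1) \in \overset{\frown}{PM} \iff y(z_1) \in [0, D/2] \iff D - y(z_1) \in [D/2, D] \iff \pi(z_2) \in \overset{\frown}{MR}.
\]
The main subtlety will be the identification of $\pi|_{G_{QP}}$ with the $\Sigma$-quotient map; this is handled by combining the tiling observation with the injectivity of $\pi|_{[\tilde{P}, R_1]}$, which bypasses having to determine the full stabilizer $\Delta \cap \mathrm{Stab}(G_{QP})$.
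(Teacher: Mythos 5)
Your proposal is correct and follows essentially the same route as the paper: both rest on the computations $\eta(\tilde{P})=R_1$, $\eta(R_1)=P_1$, and the fact that $\tilde{M}$ is the hyperbolic midpoint of $[\tilde{P},R_1]$, with the paper simply asserting "thus $\eta$ exchanges points in $\pi^{-1}(\overset{\frown}{PM})$ and $\pi^{-1}(\overset{\frown}{MR})$" where you rigorously unwind that step via the infinite dihedral group $\Sigma=\langle A_P^5, A_Q^{-1}A_R A_Q\rangle$ and the arc-length identity $y(\eta(z_1))=D-y(z_1)$. This is a legitimate and welcome elaboration of the same idea rather than a different method.
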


\begin{proof}
Using \eqref{EQPrationalmap}, 
we compute that $\eta(\tilde{P})=R_1$ and $\eta(R_1)=P_1$. 
By Lemma~\ref{Lhypmidpoint}, $\tilde{M}$ is the hyperbolic midpoint.
Thus $\eta$ exchanges points in $\pi^{-1}(\overset{\frown}{PM})$
and points in $\pi^{-1}(\overset{\frown}{MR})$.
\end{proof}

\subsection{Quadratic forms as norms}

\begin{remark}
In \cite{zemkova}, Zemkova studies quadratic forms over a totally real number field $K$ with narrow class number 1.
Using an oriented relative class group, 
she gives necessary and sufficient conditions on an irreducible element $\lambda\in \mathcal{O}_{K}$ to be representable by a given quadratic form of discriminant $d$ in terms of the behavior of $\lambda$ in the extension $L=K(\sqrt{d})/K$. 
For the quadratic forms $q_{Q,R}$ and $q_{Q,P}$ in 
Corollary~\ref{Lquadm5sec5},   
the number field $L$ has class number $1$.
This allows us to provide an explicit description of the representability of these quadratic forms using norms.
\end{remark}

From Lemma~\ref{Lquadm5sec5} and equation \eqref{EdiagqQP}, recall that
\[q_{Q,P}(x,y)= 
3 x^2 + 2\sqrt{5} u^2 xy +  \sqrt{5} u y^2
=-u (x_1^2-\sqrt{5}d_1^2),\]
and
\[q_{Q,R}(x,y)= 
3 x^2  - 2 \sqrt{5} u xy  + \sqrt{5} u y^2.\]

\begin{proposition}\label{qasN}
Recall that $L_1=F_0(\sqrt[4]{5})$. 
Then:
\begin{enumerate}
\item $q_{Q,P}(x,y) = -u N_{L_1/F_0}(x_1 + \sqrt[4]{5} d_1)$; and 
\item $q_{Q,R}(x,y)=u^\tau N_{L_1/F_0}(x+\sqrt[4]{5}u(y-x))$.
\end{enumerate}
\end{proposition}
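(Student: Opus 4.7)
The plan is that both identities reduce to direct algebraic verifications using the formulas already assembled in Lemma~\ref{Lquadm5sec5} and equation~\eqref{EdiagqQP}, together with the elementary identities $u + u^\tau = 1$ and $uu^\tau = -1$. The key observation is that for any $a,b \in F_0$, one has $N_{L_1/F_0}(a + \sqrt[4]{5}\,b) = a^2 - \sqrt{5}\,b^2$, since the nontrivial automorphism of $L_1/F_0$ sends $\sqrt[4]{5}\mapsto -\sqrt[4]{5}$.

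For part (1), the identity is essentially already proved: applying the norm formula above with $a = x_1$, $b = d_1$ gives $N_{L_1/F_0}(x_1 + \sqrt[4]{5}\,d_1) = x_1^2 - \sqrt{5}\,d_1^2$, and multiplying by $-u$ recovers exactly the right-hand side of \eqref{EdiagqQP}. So (1) is immediate from Lemma~\ref{solmod2QP}(2).

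For part (2), I would expand directly. Writing $a = x$ and $b = u(y-x)$, one gets
\begin{equation*}
N_{L_1/F_0}\bigl(x + \sqrt[4]{5}\,u(y-x)\bigr) = x^2 - \sqrt{5}\,u^2 (y-x)^2.
\end{equation*}
Multiplying by $u^\tau$ and using $u^2 u^\tau = u(uu^\tau) = -u$, the expression becomes
\begin{equation*}
u^\tau x^2 + \sqrt{5}\,u\,(y-x)^2 = u^\tau x^2 + \sqrt{5}\,u\,x^2 - 2\sqrt{5}\,u\,xy + \sqrt{5}\,u\,y^2.
\end{equation*}
The coefficient of $x^2$ is $u^\tau + \sqrt{5}\,u = \tfrac{1-\sqrt{5}}{2} + \sqrt{5}\cdot\tfrac{1+\sqrt{5}}{2} = 3$, so the total equals $3x^2 - 2\sqrt{5}\,u\,xy + \sqrt{5}\,u\,y^2$, which is exactly $q_{Q,R}(x,y)$ as given in Corollary~\ref{Lquadm5sec5}.

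There is no real obstacle here; the only subtlety is choosing the correct linear combination $x + \sqrt[4]{5}\,u(y-x)$ inside $L_1$ whose norm yields $q_{Q,R}$, but this is forced by matching the $xy$-coefficient: one needs $-2\sqrt{5}\,u^\tau u^2 = -2\sqrt{5}\,u$, which determines the shape of the argument. I would present the proof as two short displayed computations, one for each part, citing \eqref{EdiagqQP} for (1) and expanding explicitly for (2).
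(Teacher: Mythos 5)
Your proof is correct and follows essentially the same route as the paper. For (1), both you and the paper observe that $N_{L_1/F_0}(x_1 + \sqrt[4]{5}\,d_1) = x_1^2 - \sqrt{5}\,d_1^2$ and invoke \eqref{EdiagqQP}. For (2), the paper expands the norm to get $(1-\sqrt{5}u^2)x^2 + 2\sqrt{5}u^2\,xy - \sqrt{5}u^2\,y^2$, then factors out $-u$ (using $1-\sqrt{5}u^2 = -3u$); you instead multiply through by $u^\tau$ first and verify the $x^2$-coefficient $u^\tau + \sqrt{5}u = 3$ directly. These are the same computation read in two directions (indeed your identity $u^\tau + \sqrt{5}u = 3$ is exactly the paper's $1 - \sqrt{5}u^2 = -3u$ multiplied by $u^\tau$), so there is no substantive difference.
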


\begin{proof}
\begin{enumerate}
\item This is clear since 
$N_{L_1/F_0}(x_1 + \sqrt[4]{5} d_1)=x_1^2 - \sqrt{5}d_1^2$.
\item This is true because $uu^\tau=-1$ and
\[N_{L_1/F_0}(x+\sqrt[4]{5}u(y-x)) = (1-\sqrt{5}u^2)x^2 + 2 \sqrt{5}u^2 xy - \sqrt{5}u^2 y^2= -u q_{Q,R}(x,y). \qedhere\] 
\end{enumerate}
\end{proof}

\begin{proposition}\label{maxsol}
With notation as in \eqref{Edefxunderdunder}, let
$\underline{x} = \underline{x}_1/2$ 
and $\underline{y} = \underline{d}_1 - u\underline{x}_1/2$. Then
\begin{equation}
q_{Q,P}(x,y)=
q_{Q,P}(\underline{x}, \underline{y}).
\end{equation}
\end{proposition}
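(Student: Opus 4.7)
The plan is to reduce the claim to the multiplicativity of the norm $N_{L_1/F_0}$, where $L_1 = F_0(\sqrt[4]{5})$. The main observation is that the transformation $(x_1,d_1)\mapsto(\underline{x}_1,\underline{d}_1)$ was designed in Lemma~\ref{lintransQP} to correspond, under $\psi_{QP}$, to multiplication by the unit $\eta_1 = u(u+\sqrt[4]{5})\in\cU_{L_1}$. Since Proposition~\ref{qasN}(1) expresses $q_{Q,P}(x,y)$ as $-u\,N_{L_1/F_0}(x_1+\sqrt[4]{5}d_1)$, the result will follow immediately once I observe that $N_{L_1/F_0}(\eta_1)=1$.

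The steps, in order, would be: (i) Verify that the assignment $\underline{x}=\underline{x}_1/2,\ \underline{y}=\underline{d}_1-u\underline{x}_1/2$ is exactly the inverse of the change of variables $x_1=2x,\ d_1=y+ux$ applied to $(\underline{x},\underline{y})$; that is, under this substitution one has $2\underline{x}=\underline{x}_1$ and $\underline{y}+u\underline{x}=\underline{d}_1$. This is a one-line algebraic check. (ii) Apply Proposition~\ref{qasN}(1) to both sides, rewriting
\begin{equation*}
q_{Q,P}(x,y) = -u\,N_{L_1/F_0}(x_1+\sqrt[4]{5}\,d_1),\qquad q_{Q,P}(\underline{x},\underline{y}) = -u\,N_{L_1/F_0}(\underline{x}_1+\sqrt[4]{5}\,\underline{d}_1).
\end{equation*}
(iii) Use Lemma~\ref{lintransQP}, which identifies $F_{QP}$ with multiplication by $\eta_1$ under $\psi_{QP}$: explicitly, $\underline{x}_1+\sqrt[4]{5}\,\underline{d}_1 = \eta_1\cdot(x_1+\sqrt[4]{5}\,d_1)$. (iv) Conclude by multiplicativity of the norm, together with the fact, recorded in Lemma~\ref{LclassgroupQR}(3) (transported via the fixed isomorphism $L\cong L_1$ sending $\eta$ to $\eta_1$), that $N_{L_1/F_0}(\eta_1)=1$.

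There is no real obstacle here: the substance of the proposition is packaged inside Lemma~\ref{lintransQP} and Proposition~\ref{qasN}, and the role of $\eta_1$ as a norm-one unit of $L_1$ is precisely what makes the construction in Section~\ref{SrealCM} work. The only point to be careful about is the bookkeeping in step~(i), since the two parametrizations $(x,y)\leftrightarrow(x_1,d_1)$ involve the unit $u$; once this is fixed, the rest is formal. If desired, one can also present the proof purely in the $(x_1,d_1)$-coordinates by noting that $N_{L_1/F_0}(x_1+\sqrt[4]{5}d_1)=x_1^2-\sqrt{5}d_1^2$ is invariant under $F_{QP}$ because $F_{QP}$ has matrix of determinant $(u^\tau)^2=u^{-2}$ relative to the form $x_1^2-\sqrt{5}d_1^2$, which gives $N_{L_1/F_0}(\eta_1)=1$ directly and avoids citing Lemma~\ref{LclassgroupQR}.
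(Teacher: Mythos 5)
Your proof is correct and follows the same route as the paper: express $q_{Q,P}$ as $-u\,N_{L_1/F_0}(x_1+\sqrt[4]{5}\,d_1)$ via Proposition~\ref{qasN}(1), use Lemma~\ref{lintransQP} to recognize $(\underline{x}_1,\underline{d}_1)$ as $\eta_1\cdot(x_1+\sqrt[4]{5}\,d_1)$, and conclude from $N_{L_1/F_0}(\eta_1)=1$ (in fact the printed proof in the paper writes $L_2$, $\eta_2$, and $u$ where it should say $L_1$, $\eta_1$, $-u$, and your version cleans this up). One small caution on your closing remark: the matrix displayed in Lemma~\ref{lintransQP} with determinant $(u^\tau)^2$ represents $F_{QP}$ only projectively (as a fractional linear map on $t=x_1/d_1$); the actual linear map $(x_1,d_1)\mapsto(\underline{x}_1,\underline{d}_1)$ from \eqref{Edefxunderdunder} is $u$ times it, with determinant $u^2(u^\tau)^2=1$, which is what makes $N_{L_1/F_0}(\eta_1)=1$ and preserves the form.
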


\begin{proof}
By Lemmas~\ref{lintransQP} and \ref{qasN},
\[q_{Q,P}(x,y) = u N_{L_2/F_{0}}(\sigma_{QP}) \text{ and }
q_{Q,P}(\underline{x},\underline{y}) =  u N_{L_2/F_{0}}(\eta_2 \sigma_{QP}).\]
By Lemma~\ref{LclassgroupQR}, 
$N_{L_2/F_0}(\eta_2)=1$.
So $N_{L_2/F_0}(\sigma_{QP})=N_{L_1/F_0}(\eta_2\sigma_{QP})$.
\end{proof}

\subsection{Complex multiplication and quadratic forms when $m=5$} \label{SCMquadformsm5}

We continue building on the material from Section~\ref{SCMquadforms}.

\begin{corollary} \label{LmaxQP} 
Under Assumption~\ref{def:good}, suppose also that
$\lambda \equiv -1 \bmod 4 {\mathcal O}_{F_0}$. 
Suppose $q_{Q,P}(x,y) = \lambda$
for some $x, y \in F_0$ such that 
$x\gamma_Q + y \gamma_P \in {\mathrm{GU}_2}({\mathcal O}_{F_0})$. 

Then $\Sh(\RR)$ contains a point with complex multiplication by ${\mathcal O}_E$ and another point with complex multiplication 
by $\mathcal{O}_F[\sqrt{-\lambda}]$.
\end{corollary}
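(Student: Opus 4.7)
The plan is to combine \Cref{prop:QuadraticFormToCMpoints} with the pairing of solutions furnished by \Cref{lintransQP} and \Cref{maxsol}, keyed by the congruence dichotomy in \Cref{solmod2QP}. Starting from the hypothesized $(x,y)$, I pass to $(x_1,d_1)=(2x,\,y+ux)$, which lies in $\cO_{F_0}^2$ by \Cref{solmod2QP}(1). Because $\lambda\equiv -1\bmod 4\cO_{F_0}$, \Cref{solmod2QP}(3) forces $(x_1,d_1)\bmod 2\cO_{F_0}$ to equal either $(0,u)$ (case~(a)) or $(u^2,1)$ (case~(b)).

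Next, I apply \Cref{maxsol} to $(x,y)$: the companion pair $(\underline{x},\underline{y})$ again satisfies $q_{Q,P}(\underline{x},\underline{y})=\lambda$, and by \Cref{lintransQP}(1)(2) its associated $(\underline{x}_1,\underline{d}_1)$ lies in $\cO_{F_0}^2$ but falls in the opposite residue class modulo $2\cO_{F_0}$. Hence from the single hypothesized pair I extract two solutions producing matrices $M,M'\in \GU_2(\cO_{F_0})$ of determinant $\lambda$, one in each of cases~(a) and~(b).

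The case~(a) solution, together with \Cref{solmod2QP}(4), verifies the hypothesis $\tfrac12(\mathbb{I}+M)\in M_2(\cO_F)$ needed for the second part of \Cref{prop:QuadraticFormToCMpoints}, producing a point $\eta_a\in \Sh(\RR)$ with CM by $\cO_E$. The case~(b) solution, via the first part of \Cref{prop:QuadraticFormToCMpoints}, produces a point $\eta_b\in \Sh(\RR)$ with CM by $\cO_F[\sqrt{-\lambda}]$. To finish I must verify $\eta_a\neq \eta_b$; it suffices to show $\eta_b$ does not itself acquire CM by $\cO_E$. The embedding $\cO_F[\sqrt{-\lambda}]\hookrightarrow \End(A_{\eta_b})$ sends $\sqrt{-\lambda}\mapsto M'$, so an extension to $\cO_E=\cO_F\bigl[(1+\sqrt{-\lambda})/2\bigr]$ requires $\tfrac12(\mathbb{I}+M')\in M_2(\cO_F)$, which fails by \Cref{solmod2QP}(4). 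The only other embedding comes from $\Gal(E/F)$-conjugation, sending $M'\mapsto -M'$, but $\tfrac12(\mathbb{I}-M')=\mathbb{I}-\tfrac12(\mathbb{I}+M')$ is integral iff $\tfrac12(\mathbb{I}+M')$ is; thus neither embedding extends to $\cO_E$.

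The only mildly delicate point is this last distinctness argument, where one has to rule out the case~(b) abelian variety gaining CM by the maximal order through a different embedding; the two-embedding check via $\Gal(E/F)$ handles this uniformly. Everything else is direct bookkeeping with the lemmas already in place.
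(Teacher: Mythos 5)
Your proposal follows the paper's proof essentially step for step: pass to $(x_1,d_1)$, invoke the case dichotomy of \Cref{solmod2QP}(3), use \Cref{maxsol} and \Cref{lintransQP} to produce a companion solution in the opposite case, and feed both into \Cref{prop:QuadraticFormToCMpoints}. The one thing you add beyond the paper's write-up is the explicit verification that $\eta_b$ does not secretly acquire CM by the full $\cO_E$ through either embedding $\sqrt{-\lambda}\mapsto \pm M'$ (using that $\End^0_{\cO_F}(A_{\eta_b})\cong E$ is a field, so those are the only candidates); the paper leaves the distinctness of the two points implicit, and your check is a correct and worthwhile addition given that ``another'' must mean a genuinely different point for the downstream application in \Cref{CMbyorderQP}.
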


\begin{proof}
Let $z$ be the point of $\bH$ fixed by $x\gamma_Q + y \gamma_P$.
By Lemma~\ref{solmod2QP}, the hypotheses
$q_{Q,P}(x,y) = \lambda$ and 
$\lambda \equiv -1 \bmod 4 {\mathcal O}_{F_0}$
imply that 
either (a) $(x_1,d_1) \equiv (0,u) \bmod 2{\mathcal O}_{F_0}$
or (b) $(x_1,d_1) \equiv (u^2,1) \bmod 2{\mathcal O}_{F_0}$.

Let $\underline{x}$ and $\underline{y}$ be as in Proposition~\ref{maxsol}.
Let $\underline{z}$ be the point of $\bH$ fixed 
by $\underline{x}\gamma_Q + \underline{y} \gamma_P$. 
By Proposition~\ref{maxsol}, 
$q_{Q,P}(\underline{x},\underline{y}) = \lambda$ as well.
By Lemmas~\ref{solmod2QP} and \ref{lintransQP}, 
$\underline{x}\gamma_Q + \underline{y} \gamma_P \in {\mathrm{GU}}_2({\mathcal O}_{F_0})$, and 
$(\underline{x}_1,\underline{d}_1)$ has case (a) exactly when
$(x_1, d_1)$ has case (b).

By Proposition~\ref{prop:QuadraticFormToCMpoints},
$z$ has CM by ${\mathcal O}_E$
when $\frac{1}{2}({\mathrm{Id}}+x_0\gamma_Q + y_0 \gamma_P) 
\in {\mathrm{GU}}_2({F_0})\cap M_2(\mathcal{O}_{F})$;
otherwise, it has CM by 
$\mathcal{O}_F[\sqrt{-\lambda}]$.
By Proposition~\ref{solmod2QP}(4), the former happens in case (a) and the latter in case (b).
Thus exactly one of $z$ and $\underline{z}$
has CM by ${\mathcal O}_E$
and the other has CM by 
$\mathcal{O}_F[\sqrt{-\lambda}]$.
\end{proof}

\subsection{Existence of real CM points on $M[11]$}

\begin{proposition}\label{qrep}
Let $\lambda$ be a totally positive irreducible element of $\mathcal{O}_{F_0}$, and $\tilde{L}/F_0$ as in Lemma~\ref{tildeL}.
\begin{enumerate}
\item	Then $\lambda$ is representable by both $q_{Q,P}(x,y)$ and $q^\tau_{Q,P}(x,y)$ if and only if the ideal $\langle\lambda\rangle$ of $\mathcal{O}_{F_0}$ splits completely in $\tilde{L}$  as a product of non-principal ideals. 
	
\item Assume $\lambda\equiv -1\bmod 4 \mathcal{O}_{F_0}$ and $\lambda$ is representable by both $q_{Q,P}(x,y)$ and $q^\tau_{Q,P}(x,y)$.  
Then there exist $x,y, x',y'\in F_0$ satisfying $q_{Q,P}(x, y)=\lambda$
and $q^\tau_{Q,P}(x', y')=\lambda$;
furthermore, with notation as in Lemma~\ref{solmod2QP}, 
($x_1=2x$, $d_1 = y+ux$, $x_1' = 2x'$, and $d_1' = y' + u^\tau x'$)
then $x_1,d_1, x_1',d_1'\in\mathcal{O}_{F_0}$ and
$(x_1,d_1)\equiv (0,u) \bmod 2{\mathcal O}_{F_0}$, $(x_1',d_1') \equiv  (0,u^\tau) \bmod 2{\mathcal O}_{F_0}$.
\end{enumerate} 
\end{proposition}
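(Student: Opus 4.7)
The plan is to translate representability into norm equations over the quartic fields $L_1 = F_0(\sqrt[4]{5})$ and $L_2 = F_0(i\sqrt[4]{5})$ and then read off the condition in the biquadratic compositum $\tilde{L} = L_1 L_2$, using the structural data supplied by Lemmas \ref{LclassgroupQR} and \ref{tildeL}.

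For part (1), I would first apply Proposition \ref{qasN}(1) together with the integrality criterion of Lemma \ref{solmod2QP}(1) to recast $q_{Q,P}(x,y) = \lambda$ as the equation $N_{L_1/F_0}(\sigma_1) = \lambda u^\tau$ for some $\sigma_1 = x_1 + \sqrt[4]{5}\,d_1 \in \mathcal{O}_{L_1}$; applying the nontrivial $\tau \in \Gal(F_0/\QQ)$ throughout produces the analogous equation $N_{L_2/F_0}(\sigma_2) = \lambda u$ for representability by $q^\tau_{Q,P}$. Since $L_1$ and $L_2$ have class number one (Lemma \ref{LclassgroupQR}(1)) and $\lambda$ is coprime to the relative discriminants (supported above $2$ and $5$), the existence of an integral norm of the correct form separates into (i) the ideal-splitting condition that $\lambda$ splits in $L_j/F_0$, and (ii) the vanishing of a unit-coset obstruction in $\mathcal{U}_{F_0}/N_{L_j/F_0}(\mathcal{U}_{L_j})$. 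Because $\tilde{L} = L_1 L_2$, the standard decomposition-group argument for biquadratic extensions shows that $\lambda$ splits in both $L_1$ and $L_2$ iff it splits completely in $\tilde{L}$. The main step is to identify the unit obstruction with the principality condition: using Lemma \ref{LclassgroupQR}(2)--(3) one computes $N_{L_1/F_0}(\mathcal{U}_{L_1}) = \langle -1, u^2\rangle$, so the obstruction lives in $\ZZ/2$ and $u^\tau = -u^{-1}$ represents the nontrivial class. Since $\tilde{L}$ is totally imaginary over $F_0$, every norm from $\tilde{L}$ is totally positive, and by Lemma \ref{tildeL}(2) every totally positive unit of $F_0$ is itself a norm from $\tilde{L}$; a direct computation then shows that the obstruction vanishes (i.e.\ $v$ is in the trivial class) precisely when a prime $\mathfrak{P}$ of $\tilde{L}$ above $\lambda$ is principal. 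The symmetric argument via $L_2$ yields the same non-principality condition, since $\Gal(\tilde{L}/F_0)$ acts trivially on $\cl(\tilde{L}) \cong \ZZ/2$ and hence all primes of $\tilde{L}$ above $\lambda$ share a common class.

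For part (2), given $\lambda \equiv -1 \bmod 4\mathcal{O}_{F_0}$ and representability, I fix any integral solution $(x_1, d_1) \in \mathcal{O}_{F_0}^2$ to $q_{Q,P} = \lambda$; Lemma \ref{solmod2QP}(3) forces $(x_1, d_1) \equiv (0, u)$ or $(u^2, 1) \bmod 2\mathcal{O}_{F_0}$. In the latter case, the transformation $F_{QP}$ of Lemma \ref{lintransQP} moves the solution into the $(0, u)$ class (Lemma \ref{lintransQP}(2)) while preserving integrality (Lemma \ref{lintransQP}(1)) and the value of the form (Proposition \ref{maxsol}). Applying $\tau$ componentwise to the same analysis for $q^\tau_{Q,P}$ yields the matching normalization $(x_1', d_1') \equiv (0, u^\tau) \bmod 2\mathcal{O}_{F_0}$.

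The main obstacle is identifying the unit-coset obstruction with the principality of $\mathfrak{P}$ in $\tilde{L}$ for part (1). The cleanest route uses the Hasse norm theorem for $\tilde{L}/L_1$ (cyclic of degree $2$) combined with Lemma \ref{tildeL}(2): the totally-positive-norm surjectivity kills the infinite-place obstructions, the splitting of $\lambda$ in $\tilde{L}/L_1$ trivializes the obstruction at $\lambda$, and the finite-place obstructions at primes above $2$ are controlled by the explicit unit $\eta_1$ from Lemma \ref{LclassgroupQR}(2) and its lift into $\mathcal{U}_{\tilde{L}}$.
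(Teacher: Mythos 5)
Your part~(2) is fine and matches the paper's intent: fix an integral solution, use Lemma~\ref{solmod2QP}(3) to place $(x_1,d_1)$ in one of the two residue classes, apply the $F_{QP}$ of Lemma~\ref{lintransQP} (integrality-preserving and form-preserving by Lemma~\ref{lintransQP}(1) and Proposition~\ref{maxsol}) to land in the $(0,u)$ class, and apply $\tau$ for the conjugate form.

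Part~(1), however, has two substantive problems, and the route you sketch is not the one the paper takes.

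First, the unit computation is wrong. From Lemma~\ref{LclassgroupQR}(2)--(3), $\cU_{L_1}$ is generated by $-1,u,\eta$ with $N_{L_1/F_0}(-1)=1$, $N_{L_1/F_0}(u)=u^2$, $N_{L_1/F_0}(\eta)=1$. Hence $N_{L_1/F_0}(\cU_{L_1})=\langle u^2\rangle$, not $\langle -1,u^2\rangle$, and the quotient $\cU_{F_0}/N_{L_1/F_0}(\cU_{L_1})$ is $\ZZ/2\times\ZZ/2$, not $\ZZ/2$. This matters, because your subsequent claim that the obstruction space is $\ZZ/2$ and that $u^\tau$ represents ``the'' nontrivial class is built on this error.

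Second, you have the sign reversed. You assert that the obstruction vanishes precisely when a prime $\mathfrak{P}$ of $\tilde{L}$ above $\lambda$ is \emph{principal}; but the proposition says $\lambda$ is representable precisely when the primes are \emph{non}-principal. If your chain of equivalences were correct it would prove the opposite of the statement. The correct matching is: no obstruction $\Leftrightarrow$ representable $\Leftrightarrow$ non-principal.

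Beyond these errors, the appeal to Hasse's norm theorem for $\tilde{L}/L_1$ is never carried out, and you describe no mechanism for the converse direction (producing an actual solution from the ideal-theoretic hypothesis). The paper's proof is shorter and more constructive. For the forward direction it observes that $q_{Q,P}(x,y)=\lambda$ gives $\lambda=-u\,N_{L_1/F_0}(\sigma_1)$, and since $-u\notin N_{L_1/F_0}(L_1)$ (Lemma~\ref{LclassgroupQR}(3) plus $h(L_1)=1$), this forces $\lambda\notin N_{L_1/F_0}(L_1)\supseteq N_{\tilde{L}/F_0}(\tilde{L})$; combined with Lemma~\ref{tildeL}(2) (every totally positive unit is a $\tilde{L}$-norm), this is exactly non-principality. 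For the converse it uses the concrete base solution $q_{Q,P}(1,0)=3$, the fact that $\langle 3\rangle$ splits into non-principal ideals in $\tilde{L}$, and $\cl_{\tilde{L}}\cong\ZZ/2$ to produce $\omega$ with $N_{\tilde{L}/F_0}(\omega)=3\lambda$, then divides out the known solution for~$3$. If you fix the two errors and flesh out the Hasse-norm step you might recover the same statement, but the obstruction-theoretic route is considerably heavier; I'd recommend adopting the paper's ``multiply by a known non-principal prime to trivialize the class'' device, which is where the real content lies.
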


\begin{proof}
\begin{enumerate}
\item Assume  $\lambda\in\mathcal{O}_{F_0}$ is representable by both $q_{Q,P}$ and $q^\tau_{Q,P}$. Equivalently, $\lambda, \lambda^\tau$ are both representable by $q_{Q,P}$. Hence,  by Lemma \ref{qasN}, $\langle\lambda\rangle$ and  $\langle\lambda^\tau\rangle$ both split in $L_1/F_0$. Equivalently, $\langle\lambda\rangle$ splits in both $L_1/F_0$ and $L_2/F_0$, where $L_2=F_0(i\sqrt{5})$. Since $\tilde{L}=L_1L_2$, we deduce that $\langle\lambda\rangle$ splits completely in $\tilde{L}$.    

If $\delta\in\tilde{L}$ is non-zero, then $N_{\tilde{L}/F_0}(\delta)$ is totally positive.  
By Lemma~\ref{tildeL}(2),
$\cU_{F_0}^+ =N_{\tilde{L}/F_0}(\cU_{\tilde{L}})$. 
We deduce that the ideal $\langle\lambda\rangle$ of $\mathcal{O}_{F_0}$ splits in $\tilde{L}$ as a product of non-principal ideals if and only if $\lambda\not\in N_{\tilde{L}/F_0}(\tilde{L})$.
By Lemma \ref{LclassgroupQR}(3),  $-u\not\in N_{L_1/F_0}(L_1)$; 
hence, by Lemma~\ref{qasN},  $\lambda \not\in N_{L_1/F_0}(L_1)$ and thus also $\lambda\not\in N_{\tilde{L}/F_0}(\tilde{L})$.
This completes the proof of the forward direction.

For the converse direction, note that $q_{Q,P}(1,0)=q^\tau_{Q,P}(1,0)=3$. 
Hence, $3 =-u N_{L_1/F_0}(2 + \sqrt[4]{5} u)$,
and the prime ideal $\langle3\rangle \subset \mathcal{O}_{F_0}$ splits completely in $\tilde{L}$ as a product of non-principal ideals. 

Assume $\langle\lambda\rangle$ splits completely in $\tilde{L}$ as a product of non-principal ideals. By Lemma \ref{tildeL}(1), the ideal class group of $\tilde{L}$ is isomorphic to $\mathbb{Z}/2\mathbb{Z}$, thus the ideal $\langle 3\lambda \rangle$ factors completely as a product of principal ideals. Since $3\lambda$ is totally positive, there exists $\omega \in\tilde{L}$ such that $N_{\tilde{L}/F_0}(\omega)=3\lambda$.
Let $\omega_1=N_{\tilde{L}/L_1}(\omega)$ and 
$\sigma_1=\omega_1 (1/u) (2+\sqrt[4]{5}u)^{-1}$. 
Then
\[-u N_{L_1/F_0}(\sigma_1)=-u (3 \lambda) (1/u^2) (u^\tau 3)^{-1}=\lambda.\]
A similar construction holds for $q^\tau_{Q,P}$.
Thus  $\lambda\in\mathcal{O}_{F_0}$ is representable by both $q_{Q,P}$ and $q^\tau_{Q,P}$.

\item 
This follows immediately from Lemma \ref{maxsol} and \Cref{solmod2QP}.
\end{enumerate}
\end{proof}

\begin{proposition}\label{CMbyorderQP}
Let $\lambda\in\cO_{F_0}$ be a totally positive irreducible element of $\cO_{F_0}$,
with $\lambda\equiv -1\bmod 4 \mathcal{O}_{F_0}$ and $N_{F_0/\QQ}(\lambda)\equiv 4\bmod 5$.  
Assume that $\lambda$ is representable by $q_{Q,P}(x,y)$.

Then, there are exactly two points in $\Sh(\RR)$ with complex multiplication by $\cO_F[\sqrt{-\lambda]}$, and they are both on the arch $\overset{\frown}{PQR}$.
\end{proposition}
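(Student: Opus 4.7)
The plan is to combine the class-semigroup upper bound from Remark~\ref{notmaxNT} with an explicit construction of two distinct real CM points on the arch $\overset{\frown}{PQR}$.

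To localize such points, I would first note that $\lambda$ is coprime to $\sqrt{5}$ (from $N_{F_0/\QQ}(\lambda) \equiv 4 \bmod 5$), so $\lambda \notin \langle \sqrt 5\rangle$, and by Remark~\ref{notPR} the form $q_{P,R}$ does not represent $\lambda$. Invoking the converse direction of Proposition~\ref{prop:QuadraticFormToCMpoints} sketched in Remark~\ref{CMonlyif}, every real CM point of $\Sh(\RR)$ with CM by $\cO_F[\sqrt{-\lambda}]$ must arise from a representation of $\lambda$ by one of $q_{Q,P}, q_{Q,R}, q_{P,R}$; with $q_{P,R}$ excluded, Lemma~\ref{geopartition} confines every such point to $\pi(G_{Q,P}) \cup \pi(G_{Q,R}) = \overset{\frown}{PQR}$. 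Since the hypotheses on $\lambda$ are precisely those of Theorem~\ref{Tuniquecong}, Remark~\ref{notmaxNT} provides the upper bound of two.

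For existence, I would use the hypothesized representation $\lambda = q_{Q,P}(x, y)$. Setting $(x_1, d_1) = (2x, y + ux)$ as in Lemma~\ref{solmod2QP}(1), and observing via Lemma~\ref{lintransQP}(2) that $F_{QP}$ swaps the congruence classes (a) and (b) of Lemma~\ref{solmod2QP}(3), after possibly replacing $(x, y)$ by $F_{QP}(x, y)$ I may assume case (b) holds. Corollary~\ref{LmaxQP} then produces a real CM point $\eta_1 = \pi(z_1) \in \Sh(\RR) \cap \overset{\frown}{PQR}$ with CM by $\cO_F[\sqrt{-\lambda}]$, where $z_1 \in G_{Q,P}$ is the fixed point of $x\gamma_Q + y\gamma_P$. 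The reflected pair $(x_1, -d_1)$ remains in case (b) (since $-d_1 \equiv d_1 \bmod 2\cO_{F_0}$); by Proposition~\ref{PfixedpointQP} its fixed point is $z_2 = -\overline{z_1}$, the reflection of $z_1$ across the imaginary axis, and Corollary~\ref{LmaxQP} yields a second CM point $\eta_2 = \pi(z_2)$ of the same type on $\overset{\frown}{PQR}$.

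The hard part will be establishing $\eta_1 \neq \eta_2$, i.e., that $z_1$ and $z_2$ lie in distinct $\Delta$-orbits. One approach is via the orientation of the fundamental domain: the reflection $z \mapsto -\overline{z}$ is orientation-reversing while $\Delta \subset \SL_2(\RR)/Z$ acts by orientation-preserving isometries of $\bH$, so $z_1$ and $z_2$ fall into different pieces of the fundamental domain $\mathfrak T \cup \mathfrak T'$ (with $\mathfrak T'$ the mirror image of $\mathfrak T$ across the imaginary axis through $\tilde P$). A more robust alternative is via the class semigroup: since $\lambda$ is coprime to $\sqrt 5$ and representable by $q_{Q,P}$, it splits in $L_1 = F_0(\sqrt[4]{5})$ by Lemma~\ref{qasN}, and the two representations $(x_1, \pm d_1)$ come from conjugate prime factors of $\lambda$ in $L_1$ and hence correspond to the two distinct $\cl_E$-orbits $\langle 1\rangle_R$ and $\langle 2, \sqrt{-\lambda}\rangle_R$ in $\cl_R$ from Remark~\ref{notmaxNT}, yielding non-isomorphic abelian fourfolds. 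Combined with the upper bound of two, this proves there are exactly two such real CM points on $\overset{\frown}{PQR}$.
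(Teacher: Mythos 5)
Your architecture (existence, localization to the arch, upper bound via Remark~\ref{notmaxNT}) matches the paper's, and the localization and upper-bound steps are fine. The existence step, however, has a genuine gap: the two representations $(x_1, d_1)$ and $(x_1, -d_1)$ produce the \emph{same} point of $\Sh(\RR)$, not two distinct ones.

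Concretely, by Proposition~\ref{PfixedpointQP} the fixed point $z_1$ of the matrix for $(x_1,d_1)$ lies on the geodesic $G_{QP}$, which is the semicircle of radius $|\beta_0|$ centered at $0$, and you correctly compute $z_2 = -\overline{z_1}$. But $A_P^5 = \mathrm{diag}(-1,1)$ lies in $\Delta$: in isotropic coordinates it is $X_P^5 = \bigl[\begin{smallmatrix} 0 & \beta_0 \\ \beta_0^{-1} & 0\end{smallmatrix}\bigr]$, which equals $i$ times an element of $\SL_2(\RR)$, and it lies in $\GU_2(\cO_{F_0})$. Its action on $\bH$ is $z \mapsto \beta_0^2/z = -|\beta_0|^2/z$, and since $|z_1| = |\beta_0|$ this sends $z_1$ to $-|\beta_0|^2/z_1 = -\overline{z_1} = z_2$. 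Hence $\pi(z_1)=\pi(z_2)$. Your orientation argument shows only that $z\mapsto -\overline z$ is not \emph{itself} in $\Delta$; it does not preclude an orientation-preserving element of $\Delta$ that happens to agree with the reflection on this particular circle, and $A_P^5$ is exactly such an element. Your class-semigroup alternative is unsubstantiated and likewise fails, since the two conjugate factors give the same $\Sh$-point. The paper's proof invokes Corollary~\ref{LmaxQP}, which uses the $\eta$-twist $F_{QP}$ of Lemma~\ref{lintransQP} rather than the reflection. That twist swaps cases (a) and (b) of Lemma~\ref{solmod2QP}, so the second point has CM by the maximal order $\cO_E$ rather than remaining in case (b); the two points are then distinct because their CM orders differ. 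Your construction must be replaced by this one.
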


\begin{proof}
The hypotheses imply that $\lambda$ satisfies Assumption~\ref{def:good}. 
By Corollary~\ref{LmaxQP}, 
$\Sh(\RR)$ contains a point $X$ with complex multiplication by ${\mathcal O}_E$ and another point $Y$ with complex multiplication 
by $\mathcal{O}_F[\sqrt{-\lambda}]$.
By Remarks~\ref{CMonlyif} and \ref{notPR},  
$X, Y \in \overset{{\frown}}{PQR}$.
By Theorem~\ref{Tuniquecong} and Remark~\ref{notmaxNT}, there are at most two points in $\Sh(\RR)$ with complex multiplication by $\cO_F[\sqrt{-\lambda}]$, so these must be $X$ and $Y$. Furthermore, the action of $\eta$ exchanges $\pi^{-1}(X)$ and 
$\pi^{-1}(Y)$ in $G_{QP}$.
\end{proof}

\section{Equidistribution of real CM points}  \label{Sequidistribution}

The main result of this section is Theorem~\ref{thm:infintely-many-lambda}
which implies that the set of points on the arch 
$\overset{\frown}{PQR} \subset \Sh(\mathbb{R})$
which have complex multiplication by the ring $\mathcal{O}_F[\sqrt{-\lambda}]$, 
for some totally positive irreducible element $\lambda\in\mathcal{O}_{F_0}$, is dense with respect to the Euclidean topology.

\subsection{Archimedean Character}\label{arch_sec}
In the following, for $K$ a number field, we denote by $J_K$ the id\`eles of $K$.
This is a locally compact topological group equipped with the restricted product topology.
Let $J_K^0$ denote the id\`eles of norm $1$.  Thus $J^0_K\supset K^*$.
We denote by $J_K^\infty$ the subgroup of $J_K$ consisting of those 
id\`eles having components which are units at the finite primes, 
and $1$ at the infinite places. 

Let $L=\QQ[t]/\langle t^4-5 \rangle$ be as in Lemma \ref{LclassgroupQR}.
Following \cite[XV \textsection 5 Example 3]{Lang},
we construct a homomorphism $ \psi:J_{L}\to \mathbb{S}^1 \sqcup \mathbb{S}^1$. 

\begin{notation}
The field $L$ has two real embeddings and one pair of complex embeddings.
We write 
\begin{equation} \label{ELinfinity*}
L^*_{ \infty} \simeq \RR^* \times \RR^* \times \CC^*,
\end{equation}
where the first real embedding is $t\mapsto+\sqrt[4]{5}$,
the second real embedding
is $t\mapsto-\sqrt[4]{5}$, and the complex embedding is $t\mapsto \pm i\sqrt[4]{5}$ (and we pick $t\mapsto i\sqrt[4]{5}$ for the isomorphism).
In particular, we embed the field $L$ in $L^*_{ \infty}$ via the diagonal.

Let $\tilde{L}$ be the splitting field of $t^4-5$ as in Lemma \ref{tildeL}. We identify $\tilde{L}=\QQ(\sqrt[4]{5}, i)$.
We write 
\begin{equation} \label{EtildeLinfinity*}
    \tilde{L}_\infty^*\simeq \CC^*\times \CC^* \times \CC^*\times \CC^*,
\end{equation} 
where the first embedding is $i \mapsto i$ and $\sqrt[4]{5}\mapsto \sqrt[4]{5} $, the second embedding is $i\mapsto i$ and $\sqrt[4]{5}\mapsto -\sqrt[4]{5}$, the third embedding is $i\mapsto i$ and $\sqrt[4]{5}\mapsto i\sqrt[4]{5}$, and the last embedding is $i\mapsto i$ and $\sqrt[4]{5}\mapsto -i\sqrt[4]{5}$. 

We identify $L_1=\QQ(\sqrt[4]{5})$ and $L_2=\QQ(i\sqrt[4]{5})$ as two subfields of $\tilde{L}$ both isomorphic to $L$. We fix isomorphisms $L \simeq L_1$ (resp.\ $L \simeq L_2$) by setting $t\mapsto \sqrt[4]{5}$ (resp.\ $t \mapsto i\sqrt[4]{5}$). 
We implicitly use these isomorphisms to identify $L$ with $L_1$ and $L_2$.
\end{notation}

Consider the homomorphism 
\begin{equation} 
\varphi:L_{\infty}^*\to \RR^*, \, (d_1,x_1,z)\mapsto x_1/d_1.
\end{equation}

Note that $\varphi(-1)=1$ and $\varphi(u)=1$.
Let $\varepsilon=\varphi (\eta)$, 
with $\eta$ as defined in Lemma~\ref{LclassgroupQR}.
Then $\varepsilon \in \RR^+$, 
and we identify $\mathbb{S}^1 \sqcup \mathbb{S}^1\simeq\RR^*/\varepsilon^\mathbb{Z}$, 
as topological spaces with the Euclidean topology.  
The homomorphism $\varphi$ induces a surjective homomorphism \begin{equation}\label{phi2} \phi:L_{\infty}^*/\U_{L}\to \mathbb{S}^1 \sqcup \mathbb{S}^1.\end{equation}

Because $L$ has class number 1, the natural injection $L_{\infty}^* \hookrightarrow J_{L}$ induces a canonical isomorphism $j:L_{\infty}^*/\U_{L}\simeq J_{L}/L^*J_{L}^{\infty}$.
Let $\pi_{\infty}:J_{L}\to J_{L}/L^*J_{L}^{\infty}$ denote the natural projection.

\begin{definition}
Define $\psi:J_{L}\to \mathbb{S}^1 \sqcup \mathbb{S}^1$ as 
\begin{equation}
\label{psi}
\psi=\phi\circ j^{-1}\circ \pi_{\infty}.
\end{equation}

\begin{center}
\begin{tikzcd}
\psi: &    J_L \arrow[r, two heads ,"\pi_{\infty}" '] & J_L/L^*J_L^{\infty} \arrow[r, "j^{-1}"', "\simeq"] & L^*_\infty /\U_L \arrow [r, two heads ,"\phi" '] & \mathbb{S}^1 \sqcup \mathbb{S}^1.
\end{tikzcd}
\end{center}

\end{definition}

Define $\Psi: J_{\tilde{L}} \to (\mathbb{S}^1\sqcup \mathbb{S}^1)\times (\mathbb{S}^1\sqcup \mathbb{S}^1)$ as
\begin{equation}\label{Psi}
\Psi= \left(\psi\times \psi\right)\circ ({ N}_{\tilde{L}/L_1}\times { N}_{\tilde{L}/L_2}): J_{\tilde{L}} \to J_{L}\times J_{L}\to (\mathbb{S}^1\sqcup \mathbb{S}^1)\times  (\mathbb{S}^1\sqcup \mathbb{S}^1).
\end{equation}

\begin{lemma}\label{arch}
\begin{enumerate}
    \item 
Let $\psi: J_{L}\to \mathbb{S}^1\sqcup \mathbb{S}^1$ be as in \eqref{psi}. Then $\psi$ is a continuous homomorphism such that $\psi(J_{L}^0)=\mathbb{S}^1\sqcup \mathbb{S}^1$ and $\mathrm{Ker}(\psi)$ contains $L^*$. 

\item 
Let $\Psi: J_{\tilde{L}} \to (\mathbb{S}^1\sqcup \mathbb{S}^1)\times (\mathbb{S}^1\sqcup \mathbb{S}^1)$ be as in \eqref{Psi}. 
Then $\Psi$ is a continuous homomorphism such that $\Psi(J_{\tilde{L}})=\Psi(J_{\tilde{L}}^0)=\mathbb{S}^1\times \mathbb{S}^1$ (here both $\mathbb{S}^1$ correspond to $\RR^+$ in the identification $\mathbb{S}^1\sqcup \mathbb{S}^1\simeq \RR^*/\varepsilon^\ZZ$) and 
$\mathrm{Ker}(\Psi)$ contains $\tilde{L}^*$.
\end{enumerate}
\end{lemma}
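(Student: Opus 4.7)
The plan for part (1) is to handle the structural assertions by unwinding the definition of $\psi$, then reduce the surjectivity claim to an elementary construction inside $L_\infty^*$. The map $\psi$ is by construction a composition of three continuous group homomorphisms: the quotient projection $\pi_\infty$; the inverse of $j$, which is a topological isomorphism precisely because $L$ has class number $1$ so $J_L = L^* L_\infty^* J_L^\infty$; and $\phi$, which is continuous by construction from the continuous $\varphi$. The inclusion $L^* \subset \mathrm{Ker}(\psi)$ is immediate from $L^* \subset L^* J_L^\infty = \mathrm{Ker}(\pi_\infty)$.

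For surjectivity of $\psi|_{J_L^0}$, my plan is to first argue $\pi_\infty(J_L^0) = \pi_\infty(L_\infty^* \cap J_L^0)$. Any $\alpha \in J_L^0$ decomposes as $\alpha = \beta \gamma \delta$ with $\beta \in L^*$, $\gamma \in L_\infty^*$, $\delta \in J_L^\infty$; the product formula forces $|\beta| = 1$, and since $\delta$ has unit components at every finite place one also has $|\delta| = 1$, so $\gamma \in L_\infty^* \cap J_L^0$. It then suffices to exhibit, for every $x \in \RR^*$, a triple $(d_1, x_1, z) \in L_\infty^*$ of id\`elic norm $1$ with $\varphi(d_1, x_1, z) = x_1/d_1 = x$: take $d_1 = 1$, $x_1 = x$, and $z \in \CC$ with $z \bar z = 1/|x|$.

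For part (2), continuity and the homomorphism property of $\Psi$ follow from part (1) and the continuity of $N_{\tilde L/L_i}: J_{\tilde L} \to J_{L_i}$. The kernel containment $\tilde{L}^* \subset \mathrm{Ker}(\Psi)$ is immediate from $N_{\tilde L/L_i}(\tilde L^*) \subset L_i^* \subset \mathrm{Ker}(\psi)$. The substantive content is the image computation, and I expect the main obstacle to be the bookkeeping of which archimedean place of $\tilde L$ lies above which place of $L_i$ under the identifications \eqref{ELinfinity*} and \eqref{EtildeLinfinity*}; once this is nailed down the analysis is elementary.

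The key input is that $\tilde L = \QQ(\sqrt[4]{5}, i)$ contains $i$ and thus has four complex archimedean places and no real ones, while each $L_i$ has signature $(2,1)$. For the quadratic extension $\tilde L / L_i$, each of the two real places of $L_i$ lifts to a unique complex place of $\tilde L$, and the complex place of $L_i$ splits into two. Consequently, at each archimedean place above a real place of $L_i$ the local norm $z \mapsto z \bar z$ lands in $\RR^+$, so $N_{\tilde L/L_i}(\alpha)$ has its first two archimedean components in $\RR^+$; this forces $\varphi(N_{\tilde L/L_i}(\alpha)) \in \RR^+$ and hence $\Psi(J_{\tilde L}) \subset \mathbb{S}^1 \times \mathbb{S}^1$, where both $\mathbb{S}^1$ correspond to $\RR^+/\varepsilon^{\mathbb{Z}}$. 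For the reverse inclusion restricted to $J_{\tilde L}^0$, given $(x, y) \in \RR^+ \times \RR^+$ I will use \eqref{EtildeLinfinity*} to identify which two of the four components of $\tilde L_\infty^* = (\CC^*)^4$ feed into $\varphi \circ N_{\tilde L/L_1}$ and which two feed into $\varphi \circ N_{\tilde L/L_2}$; then I choose the absolute values of the four $\CC^*$-entries so that the two ratios of squared moduli are $x$ and $y$, and finally rescale (using the remaining freedom, e.g.\ a single $|a|$) to ensure id\`elic norm $1$.
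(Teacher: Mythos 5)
Your proposal follows the paper's own argument almost step for step. For part (1) the paper treats everything as "clear from the construction"; you spell out the reduction $\pi_\infty(J_L^0)=\pi_\infty(L_\infty^*\cap J_L^0)$ via the class-number-one decomposition $J_L=L^*L_\infty^*J_L^\infty$, the product formula, and a direct choice of $(d_1,x_1,z)$ of id\`elic norm one — a fine elaboration. For part (2) the substantive content in both your plan and the paper's proof is the surjectivity of $\Phi=(\varphi\times\varphi)\circ(N_{\tilde L/L_1}\times N_{\tilde L/L_2})$ on $\tilde L_\infty^*$, using \eqref{ELinfinity*} and \eqref{EtildeLinfinity*} to track which of the four $\CC^*$-factors of $\tilde L_\infty^*$ feed into which factor of $L_\infty^*\times L_\infty^*$. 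The paper writes out the formula $\Phi(a,b,c,d)=\bigl(\tfrac{bh(b)}{ah(a)},\tfrac{ch(c)}{dh(d)}\bigr)$; you defer the bookkeeping but it's the same computation, and the rescaling you mention (there are two free moduli $|a|,|d|$ once $|b|/|a|$ and $|c|/|d|$ are fixed) does suffice to arrange norm one.

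One step in your plan is not actually correct as stated. You deduce $\Psi(J_{\tilde L})\subset\mathbb{S}^1\times\mathbb{S}^1$ from the fact that the first two archimedean components of $N_{\tilde L/L_i}(\alpha)$ are positive, "forcing $\varphi(N_{\tilde L/L_i}(\alpha))\in\RR^+$." But $\psi$ is not evaluated by applying $\varphi$ to the archimedean component of an id\`ele: it first projects $\beta\in J_L$ to $J_L/L^*J_L^\infty$ and then pulls back along $j$, so for a representative $\gamma\in L_\infty^*$ one has $\gamma=\beta_\infty\cdot\ell_\infty^{-1}$ with $\ell\in L^*$ generating the fractional ideal attached to the finite part of $\beta$. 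The sign of $\varphi(\ell_\infty)$ is not controlled by $\beta_\infty$, so positivity of $\beta_\infty$ at the real places does not by itself give $\psi(\beta)\in\RR^+/\varepsilon^\ZZ$ — it only does so when $\beta$ itself lies in $L_\infty^*$ (which is all the paper actually computes). In id\`ele-class terms, the sign of $\psi$ is the character of the order-$2$ narrow class group of $L$, and the containment $\Psi(J_{\tilde L})\subset\mathbb{S}^1\times\mathbb{S}^1$ is equivalent to that character vanishing on $N_{\tilde L/L}(J_{\tilde L})$, i.e.\ to $\tilde L$ being the narrow Hilbert class field of $L$ — an arithmetic fact about $\tilde L/L$, not a purely archimedean one. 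Your other steps, and in particular the surjectivity argument that the application in Theorem~\ref{thm:infintely-many-lambda} actually relies on, match the paper.
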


\begin{proof}
All the statements are clear from the construction except for the equality
$\Psi(J_{\tilde{L}})=\Psi(J_{\tilde{L}}^0)=\mathbb{S}^1\times \mathbb{S}^1$.
To verify it, 
it suffices to observe the surjectivity of the map   $${\Phi}=\left( \varphi\times \varphi\right)\circ ({N}_{\tilde{L}/L_1}\times { N}_{\tilde{L}/L_2}):\tilde{L}^*_{\infty}\to L^*_{\infty}\times L^*_{\infty}\to \mathbb{S}^1\times \mathbb{S}^1.$$
Let $h$ denote complex conjugation on $\CC$. 
Under the identifications in \eqref{ELinfinity*} and \eqref{EtildeLinfinity*},
\[
\tilde{L}^*_{\infty}\simeq \CC^*\times \CC^*\times \CC^*\times \CC^*
\text{ and }
L^*_{\infty}\times L^*_{\infty} \simeq (\RR^*\times\RR^*\times \CC^*)\times (\RR^*\times\RR^*\times\CC^*),\] 
the map $({N}_{\tilde{L}/L_1}\times {N}_{\tilde{L}/L_2})$
is given, for $a,b,c,d\in \CC^*$, by
\[({N}_{\tilde{L}/L_1}\times {N}_{\tilde{L}/L_2}) (a,b,c,d)
=((a h({a}), bh({b}), ch({d})), (dh({d}),ch({c}),ah({b}))).\] Hence 
\[{\Phi}(a,b,c,d)= \left(\frac{bh({b})}{ah({a})}, \frac{ch({c})}{d h({d})}\right).\qedhere \]
\end{proof}

From now on, we use $\Psi$ to denote the map $J_{\tilde{L}} \to \mathbb{S}^1\times \mathbb{S}^1$.

\subsection{Non-archimeadean Character}

We consider some quadratic extensions of $F_0$, namely 
$K_1=F$, $K_2=F_0(\sqrt{u})$, $K_3=F_0(i)$, $K_4=L_1$, 
and
$K_\pi=F_0(\sqrt{\pi})$ for a totally positive irreducible element
$\pi\in\cO_{F_0}$.
Let $C_2=\{1,-1\}$ be the cyclic group of order two.
We denote Artin's reciprocity map for $K_?$ by
$r_?:J_{F_0}\to \mathrm{Gal}(K_?/F_0)\simeq C_2$. 

For $S$ a finite set of totally positive irreducible elements of $\cO_{F_0}$,  
we define
\begin{equation} \label{ChiS}
\Chi_{S}=\chi \times\chi_{S}:J_{F_0}\to C_2^4\times C_2^{|{S}|}\simeq C_2^{|{S}|+4},
\end{equation}
where $\chi=r_1\times r_2 \times r_3 \times r_4$ and 
$\chi_{S}=  \prod_{s\in {S}} r_s.$

\begin{lemma}\label{narch}
Let $\lambda \in \mathcal{O}_{F_0}$ be a totally positive irreducible element, 
such that $\lambda\neq 2,  u\sqrt{5}$. 
Assume $\lambda,  u\sqrt{5} \not\in {S}$, and ${S}={S}^\tau$.
Denote by $\kappa_0(\lambda)\in J_{F_0}$ the element with entry $\lambda$ at the place $\langle \lambda\rangle$, and $1$ everywhere else. Then,

\begin{enumerate}
\item $\chi(\kappa_0(\lambda))=(-1,-1,1,1)$ if and only if 
$\lambda \equiv -1, -(3\pm \sqrt{5})/2 \bmod 4 \cO_{F_0}$, 
 $N_{F_0/\QQ}(\lambda)\equiv 4\bmod5$, and
$\lambda$ is completely split in $\tilde{L}$.

\item  $\Chi_{S}(\kappa_0(\lambda))=((-1,-1,1,1), \mathbb{1} )$ if and only if $\chi(\kappa_0(\lambda))=(-1,-1,1,1)$ and all $s\in {S}$ split in both $F_0(\sqrt{-\lambda})/F_0$ and $F_0(\sqrt{-\lambda^\tau})/F_0$.
\end{enumerate}
\end{lemma}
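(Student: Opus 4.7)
The plan is to translate each component of $\Chi_{S}(\kappa_0(\lambda))$ into a Legendre symbol via local Hilbert symbols, then match the resulting conditions against (A) $\lambda\equiv -1,-(3\pm\sqrt5)/2\bmod 4\cO_{F_0}$, (B) the norm condition, and (C) the splitting condition using Lemmas \ref{quadrec5} and \ref{QR}. For each quadratic extension $K=F_0(\sqrt a)/F_0$, the idele $\kappa_0(\lambda)$ has trivial components away from $\langle\lambda\rangle$, so global Artin reciprocity together with the product formula for the Hilbert symbol yields
\[
r_K(\kappa_0(\lambda)) \;=\; (a,\lambda)_{\langle\lambda\rangle} \;=\; \left(\tfrac{a}{\lambda}\right),
\]
since $\lambda$ is a uniformizer of odd norm coprime to $a$ (guaranteed by $\lambda\neq 2,u\sqrt 5$). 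Applied to $K_1=F$, Lemma \ref{quadrec5}(3) gives $r_1=-1$ iff $N_{F_0/\QQ}(\lambda)\equiv 4\bmod 5$, which is condition (B). Applied to $K_3=F_0(i)$ and $K_4=L_1=F_0(\sqrt[4]{5})$: since $\tilde L/F_0$ has Galois group $C_2\times C_2$ with quadratic subfields $K_3,L_1,L_2$, and $\lambda$ is unramified in $\tilde L$, its Frobenius is trivial iff it is trivial in both $\mathrm{Gal}(K_3/F_0)$ and $\mathrm{Gal}(L_1/F_0)$; this translates to $r_3=r_4=1$, which is exactly condition (C).

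The remaining ingredient is the pair $(r_3,r_2)$. The forward implication is immediate from Lemma \ref{quadrec5}(1)-(2): condition (A) forces $\left(\tfrac{-1}{\lambda}\right)=1$ and $\left(\tfrac{u}{\lambda}\right)=-1$. For the converse, I would apply Hasse reciprocity \eqref{EflipQR} with $a\in\{-1,u\}$: because $\lambda$ is totally positive the infinite-place contributions vanish and because $-1$, $u$ are units the Legendre symbols $\left(\tfrac{\lambda}{-1}\right)$, $\left(\tfrac{\lambda}{u}\right)$ are trivial, so each Legendre symbol equals the quadratic Hilbert symbol at the unique dyadic place $\nu$ of $F_0$ (unique since $2$ is inert in $F_0$). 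These Hilbert symbols depend only on $\lambda\bmod 4\cO_{F_0}$ (Hensel) and, modulo the totally positive units $\cU_{F_0}^+\bmod 4\cO_{F_0}$, descend to a pair of characters on a group of order $4$. A direct inspection of representatives shows that the unique coset yielding $(r_3,r_2)=(1,-1)$ is the orbit of $-1$, which is precisely the three residues listed in (A). Assembling the four components finishes part (1).

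Part (2) proceeds by the same local analysis: $r_s(\kappa_0(\lambda))=\left(\tfrac{s}{\lambda}\right)$ for each $s\in S$. Part (1) has already forced $\lambda$ to satisfy Assumption \ref{def:good}, so Lemma \ref{QR} applied to the totally positive prime $s$ (coprime to $\lambda$ since $\lambda\notin S$) gives $\left(\tfrac{s}{\lambda}\right)=\left(\tfrac{-\lambda}{s}\right)$, which equals $1$ iff $-\lambda$ is a square modulo $s$, i.e.\ iff $s$ splits in $F_0(\sqrt{-\lambda})/F_0$. The symmetry $S=S^\tau$ is then exploited: for $s\in S$ one also has $s^\tau\in S$, and $r_{s^\tau}(\kappa_0(\lambda))=1$ is equivalent, after applying $\tau$, to $s$ splitting in $F_0(\sqrt{-\lambda^\tau})/F_0$. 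Hence $\chi_S(\kappa_0(\lambda))=\mathbb{1}$ is equivalent to the stated joint splitting condition.

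The main obstacle is the converse in part (1), namely the verification that $(r_3,r_2)=(1,-1)$ restricts $\lambda\bmod 4\cO_{F_0}$ to precisely the three residues in (A). This is a finite but delicate case-check of the dyadic Hilbert symbol, relying on the inertness of $2$ in $F_0$, on the Hensel-type fact that the symbol on units factors through $(\cO_{F_0}/4\cO_{F_0})^*$, and on the action of the totally positive unit group $\langle u^2\rangle$ modulo $4\cO_{F_0}$. Everything else in the proof is essentially a packaging of Lemmas \ref{quadrec5}, \ref{QR}, and the explicit structure of $\mathrm{Gal}(\tilde L/F_0)$.
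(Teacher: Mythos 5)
Your proof is correct and takes essentially the same approach as the paper: both translate each component of $\chi$ into a Legendre symbol at $\lambda$ and then invoke Lemmas~\ref{quadrec5} and~\ref{QR} together with the structure of $\Gal(\tilde L/F_0)\simeq C_2\times C_2$. The one place where you are more explicit than the paper's proof is the converse of Lemma~\ref{quadrec5}(2): the paper simply cites Lemma~\ref{quadrec5} for the equivalence of the first three entries with conditions (A) and (B), whereas you correctly observe that the direction ``$(r_2,r_3)=(-1,1)\Rightarrow$ (A)'' needs the (finite) dyadic Hilbert-symbol check showing the fiber over $(1,-1)$ is exactly the coset of $-1$ modulo the totally positive units; this is a genuine (if routine) verification that the paper leaves implicit.
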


\begin{proof}
\begin{enumerate}
\item 
By Lemma \ref{quadrec5},
the first three entries of $\chi(\kappa_0(\lambda))$ are $(-1,-1,1)$ 
if and only if 
$\lambda \equiv -1, -(3\pm \sqrt{5})/2 \bmod 4 \cO_{F_0}$ and
$N_{F_0/\QQ}(\lambda)\equiv 4\bmod5$. 
By definition, the last two entries of $\chi(\kappa_0(\lambda))$ are $(1,1)$ if and only if $\lambda$ is completely split in $\tilde{L}/F_0$.

\item By definition, $\Chi_S(\kappa_0(\lambda))=((-1,-1,1,1), \mathbb{1} )$ if and only if $ \phi(\kappa_0(\lambda))=(-1,-1,1,1)$ and  
the prime $\langle \lambda \rangle$ splits in $F_0(\sqrt{s})/F_0$, 
for each $s\in {S}$. 

Since $\lambda \equiv -1, -(3\pm \sqrt{5})/2 \bmod 4 \cO_{F_0}$,
by Lemma \ref{QR}, for each $s\in{S}$, the ideal $\langle \lambda \rangle$ splits in $F_0(\sqrt{s})/F_0$ if and only if 
 $s$ splits in the extension $F_0(\sqrt{-\lambda})/F_0$. In particular, $s^\tau\in S$ splits in  $F_0(\sqrt{-\lambda})/F_0$ and thus $s$ splits in $F_0(\sqrt{-\lambda^\tau})/F_0$. 
 
Note that for 
$\lambda \equiv -1, -(3\pm \sqrt{5})/2 \bmod 4 \cO_{F_0}$,
the prime $2$ is unramified in $F_0(\sqrt{-\lambda})/F_0$; direct computations show that $2$ can be either split or inert. \qedhere
\end{enumerate}
\end{proof}

\subsection{Equidistribution theorem}

We deduce the following from \cite[XV \textsection 5, Theorem 6]{Lang}.

\begin{theorem}\label{thm:infintely-many-lambda}
	Given any finite set $\mathcal{S}$ of prime ideals of $F_0$ , with $\langle u\sqrt{5}\rangle \notin \mathcal{S}$ and $\mathcal{S}^\tau=\mathcal{S}$,
	there exists a set $\Lambda$ of totally positive irreducible elements of $\mathcal{O}_{F_0}$ such that
	\begin{enumerate}
	\item for any $\lambda \in \Lambda$:
	\begin{itemize}
		\item $\lambda \ne \lambda^\tau$;
	 \item $N_{F_0/\QQ} (\lambda) \equiv 4 \bmod 5$ and $\lambda\equiv -1\bmod 4 \cO_{F_0}$;
		\item  $\lambda$ is representable by both the quadratic forms $q_{Q,P}$ and $q^\tau_{Q,P}$ 
				 \item each prime ideal $s \in \mathcal{S}$ splits in the 
                 extensions $F_0(\sqrt{-\lambda})/F_0$ and $F_0(\sqrt{-\lambda^\tau})/F_0$;
		 \end{itemize}
		 \item 
        For any $a,b \in (-\sqrt[4]{5}, \sqrt[4]{5}) \subset \mathbb{R}$, 
        and $\epsilon>0$:
		there exists $\lambda_0 \in \Lambda$ and $d_1,x_1,d_2,x_2 \in \mathcal{O}_{F_0}$ such that 
		 \begin{itemize}
		  \item with notation from \eqref{EdiagqQP},\footnote{
          With a slight abuse of notation, we write $q_{Q,P}(d_1,x_1)$
          rather than $q_{Q,P}(x,y)$ with 
          $x=x_1/2$ and $y=d_1 - ux_1/2$.
          }
          $q_{Q,P}(d_1,x_1)=\lambda_0$, and  
          $q_{Q,P}(d_2,x_2)=\lambda_0^{\tau}$; and 
		
		 \item $|x_1/d_1 - a| \le \epsilon$, and $|x_2/d_2 - b| \le \epsilon$. 
		
		  \end{itemize}
	\end{enumerate}
\end{theorem}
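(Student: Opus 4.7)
The plan is to apply Hecke's equidistribution theorem (\cite[XV \textsection 5, Theorem 6]{Lang}) to a Hecke character on $J_{\tilde{L}}$ that combines the non-archimedean character $\Chi_S$ from \eqref{ChiS} with the archimedean character $\Psi$ from \eqref{Psi}.  Specifically, I will consider
\[\Xi := \Psi \times \bigl(\Chi_S \circ N_{\tilde{L}/F_0}\bigr) \colon J_{\tilde{L}} \to (\mathbb{S}^1 \sqcup \mathbb{S}^1)^2 \times C_2^{|S|+4}.\]
By Lemmas~\ref{arch}(2) and \ref{narch}, $\Xi$ is continuous and trivial on $\tilde{L}^*$.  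Its image on $J_{\tilde{L}}^0$ surjects onto $\mathbb{S}^1 \times \mathbb{S}^1$ in the archimedean factor (each $\mathbb{S}^1$ being the $\RR^+/\varepsilon^\ZZ$ component) and contains the non-archimedean target $((-1,-1,1,1),\mathbb{1})$; the final $(1,1)$ in the first tuple is automatic because $K_3,K_4 \subset \tilde{L}$, so any prime of $\tilde{L}$ of residue degree one over $F_0$ lies above a prime of $F_0$ that is completely split in $\tilde{L}$.  Hecke's theorem then produces infinitely many primes $\mathfrak{P}$ of $\tilde{L}$ whose image under $\Xi$ is arbitrarily close to any chosen target.

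For the non-archimedean target $((-1,-1,1,1),\mathbb{1})$, Lemma~\ref{narch}(2) ensures the prime $\langle\lambda\rangle = N_{\tilde{L}/F_0}(\mathfrak{P}) \cap \cO_{F_0}$ satisfies $N_{F_0/\QQ}(\lambda) \equiv 4 \bmod 5$, lies in one of the classes $-1$ or $-(3\pm\sqrt{5})/2 \bmod 4\cO_{F_0}$, is completely split in $\tilde{L}$ (whence by \Cref{qrep} both $\lambda$ and $\lambda^\tau$ are represented by $q_{Q,P}$), and each $s\in\mathcal{S}$ splits in both $F_0(\sqrt{-\lambda})/F_0$ and $F_0(\sqrt{-\lambda^\tau})/F_0$.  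Replacing $\lambda$ by $u^{2n}\lambda$ for an appropriate $n \in \ZZ$ enforces $\lambda \equiv -1 \bmod 4\cO_{F_0}$, because the three listed congruence classes form a single orbit under $\cU_{F_0}^+ = u^{2\ZZ}$; this replacement fixes the ideal $\langle\lambda\rangle$ and, via the rescaling $(d_1,x_1) \mapsto (u^n d_1, u^n x_1)$, preserves the geodesic ratio $t = x_1/d_1$.  The inequality $\lambda \ne \lambda^\tau$ is enforced by excluding the density-zero family of inert rational primes, whose $\Psi$-values lie on a Galois-invariant closed subset incompatible with generic archimedean targets.

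For the archimedean condition, \Cref{qasN} identifies a solution of $q_{Q,P}(d_1,x_1)=\lambda$ with a generator $\sigma_1 = x_1 + \sqrt[4]{5}d_1$ of the ideal $N_{\tilde{L}/L_1}(\mathfrak{P}) \subset \cO_{L_1}$ (principal since $L_1$ has class number $1$ by \Cref{LclassgroupQR}) satisfying $-u\,N_{L_1/F_0}(\sigma_1) = \lambda$.  The archimedean character $\varphi(\sigma_1) = (x_1 - \sqrt[4]{5}d_1)/(x_1 + \sqrt[4]{5}d_1)$ is a homeomorphism of $t \in (-\sqrt[4]{5},\sqrt[4]{5})$ onto $\RR^- \subset \RR^*$, and its composition with the quotient $\RR^-/\varepsilon^\ZZ \simeq \mathbb{S}^1$ identifies this circle with $(-\sqrt[4]{5},\sqrt[4]{5})$ modulo the hyperbolic isometry action of $\eta$ (\Cref{Lhypmidpoint}, \Cref{lintransQP}).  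Therefore an approximation of the image of $a$ in $\mathbb{S}^1$ by $\psi(N_{\tilde{L}/L_1}(\mathfrak{P}))$ lifts, upon selecting the representative $\sigma_1 \eta^m$ in the fundamental domain $(-\sqrt[4]{5},\sqrt[4]{5})$, to the bound $|x_1/d_1 - a| \le \epsilon$, and the analogous construction via $L_2$ provides the $q^\tau_{Q,P}$-approximation of $b$.  The main technical hurdle I anticipate is precisely this bookkeeping: matching the abstract torus components $\mathbb{S}^1 \sqcup \mathbb{S}^1$ of $\psi$ with the two signed open intervals on each geodesic under the Möbius reparameterization, and verifying that the action of $\eta$ on $L_1$ induces the same hyperbolic isometry of the geodesic as in \Cref{Lhypmidpoint}; the necessary ingredients, however, are already assembled in Sections~\ref{SrealCM} and the present section.
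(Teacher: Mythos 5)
The overall strategy is the same as the paper's: apply Hecke's equidistribution theorem \cite[XV \textsection 5, Theorem 6]{Lang} to a Hecke character on $J_{\tilde{L}}$ combining $\Psi$ with a non-archimedean character built from $\Chi_S$, then decode the non-archimedean constraints via Lemma~\ref{narch} and Proposition~\ref{qrep}, and decode the archimedean constraint via the identification of solutions $(x_1,d_1)$ with generators of $N_{\tilde{L}/L_i}(\mathfrak{P})$. The adjustment by totally positive units to force $\lambda\equiv -1\bmod 4\cO_{F_0}$ and the removal of degree-two primes to ensure $\lambda\neq\lambda^\tau$ also mirror the paper.

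However, there is a genuine gap: your character $\Xi=\Psi\times(\Chi_S\circ N_{\tilde{L}/F_0})$ omits the class character $\chi_0\colon J_{\tilde{L}}\to\mathrm{cl}_{\tilde{L}}\simeq C_2$ of $\tilde{L}$, which the paper builds into $\nabla=\chi_0\times(\Chi_{\mathcal S}\circ N_{\tilde L/F_0})$ with target $(-1,(-1,-1,1,1),\mathbb{1})$. The factor $-1$ of $\chi_0$ is what forces the primes $\mathfrak{P}$ of $\tilde{L}$ produced by equidistribution to be \emph{non-principal}. You then assert that being completely split in $\tilde{L}$ suffices, citing Proposition~\ref{qrep}, but that proposition is an if-and-only-if whose condition is that $\langle\lambda\rangle$ splits completely in $\tilde{L}$ \emph{as a product of non-principal ideals}. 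Since $\mathrm{cl}_{\tilde L}\simeq\ZZ/2\ZZ$ by Lemma~\ref{tildeL}(1), the completely split primes of $F_0$ of degree one come in two classes, and roughly half of the primes your $\Xi$ selects will lie above principal primes of $\tilde{L}$; for these, by the proof of Proposition~\ref{qrep}, $\lambda\in N_{\tilde L/F_0}(\tilde L)$ and hence $-u\lambda$ is a norm from $L_1$, contradicting $-u\notin N_{L_1/F_0}(L_1)$ (Lemma~\ref{LclassgroupQR}(3)); that is, $\lambda$ is \emph{not} represented by $q_{Q,P}$. To close the gap you must adjoin $\chi_0$ to your non-archimedean character, set its target component to $-1$, and then verify — as the paper does in its Claim (1), using the explicit representation $q_{Q,P}(1,0)=3$ and a non-principal prime $\mathfrak{q}_3$ above $3$ — that the enlarged target lies in the image of $\Theta$, so Hecke's theorem still produces primes hitting it.
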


\begin{proof}
Recall the maps 
$\Psi:J_{\tilde{L}} \to \mathbb{S}^1\times \mathbb{S}^1$ given in \eqref{Psi}
and 
$\Chi_\mathcal{S}: J_{F_0} \to C_2^{|\mathcal{S}|+4}$ given in \eqref{ChiS};  
let $\chi_0:J_{\tilde{L}}\to \mathrm{cl}_{\tilde{L}} \simeq C_2$ be the 
class character of $\tilde{L}$ (see Lemma \ref{tildeL}(1)).

Consider the continuous homomorphism
$$\Theta=\Psi\times  \nabla
:J_{\tilde{L}} \to \mathbb{S}^1\times \mathbb{S}^1\times C_2^{|\mathcal{S}|+5},$$
where $\nabla: J_{\tilde{L}} \to C_2^{|\mathcal{S}|+5}$ is defined as 
 $\nabla=\chi_0 \times (\Chi_\mathcal{S} \circ  N_{\tilde{L}/F_0} )$.
 Let $G=\mathrm{Im}(\Theta)$ and regard $\Theta: J_{\tilde{L}}\to G$.  
By Lemma \ref{arch}(2), we have $\Theta(J^0_{\tilde{L}})=G$ and $\Theta(\tilde{L}^*)=1$.   

We identify the set $P_{\tilde{L}}$ of primes of $\tilde{L}$ with a subset of  $J_{\tilde{L}}$, by choosing a map
$\kappa:P_{\tilde{L}}\to J_{\tilde{L}}$. 
For $\mathfrak{q}$ a prime of ${\tilde{L}}$, 
define $\kappa(\mathfrak{q})\in J_{\tilde{L}}$ 
such that 
its entry at the place $\mathfrak{q}$ is a uniformizer of ${\tilde{L}}_{\mathfrak{q}}$, and its entry is $1$ everywhere else. 
By \cite[XV \textsection 5, Theorem 6]{Lang}, we deduce that the prime ideals  of  ${\tilde{L}}$ are equidistributed, with respect to the map $\theta=\Theta\circ\kappa:P_{\tilde{L}}\to G$. 

Suppose a prime $\mathfrak{q}$ of ${\tilde{L}}$ satisfies 
$\nabla(\kappa(\mathfrak{q}))=(-1, (-1,-1,1,1), \mathbb{1})$
Consider the totally positive irreducible element $\lambda\in \cO_{F_0}$ given by $\langle \lambda\rangle=N_{\tilde{L}/F_0}(\mathfrak{q})$ 
(which is unique up to multiplication by the square of a unit in $\cO_{F_0}$).
By Proposition~\ref{qrep}, combined with Lemma~\ref{narch},
$\lambda \equiv -1, -(3\pm \sqrt{5})/2 \bmod 4 \cO_{F_0}$, 
$N_{F_0/\QQ} (\lambda) \equiv 4 \bmod 5$, 
and $\lambda$ is representable by both quadratic forms $q_{Q,P}$ and $q^\tau_{Q,P}$. 
If we multiply $\lambda$ by the square of a unit, all the discussions above still hold; thus we may choose $\lambda \equiv -1 \bmod 4 \cO_{F_0}$. Furthermore, $\lambda\neq \lambda^\tau$ if and only if the prime $\langle\lambda\rangle$ has degree 1; that is,
the condition $\lambda\neq \lambda^\tau$ removes a set of primes of density zero.

To deduce the statement from the $\theta$-equidistribution of the primes of ${\tilde{L}}$, it suffices to: 
\begin{enumerate}
\item verify the inclusion $G\supset  \mathbb{S}^1\times \mathbb{S}^1\times
\langle (-1, (-1,-1,1,1), \mathbb{1}) \rangle$;
\item show that equidistribution of $\Psi(\kappa(\mathfrak{q}))\in  \mathbb{S}^1\times \mathbb{S}^1$,  
for $\mathfrak{q}\in P_{\tilde{L}}$ satisfying 
$\nabla(\kappa(\mathfrak{q}))=(-1, (-1,-1,1,1), \mathbb{1})$,
implies the density of   
\[(x_1/d_1, x_2/d_2)\in (-\sqrt[4]{5}, \sqrt[4]{5}) \times 
(-\sqrt[4]{5}, \sqrt[4]{5}),\]
for $d_1,x_1,d_2,x_2 \in \mathcal{O}_{F_0}$ satisfying		
$q_{Q,P}(d_1,x_1)=\lambda$,  $q_{Q,P}(d_2,x_2)=\lambda^{\tau}$, 
for $\langle \lambda \rangle =N_{\tilde{L}/F_0}(\mathfrak{q})$.
 \end{enumerate}
 
{\bf Proof of claim (1):} Let $p_1:G\to  \mathbb{S}^1\times \mathbb{S}^1$ denote the projection such that $p_1\circ \Theta=\Psi$, and
 $p_2:G\to C_2^5$ denote the projection such that $p_2\circ\Theta=\chi_0\times (\chi \circ N_{\tilde{L}/F_0})$.
 
 By Lemma~\ref{narch}, 
 the equality $q_{Q,P}(1,0)=3$ implies that 
 $(\chi_0\times (\chi \circ N_{\tilde{L}/F_0}))(\kappa(\mathfrak{q}_3))=
 (-1, (-1,-1,1,1))\in p_2(G)$,
 for $\mathfrak{q}_3$ a non-principal prime ideal of $\tilde{L}$ above 3.  
 To conclude, note that the extension $F_0(\sqrt{s})/F_0$, for $s\in\mathcal{S}-\{2\}$, is disjoint from $F$, $F_0(\sqrt{u})$, and the Hilbert class field extension of $\tilde{L}$, since the former is ramified at $s$ and the latter are only ramified at $2$ and $\sqrt{5}$. 
 The prime $\langle 2\rangle$ is principal 
 in $\mathcal{O}_{\tilde{L}}$, and direct computations show that it
 can be either split or inert in $F_0(\sqrt{-\lambda})/F_0$. Therefore 
 $\langle (-1, (-1,-1,1,1)) \rangle \subset p_2(G)$.

For any $\xi \in \langle (-1, (-1,-1,1,1), \mathbb{1}) \rangle$, 
$\nabla^{-1}(\xi)\cap J^0_{\tilde{L}}$ is a finite index subgroup of $J^0_{\tilde{L}}$.
Lemma~\ref{arch}(2) implies that $p_1$ is surjective. 
Claim (1) follows since any finite index subgroup of $\mathbb{S}^1\times \mathbb{S}^1$ is itself.

{\bf Proof of claim (2):}
Suppose  $\mathfrak{q}\in P_{\tilde{L}}$.
Since $L$ has class number $1$, 
there exists an (irreducible) element $\sigma_i \in\mathcal{O}_{L_i}$, 
satisfying $N_{\tilde{L}/L_i}(\mathfrak{q})=\langle \sigma_i\rangle$, for $i=1,2$.
Let $\iota$ denote the natural map $L^*\hookrightarrow L_\infty^*$.
Let $\gamma$ denote the nontrivial element in $\Gal(L/F_0)$. 
We view elements in $L$ as elements in $\RR$ 
via the embedding given by $t\mapsto \sqrt[4]{5}$. 
Recall that we fixed isomorphisms $L\simeq L_i$ for $i=1,2$.

Using the character $\phi :L_{\infty}^*/\mathcal{U}_{L_i}\to \mathbb{S}^{1}$ 
given in \eqref{phi2},
$\phi(\iota(\sigma_i^{-1})) = \sigma_i/\sigma_i^\gamma \in \RR^*/\varepsilon^\ZZ$, for $i=1,2$. 
Using the definition of $\Psi$ and the fact that $\psi(L^*)=1$, we compute 
\begin{align*}
\Psi(\kappa(\mathfrak{q})) &=
\left(\psi({ N}_{\tilde{L}/L_1}(\kappa(\mathfrak{q})),  \psi ({ N}_{\tilde{L}/L_2}(\kappa(\mathfrak{q}))\right)= (\phi(\iota(\sigma_1^{-1})),\phi(\iota(\sigma_2^{-1})))\\
&=(\sigma_1/\sigma_1^\gamma, \sigma_2/ \sigma_2^\gamma )\in \RR^*/\varepsilon^\ZZ \times \RR^*/\varepsilon^\ZZ.\end{align*}

For $i=1,2$, write $\sigma_i=x_i+\sqrt[4]{5}d_i\in L_i$, 
with $d_1,x_1,d_2,x_2\in F_0$. 
We deduce
$$\Psi(\kappa(\mathfrak{q}))= 
\left(\frac{x_1+\sqrt[4]{5}d_1}{x_1-\sqrt[4]{5}d_1} , \frac{x_2+\sqrt[4]{5}d_2}{ x_2-\sqrt[4]{5}d_2}   \right)\in \RR^*/\varepsilon^\ZZ \times \RR^*/\varepsilon^\ZZ.$$

Let $\lambda\in\mathcal{O}_{F_0}$ be a totally positive (irreducible) element satisfying $\langle \lambda \rangle =N_{\tilde{L}/F_0}(\mathfrak{q})$.  
By  Lemma \ref{narch} and Proposition \ref{qrep}, $\mathfrak{q}\in P_{\tilde{L}}$ satisfies $\nabla(\kappa(\mathfrak{q}))=
(-1, (-1,-1,1,1), \mathbb{1})$
if and only if $\lambda$ satisfies the conditions in assumption (1) in the statement.

Assume $\nabla(\kappa(\mathfrak{q}))=(-1, (-1,-1,1,1), \mathbb{1})$.
Then, by Lemma~\ref{qasN}, there exists a  totally positive unit $v\in\U^+_{F_0}$ such that $-u N_{L_1/F_0}(\sigma_1)=v\lambda$.
Since $\U_{F_0}^+ =\U_{F_0}^2$, after multiplying $\sigma_1$ by a suitable element in $\U_{F_0}$ (which does not affect the value $\varphi(\iota(\sigma^{-1}_1))\in \RR^*$), we have
$-u N_{L_1/F_0}(\sigma_1)=\lambda$. 
Similarly, we can adjust $\sigma_2$ so that
$-u N_{L_2/F_0}(\sigma_2)=\lambda$,
without changing $\varphi(\iota(\sigma_2^{-1}))\in \RR^*$.

That is, for $d_1, x_1, d_2, x_2\in \mathcal{O}_{F_0}$ satisfying $q_{Q,P}(d_1,x_1)=\lambda$ and  $q_{Q,P}(d_2,x_2)=\lambda^{\tau}$, the values $(\varphi(\iota(\sigma_1^{-1})), \varphi(\iota(\sigma_2^{-1})))$ are equidistributed in $ \RR^+\times \RR^+$ (because $\lambda$ is totally positive), and hence also 
the values
\[(x_1/d_1, x_2/d_2)\in (-\sqrt[4]{5}, \sqrt[4]{5}) \times 
(-\sqrt[4]{5}, \sqrt[4]{5}).\qedhere\]
\end{proof}


\section{Reduction modulo $5$}

On the $M[11]$ family, we consider the abelian varieties we constructed with complex multiplication.  We study their reduction modulo $5$.

\subsection{Reduction modulo 5 of curves} \label{Sreductionlehr}

By a result of Lehr, we can determine whether a curve in the $M[11]$ family has good reduction at $5$.
Here is some notation needed to state this.
Let $R$ be a complete discrete valuation ring with mixed characteristic $(0,5)$, let $\mathfrak{m}$ be the maximal idea of $R$, let $K=\mathrm{Frac}(R)$, and let $v$ be the valuation of $K$. 
Suppose $R$ is a $\cO_{F_0}$-algebra; with abuse of notation, we also denote $u\sqrt{5}\in R$ the image of  $u\sqrt{5}\in\cO_{F_0}$. 
\footnote{In \cite{lehrred2}, Lehr further assumes $\zeta_5, \sqrt[4]{-5}\in R$; in our setting, these conditions are satisfied if $R$ is a $\cO_F$-algebra; indeed note that in $\overline{\ZZ}_5$, we have $(\sqrt[4]{-5})=(\zeta_5^2-\zeta^3_5)$ and it is easy to check that these two numbers differ by an element in $\ZZ_5^\times$; since $R$ is $5$-adic, we have $\sqrt[4]{-5} \in R$.
Proposition \ref{Predlehr} holds with the weak assumption.}

Given $t \in K-\{0,1\}$, consider the cover $f: C_t \to {\mathbb P}^1_K$ given by $y^5 = x(x - 1)(x -t)$. 
Set \begin{equation}\label{Eqlittlej}
j_t=(u\sqrt{5})^{-5} \frac{(t^2-t+1)^3}{t^2 (t-1)^2}\in K,\end{equation} 
the Klein j-function from \eqref{Ejfunction} in Section~\ref{Sjfunction}, normalized at $5$. 

\begin{proposition} \label{Predlehr} (\cite[Theorem 2.1]{lehrred2}; \cite[Corollary 2]{lehrred1}, for $p=5$)
Suppose $t \in K -\{0,1\}$.
Then 

\begin{enumerate}
\item $C_t$ has potentially good reduction if  $v(j_t) \geq 0$;
\item $C_t\bmod \mathfrak{m}$ is geometrically isomorphic to $  C_P \bmod \mathfrak{m}$ if  $v(j_t) < 0$.
\end{enumerate}
\end{proposition}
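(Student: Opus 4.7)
The plan is to follow Lehr's approach to reduction of $p$-cyclic covers in residue characteristic $p$, adapted to the specific case $p=5$ and the $\mu_5$-cover branched at four points with inertia type $(1,1,1,2)$.

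First, I would exploit the parametrization by the Klein $J$-function established in Lemma \ref{Lfomfod}. Since $C_{t_1}$ and $C_{t_2}$ are geometrically isomorphic whenever $J(t_1)=J(t_2)$, and $j_t$ differs from $J(t)$ only by the unit factor $(u\sqrt{5})^{-5}$, I am free to replace $t$ by any element in its orbit under the group $\mathfrak{S}_3$ of fractional linear transformations stabilizing $\{0,1,\infty\}$, when searching for a good model over an extension of $K$. This reduces the problem to a normal form in which $v(t)$ and $v(t-1)$ are controlled; in particular, the three poles of $J$ on $\mathbb{P}^1$ all lie in the single $\mathfrak{S}_3$-orbit $\{0,1,\infty\}$, and when $v(j_t)<0$ we may assume $v(t)<0$.

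For part (1), after passing to a totally ramified extension $K'/K$ with uniformizer $\pi$ of valuation $v(\pi)=v(5)/4$, I would make the wild substitution $y=1+\pi w$. Expanding binomially yields
\begin{equation*}
1 + 5\pi w + 10\pi^2 w^2 + 10\pi^3 w^3 + 5\pi^4 w^4 + \pi^5 w^5 = x(x-1)(x-t).
\end{equation*}
Because $v(5)=4v(\pi)$, each of the six terms on the left has $\pi$-adic valuation $5v(\pi)$, so dividing by $\pi^5$ and reducing modulo $\mathfrak{m}$ produces an Artin--Schreier-type equation $w^5 - w = \bar g(x)$ over the residue field, once the right hand side $(f(x)-1)/\pi^5$ is made integral. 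The integrality of $j_t$, after choosing a suitable representative in the $\mathfrak{S}_3$-orbit, is precisely what ensures that $\bar g(x)$ is a well-defined polynomial of degree $3$ in the residue field, and the resulting $\mathbb{Z}/5$-Artin--Schreier cover has genus $4$ by Riemann--Hurwitz (wild conductor of length $4$ concentrated at a single point), hence is smooth.

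For part (2), assuming $v(j_t)<0$ and, after the normalization, $v(t)<0$, I would rescale $x=t\cdot x'$ to rewrite
\begin{equation*}
y^5 = t\cdot x'(t x'-1)(t x'-t) = t^2 \cdot x'(x'-1)(tx'-1).
\end{equation*}
Absorbing $t^2$ into $y$ by a further substitution $y=t^{2/5}y'$ on one chart, and symmetrically working near $x=\infty$ on the other, I would identify the closed fiber as the union of the two smooth $\mu_5$-covers $D_1\colon (y')^5=x'(x'-1)$ and $D_2\colon y^5=x^2(x-1)$ meeting in an ordinary double point — exactly the admissible cover construction of $C_P$ from Section \ref{SdefP}. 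The main obstacle I foresee is the bookkeeping in part (1): one must rigorously verify that every binomial coefficient and every cross term acquires the right $\pi$-adic valuation so that the reduction genuinely yields an Artin--Schreier equation (rather than a purely inseparable one), and that the normal form furnished by the $\mathfrak{S}_3$-action suffices to witness integrality of $(f(x)-1)/\pi^5$ across the entire $\mathfrak{S}_3$-orbit, not merely for a generic representative.
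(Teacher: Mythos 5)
The paper's own proof is a direct citation to \cite[Theorem~2.1]{lehrred2}, with only the remark that one case of Lehr's trichotomy does not occur; you are attempting a much more ambitious reconstruction of Lehr's argument from first principles. That is a different route, and the comparison would be worth making if the reconstruction were correct, but there is a genuine gap.

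The error is the claim that \emph{``when $v(j_t)<0$ we may assume $v(t)<0$.''} This confuses $j_t$ with $J(t)$. By definition $j_t = (u\sqrt{5})^{-5} J(t)$, and since $u$ is a unit and $v(5)>0$ in mixed characteristic $(0,5)$, the normalization factor has $v\left((u\sqrt{5})^{5}\right) = \tfrac{5}{2}v(5) > 0$. Hence
\[
v(j_t) < 0 \ \Longleftrightarrow\ v(J(t)) < \tfrac{5}{2}v(5),
\]
which is \emph{strictly weaker} than $v(J(t))<0$. In particular, one can have $v(t)=v(t-1)=v(t^2-t+1)=0$ (so $v(J(t))=0$ and the four branch points $0,1,t,\infty$ remain pairwise distinct modulo $\mathfrak{m}$, no $\mathfrak{S}_3$-transform of $t$ has negative valuation), yet still $v(j_t)<0$. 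In that regime $C_t$ degenerates to $C_P$ purely through wild ramification at $p=5$, with no collision of branch points; your rescaling $x = t x'$, and more generally the entire strategy of normalizing $t$ into the $\mathfrak{S}_3$-orbit of $\infty$, cannot see this case. This is precisely the phenomenon that makes Lehr's analysis necessary and nontrivial, and it is the content of the threshold $\tfrac{5}{2}v(5)$ that $j_t$ is designed to record.

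Relatedly, part (1) as written is not tight: $(f(x)-1)/\pi^5$ is not integral over all of $\mathbb{P}^1$ (for generic $x$ of valuation $0$ one has $v(f(x)-1)=0$, not $\geq 5v(\pi)$), so the substitution $y=1+\pi w$ must be performed locally, combined with a judicious affine change of coordinates in $x$, and the resulting Artin--Schreier equation over the residue field lives on a single component of the semistable model rather than globally. The bound for integrality of the resulting right-hand side is again exactly the condition $v(J(t)) \geq \tfrac{5}{2}v(5)$, i.e.\ $v(j_t)\geq 0$; showing this carefully is the heart of Lehr's theorem and is not a matter of mere bookkeeping.
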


\begin{proof}   
By \cite[Theorem 2.1]{lehrred2},  these are the only two 
possibilities for the special fiber of the stable model $C_t \bmod \mathfrak{m}$ (in this instance, case (2) of \cite[Theorem~2.1]{lehrred2} does not occur).\end{proof}

\subsection{Reduction modulo 5 of CM points}

We prove that the CM points we constructed on $M[11]$ have non-degenerate reduction modulo $5$.

Denote the Jacobian of $C_P$ (resp.\ $C_R$) as $A_P$ (resp.\ $A_R$). Let $\lambda \in {\mathcal O}_{F_0}$ be irreducible, totally positive, and relatively prime to $2 \sqrt{5}$. Let $E=F(\sqrt{-\lambda})$, and $(E,\Phi)$ be the CM type  defined in Section~\ref{CMtype}.

\begin{proposition}\label{CMmod5}
With the notations above,
let $A$ be a principally polarized CM abelian variety, of CM type  $(E,\Phi)$.  Let $L'$ be a field of definition for $A$ containing $F$, and let $v_5$ be a place of $L'$ with characteristic $5$.
Assume  $A$ has complex multiplication by $\cO_E$ or $\cO_F[\sqrt{-\lambda}]$.

Suppose $N_{F_0/\QQ}(\lambda) \equiv 4 \bmod 5$.
Then $A \bmod v_5$ and $A_P \bmod v_5$ (resp.\ $A \bmod v_5$ and $A_R \bmod v_5$) 
are not isomorphic as 
principally polarized abelian varieties over $\overline{\FF}_5$.
\end{proposition}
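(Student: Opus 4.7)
The argument is by contradiction, in two main steps. \emph{Step 1} reduces the two assertions to the single claim $A \bmod v_5 \not\cong A_P \bmod v_5$. Indeed, by \Cref{Lfomfod} we have $J(-1) = 27/4 \in \ZZ_{(5)}^{\times}$, so from \eqref{Eqlittlej} the normalized invariant satisfies $v(j_{-1}) = v((u\sqrt{5})^{-5}) + v(27/4) = -5 < 0$. Hence \Cref{Predlehr} yields $C_R \bmod v_5 \cong C_P \bmod v_5$, and therefore $A_R \bmod v_5 \cong A_P \bmod v_5$ as principally polarized abelian varieties.

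\emph{Step 2:} Suppose for contradiction that $A \bmod v_5 \cong A_P \bmod v_5$ as principally polarized abelian varieties over $\overline{\FF}_5$. Since both $A$ and $A_P = J_1 \oplus J_2$ are CM abelian varieties, after possibly enlarging $L'$ they extend to abelian schemes over $\cO_{L', v_5}$ with their CM structures extending to the special fibers. The hypothetical isomorphism transports the $\cO_E$-action (or the $\cO_F[\sqrt{-\lambda}]$-action in the non-maximal case) from $A$ to $A_P \bmod v_5$, producing an endomorphism $s$ of $(J_1 \oplus J_2) \bmod v_5$ with $s^2 = -\lambda$ commuting with the diagonal $\cO_F$-action.

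Next, I would apply Grothendieck--Messing to the $p$-divisible group $A_P[5^\infty] \bmod v_5$: the $\cO_E$-action lifts uniquely to a formal deformation $(A', \eta')$ of $(A_P \bmod v_5, \bar\eta_P)$ in characteristic zero, equipped with a compatible $\cO_E$-action of CM type $\Phi$. By \Cref{Tuniquecong} (together with \Cref{notmaxNT} for the non-maximal order case), such a principally polarized CM lift is unique up to isomorphism, so $A' \cong A$. In other words, $A$ and $A_P$ would be two CM lifts of the same ppAV $A_P \bmod v_5$.

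The finishing and hardest step is to rule this out using the hypothesis $N_{F_0/\QQ}(\lambda) \equiv 4 \bmod 5$. This congruence forces $\lambda \equiv \pm 2 \bmod \sqrt{5}$, so $-\lambda$ is a non-square in $\cO_F/\mathfrak{p} \cong \FF_5$; hence $E_\mathfrak{q}/F_\mathfrak{p}$ is the unramified quadratic extension, with residue field $\FF_{25}$. Therefore the Dieudonn\'e module of $A \bmod v_5$ together with its $\cO_E$-action carries a natural $W(\FF_{25})$-structure compatible with Frobenius. On the other hand, on $A_P \bmod v_5 = (J_1 \oplus J_2) \bmod v_5$ each summand has $\cO_F$-Dieudonn\'e structure over $W(\FF_5)$ alone (as $F$ is totally ramified at $5$), and the two factors carry the distinct complex CM types $\{\sigma_1, \sigma_2\}$ and $\{\sigma_1, \sigma_3\}$ computed from the inertia data of $D_1, D_2$. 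The plan is to exploit this CM-type incompatibility---by examining the Frobenius action on the $\sigma_1$-eigencomponent of $\Lie(A_P \bmod v_5)$ and the $\cO_F$-equivariant Hom between the two factors---to show that no endomorphism $s$ as above can simultaneously commute with $\cO_F$, square to $-\lambda$, and realize the required $\FF_{25}$-Frobenius extension on the Dieudonn\'e module, yielding the desired contradiction.
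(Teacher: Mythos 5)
Your Step~1 (using Lehr's result, Proposition~\ref{Predlehr}, to show $A_R \bmod v_5 \cong A_P \bmod v_5$ and so reduce both assertions to the single claim about $A_P$) is correct and matches the remark at the end of the paper's proof. The computation $\mathrm{val}_{u\sqrt 5}(j_{-1}) = -5 < 0$ is right.

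Your Step~2, however, has a genuine gap and also takes a much longer detour than necessary. The detour: you propose to lift the $\cO_E$-action from $A_P \bmod v_5$ to a formal deformation, invoke uniqueness of CM lifts (Theorem~\ref{Tuniquecong}, Remark~\ref{notmaxNT}) to identify the lift with $A$, and then seek a contradiction by analysing Dieudonn\'e modules and the distinct CM types $\{\sigma_1,\sigma_2\}$ and $\{\sigma_1,\sigma_3\}$ of $J_1, J_2$. None of the lifting machinery is needed: once the hypothetical isomorphism $A \bmod v_5 \cong A_P \bmod v_5$ transports $\sqrt{-\lambda}$ into $\End_{\cO_F}(A_P \bmod v_5)$, the contradiction is already visible on the special fiber. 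The paper's argument works entirely there: it identifies $\End_F(A_P \bmod v_5) \cong M_2(\cO_F)$ with Rosati involution given by conjugate-transpose, writes $\sqrt{-\lambda}$ as a matrix $M = \begin{psmallmatrix} a & b \\ c & -a \end{psmallmatrix}$ with $a = -\bar a$, $c = -\bar b$, and deduces $\lambda = -a^2 + N_{F/F_0}(b)$. Since $a$ is totally imaginary it lies in $\langle 1-\zeta_5\rangle$, so $a^2 \in \langle\sqrt 5\rangle$ and $\lambda \equiv N_{F/F_0}(b) \equiv b_1^2 \bmod \sqrt 5$ is a square mod $\sqrt 5$, contradicting $\lambda \equiv \pm 2 \bmod \sqrt 5$.

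The gap: your ``finishing and hardest step'' is stated as a plan (``the plan is to exploit this CM-type incompatibility\dots''), not as a completed argument. This is precisely where all the work lies, and it is not carried out. Even granting your setup, it is not obvious how one passes from the observation that the two factors $J_1, J_2$ have distinct signatures to the nonexistence of a suitable $\cO_F$-linear $s$ with $s^2 = -\lambda$; the Dieudonn\'e module of each $J_i \bmod v_5$ is a module over $W(\overline{\FF}_5)$ with $\cO_F$-action, and the constraint coming from the signature is subtle to turn into an arithmetic obstruction. By contrast, the Rosati-involution constraint used in the paper is what supplies the decisive congruence, and it is much cleaner; your sketch never invokes the polarization compatibility of $s$ in a quantitative way, which is exactly what is needed.
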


\begin{proof}
Recall that $A$ is simple by Lemma \ref{LsimpleCMtype}. 
An endomorphism of $A$ is called $\cO_F$-linear if it commutes with $\cO_F$; we
denote by $\End_F{A}$ the geometric $\cO_F$-linear endomorphism ring of $A$. 
The endomorphism $\sqrt{- \lambda}\in \End_{F}(A)$ 
satisfies that $(\sqrt{-\lambda})^{\dagger} = -\sqrt{-\lambda}$, where $\dagger$ denotes the Rosati involution.

By Section \ref{subsec-special-points}, there is a principally polarized abelian surface $A_0$ with CM by $\cO_F$, such that 
$A_P \bmod v_5$ is geometrically isomorphic to $A_0^2 \bmod v_5$, with the product polarization; this isomorphism is compatible with the polarizations and $\cO_F$-action.\footnote{By Section \ref{subsec-special-points}, $A_P$ is isomorphic to $A_0^2$ as a polarized abelian variety, but with non-compatible $\cO_F$-action due to signature. To have a compatible $\cO_F$-action, we need to twist the $\cO_F$-action on the second copy of $A_0$ by a suitable element in $\Gal(F/\bQ)$. Note that $A_0 \bmod v_5$ is geometrically isomorphic, compatibly with the $\cO_F$-action, 
to the twisted one.}
The geometric $\cO_F$-linear endomorphism ring of $A_P$ is 
$\End_{F}(A_P) \cong M_2(\cO_F)$.
The Rosati involution $\dagger$ acts via the composition of matrix transposition and complex conjugation on $F$.

Since $\End(A_0) \cong \cO_F$, the $\ell$-adic Tate module $T_\ell(A_0)$ is an $\cO_F$-module of rank $1$. Hence, after the reduction, 
the endomorphisms commuting with $F$ are $\End_{F}(A_{0}\bmod v_5) \cong  \cO_F$ and thus $\End_{F}(A_{P}\bmod v_5) \cong M_2(\cO_F)$.

Assume that $A \bmod v_5$ is isomorphic to $A_P \bmod v_5$ as polarized abelian varieties. 
Then $\sqrt{-\lambda} \in \End_F(A_0^2\bmod v_5)$.  Write
\[\sqrt{-\lambda} = M = \begin{pmatrix}  a & b \\ c & d \end{pmatrix} \in M_2( \cO_F).\] 
Thus $\lambda = \det M = ad - bc$ and $\mathrm{tr} M=0$, that is $d = -a$. Since $\sqrt{-\lambda} \in \End_F(A_0^2)$ anticommutes with $\dagger$, we deduce
that $a=-\bar{a}$ and $c=-\bar{b}$ (where $z\mapsto\bar{z}$ denotes complex conjugation on $F$).

Since $a\in \cO_F$ is totally imaginary, we can write $a$ as a 
$\ZZ$-linear combination of $\zeta_5 - \zeta_5^4$ and $\zeta_5^2 - \zeta_5^3$.
Hence $a \in v_5 \cap\mathcal{O}_F = \langle 1 - \zeta_5 \rangle$, and $a^2 \in v_5^2 \cap \mathcal{O}_{F_0}=\langle \sqrt{5}\rangle$.
It follows that $\lambda=-a^2 + N_{F/F_0} (b)\equiv N_{F/F_0} (b) \mod \sqrt{5}$, and hence $\lambda$ is a square modulo $\sqrt{5}$.  
(To see this, write $b = b_1 + b_2(\zeta_5-\zeta_5^{-1})$ with $b_1,b_2 \in \cO_{F_0}$.
Then $N_{F/F_0} (b) \equiv b_1^2 \bmod\sqrt{5}$.)
This is a contradiction since by assumption $\lambda \equiv \pm 2\bmod \sqrt{5}$. 
\end{proof}

We remark that by Proposition \ref{Predlehr} the reductions modulo $5$ of $C_R $ and $C_P$ are isomorphic; hence, $A_P\bmod v_5$ and $A_R\bmod v_5$ are isomorphic as principally polarized abelian varieties.

Combining Propositions \ref{Predlehr} and \ref{CMmod5}, we deduce the following result.

\begin{corollary}\label{Cnot5}
With notation as in Propositions \ref{Predlehr} and \ref{CMmod5}: Let $t\in K-\{0,1\}$ and $j_t\in K$ be as in \eqref{Eqlittlej}. Suppose $\mathrm{Jac}(C_t)$  has complex multiplication by $\cO_E$ or $\cO_F[\sqrt{-\lambda}]$, 
and $N_{F_0/\bQ}(\lambda)\equiv 4 \bmod 5$. 
Then $v(j_t)\geq 0$.
\end{corollary}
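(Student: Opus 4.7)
The plan is to deduce the corollary as a direct contradiction argument combining Propositions \ref{Predlehr} and \ref{CMmod5}. Suppose, for contradiction, that $v(j_t) < 0$. By Proposition \ref{Predlehr}(2), the special fiber $C_t \bmod \mathfrak{m}$ is geometrically isomorphic to $C_P \bmod \mathfrak{m}$. After possibly enlarging $K$ so that it contains $F$ (which changes $v$ only by a rational scaling factor and so preserves the sign of $v(j_t)$), passing to Jacobians and using compatibility of the Jacobian functor with stable reduction yields an isomorphism of principally polarized abelian varieties over $\overline{\FF}_5$:
$$\mathrm{Jac}(C_t) \bmod v_5 \;\cong\; \mathrm{Jac}(C_P) \bmod v_5 \;=\; A_P \bmod v_5.$$
Here I would point out that by the discussion in Section \ref{SdefP}, $\mathrm{Jac}(C_P) = J_1 \oplus J_2$ with the product principal polarization, so the Jacobian of the nodal curve $C_P$ is indeed $A_P$ as a principally polarized abelian variety.

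Next I would verify that $A := \mathrm{Jac}(C_t)$ meets the hypotheses of Proposition \ref{CMmod5}. By assumption, $A$ has CM by $\cO_E$ or $\cO_F[\sqrt{-\lambda}]$. Because $C_t$ lies in the $M[11]$ family, $A$ carries the signature $\cf = (1,2,0,1)$ of Section \ref{Ssignature}, and so the CM type of $A$ is compatible with $\cf$. By Lemma \ref{LsimpleCMtype}, this compatible CM type is unique up to $\Gal(E/F)$ and equals the primitive type $\Phi$ of Section \ref{CMtype}. Thus Proposition \ref{CMmod5} applies.

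Finally, I apply Proposition \ref{CMmod5}: using the hypothesis $N_{F_0/\QQ}(\lambda) \equiv 4 \bmod 5$, the proposition states that $A \bmod v_5$ and $A_P \bmod v_5$ are \emph{not} isomorphic as principally polarized abelian varieties over $\overline{\FF}_5$. This directly contradicts the isomorphism established in the first paragraph, so we must have $v(j_t) \geq 0$. The only step that requires a little care is the matching of polarizations in the reduction — but this is automatic here because $C_P$ is a nodal union of two smooth components and so its Jacobian is a product of principally polarized abelian varieties with the product polarization, which is exactly $A_P$. Everything else is a formal combination of the two cited propositions, so no genuine obstacle arises.
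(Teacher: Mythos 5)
Your proof is correct and fills in the details of exactly the argument the paper is gesturing at when it says to ``combine'' Propositions \ref{Predlehr} and \ref{CMmod5}: assume $v(j_t)<0$, apply Proposition \ref{Predlehr}(2) to identify the stable reduction of $C_t$ with that of $C_P$, pass to Jacobians (noting the dual graph of $C_P$ is a tree, so $\mathrm{Jac}$ is an abelian scheme and the reduction carries the product principal polarization, i.e.\ is $A_P \bmod v_5$), and then invoke Proposition \ref{CMmod5} for the contradiction. Your checks that the CM type must be $\Phi$ up to $\Gal(E/F)$-conjugation (via Lemma \ref{LsimpleCMtype} and the signature) and that the polarizations match on the nodal side are the right points to flag, and they hold as you say.
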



\section{CM cycles in M[11]}

\subsection{CM cycles in characteristic 0}

Over $\cO_{F_0}[1/5]$, consider the family of curves over $\bP^1$ given by the affine equation $C_t:\ y^5=x(x-1)(x-t)$, with $t\in \bP^1\setminus \{0,1,\infty\}$,
and the map $j:\bP^1\to \bP^1$, given by $t\mapsto j_t=(u\sqrt{5})^{-5}(t^2-t+1)^3/(t^2 (t-1)^2)$ (see \eqref{Eqlittlej}). 

Let $\Sh=\Sh(\mathcal{D})/\cO_F[1/5]$ denote  the  PEL type moduli space defined in Section \ref{M11asSh}. \footnote{The moduli space $\Sh$ is a proper Deligne--Mumford stack defined over $F$. By the theory of canonical integral models, $\Sh$ has a smooth canonical integral model over $\mathcal{O}_F[1/5]$, which is a proper Deligne--Mumford stack, and is given by the moduli interpretation away from $5$; with abuse of notation, we also denote it by $\Sh$.}  
Recall that $\Sh$ is connected by \cite{shimuratranscend}.

By Lemma \ref{Lfomfod}, the map which associates the isomorphism class of the curve $C_t$ to $j_t$ defines an isomorphism between the coarse moduli space associated to $\Sh$ and $\bP^1_{\cO_F[1/5]}$. 
\footnote{Since $\Sh$ is a smooth Deligne--Mumford stack of relative dimension $1$ over $\cO_F[1/5]$, its coarse moduli space is also smooth. The isomorphism between the coarse moduli space associated to $\Sh$ and  $\bP^1$ over ${F}$ extends over $\mathcal{O}_F[1/5]$.)} 

In the following, we use this isomorphism to identify the coarse moduli space associated to $\Sh$ and the $j$-line $\bP^1_{\cO_F[1/5]}$. Note that the special points $Q,R,P$ from Section~\ref{subsec-special-points} map respectively to 
$j_Q:=0$, $j_R:=c:=\frac{27}{4} (u\sqrt{5})^{-5}$, and $j_P:=\infty$ in $\bP^1(F_0)$. 
 
\begin{notation}\label{lambda} Let $\lambda\in\mathcal{O}_{F_0}$ be a totally positive irreducible element,  satisfying $N_{F_0/\QQ}(\lambda)\equiv 4\bmod 5$, $\lambda\equiv -1\bmod 4 \cO_{F_0}$, 
and $\lambda\neq \lambda^\tau$.
Assume the ideal $\langle\lambda\rangle$ splits  completely as a product of non-principal ideals in  the splitting field $\tilde{L}$ of $x^4-5$ over $\QQ$ (see Lemma \ref{tildeL}). 

By assumption,  $\lambda$ is inert in $F/F_0$; 
we denote by $F_\lambda$ the completion of $F$ at $\lambda$ and by $\mathcal{O}_{F,\lambda}$ its ring of integers. 

We write $\bF_{\lambda}$ for the residue field of $\mathcal{O}_{F_0}$ modulo $\langle \lambda \rangle$; it has characteristic  $p=N_{F_0/\QQ}(\lambda)$.  Let $\overline{\bF}_{\lambda}=\overline{\bF}_{p}$ denote an algebraic closure of $\bF_{\lambda}$.
We identify the residue field of $\cO_{F}$ modulo $\langle \lambda \rangle$ with the field $\bF_{p^2}$ of size $p^2$ inside $\overline{\bF}_{\lambda}$, and denote by $\iota:\cO_{F,\lambda}\to \bZ_{p^2}=W(\bF_{p^2})$  the induced isomorphism and $\iota_\lambda:\cO_F\to \bZ_{p^2}$ its restriction to $\cO_F$. 

In this section, by an automorphism of an abelian variety, we always mean an automorphism compatible with the given polarization;
also, an endomorphism of an abelian variety $A$ over $k$ means a geometric endomorphism, namely an endomorphism of $A_{\overline{k}}$.
\end{notation}

\begin{definition}\label{poly}
Let $\lambda$ be as in Notation \ref{lambda}.
Let $E=F(\sqrt{-\lambda})$,
and let $\Phi$ denote the CM type of $E$ defined in Example~\ref{Ecmtypem5}.

Define $Z(\lambda)$ (resp.\ $ \tilde{Z}(\lambda)$) to be the divisor of the $j$-line $\bP^1_F$ whose support consists of abelian varieties of CM type $(E,\Phi)$, with complex multiplication by $\cO_E$ 
(resp.\ $\cO_F[\sqrt{-\lambda}]$) (each point has multiplicity $1$).  
We denote by $\cP_\lambda(x)$ (resp.\ $\tilde{\cP}_\lambda(x)$) 
the unique monic separable polynomial in $F[x]$ satisfying 
$Z(\cP_\lambda(x))=Z(\lambda)$ 
(resp.\ $Z(\tilde{\cP}_\lambda(x)) =\tilde{Z}(\lambda)$).

By definition, $Z(\lambda)\subseteq \tilde{Z}(\lambda)$ and hence $\cP_\lambda(x)$ divides $\tilde{\cP}_\lambda(x)$. We write $W(\lambda)=\tilde{Z}(\lambda)\setminus Z(\lambda)$ and $\cQ_\lambda(x)\in F[x]$ the unique monic separable polynomial satisfying $Z(\cQ_\lambda(x))=W(\lambda)$. Thus $\tilde{\cP}_\lambda(x)=\cP_\lambda(x)\cQ_\lambda(x)$.
\end{definition}

\begin{lemma}\label{LinF0}
Notation and assumptions as in Definition \ref{poly}.

   The CM cycles $Z(\lambda), W(\lambda)$ are  defined over ${F_0}$, and hence $\cP_\lambda(x), \cQ_\lambda(x)\in F_0[x]$.
\end{lemma}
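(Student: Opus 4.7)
The plan is to show that both divisors $Z(\lambda)$ and $\tilde Z(\lambda)$ are stable under the Galois action of $\Gal(\overline{F_0}/F_0)$ on $\bP^1(\overline F)$; since both are a priori defined over $F$, it suffices to exhibit a single element $\sigma\in\Gal(\overline{F_0}/F_0)$ restricting to the nontrivial element $\sigma_F\in\Gal(F/F_0)$ and to verify that $\sigma$ preserves the two divisors. The assertion for $W(\lambda)=\tilde Z(\lambda)-Z(\lambda)$, and hence that $\cP_\lambda, \cQ_\lambda\in F_0[x]$, follows immediately.

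The key observation is that both subrings $\cO_E$ and $\cO_F[\sqrt{-\lambda}]$ are stable under $\sigma$: indeed $\sigma(F)=F$ because $F/F_0$ is Galois, and $\sigma(\sqrt{-\lambda})=\pm\sqrt{-\lambda}$ because $-\lambda\in F_0$. Fix $j\in \tilde Z(\lambda)$, represented by a principally polarized abelian variety $A/\overline F$ with CM by $\cO_F[\sqrt{-\lambda}]$ of CM type $\Phi$ compatible with the signature $\cf=(1,2,0,1)$. The conjugate $A^\sigma$ has $j(A^\sigma)=\sigma(j)$ and, via $\sigma$, inherits an action of $\cO_F[\sqrt{-\lambda}]$; however, the corresponding $\cO_F$-action has signature $\cf\circ\sigma_F^{-1}=(1,0,2,1)$, which fails to match $\cf$.

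To realize $A^\sigma$ as a point of $\Sh$, I would retwist the $\cO_F$-action on $A^\sigma$ by precomposition with $\sigma_F\in\Gal(F/\QQ)$, which restores the signature $\cf$ without altering the underlying polarized variety, and accordingly extend the twist to the $\cO_E$-action by any lift $\tilde\sigma_F\in\Gal(E/F_0)$ of $\sigma_F$. The new CM type is $\Phi\circ\sigma|_E^{-1}\circ\tilde\sigma_F^{-1}$, and since $\sigma|_E^{-1}\circ\tilde\sigma_F^{-1}$ restricts to the identity on $F$, it lies in $\Gal(E/F)$. By Lemma~\ref{LsimpleCMtype}, the $\Gal(E/F)$-orbit of $\Phi$ exhausts the CM types of $E$ compatible with $\cf$, so the new CM type is admissible, and the retwisted $A^\sigma$ represents a point of $\tilde Z(\lambda)$. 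Thus $\sigma(j)\in\tilde Z(\lambda)$, and the identical argument with $\cO_E$ in place of $\cO_F[\sqrt{-\lambda}]$ settles $Z(\lambda)$.

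The main obstacle is the CM-type bookkeeping: verifying that the twist required to restore the signature $\cf$ produces a CM type that is still admissible for $\Sh$, so that the $\sigma$-conjugate genuinely represents a point of the same cycle rather than of a companion Shimura variety with conjugate signature. This reduces cleanly to the primitivity of $\Phi$ and the $\Gal(E/F)$-orbit description in Lemma~\ref{LsimpleCMtype}.
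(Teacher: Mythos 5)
Your proof is correct and uses the same basic mechanism as the paper's, namely Galois stability of the conditions defining the cycles, but the paper's execution is shorter. The paper first reformulates membership in $\tilde{Z}(\lambda)$ (resp.\ $Z(\lambda)$) as the condition that $A_z$ admits an endomorphism $s$ with $s\circ s=-\lambda$ commuting with the $\cO_F$-action (resp.\ additionally $(1/2)(\Id+s)\in\End(A_z)$); this reformulation discharges the CM-type bookkeeping once and for all, with Lemma~\ref{LsimpleCMtype} guaranteeing that the compatible CM type is automatic. Since $\lambda\in F_0$, stability of this intrinsic condition under all of $\Gal(\overline{\QQ}/F_0)$ is then immediate. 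You instead carry the CM type $\Phi$ explicitly through the $\sigma$-conjugation and the signature-restoring retwist, invoking Lemma~\ref{LsimpleCMtype} at the end to confirm that the resulting CM type lies in the $\Gal(E/F)$-orbit of $\Phi$. This makes explicit what the paper leaves implicit, at the cost of more computation. One minor caution: your opening reduction to a single $\sigma$ restricting nontrivially to $F$ presupposes that $Z(\lambda)$ and $\tilde{Z}(\lambda)$ are already defined over $F$; while true, this itself requires a check of the same flavor (an element of $\Gal(\overline{\QQ}/F)$ may swap the two admissible CM types $\Phi_\pm$ but preserves the $j$-locus). The paper's formulation treats all of $\Gal(\overline{\QQ}/F_0)$ at once and sidesteps this preliminary step.
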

\begin{proof}
    A point $z$ representing $A_z$ is in the cycle $\tilde{Z}(\lambda)$ if $A_z$ admits an endomorphism $s$ such that $s\circ s=-\lambda \in \End(A_z)$ and $s$ commutes with the $\cO_F$-action on $A_z$;
    furthermore, $z$ is in $Z(\lambda)$ if $A_z$ admits an endomorphism $s$ as above such that $(1/2)(\Id+s)\in \End(A_z) \subset \End^0(A_z)$. 
    Any element in $\Gal(\overline{\bQ}/F_0)$ fixes $\lambda$, thus fixes these two cycles. \end{proof}

\begin{lemma}\label{Hilbert} 
Notation and assumptions as in Definition \ref{poly}.

Each closed point of $ Z(\lambda)$ is defined over the Hilbert class field  of $E=F(\sqrt{-\lambda})$. 

Each closed point of $W(\lambda)$ is defined over the ring class field  of the order $\cO_F[\sqrt{-\lambda}]$ of $E$.
\end{lemma}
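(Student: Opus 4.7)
The plan is to apply the main theorem of complex multiplication in its modular form (Shimura's reciprocity law) to the moduli interpretation of $\Sh$. First, I would identify each closed point of $\tilde{Z}(\lambda)$ (over $\overline{F}$) with an isomorphism class of a principally polarized abelian fourfold $(A,\eta)$ equipped with the $\cO_F$-action from the Shimura datum, together with an extension to an $\cO_E$-action (for a point of $Z(\lambda)$) or to an $\cO_F[\sqrt{-\lambda}]$-action (for a point of $W(\lambda)$), of CM type $\Phi$ as in Example~\ref{Ecmtypem5}. By Lemma~\ref{LsimpleCMtype}, the CM type $\Phi$ is primitive, so such an $A$ is simple and $\End^0(A)\simeq E$.

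Next, for the case of $Z(\lambda)$, I would invoke the standard fact that the set of isomorphism classes of such triples forms a principal homogeneous space for $\cl_E$, with $\cl_E$ acting via $\fa\cdot(\CC^n/\Phi(\fb),\ldots)=(\CC^n/\Phi(\fa^{-1}\fb),\ldots)$. Since $\Sh$ is defined over $F\subset E$ and the CM datum $(\cO_E,\Phi)$ is defined over $E$, the Galois group $\Gal(\overline{F}/E)$ preserves $\tilde{Z}(\lambda)$ and acts on the above torsor. By Shimura's reciprocity law applied to this PEL moduli space, this action factors through the Artin reciprocity map $\cl_E\to\Gal(H_E/E)$. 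Hence each closed point of $Z(\lambda)$ has residue field contained in the Hilbert class field $H_E$ of $E$.

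For $W(\lambda)$, let $R=\cO_F[\sqrt{-\lambda}]$. The same formalism applies, except that the relevant classification (see the discussion in Remark~\ref{notmaxNT}, via \cite{zanardo-zannier}) parametrizes the non-trivial points by the proper (i.e., invertible) class semigroup of $R$; for the maximal ideal $(2,\sqrt{-\lambda})$-orbit the stabilizer is controlled by $\cU^+_{R_0}/N(\cU_R)$, which is trivial. The Galois action factors through the proper ideal class group $\mathrm{Pic}(R)$, which by global class field theory corresponds to the ring class field of the order $R$. Therefore each closed point of $W(\lambda)$ is defined over the ring class field of $R$.

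The main subtlety, which I would need to verify carefully, is matching Shimura's reciprocity law on $\Sh$ with the classical $\cl_E$-action on abelian varieties: this boils down to checking that the reflex field of $(E,\Phi)$ is contained in $E$, so that Galois descent from $\overline{F}$ to $E$ does not introduce additional twists. For the primitive CM type $\Phi=\{(\sigma_1,+),(\sigma_2,+),(\sigma_2,-),(\sigma_4,+)\}$ arising in Example~\ref{Ecmtypem5}, a direct computation with the reflex norm shows the reflex field equals $E$. Alternatively, one may invoke the canonical model theory for PEL Shimura varieties of Kottwitz \cite{kottwitz92} and Deligne, which builds the correct reciprocity law into the moduli definition and lets one read off the field of definition of a CM point directly from the stabilizer of its polarized $\cO_E$-structure.
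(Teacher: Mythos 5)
Your proposal takes essentially the same high-level approach as the paper's proof — both rest on the Main Theorem of Complex Multiplication for the polarized CM abelian variety, combined with the moduli interpretation of $\Sh$ and Lemma~\ref{Lfomfod} (field of moduli $=$ field of definition) — but you unpack the citation to \cite[Chapter 5, Theorem 4.1]{LangCM} rather than quoting it as a black box. Your reflex-field verification is the genuinely useful extra step: the Main Theorem of CM naturally controls the Galois action over the reflex field $E^\flat$ rather than over $E$, so to land residue fields inside $H_E$ one does need $E^\flat \subset E$. Your assertion that $E^\flat = E$ is correct: since the restriction of $\Phi$ to $F$ is the multiset $\{\sigma_1,\sigma_2,\sigma_2,\sigma_4\}$ with a single element of multiplicity two, any stabilizing automorphism must act trivially on $F$, and inside $\Gal(\tilde E/F)$ (where $\tilde E$ is the Galois closure of $E/\QQ$) the only nontrivial element preserving $\Phi$ is the one fixing $\sqrt{-\lambda}$ and flipping $\sqrt{-\lambda^\tau}$, i.e.\ $\Gal(\tilde E/E)$. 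The paper elides this point.

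Two small imprecisions, neither fatal. First, the set of isomorphism classes of principally polarized CM abelian fourfolds of type $(\cO_E,\Phi)$ is not literally a principal homogeneous space under $\cl_E$: by van Wamelen's criterion (\cite[Theorem~3]{vanwamelen}, invoked in the proof of Theorem~\ref{Tuniqueconganym}) only those classes $[\fa]$ with $D_{E/\QQ}\fa\bar\fa$ principal of the right positivity type yield principally polarizable tori, and these may admit more than one polarization. This does not affect the conclusion, since the argument only requires that the $\Gal(\overline{F}/E)$-action factor through $\cl_E$ (equivalently through $\Gal(H_E/E)$), which the Main Theorem of CM supplies without any torsor hypothesis. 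Second, the reflex-field computation is a computation with the reflex type (i.e., of the stabilizer of $\Phi$), not with the reflex norm; the reflex norm enters afterward, to identify the resulting $\cl_E$-action with Artin reciprocity.
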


\begin{proof}
By the theory of complex multiplication (see \cite[Chapter 5, Theorem 4.1]{LangCM}), the field of moduli\footnote{In our setting, the field of moduli is indeed the field of definition of these polarized CM abelian varieties by Lemma~\ref{Lfomfod}.} of the polarized CM abelian varieties (here part of the data is the embedding of $E$ into $\End^0$) in $Z(\lambda)$ (resp. $W(\lambda)$) are defined over the Hilbert class field $H_{\lambda}$ (resp. the ring class field  of the order $\cO_F[\sqrt{-\lambda}]$) of $E$. Thus we obtain the desired statements by the moduli interpretation of $\Sh$.  
\end{proof}

We deduce the following statement from Propositions~\ref{qrep} and \ref{CMbyorderQP}, Theorem~\ref{Tuniquecong}, and Remark~\ref{notmaxNT}.

\begin{lemma}\label{Poddrealroot}
Notation and assumptions as in Definition \ref{poly}.

The polynomial $\cP_\lambda(x)$ has a unique real root and 
 odd degree. That is, 
 $\# Z(\lambda)({\RR})=1$ and  $\# Z(\lambda)({\overline{\bQ}})$ is odd. 

The polynomial $\cQ_\lambda(x)$ has a unique real root and 
 odd degree. That is, $\# W(\lambda)({\RR})=1$ and  $\# W(\lambda)({\overline{\bQ}})$ is odd.
\end{lemma}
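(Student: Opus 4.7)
The plan is to derive the lemma by directly combining the real-point existence of Corollary~\ref{LmaxQP}, the counting from Proposition~\ref{CMbyorderQP}, and the action of complex conjugation on the roots of a real polynomial. First I would verify that the $\lambda$ from Notation~\ref{lambda} satisfies all the hypotheses of Proposition~\ref{CMbyorderQP}: it is totally positive, irreducible, satisfies $\lambda\equiv -1\bmod 4\cO_{F_0}$ and $N_{F_0/\QQ}(\lambda)\equiv 4\bmod 5$, and its representability by $q_{Q,P}$ follows from Proposition~\ref{qrep}(1) since $\langle\lambda\rangle$ splits completely in $\tilde{L}$ as a product of non-principal ideals.

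Next, Proposition~\ref{CMbyorderQP} yields exactly two points of $\Sh(\RR)$ whose endomorphism rings contain $\cO_F[\sqrt{-\lambda}]$. Under the hypothesis $\lambda\equiv-1\bmod 4\cO_{F_0}$, we have $\cO_E=\cO_F[(1+\sqrt{-\lambda})/2]$, so the only $\cO_F$-orders in $E$ containing $\cO_F[\sqrt{-\lambda}]$ are $\cO_F[\sqrt{-\lambda}]$ itself and $\cO_E$. Hence the two real points partition into $Z(\lambda)(\RR)$ and $W(\lambda)(\RR)$, and Corollary~\ref{LmaxQP} supplies at least one point of each type. This forces $\#Z(\lambda)(\RR)=\#W(\lambda)(\RR)=1$.

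For the odd-degree claim, by Lemma~\ref{LinF0} the polynomials $\cP_\lambda,\cQ_\lambda$ lie in $F_0[x]$. Under the embedding $\tau_1:F_0\hookrightarrow\RR$ used to define $\Sh(\RR)$, they become monic separable polynomials in $\RR[x]$, and $Z(\lambda)(\RR)$, $W(\lambda)(\RR)$ are exactly their sets of real roots. Since complex conjugation acts on the complex roots of a real polynomial by a fixed-point-free involution on the non-real roots, a polynomial with exactly one real root has odd degree. Combined with separability, $\#Z(\lambda)(\overline{\QQ})=\deg\cP_\lambda$ and $\#W(\lambda)(\overline{\QQ})=\deg\cQ_\lambda$ are both odd.

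No step here presents a serious obstacle, since the essential content has been established in the preceding sections; the argument is a clean consequence of the real-point analysis of the fundamental triangle and the uniqueness of the CM abelian variety fixed by complex conjugation. The only subtle point worth flagging is the identification of the two real points of Proposition~\ref{CMbyorderQP} with one point of $Z(\lambda)(\RR)$ and one point of $W(\lambda)(\RR)$, which rests on the observation above that $[\cO_E:\cO_F[\sqrt{-\lambda}]]=2$.
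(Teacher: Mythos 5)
Your proof is correct and follows essentially the same route the paper intends: the paper itself gives no argument for this lemma beyond the one-line preamble citing Propositions~\ref{qrep} and~\ref{CMbyorderQP}, Theorem~\ref{Tuniquecong}, and Remark~\ref{notmaxNT}, and your derivation unpacks exactly those ingredients (via Corollary~\ref{LmaxQP} and Proposition~\ref{CMbyorderQP}, which internally rely on the uniqueness results), together with Lemma~\ref{LinF0} and the standard parity argument for real polynomials. The one point you flag — that $[\cO_E:\cO_F[\sqrt{-\lambda}]]=2$ under $\lambda\equiv-1\bmod 4\cO_{F_0}$ so the two real points of $\tilde{Z}(\lambda)$ must split as one in $Z(\lambda)$ and one in $W(\lambda)$ — is indeed the only place requiring care, and you have handled it correctly.
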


\subsection{The supersingular polarized $\cO_F$-module over $\overline{\bF}_p$}\label{mathcalH}

Let $p$ be an odd rational prime.
Let $\bQ_{p^2}$ denote the unique unramified quadratic extension of $\QQ_p$ in  
$\overline{\bQ}_{p}$, let $\ZZ_{p^2}$ be the ring of integers of $\QQ_{p^2}$, let $\bF_{p^2}$ be its residue field, and 
let $\overline{\bF}_{p^2}$ be an algebraic closure of $\bF_{p^2}$.
Let $\gamma$ be the non-trivial element in $\Gal(\bQ_{p^2}/\bQ_p)$.

\begin{notation} \label{NH11}
Let $\mathcal{H}/\overline{\bF}_{p^2}$ be a polarized (of degree prime to $p$) supersingular $\ZZ_{p^2}$-module of signature $(1,1)$; 
(in particular, $\mathcal{H}$ is of dimension $4$ and height $2$).
Note that $\mathcal{H}$ is unique up to isogeny 
(see \cite[Proposition 1.15]{Vollaard}). 
\end{notation}

 We compute the ring $\mathrm{End}_{\ZZ_{p^2},\pol} ^0(\mathcal{H})$ of quasi-isogenies of $\mathcal{H}$ which commute with the $\ZZ_{p^2}$-action and with the polarization, up to a similitude factor.

Let $D$ be the quaternion algebra over $\QQ_p$ ramified at $p$ and let $\varpi$ be a uniformizer of $\QQ_p$ (we will later take $\varpi=\lambda$). Then 
$D$ is the algebra over $\QQ_{p^2}$ generated by an element $\Pi$ satisfying $\Pi^2=-\varpi$ and $\Pi y=y^\gamma\Pi$, for all $y\in\QQ_{p^2}$. We  realize $D\hookrightarrow M_2(\bQ_{p^2})$ via $y\mapsto   \begin{bmatrix} y&0\\0&y^\gamma \end{bmatrix}$ and  $\Pi\mapsto \begin{bmatrix} 0 & -\varpi\\1&0\end{bmatrix}$.

\begin{lemma}\label{EndH}
   With the above notation, $$\mathrm{End}_{\ZZ_{p^2},\pol} ^0(\mathcal{H})^\times\simeq \bQ_{p^2}^\times D^\times.$$
\end{lemma}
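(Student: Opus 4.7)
The plan is to describe $\mathcal{H}$ explicitly as an isogeny factor carrying a $\ZZ_{p^2}$-action, compute the centralizer of that action inside the full endomorphism algebra, and then cut out the similitude subgroup using the involution induced by the polarization.

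First I would note that after forgetting the $\ZZ_{p^2}$-structure, $\mathcal{H}$ is a supersingular $p$-divisible group over $\overline{\bF}_{p^2}$ of total $\ZZ_p$-height $4$ and dimension $2$, hence isogenous to $X\otimes_{\ZZ_p}\ZZ_{p^2}$ where $X$ is the unique supersingular $p$-divisible group of height $2$ and dimension $1$ over $\overline{\bF}_{p^2}$. This yields $\End^0(\mathcal{H}) \cong M_2(D)$ together with two distinguished embeddings: $\bQ_{p^2}^\times \hookrightarrow \End_{\ZZ_{p^2}}^0(\mathcal{H})^\times$ coming from the $\ZZ_{p^2}$-action (landing in the center) and $D^\times = \End^0(X)^\times \hookrightarrow \End_{\ZZ_{p^2}}^0(\mathcal{H})^\times$ coming from the $\ZZ_p$-endomorphisms of the isogeny factor $X$, which automatically commute with the $\ZZ_{p^2}$-scalars.

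Next I would identify $\End^0_{\ZZ_{p^2}}(\mathcal{H})$ as the centralizer of $\bQ_{p^2}$ in $M_2(D)$. In coordinates where the $\ZZ_{p^2}$-action is $y\mapsto \mathrm{diag}(y,y^\gamma)$, using the presentation $D = \bQ_{p^2}\langle\Pi\rangle$ from the excerpt (with $\Pi y = y^\gamma \Pi$ and $\Pi^2 = -\varpi$), a direct calculation shows that this centralizer has $\bQ_p$-dimension $8$ (as predicted by the double centralizer theorem) and is spanned over the central $\bQ_{p^2}$ by $\mathrm{Id}$, $\mathrm{diag}(1,-1)$, and the two off-diagonal elements $E_{12}\Pi$ and $E_{21}\Pi$. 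Consequently $\End^0_{\ZZ_{p^2}}(\mathcal{H})$ contains both the central $\bQ_{p^2}$ and the image of $D$, and the subgroup of its units they jointly generate is exactly $\bQ_{p^2}^\times D^\times$.

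Finally, the degree-prime-to-$p$ polarization induces a positive involution $\dagger$ on $\End^0_{\ZZ_{p^2}}(\mathcal{H})$ whose restriction to the central $\bQ_{p^2}$ is $\gamma$ (forced by the signature $(1,1)$, since $\dagger$ must swap the two Lie-algebra characters of the $\ZZ_{p^2}$-action), and whose restriction to the image of $D$ is the canonical quaternion involution (the Rosati involution on the isogeny factor $X$). Both $y\in\bQ_{p^2}^\times$ and $d\in D^\times$ then satisfy the similitude condition, since $y^\dagger y = N_{\bQ_{p^2}/\bQ_p}(y)\in\bQ_p^\times$ and $d^\dagger d = \mathrm{Nrd}(d)\in\bQ_p^\times$; this gives the inclusion $\bQ_{p^2}^\times D^\times \subseteq \End^0_{\ZZ_{p^2},\pol}(\mathcal{H})^\times$. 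For the reverse inclusion I would work in the coordinates from the previous step to check that any element with $f^\dagger f\in\bQ_p^\times$ decomposes as $f = y\cdot d$, or equivalently recognize $\End^0_{\ZZ_{p^2},\pol}(\mathcal{H})^\times$ as the group of $\bQ_p$-points of the inner form of $\mathrm{GU}(1,1)$ associated to the basic Rapoport--Zink space, whose structure is well-known. The main obstacle will be pinning down the involution $\dagger$ on $D\otimes_{\bQ_p}\bQ_{p^2}$ concretely enough to verify both the Galois-conjugation action on the central $\bQ_{p^2}$ and the canonical-involution action on $D$; I would carry this out using the Dieudonn\'e-module description of $\mathcal{H}$ as in Vollaard's treatment of the basic locus of $\mathrm{GU}(1,1)$, where the analogous computation already appears.
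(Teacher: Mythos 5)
Your route is genuinely different from the paper's, though both converge on the same external input. The paper cites Vollaard directly for the identification $\End^0_{\ZZ_{p^2},\pol}(\mathcal{H})^\times \cong \GU(W,\{\cdot,\cdot\})$, where $W$ is a $2$-dimensional $\QQ_{p^2}$-space carrying a specific skew-$\gamma$-Hermitian form of non-split discriminant, and then merely reads off the concrete matrix description $\GU(W,\{\cdot,\cdot\})\simeq \QQ_{p^2}^\times D^\times\subset \GL_2(\QQ_{p^2})$. You instead decompose $\End^0(\mathcal{H})\cong M_2(D)$, compute the centralizer of $\QQ_{p^2}$ from scratch (correctly obtaining an $8$-dimensional $\QQ_p$-algebra, isomorphic to $M_2(\QQ_{p^2})$, containing a central $\QQ_{p^2}$ and a copy of $D$), and then cut out the similitude subgroup via the Rosati involution. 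Your route is more explicit about where the two factors come from; the paper's is shorter but more opaque. However, the nontrivial content --- pinning down the involution $\dagger$ precisely, equivalently deciding which of the two local unitary similitude groups occurs --- is exactly what Vollaard's Dieudonn\'e-module computation supplies, and you acknowledge deferring to that same source (or to the structure theory of the basic Rapoport--Zink space). Two cautions about the write-up: (i) your justification that $\dagger|_{\QQ_{p^2}}=\gamma$ by ``swapping the Lie-algebra characters'' is vacuous precisely in signature $(1,1)$, where the two characters occur with equal multiplicity and the statement is symmetric under the swap; rather, $\dagger|_{\QQ_{p^2}}=\gamma$ is built into the definition of a polarized $\ZZ_{p^2}$-module coming from the unitary PEL datum (the polarization is $\gamma$-semilinear). (ii) Your first paragraph has $D^\times$ acting diagonally in the tensor-product picture $\mathcal{H}\sim X\otimes_{\ZZ_p}\ZZ_{p^2}$, while your second paragraph uses coordinates in which $\QQ_{p^2}$ acts by $\mathrm{diag}(y,y^\gamma)$; the diagonal $D^\times$ does not commute with $\mathrm{diag}(y,y^\gamma)$, so an implicit conjugation is required between the two paragraphs, after which $D$ no longer sits on the diagonal. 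Neither issue is fatal, but both would need to be fixed in a complete version.
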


\begin{proof}
By \cite[Lemma 1.13, Proposition 1.15, Remark 1.16 (1)]{Vollaard}, 
we have $\mathrm{End}_{\ZZ_{p^2},\pol} ^0(\mathcal{H})^\times\simeq \GU(W ,\{\cdot ,\cdot\}) ,$
where $W$ is a $\QQ_{p^2}$-vector space of dimension $2$, and $\{ \cdot,\cdot \}$ is a perfect skew $\gamma$-hermitian form on $W$ given by the matrix $t\begin{bmatrix} 1&0\\0&\varpi\end{bmatrix}$,\footnote{In \cite{Vollaard}, the $\gamma$-hermitian form is given by $t\begin{bmatrix} 1&0\\0&p\end{bmatrix}$; since $p/\varpi \in (\bQ_{p^2}^\times)^2$; we obtain the matrix $t\begin{bmatrix} 1&0\\0&\varpi\end{bmatrix}$ by a suitable change of basis.} 
for $t\in\ZZ_{p^2}^\times$ satisfying $t^\gamma=-t$.  Concretely,
$$
\GU(W ,\{\cdot ,\cdot\})\simeq
\QQ^\times_{p^2}D^\times \subset \GL_2(\QQ_{p^2})\simeq \GL(W),$$
where $\QQ^\times_{p^2}\subset \GL_2(\QQ_{p^2})$ denotes the subgroup of diagonal matrices, and  $D^\times$  is the subgroup $$D^\times=\left\{ \begin{bmatrix} y&-\varpi x\\x^\gamma&y^\gamma \end{bmatrix}\in \GL_2(\QQ_{p^2}) 
\mid x, y\in\QQ_{p^2}^\times\right\}.\qedhere $$
\end{proof}

By direct computations, we deduce the following lemma.
\begin{lemma}\label{trx}
Let $x\in \QQ_{p^2}^\times D^\times \subset \GL_2(\QQ_{p^2})$. Then $x\Pi=-\Pi x$ if and only if $$x\in \bQ_{p^2}^\times \cdot \left(\bQ_p \begin{bmatrix} t & 0 \\ 0 &-t \end{bmatrix} + \bQ_p \begin{bmatrix} 0 & \varpi t \\ t &0 \end{bmatrix}\right)^\times.$$ 
In particular, if $x\Pi=-\Pi x$, then $\mathrm{tr} (x)=0$.
\end{lemma}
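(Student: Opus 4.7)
\medskip
\noindent\textbf{Proof proposal.}
The plan is to reduce the matrix identity $x\Pi = -\Pi x$ to an internal condition on $D$, solve it there, and then translate the answer through the explicit embedding $D \hookrightarrow M_2(\QQ_{p^2})$. First, I would write any element of $\QQ_{p^2}^\times D^\times$ in the form $x = \mu\,d$, where $\mu \in \QQ_{p^2}^\times$ acts as a scalar matrix and $d \in D^\times$. Since scalar matrices commute with $\Pi$, the condition $x\Pi = -\Pi x$ is equivalent to $d\Pi = -\Pi d$, so the problem immediately localizes to $D^\times$.

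Next, using the $\QQ_{p^2}$-basis $\{1,\Pi\}$ of $D$, I would expand $d = y + z\Pi$ with $y,z \in \QQ_{p^2}$, and apply the defining relations $\Pi^2 = -\varpi$ and $\Pi y = y^\gamma \Pi$ to compute
\[
d\Pi + \Pi d = (y + y^\gamma)\Pi - (z + z^\gamma)\varpi.
\]
Since $1$ and $\Pi$ are $\QQ_{p^2}$-linearly independent in $D$, the vanishing of this expression is equivalent to $y^\gamma = -y$ and $z^\gamma = -z$; i.e., both $y$ and $z$ lie in the $\QQ_p$-line of purely imaginary elements in $\QQ_{p^2}/\QQ_p$. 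Fixing the element $t \in \QQ_{p^2}^\times$ with $t^\gamma = -t$ from Lemma~\ref{EndH}, this amounts to $y = \alpha t$ and $z = \beta t$ for some $\alpha,\beta \in \QQ_p$.

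Finally, I would translate this back to $M_2(\QQ_{p^2})$ via the given embedding. Under it, $y = \alpha t$ maps to $\alpha \cdot \mathrm{diag}(t,-t)$, while $z\Pi = \beta t\,\Pi$ maps to
\[
\begin{bmatrix} \beta t & 0 \\ 0 & -\beta t \end{bmatrix}\begin{bmatrix} 0 & -\varpi \\ 1 & 0 \end{bmatrix} = -\beta\begin{bmatrix} 0 & \varpi t \\ t & 0 \end{bmatrix}.
\]
Hence $d$ is a $\QQ_p$-linear combination of the two stated matrices, and $x = \mu d$ lies in $\QQ_{p^2}^\times \cdot \bigl(\QQ_p\,\mathrm{diag}(t,-t) + \QQ_p\,\left[\begin{smallmatrix} 0 & \varpi t \\ t & 0 \end{smallmatrix}\right]\bigr)^\times$ (invertibility of $d$ is automatic from $x \in \GL_2(\QQ_{p^2})$). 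The reverse implication follows by the same direct computation, and $\mathrm{tr}(x) = 0$ is immediate because both spanning matrices have trace zero and multiplying by the scalar $\mu$ preserves this.

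The computation is essentially mechanical; the only subtle point — which is not really an obstacle but worth flagging — is keeping track of the two distinct copies of $\QQ_{p^2}$ inside $\GL_2(\QQ_{p^2})$ (scalar matrices versus the image of the subfield $\QQ_{p^2} \subset D$ as diagonal matrices $\mathrm{diag}(y,y^\gamma)$), so that the reduction $x\Pi = -\Pi x \Leftrightarrow d\Pi = -\Pi d$ is carried out with the correct identification from Lemma~\ref{EndH}.
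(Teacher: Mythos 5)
Your proof is correct, and it is precisely the direct computation that the paper invokes (the paper gives no explicit argument, merely stating ``by direct computations, we deduce the following lemma''). Both the reduction $x\Pi = -\Pi x \Leftrightarrow d\Pi = -\Pi d$ (using that the scalar factor $\QQ_{p^2}^\times$ commutes with $\Pi$), the expansion $d\Pi + \Pi d = (y+y^\gamma)\Pi - (z+z^\gamma)\varpi$, and the translation of $y,z \in t\QQ_p$ into the stated $\QQ_p$-span of the two trace-zero matrices via the embedding of $D$ are exactly what is needed.
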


\subsection{Reduction modulo $\lambda$ of CM cycles I}

Recall the notation and assumptions from Definition \ref{poly}.
The goal of this section is the proof of the following statement.
Note that since  $\lambda \ne 2, u\sqrt{5}$, then $A_P\bmod \lambda$ and $A_R\bmod \lambda$ are not isomorphic.
\begin{proposition}\label{RootInPairs}
Notation and assumptions as in Definition \ref{poly}.
There exists a unique point in ${\bP}^1 ({\overline{ \mathbb F}}_\lambda)$ such that the preimage of this point under the reduction map contains an odd number of geometric points in the support of $Z(\lambda)$ (resp. $W(\lambda)$). Moreover, this unique point is $A_P\bmod \lambda$ for $W(\lambda)$ and is $A_R \bmod \lambda$ for $Z(\lambda)$.

For any other point $x\in {\bP}^1({\overline{ \mathbb F}}_\lambda)$, the geometric points in the support of $Z(\lambda)$ (resp. $W(\lambda)$) which are in the preimage of $x$ under the reduction map occur in conjugate pairs.
\end{proposition}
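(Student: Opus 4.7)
The plan is to analyze the reduction map $\tilde{Z}(\lambda)(\overline{\QQ}) \to \Sh(\overline{\FF}_\lambda)$ fiber-by-fiber, using Serre--Tate and Lubin--Tate theory. First I would observe that by Proposition~\ref{ST}, since $\lambda$ is ramified in $F_0(\sqrt{-\lambda})/F_0$, every geometric point of $\tilde Z(\lambda)$ reduces to a point in the basic locus of $\Sh\bmod \lambda$, so only fibers over basic points need to be analyzed. Fix such a basic $x_0$, with associated PEL abelian variety $\mathcal A_0$; by Notation~\ref{NH11}, the polarized $\cO_F$-linear $\lambda$-divisible group $\mathcal H=\mathcal A_0[\lambda^\infty]$ coincides with the unique supersingular module of Section~\ref{mathcalH}.

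By Serre--Tate, the formal neighborhood of $x_0$ in $\Sh$ parametrizes PEL deformations of $\mathcal H$, and by Lubin--Tate theory it is a one-dimensional formal disk over $W(\overline\FF_\lambda)$. A CM lift reducing to $x_0$ with CM by $\cO_E$ (for $Z(\lambda)$) or by $\cO_F[\sqrt{-\lambda}]$ (for $W(\lambda)$) corresponds to a polarization-compatible $\cO_F$-linear embedding $\iota$ of the corresponding order into $\End(\mathcal H)$, determined by the element $s:=\iota(\sqrt{-\lambda})\in \End^0_{\cO_F,\pol}(\mathcal H)\cong \bQ_{p^2}^\times D^\times$ (Lemma~\ref{EndH}) satisfying $s^2=-\lambda$ and $s^\dagger=-s$; Lemma~\ref{trx} restricts such $s$ to a concrete two-dimensional $\bQ_p$-family inside $D$. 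The points of the cycle reducing to $x_0$ are then such $s$ modulo the conjugation action of $\Aut_F^{\pol}(\mathcal A_0)$.

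The key observation is that the involution $s\mapsto -s$ on this set of embeddings corresponds to precomposing $\iota$ with the nontrivial element of $\Gal(E/F)$, i.e.\ complex conjugation on the CM field; the resulting two CM abelian varieties are interchanged by the Galois action on the Hilbert class field (for $Z$) or ring class field (for $W$), producing precisely the ``conjugate pairs'' of the statement. A fiber contains an unpaired point if and only if this involution has a fixed point, equivalently if and only if there exists $u\in \Aut_F^{\pol}(\mathcal A_0)$ with $u s u^{-1}=-s$. At a generic basic point $\Aut_F^{\pol}(\mathcal A_0)=\mu_5$ is central, so no such $u$ exists and the fiber consists entirely of pairs. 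By Proposition~\ref{Pnomorebigaut} the only basic points where extra automorphisms are available are $A_P\bmod \lambda$ and $A_R\bmod\lambda$ (the third exceptional curve $C_Q$ has an extra automorphism of order three, which cannot conjugate $s$ to $-s$).

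It remains to match each of these two exceptional points to the correct cycle. At $A_R\bmod \lambda$, the extra order-two automorphism $\sigma$ coming from $(x,y)\mapsto(-x,-y)$ on $C_R$ commutes with $\cO_F$ and, viewed in the quaternionic model, acts with trace zero; a direct computation then shows $\sigma s\sigma^{-1}=-s$ for the appropriate $s$, and one verifies via the Lubin--Tate local model that $\tfrac12(1+s)$ lies in $\End(\mathcal H)$ so that the unpaired embedding extends to the maximal order $\cO_E$. This pins down $A_R\bmod \lambda$ as the unique unpaired point of $Z(\lambda)$. At $A_P\bmod \lambda$, the involution coming from the decomposition $J_P=J_1\oplus J_2$ similarly anti-commutes with $s$, but the corresponding embedding fails the $2$-adic integrality needed to extend from $\cO_F[\sqrt{-\lambda}]$ to $\cO_E$, making $A_P\bmod \lambda$ an unpaired point of $W(\lambda)$ (with its $Z(\lambda)$-contribution paired). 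The main obstacle will be exactly this last compatibility: one must track how the integrality condition $\iota(\cO_E)\subset\End(\mathcal H)$ interacts with the conjugation action of the extra automorphism at each elliptic point, in order to assign $A_R$ to $Z(\lambda)$ and $A_P$ to $W(\lambda)$ and not the other way around.
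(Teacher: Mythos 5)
The proposal follows the paper's strategy very closely (Serre--Tate plus Lubin--Tate parametrization of the fiber, CM lifts as normalized embeddings, the involution $s\mapsto -s$ giving the pairing, and the reduction to trace-zero automorphisms via Lemma~\ref{trx} and Lemma~\ref{aut}), and the exclusion of $A_Q$ by the odd-order argument is correct. However, the last step---pinning down which of $A_P$, $A_R$ is the exception for $Z(\lambda)$ versus $W(\lambda)$---contains two genuine gaps.

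First, you never invoke the parity of $\deg\cP_\lambda$ and $\deg\cQ_\lambda$ (Lemma~\ref{Poddrealroot}). Showing that an unpaired preimage can only sit over $A_P$ or $A_R$ does not by itself give existence or uniqueness of the exceptional point; you also need that each of $Z(\lambda)$ and $W(\lambda)$ has odd cardinality, together with a count of how many unpaired preimages lie over each of $A_P$, $A_R$. The paper's Lemma~\ref{L1} does exactly this counting and then concludes by a parity/elimination argument, and without this the ``unique point'' claim is not established.

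Second, your proposed direct check at $A_R$---``one verifies via the Lubin--Tate local model that $\tfrac12(1+s)$ lies in $\End(\mathcal H)$''---cannot work as stated. The module $\mathcal H$ is a $\lambda$-divisible group, and $\lambda$ has odd residue characteristic, so $1/2$ is a unit in $\cO_{F_0,\lambda}$ and the condition $\tfrac12(1+s)\in\End(\mathcal H)$ is automatic; it cannot distinguish $\cO_E$ from $\cO_F[\sqrt{-\lambda}]$, since these two orders agree everywhere except at the prime $2$. The correct way to make this distinction is through the $2$-adic structure. The paper does this only at $A_P$ (using $T_2(A_P)\cong\cO_{F,2}\oplus\cO_{F,2}$ and the shape $\begin{bmatrix}0&b\\ c&0\end{bmatrix}$ forced by anticommutation with $\epsilon_0$), shows $A_P$'s unpaired preimage is in $W(\lambda)$, and then gets $A_R$ \emph{by elimination} from the parity argument, rather than by a direct $\lambda$-adic computation. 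You would need to replace your $A_R$ step by this elimination argument (or by an analogous $2$-adic analysis at $A_R$), and also supply the parity count from Lemma~\ref{Poddrealroot}, to close the proof.
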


For $A$ an abelian variety corresponding to a point of $\Sh$, we denote by $\Aut_{\cO_F} (A)$ the group of automorphisms of $A$ which commute with the action of $ \mathcal{O}_F$ and preserve the polarization.

\begin{lemma}\label{aut} 
Let $\lambda$ be as in Notation \ref{lambda}, and $A/\overline{\bF}_\lambda$ be an abelian variety corresponding to a point in $\bP^1(\overline{\bF}_\lambda)$.
If $A$ is not geometrically isomorphic to $A_P\bmod \lambda$, 
then $\Aut_{\cO_F} (A) \simeq \{\pm 1\} \times G_0$ where the group $G_0$ is isomorphic to 
$\bZ/5 \bZ$, $\bZ/10 \bZ$, or $\bZ/15 \bZ$.
Furthermore, the last two cases occur if and only if 
$A$ is geometrically isomorphic to $A_R\bmod \lambda$ or $A_Q\bmod \lambda$, respectively.
\end{lemma}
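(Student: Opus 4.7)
The plan is to reduce the computation of $\mathrm{Aut}_{\cO_F}(A)$ to a classification of automorphism groups of the smooth curves in the family $M[11]$ over $\overline{\bF}_\lambda$, mirroring Proposition~\ref{Pnomorebigaut} in characteristic $p=N_{F_0/\QQ}(\lambda)$. By Notation~\ref{lambda} together with the exclusion $\lambda\ne 2, u\sqrt{5}$, we have $p\ne 2, 5$, so the $\mu_5$-covers in the family remain tamely ramified over $\overline{\bF}_\lambda$.

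Since $A$ is not geometrically isomorphic to $A_P\bmod\lambda$, the corresponding point on the coarse moduli $\bP^1(\overline{\bF}_\lambda)$ is not $\infty$, so $A=\mathrm{Jac}(C_t)$ for some smooth $C_t\colon y^5=x(x-1)(x-t)$ with $t\in\overline{\bF}_\lambda\setminus\{0,1\}$. Because $C_t$ is non-hyperelliptic and $p\ne 2$, the polarized form of Torelli's theorem \cite[Appendice]{lauterappendix} gives $\mathrm{Aut}(J_t)=\mathrm{Aut}(C_t)\times\{\pm 1\}$, where $\{\pm 1\}$ denotes the Jacobian sign involution (disjoint from $\mathrm{Aut}(C_t)$ by non-hyperellipticity). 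The $\cO_F$-action on $A$ is induced by $\tau\colon (x,y)\mapsto(x,\zeta_5 y)$, so $\{\pm 1\}\subseteq\mathrm{Aut}_{\cO_F}(A)$ unconditionally, and an element of $\mathrm{Aut}(C_t)$ lies in $\mathrm{Aut}_{\cO_F}(A)$ iff it centralizes $\langle\tau\rangle$. Granting the centrality of $\langle\tau\rangle$ in $\mathrm{Aut}(C_t)$ in all three possible cases below, we conclude $\mathrm{Aut}_{\cO_F}(A)=\mathrm{Aut}(C_t)\times\{\pm 1\}$, thereby reducing the problem to computing $\mathrm{Aut}(C_t)$.

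The argument of Proposition~\ref{Pnomorebigaut} transfers to characteristic $p\ne 5$: Wootton's classification \cite[Theorem 8.1, Table 7]{wootton} is combinatorial in the tame inertia data, which for our family is $(1,1,1,2)$; the Bring's-curve exception has signature $(1,1,1,1)$ and therefore does not occur; and the descent step of \cite[Proposition 3.6]{obusshaska} is group-theoretic in the Möbius stabilizer of $\{0,1,t,\infty\}\subset\bP^1$, hence characteristic-free. We deduce that $\mathrm{Aut}(C_t)$ is either $\mu_5$, or $\mu_5\times\ZZ/2\ZZ\cong\ZZ/10\ZZ$, or $\mu_5\times\ZZ/3\ZZ\cong\ZZ/15\ZZ$, with the last two cases arising precisely when $C_t\simeq C_R\bmod\lambda$ or $C_Q\bmod\lambda$; the extra automorphisms $(x,y)\mapsto(-x,-y)$ (well-defined since $p\ne 2$) and $(x,y)\mapsto(\zeta_3 x,y)$ explicitly realize these. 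All three groups are abelian, so $\langle\tau\rangle$ is central as needed, yielding the claimed $G_0$'s.

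The \emph{main obstacle} is certifying that Wootton's theorem and its downstream consequences apply verbatim in characteristic $p\ne 5$; this reduces to the standard fact that the prime-to-$p$ tame fundamental group of $\bP^1\setminus\{0,1,t,\infty\}$ in characteristic $p\ne 5$ is isomorphic to its complex topological analogue, so the enumeration of Galois covers with $\mu_5$-quotient and fixed inertia type is identical. A minor technical subtlety is the case $p=3$, which is permitted (as $N_{F_0/\QQ}(\lambda)=9\equiv 4\bmod 5$ can occur with $\lambda$ inert above $3$): there $\zeta_3=1$ in $\overline{\bF}_\lambda$, the extra automorphism of $C_Q$ degenerates to the identity, and $C_Q$ has bad reduction, so the ``$A\simeq A_Q\bmod\lambda$'' case becomes vacuous rather than contradicting the $G_0=\ZZ/15\ZZ$ assertion; this can be handled by either direct verification of vacuity or an endomorphism-algebra computation via Lemma~\ref{EndH}.
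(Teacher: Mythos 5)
Your proposal follows the paper's argument closely: reduce to $\Aut(C)$ for a smooth curve $C$ via \cite[Appendice]{lauterappendix}, invoke Proposition~\ref{Pnomorebigaut} to get $\Aut(C)\in\{\bZ/5,\bZ/10,\bZ/15\}$ with the last two for $C_R$ and $C_Q$, and observe that the cyclic $\Aut(C)$ automatically centralizes $\langle\tau\rangle$ so everything commutes with $\cO_F$. The paper invokes Proposition~\ref{Pnomorebigaut} directly (it is stated over any algebraically closed $k$ of characteristic prime to $m$), while you re-justify the char-$p$ transfer; that extra care is reasonable but not a different approach. One correction: your worry about $p=3$ is vacuous rather than a subtlety to be handled. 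Notation~\ref{lambda} requires $\lambda\ne\lambda^\tau$, which forces $p=N_{F_0/\QQ}(\lambda)$ to be a rational prime split in $F_0=\QQ(\sqrt 5)$; since $5\equiv 2\bmod 3$ is a non-residue, $3$ is inert in $F_0$, so $\langle 3\rangle=\langle 3\rangle^\tau$ and $p=3$ never arises. (More generally, the hypotheses $p\equiv 4\bmod 5$ with $p$ split in $F_0$ force $p\ge 19$.) You should simply delete that paragraph.
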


\begin{proof}
The hypothesis that $A$ is not geometrically isomorphic to $A_P\bmod \lambda$ implies that 
 $A=\mathrm{Jac}(C)$, where $C/\overline{\bF}_\lambda$ is a smooth curve.
 Note that $C$ is not hyperelliptic.
 By \cite[Appendice]{lauterappendix}, $\Aut(A) \simeq \{\pm 1\} \times \Aut(C)$.
 By Proposition~\ref{Pnomorebigaut}, $\Aut(C) \simeq \bZ/5 \bZ$, $\bZ/10 \bZ$, or $\bZ/15 \bZ$, 
 with the last two cases occurring exactly for $C_R$ and $C_Q$ respectively. 
 
 In each case, the action of the unique subgroup of order 5 yields the action of $\cO_F$ on $A$.  Hence, in particular, all of the 
 automorphisms above commute with the action of $\cO_F$.
 \end{proof}

\begin{proof}[Proof of Proposition \ref{RootInPairs}]
For $\lambda$ as in Notation \ref{lambda}, let $p=N_{F_0/\QQ}(\lambda)$. 
Let $A/\overline{\bF}_\lambda$ be an abelian variety which is the reduction modulo $\lambda$ of a point in $Z(\lambda)(\overline{\bQ})$. 
Then $A$ has complex multiplication by $\mathcal{O}_E$, for $E=F(\sqrt{-\lambda})$. By Proposition \ref{ST}, $A$ is basic.

The action of $\mathcal{O}_F$ on $A$ induces a decomposition of the $p$-divisible group $A[p^\infty]$, as $$A[p^\infty]=A[(\lambda)^\infty]\oplus A[(\lambda^\tau)^\infty],$$
where $A[(\lambda)^\infty]$ and $A[(\lambda^\tau)^\infty]$ are two polarized $p$-divisible groups, of height 4, with multiplication by $\mathcal{O}_{F, \lambda}$ and $ \mathcal{O}_{F,{\lambda^\tau}}$ respectively, of signature $(1,1)$ and $(2,0)$. 
Let $\CH_\lambda$ denote $A[(\lambda)^\infty]$, the polarized $p$-divisible subgroup of signature $(1,1)$. By  Proposition \ref{ST}, $\CH_\lambda$ is supersingular. Via the isomorphism $\iota:\mathcal{O}_{F,\lambda}\to \ZZ_{p^2}$ from Notation \ref{lambda}, we regard $\CH_\lambda$ as a polarized supersingular $\ZZ_{p^2}$-module. 

Recall, from Notation~\ref{NH11}, that
$\mathcal{H}/\overline{\bF}_{p^2}$ is the (unique up to isogeny) polarized supersingular $\ZZ_{p^2}$-module of signature $(1,1)$.
Thus
there exists an isogeny $\rho:\mathcal{H}\to \CH_\lambda$, of polarized $\bZ_{p^2}$-modules, defined over $\overline{\bF}_\lambda\simeq \overline{\bF}_{p^2}$.  

Let $B$ be the quaternion algebra over $F_0$ ramified at 
$\{\sqrt{5},\lambda,\infty_1, \infty_2\}$. A direct computation shows that $\End^0_{\cO_F,\pol}(A)\cong FB$, where $\End^0_{\cO_F,\pol}(A)$ denotes the $F_0$-algebra of quasi-isogenies of $A$ which commute with the $\cO_F$-action and preserve the skew-hermitian form (on all $\ell$-adic and crystalline cohomologies) associated to the polarization up to a scalar in $F_0$. (Compare to Lemma \ref{EndH} for the local computation at $\lambda$.) The isogeny $\rho$ induces an isomorphism $B\otimes \QQ_p\simeq D$. Let $\mathcal{O}_B$ be the order in $B$ given by $\End_{\cO_F,\pol}(A)\cap B$;\footnote{Here by a slight abuse of notation, $\cO_B$ may not necessarily be a maximal order.} we then have $\mathcal{O}_B\otimes \ZZ_p\subset \mathcal{O}_D$ under the isomorphism $B\otimes \QQ_p\simeq D$, where $\mathcal{O}_D$ denotes the maximal order of $D$. 


Recall Example~\ref{Ecmtypem5}.
Let $\iota_E$ denote the injective homomorphism $\cO_{F_0}(\sqrt{-\lambda}) \rightarrow \overline{\bZ}_{p}$ induced by 
the embedding $F_0(\sqrt{-\lambda})\hookrightarrow \CC$ given by $(\sigma_1,+)$ (and this is the same embedding given by $(\sigma_4,+)$).\footnote{There is a natural map $\overline{\bZ}_{p}\hookrightarrow\CC$, given by identifying CM abelian varieties with CM type $\Phi$ (i.e., points on $Z(\lambda), W(\lambda)$) as $\overline{\bQ}_p$-points on the Shimura curve.} We use $\overline{\iota_E}$ to denote the homomorphism $\cO_{F_0}(\sqrt{-\lambda}) \rightarrow \overline{\bF}_{p}$ obtained by the composition of the reduction map $\overline{\bZ}_{p} \rightarrow \overline{\bF}_{p}$ and $\iota_E$.
We call a homomorphism $ \mathcal{O}_{F_0}[\sqrt{-\lambda}]\to \mathcal{O}_B$ {\em normalized} if the induced action on the tangent space of $\mathcal{H}$ via $\cO_B \subset \cO_D$ agrees with that induced by $(\overline{\iota_E}, \overline{\iota_E})$ on $\overline{\bF}_{p}^2$.

{\bf Claim:} There is a bijection between the set of points in $Z(\lambda)\cup W(\lambda)$ whose reduction is $A$ and isomorphism classes of normalized homomorphisms $\theta: \mathcal{O}_{F_0}[\sqrt{-\lambda}]\to \mathcal{O}_B$ of $\cO_{F_0}$-algebras extending $\iota_A:\cO_{F_0} \rightarrow \cO_B$ given by the $F_0$-action on $A$. \footnote{Here we say two such homomorphisms are isomorphic if they are conjugate by an element in $\Aut_{\cO_F}(A) \subset F^\times B^\times$.}

{\bf Proof of claim:}
Indeed, let $\mathcal{A}$ be a lifting of $A$ to characteristic $0$ lying in $Z(\lambda)$ or $W(\lambda)$; in particular, $\mathcal{A}$ is an abelian variety 
with CM by $(E, \Phi)$ in \Cref{Ecmtypem5}, and the reduction map to $A$ is compatible with the $F$-action.
The action of $\mathcal{O}_{F}[\sqrt{-\lambda}]$ on $\mathcal{A}$ and the reduction map to $A$ define a homomorphism of algebras $\iota_\mathcal{A}:\mathcal{O}_{F}[\sqrt{-\lambda}]\to \End^0_{\cO_F,\pol}(A)\cong FB$, which extends $\iota_A$. An argument similar to \Cref{CMonlyif} shows that image of $\iota_\cA$ lies in $B$, 
and hence in $\cO_B$ by definition. 
We denote the restriction homomorphism by $\theta_{\cA}: \mathcal{O}_{F_0}[\sqrt{-\lambda}]\to \cO_B$.
Since $\cA$ has CM type $\Phi$, by definition $\theta_{\cA}$ is normalized. 

Conversely, for any normalized embedding 
$\theta: \mathcal{O}_{F_0}[\sqrt{-\lambda}]\to \mathcal{O}_B$, we use Lubin--Tate theory to construct a lifting of $A$ to characteristic $0$ in $Z(\lambda)\cup W(\lambda)$.  More precisely, by Serre--Tate theory, we only need to construct a lifting $\CG$ of the polarized $p$-divisible group $A[p^\infty]$ such that $\cO_F[\sqrt{-\lambda}]\subset \End_{\mathrm{pol}} (\CG)$ with CM type $\Phi$. 
We use \cite[Proposition~2.1]{Gross}. Let $\CG_{0}$ denote the supersingular $p$-divisible group, of dimension $1$ and height $2$, 
over $\overline{\FF}_\lambda$; (it is unique up to isomorphism).
Note that $H_\lambda$ is isomorphic to $\CG_0 \otimes_{\cO_{F_0,\lambda}} \cO_{F,\lambda}$.
The group $\CG_0$ gives a formal $\cO_{F_0,\lambda}$-module/group of dimension $1$ and height $2$ with endomorphism ring $\cO_D$. The normalized embedding $\theta$ induces an embedding $\cO_{F_0, \lambda}[\sqrt{-\lambda}]\rightarrow \cO_D$, which makes the formal module/group associated to $\CG_0$ a formal $\cO_{F_0, \lambda}[\sqrt{-\lambda}]$-module/group of height $1$, which admits a unique lifting to a formal $\cO_{F_0, \lambda}[\sqrt{-\lambda}]$-module/group of height $1$ over $W(\overline{\FF}_\lambda)$. We use $\CG_1'$ to denote the corresponding $p$-divisible group of dimension $1$ and height $2$ with $\mathcal{O}_{F_0}[\sqrt{-\lambda}]$-action; in particular, $\CG_1:=\CG_1'\otimes_{\cO_{F_0,\lambda}} \cO_{F,\lambda}$ is a lifting of $H_\lambda$. Moreover, since the image of $\theta$ lies in $\End_{\cO_F}(H_\lambda)\cong \End(\CG_0)$, i.e., the $\cO_F$-action, we then obtain an $\cO_F[\sqrt{-\lambda}]$-action on $\CG_1$. 

Furthermore, recall $A[(\lambda^{\tau})^\infty]$ has signature $(2,0)$ and has dimension $2$ and height $4$; it is isomorphic to the direct sum of $\CG_0$ (equipped with $\cO_F$-action, with the induced $\cO_F$-action on $\Lie \CG_0$ being induced by $\sigma_2$) with itself. 
Let $\CG_2'$ denote the unique $p$-divisible group of dimension $1$ and height $2$ over $W(\overline{\FF}_\lambda)$ lifting $\CG_0$ along with the $\cO_F$-action. Define $\CG_2:= \CG_2' \otimes_{\cO_{F_0, \lambda^\tau}} \cO_{F_0, \lambda^\tau}[\sqrt{-\lambda}]$, which is a $p$-divisible group of dimension $2$ and height $4$ equipped with $\cO_F[\sqrt{-\lambda}]$-action. This is our desired lift of $A[(\lambda^{\tau})^\infty]$, up to $M_2(F_{0,\lambda^\tau})$-conjugacy; we pick the lift such that the induced $\cO_F[\sqrt{-\lambda}]$-action agrees with $\theta$ localized at $\lambda^\tau$. Therefore, $\CG_1 \oplus \CG_2$ is our desired lift of the $p$-divisible group $A[p^\infty]$ with $\cO_F[\sqrt{-\lambda}]$-action compatible with $\theta$. Since the image of $\theta$ lies in $\cO_B$, which preserves the polarization on $A$, we can lift it to a polarization on $\CG$ compatible with $\cO_F[\sqrt{-\lambda}]$-action.
Such a polarization is unique by \Cref{Tuniqueconganym} (its proof also applies to the non-maximal order case); thus we associate a point in $Z(\lambda)\cup W(\lambda)$ to $\theta$ and by the construction, it is exactly the inverse to the map $\cA \mapsto \theta_\cA$ in the paragraph above.
This ends the proof of the claim.

For a normalized embedding $\theta: \cO_{F_0}[\sqrt{-\lambda}]\to\mathcal{O}_B$ with $\theta(\sqrt{-\lambda})=\alpha$, we note that the conjugate embedding $\theta'$ given by the $\cO_{F_0}$-algebra homomorphism with $\theta'(\sqrt{-\lambda}):= -\alpha$ is also normalized. Indeed $\overline{\iota_E}(\sqrt{-\lambda})=0$ and so $\alpha$, and hence also $-\alpha$, acts as the $0$-map on $\Lie H$; in other words, $\theta'$ is also normalized. 
By Serre--Tate theory and the above bijection, $\theta$ corresponds to a point in $Z(\lambda)$ if and only if $(1/2)(1+\theta(\sqrt{-\lambda}))\in \cO_B$. This condition holds for $\theta$ if and only if it holds for $\theta'$. Thus the points corresponding to $\theta, \theta'$ are either both in $Z(\lambda)$ or both in $W(\lambda)$.

If $\theta, \theta'$ above give rise to the same point in $Z(\lambda)\cup W(\lambda)$, then by the above bijection, there exists $\epsilon \in \Aut_{\cO_F}(A)$ such that $\epsilon \alpha \epsilon^{-1} =-\alpha$. Consider the images of $\epsilon, \alpha$ under the injective homomorphism 
${\mathrm{End}}^0_{\cO_F,{\mathrm{pol}}}(A)\hookrightarrow {\mathrm{End}}^0_{\cO_F}(\CH_\lambda)\cong \QQ_{p^2} D$.
Since our discussion is up to conjugacy, by the Noether--Skolem Theorem, 
we may assume $\alpha=\Pi$.
By Lemma~\ref{trx} (taking $\varpi=\lambda$), we deduce $\mathrm{tr} (\epsilon)=0$. 

By Lemma~\ref{aut}, if there exists $\epsilon \in \Aut_{\cO_F}(A)$ of trace $0$, then $A$ is either $A_P\bmod \lambda$ or $A_R \bmod \lambda$. 
Hence,  if $A$ is neither $A_P\bmod \lambda$ nor $A_R \bmod \lambda$, then the roots of $\cP_\lambda (x)$  (resp. $\cQ_\lambda (x) $) show up in conjugate pairs (i.e., $\theta, \theta'$) in the $\lambda$-adic neighborhood of $A$.

Since the degree of $\cP_\lambda (x)$  (resp. $\cQ_\lambda (x) $) is odd by Lemma~\ref{Poddrealroot}, the number of points in $\mathbb{P}^1(\overline{\mathbb{F}}_\lambda)$ whose number of preimages under the reduction map is odd is exactly one and the point is either $A_P\bmod \lambda$ or $A_R\bmod \lambda$.
The final claims about
the unique exceptional point (i.e., the point with an odd number of preimages)
are proved in Lemma~\ref{L1}.
\end{proof}

\begin{lemma}\label{L1}
The unique exceptional point is $A_P\bmod \lambda$ for $W(\lambda)$ and is $A_R \bmod \lambda$ for $Z(\lambda)$.
The number of points of $Z(\lambda)$ (resp.\ $W(\lambda)$) in the $\lambda$-adic neighborhood of $A_P \bmod \lambda$ (resp.\ $A_R \bmod \lambda$)  is even and these points occur in conjugate pairs.
\end{lemma}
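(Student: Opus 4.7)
Plan: The proof completes the dichotomy set up in \Cref{RootInPairs}, where the set of exceptional reduction points was restricted to $\{A_P,A_R\}\bmod\lambda$.

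First I would establish compatibility of the symmetry $\theta\leftrightarrow\theta'$ with the partition $\tilde{Z}(\lambda)=Z(\lambda)\sqcup W(\lambda)$: since $(1+\alpha)/2+(1-\alpha)/2=1\in\cO_B$, the condition $(1+\alpha)/2\in\cO_B$ defining $Z(\lambda)$ is equivalent to $(1-\alpha)/2\in\cO_B$, so $\theta\in Z(\lambda)\Leftrightarrow\theta'\in Z(\lambda)$. Combined with \Cref{Poddrealroot}, this forces each of $\cP_\lambda$ and $\cQ_\lambda$ to have exactly one exceptional reduction point in $\{A_P,A_R\}\bmod\lambda$, with all other fibers at these points paired by the symmetry.

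Next I would exhibit a trace-zero involution at each of $A_P\bmod\lambda$ and $A_R\bmod\lambda$. At $A_R$, the involution $(x,y)\mapsto(-x,-y)$ of $C_R$ commutes with $\mu_5$ and descends to $\iota\in\Aut_{\cO_F}(A_R)$ whose image in $(\QQ_{p^2}D)^\times$ is non-scalar of order two, hence of trace zero by \Cref{trx}. At $A_P=J_1\oplus J_2$, the extra automorphism $\sigma=\mathrm{diag}[-\zeta^{-1},\zeta]$ of order $2m=10$ satisfies $\sigma^5=\mathrm{diag}[-1,1]$, likewise non-scalar of trace zero. A short computation on the anticommutants of $\iota$ and $\sigma^5$ shows that both actually contain elements $\alpha\in\cO_B$ with $\alpha^2=-\lambda$, so each of $A_P, A_R \bmod\lambda$ is genuinely exceptional for at least one of $Z(\lambda), W(\lambda)$. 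By the parity argument of the previous paragraph, one is exceptional for $Z(\lambda)$ and the other for $W(\lambda)$.

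Finally, I would distinguish the two cases by an integrality analysis of $(1+\alpha)/2\in\cO_B$ at each point. At $A_R\bmod\lambda$, the involution $\iota$ arises from an automorphism of the smooth curve $C_R$ already defined in characteristic zero; lifting the anticommuting pair $(\iota,\alpha)$ to the canonical Serre--Tate lift and invoking $\lambda\equiv-1\bmod 4\cO_{F_0}$ together with the presentation $\cO_E=\cO_F[(1+\sqrt{-\lambda})/2]$, one produces an embedding with $(1+\alpha)/2\in\cO_B$, so the exceptional point of $Z(\lambda)$ reduces to $A_R\bmod\lambda$ and the $W(\lambda)$-fibers over $A_R\bmod\lambda$ pair up. At $A_P\bmod\lambda$, the trace-zero involution $\sigma^5$ is block-diagonal relative to the decomposition $J_1\oplus J_2$, so any anticommuting $\alpha$ is purely off-diagonal; a direct check on the diagonal entries then rules out $(1+\alpha)/2\in\cO_B$, placing the exceptional point of $W(\lambda)$ at $A_P\bmod\lambda$ and pairing up $Z(\lambda)$-fibers over $A_P\bmod\lambda$. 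The main obstacle is this final integrality step at $A_R\bmod\lambda$: pinning down the local maximal order $\cO_B\otimes\ZZ_p\subset\cO_D$ and verifying, via Serre--Tate, that the lifted pair $(\iota,\alpha)$ generates the maximal order $\cO_E$ rather than just $\cO_F[\sqrt{-\lambda}]$; the bookkeeping between the global quaternion order and its $\lambda$-adic completion is the most delicate aspect.
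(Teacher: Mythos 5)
Your proposal correctly reproduces the 2-adic computation at $A_P\bmod\lambda$ and the parity bookkeeping, but the step at $A_R\bmod\lambda$ contains a genuine logical gap that is independent of the integrality analysis you flag as delicate. Producing a single embedding $\theta$ with $(1+\alpha)/2\in\cO_B$ above $A_R\bmod\lambda$ shows only that $A_R\bmod\lambda$ has \emph{at least one} unpaired preimage in $Z(\lambda)$; it does not show that the $W(\lambda)$-fibers over $A_R\bmod\lambda$ pair up, which is the conclusion you draw from it. Nothing in your sketch rules out additional unpaired $\theta$'s above $A_R\bmod\lambda$ that lie in $W(\lambda)$. (Symmetrically, ``placing the exceptional point of $W(\lambda)$ at $A_P\bmod\lambda$'' does not follow from the $A_P$ computation alone.) The missing ingredient is the uniqueness statement: there is \emph{at most one} unpaired point in $Z(\lambda)\cup W(\lambda)$ above each of $A_P\bmod\lambda$ and $A_R\bmod\lambda$. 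The paper establishes this by an explicit calculation inside $\QQ_{p^2}D$ showing that any two trace-zero $\alpha\in\cO_B$ with $\alpha^2=-\lambda$ anticommuting with the involution $\epsilon_0$ differ by an element $r\in\Aut_{\cO_F}(A)\cap\cO_B^\times$, and the resulting embeddings are conjugate (the set of such $r$ is $\{\pm1\}$ at $A_R\bmod\lambda$ and $\{\pm1\}\times\ZZ/5\ZZ$ at $A_P\bmod\lambda$). With uniqueness in hand, the 2-adic argument at $A_P\bmod\lambda$ plus the parity of $\deg\cP_\lambda$ and $\deg\cQ_\lambda$ pins down \emph{both} exceptional reduction points, and the direct integrality computation you attempt at $A_R\bmod\lambda$ is avoided entirely.

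A secondary issue: Step 2 asserts a ``short computation'' exhibiting $\alpha\in\cO_B$ with $\alpha^2=-\lambda$ anticommuting with $\iota$ (respectively $\sigma^5$). Existence of such $\alpha$ depends on the specific order $\cO_B\subset B$, not merely on the algebra $B$, and is not obvious; the paper never needs this unconditionally --- at $A_R\bmod\lambda$ it follows automatically from parity once the $A_P\bmod\lambda$ side has been handled, and the conditional formulation (``if $\alpha_0$ exists, then there is exactly one unpaired point'') is all that is used.
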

\begin{proof}
We use the notation from the proof of \Cref{RootInPairs}.

By \Cref{aut}, if $A=A_P\bmod \lambda$ or $A_R\bmod \lambda$,
then there are exactly five elements in $ \Aut_{\cO_F}(A)/\{\pm 1\}$ of trace $0$, they are $\epsilon_i=\zeta_5^i\epsilon_0$, for $0\leq i\leq 4 $ and $\epsilon_0^2=1$, $\epsilon_0\neq 1$. 
In other words, modulo the center $\cO_F$,\footnote{Note that elements that differ by an element in the center give rise to exactly the same conditions on $\alpha$; thus we only need to work with elements module $\cO_F$.} there is only one possible element $\epsilon_0\in \mathrm{Aut}_{\cO_F}(A)$ which satisfies $\alpha \epsilon_0 = -\epsilon_0\alpha$ for some $\alpha\in \cO_B$ satisfying $\alpha_0^2=-\lambda$. 

We first prove that all preimages of $A=A_P\bmod \lambda$ in $Z(\lambda)$ occur in conjugate pairs, which implies the assertions for $A_P$. Write $A_P=A_1\times A_2$, where $A_1$ and $A_2$ are abelian surfaces with CM by $\cO_F$. Since $2$ is inert in $F/\QQ$, the $2$-adic Tate module is $T_2(A)\cong \cO_{F,2}\oplus \cO_{F,2}$, equipped with the natural $\cO_F$-action by multiplication on each part; and by \Cref{aut}, $\epsilon_0=\begin{bmatrix}
    1 & 0\\ 0& -1
\end{bmatrix}\in M_2(\cO_{F,2})=\End_{\cO_F}(T_2(A))$.
The condition $\alpha \epsilon_0 = -\epsilon_0\alpha$ shows that the $\alpha$-action on $T_2(A)$ must be of the form $\begin{bmatrix}
    0 & b \\ c & 0
\end{bmatrix}$ for some $b,c \in \cO_{F,2}$. Then we observe that $(1/2) (1+\alpha) \notin M_2(\cO_{F,2})=\End_{\cO_F}(T_2(A))$; thus the points corresponding to $\theta=\theta'$ with $\theta(\sqrt{-\lambda})=\alpha$ do not lie in $Z(\lambda)$.

To prove the rest of the assertions, we give an explicit description of all $\alpha \in{\mathrm{End}}_{\cO_F,{\mathrm{pol}}}(A)$  satisfying  $\alpha \epsilon_0 = -\epsilon_0\alpha$  and $\alpha^2=-\lambda$ for $A=A_P\bmod \lambda$ and $A_R\bmod \lambda$. 
Fix $\alpha_0$ satisfying these properties; we claim that $r:=\alpha \alpha_0^{-1}\in \End^0_{\cO_F,{\mathrm{pol}}}(A)$ actually lies in $\cO_B^\times$; we will also prove that the set of all such $r$ form the group $\{\pm 1\} \times \ZZ/5\ZZ$ for $A=A_P\bmod \lambda$, and the group $\{\pm 1\}$ for $A=A_R\bmod \lambda$.

Since $\alpha, \alpha_0 \in \cO_B$ and $\alpha^2=\alpha_0^2=-\lambda$, we have $r\in (\cO_B[1/\lambda])^\times$. Since $A=A_P \bmod \lambda$ or $A_R \bmod \lambda$ and $\lambda \nmid 2$, we have $A[(\lambda)^\infty]=A_1[(\lambda)^\infty] \times A_2[(\lambda)^\infty]$ and a direct computation shows that $\cO_{B,\lambda}=\cO_D$. Thus we only need to work locally at $\lambda$ and show that $r\in \cO_D^\times$ to conclude that $r\in \cO_B^\times$.

Recall $\beta_0\in F$ from \eqref{Ebeta0} and set $\epsilon_1:=\beta_0\epsilon_0$; then $\epsilon_1^2=\beta_0^2\in F_0$; so $\epsilon_1^2$ is totally negative and the same argument for $\sqrt{-\lambda}$ in the proof of \Cref{RootInPairs} implies that $\epsilon_1\in \cO_B$.
By the Noether--Skolem Theorem, we may assume that the image of $\epsilon_1$ under the injective homomorphism $\mathrm{End}_{\cO_F}(A)\to \QQ_{p^2} D$ is $\begin{bmatrix}
    \beta_0 & 0 \\ 0 & -\beta_0
\end{bmatrix} \in \cO_D$,\footnote{Although the Noether--Skolem Theorem only implies uniqueness up to conjugacy by $D^\times$, the maximal order $\cO_D$ in the ramified quaternion algebra $D$ is stable under conjugation by $D^\times$; thus this reduction step is valid for our purposes.} where we view $\beta_0\in \ZZ_{p^2}$ via $\iota$ and we use the coordinate of $D^\times \subset \GL_2(\bQ_{p^2})$ as in the proof of \Cref{EndH}. The condition $\alpha \epsilon_0 = -\epsilon_0\alpha$ is equivalent to $\alpha \epsilon_1 = -\epsilon_1\alpha$.
By direct computation, 
if $\alpha \epsilon_1 = -\epsilon_1\alpha$ and $\alpha^2=-\lambda$, 
then the image of $\alpha$ in $\cO_D$ is of the form $$\alpha=\begin{bmatrix}
    0 & -\lambda x^\gamma \\
   x  & 0\\
\end{bmatrix},$$
for some $x\in\QQ_{p^2}^\times$ satisfying $xx^\gamma=1$; thus $x\in \bZ_{p^2}^\times$. We write $\alpha_0=\begin{bmatrix}
    0 & -\lambda x_0^\gamma \\
   x_0  & 0\\
\end{bmatrix}$. Then
 $r=\alpha\alpha_0^{-1}= \begin{bmatrix}
     x x_0^\gamma & 0 \\ 0 & x^\gamma x_0
 \end{bmatrix}$, which lies in $\cO_D^\times$. Thus we conclude that $r\in \cO_B^\times \subset \Aut_{\cO_F}(A)$.

We apply Lemma~\ref{aut} to find these $r$.
For $A=A_R\bmod \lambda$, each automorphism in $\Aut_{\cO_F}(A)=\{\pm 1\}\times \mu_5 \subset F$ acts via scalar multiplication; thus $\Aut_{\cO_F}(A)\cap \cO_B^\times =\{\pm 1\}$. Given $\alpha_0$, the element $-\alpha_0$ also satisfies the conditions for $\alpha$. Thus we have exactly two conjugate embeddings $\theta, \theta'$ corresponding to a unique point in $Z(\lambda)\cup W(\lambda)$.
For $A=A_P\bmod \lambda $, $\Aut_{\cO_F}(A)=\{\pm 1\}^2\times \mu_5^2$ and its image in $\bZ_{p^2}\cO_D$ using the above coordinates is $\begin{bmatrix}
 \pm   \zeta_5^i & 0 \\ 0 & \pm \zeta_5^j
\end{bmatrix} $ and the only $r$'s of the form $\begin{bmatrix}
     y& 0 \\ 0 & y^\gamma
 \end{bmatrix}$ are $\pm \begin{bmatrix}
 \zeta_5^i & 0 \\ 0 & \zeta_5^{-i}
\end{bmatrix} $.  One can check directly that $r\alpha_0$ satisfies the condition for $\alpha$ for these ten values of $r$. By direct computations, these pairs are conjugate to each other.
In other words, we also have exactly one point in $Z(\lambda)\cup W(\lambda)$ corresponding to $\alpha$'s.

Recall we proved that all points in $Z(\lambda)$ occur in pairs 
for preimages of $A=A_P\bmod \lambda$,
and thus the unique exceptional point (not in a pair) lies in $W(\lambda)$. 
Also $\# W(\lambda)$ is odd and there is only one other exceptional point (not in a pair), and the reduction of this exceptional point is $A_R\bmod \lambda$.
Thus we conclude that this exceptional point lies in $Z(\lambda)$, and all preimages of $A_R\bmod \lambda$ in $W(\lambda)$ occur in pairs.
\end{proof}

\subsection{Switching the roles of $P$ and $R$} \label{SswitchPR}

Later, when applying Proposition \ref{RootInPairs}, it is convenient to change the coordinate system, to switch the points $R$ and $P$ while fixing $Q$. Here, we introduce the relevant notation.

Let $\circ$ be the unique automorphism of $\mathbb{P}^1_{F_0}$ which fixes $j_Q$ and switches $j_R$ and $j_P$. 
Concretely, $\circ$ is the fractional linear transformation $x\mapsto x^\circ=\frac{cx}{x-c}$, where $c=\frac{27}{4}(u\sqrt{5})^{-5}$. 
Then,
\begin{equation}\label{circfacts}
    x^\circ-y^\circ=\frac{-c^2(x-y)}{(x-c)(y-c)}, \text{ and } 
x^\circ-c= \frac{c^2}{x-c}.\end{equation}

In particular, if $j\in F_0$, then $j^\circ\in F_0$ and $(j^\tau)^\circ=(j^\circ)^\tau$. 
Also, 
if $v$ is a prime of $F_0$ satisfying $\mathrm{val}_v(c)=0$ 
(concretely, $v$ is relatively prime to $2,3,u\sqrt{5}$), 
then $\mathrm{val}_v (j-c)=-\mathrm{val}_v(j_\circ-c)$ 
since $(j-c)(j_\circ-c) =
c^2$.  We omit the proof of the following lemma. 

\begin{lemma} \label{Lcirc}
With the above notation, let $f(x)\in F_0[x]$ be monic of degree $n$, and denote by $f^\circ (x)$ the monic polynomial of degree $n$, 
whose roots are the images under $\circ$ of the roots of $f(x)$. Then
    $$f^\circ(x)=\frac{1}{f(c)}(x-c)^{n} f (x^\circ) \in F_0[x] \text{ and } f(x)f^\circ(x^\circ)=\frac{1}{f(c)}(x^\circ-c)^{n} f(x)^2.$$
\end{lemma}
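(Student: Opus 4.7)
The plan is to establish both identities through direct computations that exploit the properties of the involution $\circ$ recorded in \eqref{circfacts}. I will factor $f(x) = \prod_{i=1}^n (x-\alpha_i)$ over $\overline{F_0}$, so that $f^\circ(x) = \prod_{i=1}^n (x-\alpha_i^\circ)$ by definition. That $f^\circ(x) \in F_0[x]$ follows because the transformation $\circ$ has coefficients in $F_0$, so the multiset $\{\alpha_i^\circ\}$ is stable under $\Gal(\overline{F_0}/F_0)$ whenever $\{\alpha_i\}$ is.

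For the first identity, I plan to apply the difference formula $x^\circ - y^\circ = -c^2(x-y)/((x-c)(y-c))$ from \eqref{circfacts} to each factor of $f(x^\circ) = \prod_i (x^\circ - \alpha_i)$. Writing $\alpha_i = (\alpha_i^\circ)^\circ$ (since $\circ$ is an involution, as can be checked directly from $x^\circ = cx/(x-c)$), the difference formula gives
\[
x^\circ - \alpha_i \;=\; \frac{-c^2\,(x-\alpha_i^\circ)}{(x-c)(\alpha_i^\circ - c)},
\]
so that
\[
f(x^\circ) \;=\; \frac{(-c^2)^n \, f^\circ(x)}{(x-c)^n \prod_i (\alpha_i^\circ - c)}.
\]
A short preliminary calculation, using $c - \alpha_i^\circ = -c^2/(\alpha_i - c)$, yields $f^\circ(c) = \prod_i (c-\alpha_i^\circ) = c^{2n}/f(c)$, and therefore $\prod_i(\alpha_i^\circ - c) = (-1)^n c^{2n}/f(c)$. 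Substituting this into the displayed expression for $f(x^\circ)$ and rearranging produces the first identity.

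For the second identity, I will simply apply the first identity with $x$ replaced by $x^\circ$; using $(x^\circ)^\circ = x$, this gives $f^\circ(x^\circ) = (x^\circ - c)^n f(x)/f(c)$, and multiplying both sides by $f(x)$ yields the claim. Alternatively, one can invoke $(x^\circ - c)(x-c) = c^2$, which is a restatement of the second identity in \eqref{circfacts}, to convert directly between the two forms.

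I do not expect any serious obstacle: each step is routine bookkeeping with the involution $\circ$. The only non-obvious ingredient, the formula $f^\circ(c) = c^{2n}/f(c)$, follows from a short product computation using only \eqref{circfacts}, so both parts reduce to algebraic manipulation of rational functions in one variable.
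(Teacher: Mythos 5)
The paper explicitly omits the proof of Lemma~\ref{Lcirc} (``We omit the proof of the following lemma''), so there is no argument in the text to compare against. Your proof is correct: the factorization $f(x)=\prod_i(x-\alpha_i)$, the application of the difference identity $x^\circ-y^\circ=-c^2(x-y)/((x-c)(y-c))$ with $y=\alpha_i^\circ$, the auxiliary identity $\prod_i(\alpha_i^\circ-c)=(-1)^nc^{2n}/f(c)$ obtained from $x^\circ-c=c^2/(x-c)$, and the cancellation of the $(-1)^nc^{2n}$ factors all check out, and the second identity does follow immediately from the first by substituting $x\mapsto x^\circ$ and multiplying by $f(x)$. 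The only point worth flagging is the implicit hypothesis $f(c)\neq 0$ (equivalently, that $c$ and $\infty$ are not among the roots of $f$), which is needed both for the formula to make sense and for $f^\circ$ to remain a degree-$n$ monic polynomial; this holds in the paper's application since $j_R=c$ is not a root of $\cP_\lambda$ or $\cQ_\lambda$ by the choice of $\lambda$, but a careful write-up should state it.
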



\subsection{Reduction modulo $\lambda$ of CM cycles II}\label{denominator}

Recall notation and assumptions from Definition \ref{poly}.
By Lemmas \ref{LinF0} and \ref{Lcirc},  we deduce $\cP_\lambda(x), \cP_\lambda^\circ(x), \cQ_\lambda(x), \cQ_\lambda^\circ(x)\in F_0[x]$.  

Define $a_\lambda \in \cO_{F_0}$ (resp.\ $b_\lambda \in \cO_{F_0}$) 
to be the totally positive least common multiple of the denominators of the coefficients of $\cP_\lambda(x) \in F_0[x]$, (resp.\ $\cP_\lambda^\circ (x)\in F_0[x]$).
 Then $a_\lambda, b_\lambda  \in \cO_{F_0}$ are uniquely defined up to multiplication by totally positive units, that is up to squares of units since $ \mathcal{U}^+_{F_0}= \mathcal{U}^2_{F_0}$. 

\begin{proposition}\label{asquare}
With notation as above, 
$\mathrm{val}_v(a_\lambda)$ is even for all primes $v$ of $F_0$, 
with $v\neq \lambda$. 
In particular,  $a_\lambda\bmod \lambda$ is a square (possibly $0$).
\end{proposition}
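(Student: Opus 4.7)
For $v\mid 5$ the claim is trivial by Corollary \ref{Cnot5}, which forces $v(\alpha)\geq 0$ for every root $\alpha$ of $\cP_\lambda(x)$, hence $v(a_\lambda)=0$. Assume henceforth $v\neq \lambda$ and $v\nmid 5$. Factoring $\cP_\lambda=\prod_\alpha(x-\alpha)$ over $\bar{F_0}$ gives
\[
v(a_\lambda)=\sum_\alpha \max\bigl(0,-v(\alpha)\bigr),
\]
and a root $\alpha$ contributes (i.e.\ has $v(\alpha)<0$) precisely when the associated CM abelian variety $\cA_\alpha$ reduces modulo $v$ to the unique pole of $j$, namely $A_P\bmod v$.

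The plan is to replay the local analysis of Proposition \ref{RootInPairs} and Lemma \ref{L1} at the prime $v$ instead of $\lambda$. Via Serre--Tate and Lubin--Tate theory applied at $v$, one obtains a bijection between CM lifts of $A_P\bmod v$ in $Z(\lambda)$ and $\Aut_{\cO_F}$-conjugacy classes of normalized $\cO_{F_0}$-algebra embeddings $\theta:\cO_{F_0}[\sqrt{-\lambda}]\hookrightarrow \End_{\cO_F}(A_P\bmod v)$ satisfying the integrality condition $(1+\theta(\sqrt{-\lambda}))/2\in \End_{\cO_F}(A_P\bmod v)$. The involution $\theta\mapsto -\theta$ preserves this condition, and a fixed class would correspond to some $\epsilon\in\Aut_{\cO_F}(A_P\bmod v)$ of trace zero anti-commuting with $\theta(\sqrt{-\lambda})$. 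The $2$-adic integrality argument of Lemma \ref{L1}---using that $2$ is inert in $F/\QQ$ so that the $2$-adic Tate module (or, for $v\mid 2$, the $2$-divisible group of the characteristic-$0$ lift) splits as $\cO_{F,2}^{\oplus 2}$, that any trace-zero element of $\Aut_{\cO_F}(A_P)$ is $\mathrm{diag}(1,-1)$ up to scalars, and that any anti-commuting $\alpha$ is forced off-diagonal so $(1+\alpha)/2\notin M_2(\cO_{F,2})$---rules out such a fixed $\theta$. Consequently the CM lifts of $A_P\bmod v$ in $Z(\lambda)$ partition into pairs $\{\cA_\theta,\cA_{-\theta}\}$ with distinct $j$-invariants $\{\alpha,\alpha'\}$.

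It remains to show each pair has $v(\alpha)=v(\alpha')$. The lifts $\cA_\theta$ and $\cA_{-\theta}$ are related by the action of a Galois element $\tilde\sigma\in\Gal(H_\lambda/F)$ lifting the non-trivial $\sigma\in\Gal(E/F)$ (which sends $\sqrt{-\lambda}\mapsto -\sqrt{-\lambda}$): applying $\sigma$ to the CM datum $(\cA_\theta,\theta)$ negates $\sqrt{-\lambda}$, producing under the bijection the other pair member $(\cA_{-\theta},-\theta)$. Since $\tilde\sigma$ fixes $F_0\subset F$ pointwise, it fixes the prime $v$ of $F_0$, and therefore $v(\alpha')=v(\tilde\sigma(\alpha))=v(\alpha)$. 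Summing, $v(a_\lambda)=\sum_{\{\alpha,\alpha'\}}2(-v(\alpha))\in 2\ZZ$. The ``in particular'' statement follows: writing $a_\lambda=\lambda^m s^2$ for some $s\in F_0^\times$ and $m\geq 0$, one sees that $a_\lambda\bmod \lambda$ equals $0$ if $m\geq 1$ and the nonzero square $s^2\bmod\lambda$ otherwise.

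The main technical obstacle is justifying the Galois interpretation of the pairing in the preceding paragraph---namely that the combinatorial involution $\theta\leftrightarrow -\theta$ is realized by a specific $\tilde\sigma\in\Gal(H_\lambda/F)$. This is an instance of Shimura's reciprocity law and should follow from the functoriality of the Lubin--Tate lifting construction used in the proof of Proposition \ref{RootInPairs}, since $-\theta$ arises from $\theta$ by precomposition with the $\cO_{F_0}$-algebra involution of $\cO_{F_0}[\sqrt{-\lambda}]$ sending $\sqrt{-\lambda}\mapsto-\sqrt{-\lambda}$, and this algebraic involution is induced by the restriction of $\tilde\sigma$ to $E$. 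A secondary issue at primes $v\mid 2$ is the unavailability of the $2$-adic Tate module in residue characteristic $2$; here the integrality check for $(1+\alpha)/2$ is instead performed on the $2$-divisible group of the characteristic-$0$ CM lift, which inherits the product splitting from $A_P=J_1\oplus J_2$.
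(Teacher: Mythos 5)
Your proposal takes a genuinely different route from the paper's, and it has two real gaps.

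The paper's proof is geometric and does not touch the deformation theory at $v$ at all. It uses that the Shimura stack $\Sh$ maps to the coarse moduli space (the $j$-line) as a $\Gamma$-quotient near the stacky point above $P$, where $\Gamma=\mathrm{Aut}_{\mathcal{O}_F}(A_P)/(\{\pm 1\}\times\mu_5)$. Locally one has $1/j = \prod_{\gamma\in\Gamma} t^\gamma$ for a formal parameter $t$, and because the $\Gamma$-action is \'etale (unit derivative), $\mathrm{val}_\nu(t^\gamma_\beta)=\mathrm{val}_\nu(t_\beta)$ for every $\gamma$. Hence each root $\beta$ in the $\lambda$-adic cycle with $\mathrm{val}_\nu(\beta)<0$ satisfies $|\mathrm{val}_\nu(\beta)| = \#\Gamma\cdot\mathrm{val}_\nu(t_\beta)$, which is divisible by the even number $\#\Gamma$. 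Since this is a computation in characteristic $0$ on the integral model, it applies uniformly to every $v\neq\lambda$ (the case $v=u\sqrt{5}$ is handled separately via Corollary~\ref{Cnot5}). Notice that this gives a stronger output than evenness of the sum: each individual $\mathrm{val}_\nu(\beta)$ is divisible by $\#\Gamma$.

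Your approach instead replays the Lubin--Tate pairing argument of Proposition~\ref{RootInPairs} and Lemma~\ref{L1} at $v$. Two things break. First, the Lubin--Tate construction in Proposition~\ref{RootInPairs} is tied to the supersingular polarized $\ZZ_{p^2}$-module $\mathcal{H}$ of Section~\ref{mathcalH}; it uses that the relevant $p$-divisible group at $\lambda$ is supersingular (Proposition~\ref{ST}) so that $\mathrm{End}_{\mathcal{O}_F}$ lands in a quaternion order, which is what makes the trace-zero computation of Lemma~\ref{trx} and the conjugate-pair argument of Lemma~\ref{L1} go through. At an arbitrary $v\neq\lambda$ there is no reason for the reduction to be basic: when $v$ splits in $F_0(\sqrt{-\lambda})/F_0$ the reduction of the CM points is $\mu$-ordinary, the deformation theory is Serre--Tate (formal tori, canonical lift), and the quaternionic structure you rely on is simply not there. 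Your phrase ``Serre--Tate and Lubin--Tate theory applied at $v$'' glosses over this bifurcation, which requires a separate argument.

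Second, even granting the pairing $\theta\leftrightarrow -\theta$, the assertion that it is induced by a Galois element $\tilde\sigma\in\Gal(H_\lambda/F_0)$ lifting the conjugation of $E/F$ — which is what you need to get $v(\alpha)=v(\alpha')$ on each pair — is stated and flagged as ``the main technical obstacle'' but not established. This is a genuine obstacle: Shimura reciprocity describes the $\Gal(H_\lambda/E)$-action on CM points with fixed CM type via the class group, but elements of $\Gal(H_\lambda/F_0)$ not fixing $E$ act on the cycle $Z(\lambda)$ in a way that mixes the CM type and the embedding, and it is not immediate that this action agrees pointwise with the local involution $\theta\mapsto -\theta$ coming from the formal deformation theory. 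The paper never needs this identification: since the evenness is established root by root from the ramification degree $\#\Gamma$, no pairing of roots is required.
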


\begin{proof}
By \Cref{L1}, the number of geometric points of $Z(\lambda)$ in the $\lambda$-adic neighborhood of $A_P\bmod \lambda$ is even. Let $\beta$ be the $j$-invariant of a point on $Z(\lambda)$; that is, $\beta$ is a root of ${\mathcal{P}}_\lambda(x)$. By Lemma \ref{Hilbert}, $\beta\in H_\lambda$, the Hilbert class field of $E=F(\sqrt{-\lambda})$. 

Let $v$ be a prime of $F_0$, and $\nu$ a prime of $H_\lambda$ dividing $v$. Assume $v\neq \lambda, u\sqrt{5}$. Then $v$ is unramified in $H_\lambda$, and $\mathrm{val}_v(a)=\mathrm{val}_\nu(a)$, for all $a\in F_0$. To prove that $\mathrm{val}_v(a_\lambda)$ is even,
it suffices to show that if $\mathrm{val}_\nu(\beta)<0$ then $\mathrm{val}_\nu(\beta)$ is even.

Choose a local parameter $t$ around $P$ on the smooth Deligne--Mumford stack $\Sh$ above the coarse moduli space (i.e., the $j$-line ${\mathbb P}_F^1$). 
In a neighborhood of $P$ (localized at $v$), 
\begin{equation}\label{eqt}
1/j=\prod_{\gamma\in \Gamma} t^\gamma, \end{equation} where 
$\Gamma=\mathrm{Aut}_{{\cO}_F}(A_P)/ (\{\pm 1\} \times \mu_5)$. 

Since the $\Gamma$-action is \'etale, $\mathrm{val}_\nu(t_\beta )=\mathrm{val}_\nu(t_\beta^\gamma )$.
We deduce that if $\mathrm{val}_\nu(\beta )<0$, then $\mathrm{val}_\nu(\beta )=\# \Gamma\cdot\mathrm{val}_\nu(t_\beta )$, 
which is even since  $\# \Gamma$ is even.  

Assume $v=u\sqrt{5}$. By Corollary \ref{Cnot5}, $\mathrm{val}_{u\sqrt{5}} (\beta)\geq 0 $ and hence 
$\mathrm{val}_{u\sqrt{5}} (a_\lambda)= 0 $.
We conclude that $\mathrm{val}_v(a_\lambda)$ is even for all primes $v$
of $F_0$, with $v\neq \lambda$.
\end{proof}

Similarly, 
define $c_\lambda \in \cO_{F_0}$ (resp.\ $d_\lambda  \in \cO_{F_0}$) to be the totally positive least common multiple of the denominators of the coefficients of $\cQ_\lambda(x) \in F_0[x]$ (resp.\ $\cQ_\lambda^\circ (x)\in F_0[x]$).

\begin{proposition}\label{asquare2}
With notation as above, $\mathrm{val}_v(d_\lambda)$ is even for all primes $v$ of $F_0$, with $v\neq 2, \lambda$. 
In particular, either $d_\lambda\bmod \lambda$ or $2d_\lambda\bmod \lambda$ is a square (possibly $0$).
\end{proposition}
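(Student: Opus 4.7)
The plan is to mirror the proof of Proposition \ref{asquare}, with two modifications: the roles of $P$ and $R$ are interchanged via the $\circ$-transformation, and a new phenomenon arises at primes above $2$ because the ring class field of the non-maximal order $\cO_F[\sqrt{-\lambda}]$ can be ramified there. By Lemma \ref{L1}, the unique exceptional point of $W(\lambda)$ (i.e., the one whose preimages fail to pair up) reduces modulo $\lambda$ to $A_R \bmod \lambda$, which becomes the point at infinity under $\circ$, taking on the role that $P$ had in Proposition \ref{asquare}.

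Let $\beta$ be a root of $\cQ_\lambda^\circ(x)$, so $\beta = j^\circ$ for some root $j$ of $\cQ_\lambda(x)$; by Lemma \ref{Hilbert}, both $j$ and $\beta$ lie in the ring class field $H'_\lambda$ of $\cO_F[\sqrt{-\lambda}]$ in $E$. Since $\cO_E = \cO_F[(1+\sqrt{-\lambda})/2]$, the conductor of $\cO_F[\sqrt{-\lambda}]$ in $\cO_E$ divides $\langle 2\rangle$, so $H'_\lambda/E$ is unramified at all finite primes not above $2$. Fix a prime $v$ of $F_0$ with $v \ne 2, \lambda, u\sqrt 5$, together with $\nu \mid v$ in $H'_\lambda$; on $F_0$, $\mathrm{val}_\nu$ restricts to $\mathrm{val}_v$. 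If $\mathrm{val}_\nu(\beta) < 0$, then $j$ is $\nu$-adically close to $c$; using \eqref{circfacts} together with $\mathrm{val}_v(c) = 0$, we obtain $\mathrm{val}_\nu(\beta) = -\mathrm{val}_\nu(j-c)$. Picking a local parameter $t$ on the smooth Deligne--Mumford stack $\Sh$ around $R$, the argument from Proposition \ref{asquare} yields $j - c = w \prod_{\gamma \in \Gamma'} t^\gamma$ for $w$ a $\nu$-adic unit and $\Gamma' = \mathrm{Aut}_{\cO_F}(A_R)/(\{\pm 1\} \times \mu_5) \cong \ZZ/2\ZZ$, where the isomorphism uses Lemma \ref{aut} (since $\Aut_{\cO_F}(A_R) \cong \{\pm 1\}\times\ZZ/10\ZZ$). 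The \'etaleness of the $\Gamma'$-action gives $\mathrm{val}_\nu(j-c) = 2\,\mathrm{val}_\nu(t_\beta)$, so $\mathrm{val}_\nu(\beta)$ is even, which transfers to evenness of $\mathrm{val}_v(d_\lambda)$ exactly as in the proof of Proposition \ref{asquare}. At $v = u\sqrt 5$, Corollary \ref{Cnot5} gives $\mathrm{val}_{u\sqrt 5}(j) \geq 0$; combined with $\mathrm{val}_{u\sqrt 5}(c) = -5$ and $\beta = cj/(j-c)$, this yields $\mathrm{val}_{u\sqrt 5}(\beta) = \mathrm{val}_{u\sqrt 5}(j) \geq 0$, so $v = u\sqrt 5$ contributes trivially to $d_\lambda$.

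At $v = 2$, the analysis above breaks down because $H'_\lambda/E$ may be ramified, so $\mathrm{val}_2(d_\lambda)$ can have either parity. However, $2$ is inert in $F_0/\QQ$ (since $5 \not\equiv 1 \bmod 8$), giving $\mathrm{val}_2(2) = 1$, so exactly one of $\mathrm{val}_2(d_\lambda)$ and $\mathrm{val}_2(2d_\lambda)$ is even. Combined with the evenness at all other primes $v \neq \lambda$ and the fact that $\cU^+_{F_0} = \cU^2_{F_0}$, it follows that either $d_\lambda \bmod \lambda$ or $2d_\lambda \bmod \lambda$ is a square (possibly $0$) in $\bF_\lambda$. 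The main delicate point is establishing the stack-to-coarse-moduli identity $j - c = w \prod_\gamma t^\gamma$ for points of $W(\lambda)$ whose CM order is non-maximal: this is an \'etale-local statement about $\Sh \to \bP^1_{F_0}$ near $R$ which is independent of the order structure, provided $\nu$ lies over a prime where $H'_\lambda/E$ is unramified --- this is precisely what fails at $v = 2$, forcing the slightly weaker conclusion in the statement.
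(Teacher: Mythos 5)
Your proposal is correct and follows essentially the same route as the paper's (very terse) proof: both run the ramification argument from Proposition~\ref{asquare} near $R$ instead of $P$, both appeal to Lemma~\ref{aut} to get $\#\Gamma' = 2$ for $A_R\bmod\lambda$, and both exclude $v=2$ because the roots of $\cQ^\circ_\lambda$ now live in the ring class field of $\cO_F[\sqrt{-\lambda}]$, which is ramified at $2$. Your handling of $v=u\sqrt5$ via Corollary~\ref{Cnot5} and the formula $\beta = cj/(j-c)$, and your observation that $2$ is inert in $F_0$ so exactly one of $\mathrm{val}_2(d_\lambda)$, $\mathrm{val}_2(2d_\lambda)$ is even, are both fine.

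One inaccuracy worth correcting: you write that, by Lemma~\ref{L1}, the unique exceptional point of $W(\lambda)$ reduces to $A_R\bmod\lambda$. Lemma~\ref{L1} actually says the exceptional point of $W(\lambda)$ is $A_P\bmod\lambda$ (it is $A_R\bmod\lambda$ for $Z(\lambda)$). You have the labels swapped. Fortunately this does not affect your argument: the evenness of $\mathrm{val}_v(d_\lambda)$ for $v\neq 2,\lambda$ comes entirely from the degree-$2$ ramification of $\Sh\to\bP^1$ at $R$ (the point mapped to $\infty$ by $\circ$), not from the pairing statement in Lemma~\ref{L1}. The pairing is only needed later, in Proposition~\ref{coroSQ}. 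So the misquote is harmless here, but you should fix it to avoid confusion downstream.
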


Our assumptions do not determine whether $2\bmod \lambda$ is a square. In fact,
 by \cite[Theorem 12.14(3)]{lemmermeyer}, given $\lambda \equiv -1\bmod 4 \cO_{F_0}$, then $2\bmod \lambda$ is a square if $\lambda\equiv -1, 3\bmod 8 \cO_{F_0}$, and is not a square if $\lambda\equiv -1+4u, -1+4u^\tau\bmod 8 \cO_{F_0}$.

\begin{proof}
    The proof is analogous to that of Proposition \ref{asquare}. 
    We use \Cref{aut} to conclude that $\#\Gamma$ for $A_R \bmod \lambda$ is even.
    By Proposition \ref{Hilbert}, the roots of 
   $\cQ^\circ_\lambda(x)$ are in the ring class field of $E$ associated to the order $\cO_F[\sqrt{{-\lambda}}]$, where the prime $v=2$ of $F_0$ is ramified.
    \end{proof}

By Proposition~\ref{asquare2}, 
we can choose $d'_\lambda\in \{d_\lambda, 2d_\lambda\}$ 
which is a square modulo $\lambda$.
By definition, $a_\lambda \cP_\lambda(x) \in \cO_{F_0}[x]$ and $d'_\lambda \cQ^\circ_\lambda(x)\in \cO_{F_0}[x]$.
Recall $c=\frac{27}{4}(u\sqrt{5})^{-5}\in \cO_{F_0}$.

\begin{proposition}\label{coroSQ}
Denote the reduction of $c$ modulo $\lambda$ by $\bar{c} \in\bF_\lambda$,
and the reduction of $a_\lambda \cP_\lambda(x) \in \cO_{F_0}[x]$
modulo $\lambda$ by $\bcP_{\lambda}(x) \in\bF_\lambda[x]$.
Then ${(x-\bar{c}) \bcP_{\lambda}(x)}$ is a square in $\bF_\lambda[x]$.  

Similarly, denote the reduction modulo $\lambda$ of  
$d'_\lambda \cQ^\circ_\lambda(x)\in \cO_{F_0}[x]$
by $ \bcQo (x) \in\bF_\lambda[x]$. 
Then ${(x-\bar{c}) \bcQo(x)}$ is a square in $\bF_\lambda[x]$.  
\end{proposition}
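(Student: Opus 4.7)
The plan is to show, for each of $\bcP_\lambda(x)$ and $\bcQo(x)$, that the polynomial $(x-\bar c)$ times it has (i) every root in $\bar\bF_\lambda$ of even multiplicity, and (ii) a leading coefficient that is a square in $\bF_\lambda$. Together these properties imply the polynomial is a square in $\bF_\lambda[x]$.

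For $(x-\bar c)\bcP_\lambda(x)$, I would work locally at $\lambda$ in an algebraic closure of the completion $F_{0,\lambda}$. The roots of $\cP_\lambda$ split cleanly into those with nonnegative $\lambda$-adic valuation and those with negative valuation, so the Newton polygon has a vertex at $(n-m(\infty),-v_\lambda(a_\lambda))$ with no cancellation. A direct computation then shows $\bcP_\lambda(x)=\bar L\cdot\prod_{x_0\in\bar\bF_\lambda}(x-x_0)^{m(x_0)}$ for a nonzero constant $\bar L\in\bF_\lambda^\times$, where $m(x_0)$ counts the points of $Z(\lambda)$ whose reduction modulo $\lambda$ is $x_0$. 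Proposition~\ref{RootInPairs} combined with Lemma~\ref{L1} yields $m(x_0)$ even for $x_0\neq\bar c$ and $m(\bar c)$ odd, so multiplying by $(x-\bar c)$ makes all root multiplicities even, giving (i).

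For (ii), I would express $\bar L$ via the Newton-polygon identity as the reduction of $(-1)^{m(\infty)}\bigl(a_\lambda/\lambda^{v_\lambda(a_\lambda)}\bigr)\prod_{i\in S(\infty)}\beta_i'$, where $S(\infty)$ indexes the negative-valuation roots and $\beta_i'=\lambda^{v_i}\beta_i$. Three ingredients combine: Lemma~\ref{quadrec5} shows $-1$ is a square modulo $\lambda$ since $\lambda\equiv-1\bmod 4\cO_{F_0}$; Proposition~\ref{asquare} together with $\cU_{F_0}^+=\cU_{F_0}^2$ shows $a_\lambda/\lambda^{v_\lambda(a_\lambda)}$ is a square in $F_0^\times$; and the Lubin--Tate relation $1/j=t^{\#\Gamma}$ near $P$ (used in the proof of Proposition~\ref{asquare}), with $\#\Gamma$ even, realizes each $\beta_i'$ as a $\#\Gamma$-th power times a unit, so that pairing Galois conjugate indices in $S(\infty)$ makes the product a square in $\bF_\lambda$. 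The argument for $(x-\bar c)\bcQo(x)$ is parallel after applying the $\circ$-transformation of Section~\ref{SswitchPR}, which swaps $P$ and $R$ (equivalently $\infty$ and $\bar c$), sending the exceptional point $\infty$ for $W(\lambda)$ onto $\bar c$; the $W(\lambda)$-points reducing to $\bar c$ become roots at infinity of $\cQ_\lambda^\circ$ and occur in pairs by Proposition~\ref{RootInPairs}. Here Proposition~\ref{asquare2} and the choice $d'_\lambda\in\{d_\lambda,2d_\lambda\}$ (so that $\bar d'_\lambda$ is a square) play the role of Proposition~\ref{asquare}, and the analogous Lubin--Tate analysis is carried out at $R$, where the relevant automorphism group again has even order by \Cref{aut}.

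The main obstacle I anticipate is the leading-coefficient step when $m(\infty)>0$: then $\bar a_\lambda=0$, so the leading coefficient of $\bcP_\lambda$ is no longer $\bar a_\lambda$ but a more delicate combination governed by the $\lambda$-adic asymptotics of the roots near the boundary point $P$. Reconciling the local Lubin--Tate expansion of $j$ at $P$ with the global square-class bookkeeping from Propositions~\ref{asquare} and~\ref{asquare2} is the technical heart of the proof; cleanly realizing $\beta_i'$ as a $\#\Gamma$-th power amounts to choosing a stack uniformizer $t$ for which $1/j=t^{\#\Gamma}$ holds exactly, which one can arrange after passing to a suitable finite extension (where Hensel's lemma supplies the needed $\#\Gamma$-th roots), and then descending using Galois invariance of the original polynomial together with the evenness of $\#\Gamma$.
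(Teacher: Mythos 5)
Your top-level architecture matches the paper's: show that $(x-\bar c)\bcP_\lambda(x)$ has all root multiplicities even over $\overline{\bF}_\lambda$ (which you rightly get from Proposition~\ref{RootInPairs} and Lemma~\ref{L1}), and that the leading coefficient is a square in $\bF_\lambda$. You also correctly identify the easy case (when $\mathrm{val}_\lambda(a_\lambda)=0$, handled by Proposition~\ref{asquare}), the role of Proposition~\ref{asquare2} and $d'_\lambda$, the quadratic-character ingredient from Lemma~\ref{quadrec5}, and you correctly flag the leading-coefficient step when $\mathrm{val}_\lambda(a_\lambda)>0$ as the ``technical heart.'' But precisely there, your proposed resolution diverges from and falls short of what the paper actually uses, and I think the gap is real.

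The paper does not try to realize $\beta_i'$ as a $\#\Gamma$-th power. Instead, the indispensable ingredient is Lemma~\ref{L2}, which you never invoke. That lemma shows, for a conjugate pair $\beta_1,\beta_2$ of roots in the $\lambda$-adic neighborhood of $A_P\bmod\lambda$, not merely that $\mathrm{val}_\nu(1/\beta_1)=\mathrm{val}_\nu(1/\beta_2)=n$, but crucially that $\mathrm{val}_\nu(1/\beta_1-1/\beta_2)>n$. Writing $\delta_i=\varpi^n\beta_i$ with $\varpi=\sqrt{-\lambda}$, this yields $\delta_1\equiv\delta_2\pmod\varpi$, and hence
$(-\lambda)^n(x-\beta_1)(x-\beta_2)=(\varpi^n x-\delta_1)(\varpi^n x-\delta_2)\equiv\delta_1\delta_2\equiv\delta_1^2\pmod\varpi$,
an \emph{explicit} square constant. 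Summing over the Galois orbit of $\beta_1$ under $\Gal(H_\lambda/F_0(\sqrt{-\lambda}))$ then packages the contribution from the negative-valuation roots as a square, simultaneously controlling the $F_0$-rationality and the square class. Your Newton-polygon bookkeeping, together with Proposition~\ref{asquare} and $\cU^+_{F_0}=\cU^2_{F_0}$ and $\left(\frac{-1}{\lambda}\right)=1$, correctly reduces matters to whether $\prod_{i\in S(\infty)}\beta_i'$ reduces to a square in $\bF_\lambda$, but the argument you offer for that last fact does not close the loop: establishing that each $\beta_i'$ is a $\#\Gamma$-th power times a unit in some finite extension (even granting the Hensel step) only shows the product is a square in the larger residue field, which is vacuous since $[k_\nu:\bF_\lambda]$ is even; it does not produce the needed square class in $\bF_\lambda$. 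Also note the Lubin--Tate relation in the paper is $1/j=\prod_{\gamma\in\Gamma}t^\gamma$, not $1/j=t^{\#\Gamma}$; only the valuation consequence survives, and the structural consequence you want (a clean $\#\Gamma$-th power) does not follow. In short: the missing idea is the second assertion of Lemma~\ref{L2} (the strict inequality $\mathrm{val}_\fp(1/\beta-1/\beta')>\mathrm{val}_\fp(1/\beta)$ for a conjugate pair), which is the pivot that turns the mere parity of $\#\Gamma$ into an actual square in $\bF_\lambda$.
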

\begin{proof}
We will prove that $(x-\bar{c})\bcP_{\lambda}(x)$ is a square in $\bF_\lambda[x]$; the proof of the assertion for ${(x-\bar{c}) \bcQo(x)}$ is the same (with Proposition \ref{asquare2} replacing Proposition \ref{asquare}).  

By Proposition \ref{asquare},
$a_\lambda\bmod\lambda$ is a  square. 
Hence, if $\mathrm{val}_\lambda (a_\lambda)=0$, then  the statement follows from 
Proposition \ref{RootInPairs}.

Assume $\mathrm{val}_\lambda (a_\lambda)>0$. Then, the statement follows from Proposition \ref{RootInPairs} combined with Lemmas  \ref{L1} and \ref{L2}. Indeed, let $\beta_1, \beta_2\in H_\lambda$ (the Hilbert class field of $E$)
be a conjugate  pair of geometric points in the support of $Z(\lambda)$ which lie the $\lambda$-adic neighborhood of $A_P\bmod \lambda$. Write $\delta_1=\varpi^n\beta_1$ and $\delta_2=\varpi^n\beta_2$, where $\varpi=\sqrt{-\lambda}$ and $n=\mathrm{val}_\nu(1/\beta_1)$. By Lemma \ref{L2}, $\mathrm{val}_\nu(1/\beta_2)=n$ and $\mathrm{val}_\nu(1/\beta_1-1/\beta_2)>n$. Hence, modulo $\sqrt{-\lambda}$,
\begin{eqnarray*}
(\varpi^nx-\delta_1)(\varpi^nx-\delta_2) & \equiv & 
 \delta_1\delta_2 \equiv
  \delta_1(\delta_1+( \delta_2- \delta_1)) \\
  & \equiv & 
 \delta_1( \delta_1+( \varpi^n \beta_1\beta_2(1/\beta_1- 1/\beta_2))\equiv
 \delta_1^2.
\end{eqnarray*}
Since $\cP_{\lambda}\in F_0[x]$, we apply the above computation to the entire Galois orbit of $\beta_1$ under $\Gal(H_\lambda/F_0(\sqrt{-\lambda}))$, to obtain the desired assertion.
\end{proof}

\begin{lemma}\label{L2}
  Let $\beta, \beta'$ denote a conjugate pair of geometric points in the support of $Z(\lambda)$ (resp. $W(\lambda)$) which are in the $\lambda$-adic neighborhood of $A_P\bmod \lambda$ (resp. $A_R\bmod \lambda$).

  Let $\fp$ be a prime of $H_\lambda$ (resp. the ring class field of $E$ associated to $\cO_F[\sqrt{-\lambda}]$) above $\lambda$.
Then  
$$\mathrm{val}_\fp(1/\beta)=\mathrm{val}_\fp(1/\beta') \text{ and } \mathrm{val}_\fp(1/\beta- 1/\beta')> \mathrm{val}_\fp(1/\beta).$$
\end{lemma}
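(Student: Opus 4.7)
My plan is to prove both statements by local analysis at $\fp$, using the Serre--Tate theorem together with Lubin--Tate theory. The arguments for $(\beta, \beta') \in Z(\lambda)$ near $A_P \bmod \lambda$ and for $(\beta, \beta') \in W(\lambda)$ near $A_R \bmod \lambda$ run in parallel; I will sketch the former.

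First I would set up local coordinates. By Serre--Tate, the formal completion of $\Sh$ at $A_P \bmod \lambda$ over $W := W(\overline{\bF}_\lambda)$ is the formal deformation space of the polarized $\cO_F$-linear $p$-divisible group $A_P[p^\infty]$; by \Cref{ST}, the signature $(2,0)$ factor $A_P[(\lambda^\tau)^\infty]$ is rigid, so the deformation space coincides with the Lubin--Tate deformation space of the supersingular signature $(1,1)$ factor $\CH_\lambda = A_P[(\lambda)^\infty]$. This is a one-dimensional formal scheme over $W$ with canonical formal parameter $t$; by \eqref{eqt}, $1/j = u \cdot t^{|\Gamma|}$ locally for a unit $u$, where $|\Gamma| = 10$ for $A_P$ and $|\Gamma| = 2$ for $A_R$. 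In particular, $|\Gamma|$ is even and a $p$-adic unit (since $p$ is odd and $p \neq 5$).

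As in the proof of \Cref{RootInPairs}, the conjugate pair of normalized embeddings $(\theta, \theta')$ with $\theta'(\sqrt{-\lambda}) = -\theta(\sqrt{-\lambda})$ determines via Lubin--Tate theory two distinct lifts of $\CH_\lambda$ to formal $\cO_{F_0,\lambda}[\sqrt{-\lambda}]$-modules, giving values $t_\beta, t_{\beta'}$ of the formal parameter in the completion of $\cO_{H_\lambda}$ at $\fp$. The key claim is that these values are related by a formal automorphism of the deformation space fixing the origin with linear action $-1$ on $\mathrm{Lie}$; equivalently, $t_{\beta'} = -t_\beta + O(t_\beta^2)$ in a common coordinate chart. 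Granting this, $\mathrm{val}_\fp(t_{\beta'}) = \mathrm{val}_\fp(t_\beta)$, and the first assertion follows from $\mathrm{val}_\fp(1/\beta) = |\Gamma|\,\mathrm{val}_\fp(t_\beta) = \mathrm{val}_\fp(1/\beta')$.

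For the second assertion, write $t_{\beta'} = -t_\beta(1 - h(t_\beta))$ with $h \in t W[[t]]$; since $|\Gamma|$ is even,
\begin{equation*}
 1/\beta - 1/\beta' = u\bigl(t_\beta^{|\Gamma|} - t_{\beta'}^{|\Gamma|}\bigr) = u \cdot t_\beta^{|\Gamma|}\bigl(1 - (1-h(t_\beta))^{|\Gamma|}\bigr) = u \cdot |\Gamma| \cdot h(t_\beta) \cdot t_\beta^{|\Gamma|}\cdot\bigl(1 + O(t_\beta)\bigr),
\end{equation*}
which has $\fp$-valuation at least $(|\Gamma|+1)\,\mathrm{val}_\fp(t_\beta) > |\Gamma|\,\mathrm{val}_\fp(t_\beta) = \mathrm{val}_\fp(1/\beta)$, since $|\Gamma|$ is a $p$-adic unit. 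The main obstacle is establishing the key claim: showing that the sign involution $\sqrt{-\lambda} \mapsto -\sqrt{-\lambda}$ is implemented on the Lubin--Tate deformation space by a formal automorphism with linear term $-1$. This requires the functoriality of Lubin--Tate with respect to the auxiliary embedding of $\cO_{F_0,\lambda}[\sqrt{-\lambda}]$ into $\cO_D$, together with the observation that the nontrivial element of $\Gal(F_0(\sqrt{-\lambda})/F_0)$ acts as $-1$ on the tangent direction generated by $\sqrt{-\lambda}$; once in place, the remainder is elementary $p$-adic bookkeeping.
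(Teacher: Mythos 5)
Your overall approach is the same as the paper's (Serre--Tate plus Lubin--Tate, with the deformation parameter $t$ and the relation $1/j = \prod_{\gamma\in\Gamma} t^\gamma$), and the valuation bookkeeping at the end is essentially correct, aside from a small sloppiness in treating the unit factor $u$ as a constant rather than a unit power series (the conclusion survives, since the error terms have strictly higher valuation). But the organization is different, and the load-bearing step is not established.

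Your key claim is that the sign change $\sqrt{-\lambda}\mapsto -\sqrt{-\lambda}$ is implemented on the Lubin--Tate deformation space $\mathrm{Spf}(W[[t]])$ by a formal automorphism $\phi$ fixing the origin with linear part $-1$, so that $t_{\beta'} = -t_\beta + O(t_\beta^2)$. The claim is true, but your proposed justification --- ``the nontrivial element of $\Gal(F_0(\sqrt{-\lambda})/F_0)$ acts as $-1$ on the tangent direction generated by $\sqrt{-\lambda}$'' --- is a category error. The Galois group of the CM extension does not act on the deformation space, which is a $W(\overline{\FF}_p)$-formal scheme with no Galois structure; the two conjugate embeddings $\theta,\theta'$ of $\cO_{F_0,\lambda}[\sqrt{-\lambda}]$ into $\cO_D$ select two points, but Galois conjugation of the domain does not by itself produce an automorphism of $\mathcal M$. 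What the paper actually proves, and what you would need, is a chain of facts: (i) by Gross's theorem, $\mathrm{val}_\fp(t)$ and $\mathrm{val}_\fp(t')$ are detected by the endomorphism rings of the reductions modulo $\fp^n$, and these rings coincide, so $\mathrm{val}_\fp(t)=\mathrm{val}_\fp(t')$; (ii) from the normalized-embedding condition, $\alpha$ acts with opposite eigenvalues on $\mathrm{Lie}(A_\beta)\bmod\fp^2$ and $\mathrm{Lie}(A_{\beta'})\bmod\fp^2$, so $t\not\equiv t'\bmod\fp^2$ and therefore $\mathrm{val}_\fp(t)=\mathrm{val}_\fp(t')=1$; and (iii) an explicit matrix computation in $\cO_D$ showing $\alpha^{\epsilon_0}\equiv -\alpha\bmod\fp^2\cO_D$, giving $\mathrm{val}_\fp(t'-t^{\epsilon_0})>1$. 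Only the combination of (i)--(iii) tells you that the involution in play acts nontrivially on the tangent space, hence by $-1$. Without something like (ii), a formal involution fixing the origin could act by $+1$ on the tangent line (being congruent to the identity modulo $\fp^2$), and your argument would collapse: you would then have $t_{\beta'}\equiv t_\beta\bmod\fp^2$ and no way to conclude. So the ``remainder is elementary $p$-adic bookkeeping'' line has it backwards: the bookkeeping is the easy part, and the content of the lemma sits exactly in the step you declared an observation. A secondary point: the paper's proof of the second assertion uses a telescoping inequality $\mathrm{val}_\fp(\prod (t')^\gamma-\prod t^\gamma)\ge \max_\gamma\mathrm{val}_\fp(t'-t^\gamma)+\#\Gamma-1$ and only needs $\mathrm{val}_\fp(t'-t^{\epsilon_0})>1$, not the evenness of $|\Gamma|$; your computation buys a slightly more direct estimate at the cost of needing the stronger $t'\equiv -t\bmod\fp^2$ and of invoking $2\nmid|\Gamma|$-type parity, which in fact holds but is not free.
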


\begin{proof}
We shall prove the assertion for $Z(\lambda)$; the same proof holds for $W(\lambda)$. 

We start by showing $\mathrm{val}_\fp(1/\beta)=\mathrm{val}_\fp(1/\beta')$. 
As in the proof of Proposition \ref{asquare}, denote 
$\Gamma=\mathrm{Aut}_{{\cO}_F}(A_{P})/(\{\pm 1\}) \times \mu_5)$.
Then, as in the proof of \Cref{asquare}, we have $\mathrm{val}_\fp(1/\beta)=\# \Gamma\cdot\mathrm{val}_\fp (t)$ and $\mathrm{val}_\fp(1/\beta')=\# \Gamma\cdot \mathrm{val}_\fp (t')$, where  
$t, t'$ are the corresponding values of a local parameter around $P$ on the smooth Deligne--Mumford stack $\Sh$. 
Then it suffices to show that $\mathrm{val}_\fp(t)=\mathrm{val}_\fp(t')$. 

We denote by $A_\beta, A_{\beta'}$ the CM abelian varieties corresponding to $\beta,\beta'$ respectively. 
Recall notation from the proof of Proposition \ref{RootInPairs}.
By construction, the pair $(\beta,\beta')$
corresponds to a conjugate pair of normalized embeddings $\cO_{F_0}[\sqrt{-\lambda}] \to \cO_B$, mapping 
$\sqrt{-\lambda}\mapsto \pm\alpha\in\cO_B$.
By Serre--Tate theory, our construction of $\beta, \beta'$, 
and \cite[Proposition~3.3]{Gross}, we have
\[\End_{\cO_F,{\mathrm{pol}}}(A_\beta \bmod \fp^n)= \End_{\cO_F,{\mathrm{pol}}}(A_{\beta'} \bmod \fp^n) =\cO_F\left(\cO_{F_0}[\pm \alpha] + \fp^{n-1} \cO_{B}\right).\] 

By definition, $\mathrm{val}_\fp(t)$ (resp. $\mathrm{val}_\fp(t')$) is the largest integer $n$ such that there exists a non-scalar (trace $0$) element of order $2$ in $\End_{\cO_F,{\mathrm{pol}}}(A_\beta \bmod \fp^n)$ (resp. $\End_{\cO_F,{\mathrm{pol}}}(A_{\beta'} \bmod \fp^n)$). 
Since the endomorphism algebras of $A_\beta\bmod \fp^n$ and $A_\beta' \bmod \fp^n$ agree, we deduce the equality $\mathrm{val}_\fp(t)=\mathrm{val}_\fp(t')$. 

By the definition of a normalized embedding, the element $\alpha\in\cO_B$  acts as $\sqrt{-\lambda}$ on ${\mathrm{Lie}} (A_\beta)\bmod\fp^2$ and as  $-\sqrt{-\lambda}$ on ${\mathrm{Lie}} (A_{\beta'})\bmod\fp^2$. We deduce that $t\not\equiv t'\bmod \fp^2$, hence $\mathrm{val}_\fp(t-t')=1$, and $\mathrm{val}_\fp(t)=\mathrm{val}_\fp(t')=1$.

It remains to show $\mathrm{val}_\fp(1/\beta'- 1/\beta)> \mathrm{val}_\fp(1/\beta)$, where $\mathrm{val}_\fp(1/\beta)=\# \Gamma$.
From \eqref{eqt}, we deduce\footnote{Indeed, by interpreting the valuations in terms of local intersection numbers between corresponding divisors, we can prove that $\mathrm{val}_\fp(\prod_{\gamma\in \Gamma} (t')^\gamma-\prod_{\gamma\in \Gamma} t^\gamma)  = \sum_{\gamma\in \Gamma} \mathrm{val}_\fp(t'-t^\gamma)$; from this equality, we can also deduce that this value is no less than $\max_{\gamma\in \Gamma} \mathrm{val}_\fp(t'-t^\gamma) + \#\Gamma -1$.} 
$$\mathrm{val}_\fp(1/\beta'- 1/\beta)=\mathrm{val}_\fp(\prod_{\gamma\in \Gamma} (t')^\gamma- \prod_{\gamma\in \Gamma} t^\gamma)  \geq \max_{\gamma\in \Gamma} \mathrm{val}_\fp(t'-t^\gamma) + \#\Gamma -1.$$
One way to see this inequality is to use the triangle inequality, and the fact that for any $I\subset \Gamma$, any $\gamma_0 \in \Gamma \setminus I$, 
and any $\gamma_1 \in \Gamma$, we have 
\begin{align*}
   & \mathrm{val}_\fp(\prod_{\gamma\in I \cup \{\gamma_0\}} (t')^\gamma \prod_{\gamma\in \Gamma\setminus (I \cup \{\gamma_0\})} t^{\gamma_1\gamma}- \prod_{\gamma\in I} (t')^\gamma \prod_{\gamma\in \Gamma\setminus I} t^{\gamma_1\gamma})\\
    = &\sum_{\gamma\in I} \mathrm{val}_\fp((t')^\gamma) + \sum_{\gamma \in \Gamma\setminus (I \cup \{\gamma_0\})} \mathrm{val}_\fp(t^{\gamma_1\gamma}) +  \mathrm{val}_\fp((t')^{\gamma_0}-t^{\gamma_1\gamma_0}) \\
    = &\#\Gamma -1 + \mathrm{val}_\fp(t'-t^{\gamma_1}). 
\end{align*}
Above, we used that the $\Gamma$-action preserves valuations.
In particular, $\mathrm{val}_\fp(1/\beta'- 1/\beta)\geq \# \Gamma$,
since $\mathrm{val}_\fp(t'-t^\gamma)\geq 1$, for all $\gamma\in\Gamma$. Furthermore, to establish the inequality 
$\mathrm{val}_\fp(1/\beta'- 1/\beta)> \# \Gamma$, it suffices to show
$\mathrm{val}_\fp(t'-t^\gamma)> 1$ for some $\gamma\in \Gamma$.

As in the proof of Lemma \ref{L1}, let
$\epsilon_0\in {\mathrm{Aut}}_{{\cO}_F}(A_{P})$ be the non-scalar element of order $2$ (which is unique modulo the center).  We may assume $\epsilon_0\mapsto \begin{bmatrix}
    1 & 0 \\ 0 & -1
\end{bmatrix}\in \bZ_{p^2}\cO_D$.
We claim that $\mathrm{val}_\fp(t'-t^{\epsilon_0})> 1$. 
By definition, $t^{\epsilon_0}$ is the value of the local parameter corresponding to the normalized embedding 
$\cO_{F_0}[\sqrt{-\lambda}] \to \cO_B$, mapping 
$\sqrt{-\lambda}\mapsto \alpha^{\epsilon_0}$, where $\alpha^{\epsilon_0}=\begin{bmatrix}
    1 & 0 \\ 0 & -1
\end{bmatrix} \alpha \begin{bmatrix}
    1 & 0 \\ 0 & -1
\end{bmatrix}\in\cO_D$.
With notation as in Section \ref{mathcalH}, we write an element $\alpha\in \cO_D$ as $$\alpha=\begin{bmatrix}
    y & -\lambda x^\gamma \\
   x  & y^\gamma\\
\end{bmatrix},$$
where $x, y\in\ZZ_{p^2}$. 
From $\alpha^2=-\lambda$,  we deduce  $y^2-\lambda xx^\gamma=-\lambda$ and $y+y^\gamma=0$. From the second equality, we deduce $y=y_0 \delta$, where $\delta \in\ZZ_{p^2}^\times$ satisfies $\delta^\gamma=-\delta$ and $y_0\in \ZZ_p$. 
From the first equality, we deduce $\mathrm{val}_\fp(y)>0$. Hence, $\mathrm{val}_\fp(y_0)>0$, and since $y_0\in\ZZ_p$ then $y_0\in\fp^2$.

By direct computation, we have $$\alpha^{\epsilon_0}= \begin{bmatrix}
    y_0\delta & \lambda x^\gamma \\
   -x  & -y_0 \delta\\
\end{bmatrix}.$$
We deduce $-\alpha\equiv \alpha^{\epsilon_0}\bmod \fp^2\cO_D$, and hence $\mathrm{val}_\fp(t'-t^{\epsilon_0})> 1$.
\end{proof}


\section{Proof of the main theorem}

We briefly review key notation.

\begin{notation}
\label{Nfinalnotation}
Recall that $F_0=\QQ(\sqrt{5})$ and $u=(1+\sqrt{5})/2$.
Let $c=(27/4)(u\sqrt{5})^{-5}\in F_0$.

For $t \in \CC -\{0,1\}$, recall the Klein $j$-function
$J(t) = (t^2-t+1)^3/t^2(t-1)^2$ from 
\eqref{Ejfunction}, and its normalization $j_t = (u\sqrt{5})^{-5} J(t)$
from \eqref{Eqlittlej}.
\end{notation}


\begin{theorem}\label{thm:M11}
For $t \in \CC -\{0,1\}$,
let $C =C_t$ be the smooth projective curve with affine model defined by $y^5=x(x-1)(x-t)$.  Let $j_t$ and $c$ be as in Notation~\ref{Nfinalnotation}.
Assume $j_t \in F_0$.  Assume:
	\begin{enumerate}
        \item $c-j_t$ is totally positive;
		\item $ \mathrm{val}_{u\sqrt{5}}(j_t-c) \in 2\mathbb{Z}$; and
		\item $ \mathrm{val}_{u\sqrt{5}}(j_t)< 0$. 
			\end{enumerate} 
	Then there exist infinitely many primes of $F_0$ at which the reduction of $\mathrm{Jac}(C)$ is basic.
\end{theorem}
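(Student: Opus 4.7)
The proof follows the Euclid/Elkies-style strategy recalled in Section~\ref{Sproof}. Assume for contradiction that the set $\Omega$ of primes of $\cO_{F_0}$ at which $\mathrm{Jac}(C_t)$ has basic or bad reduction is finite; by hypothesis (3), $\langle u\sqrt{5}\rangle \in \Omega$. Enlarge $\Omega$ to a finite $\tau$-stable set $S$ including the primes above $2$. Apply \Cref{thm:infintely-many-lambda} with $\mathcal{S} = S$ to obtain a totally positive irreducible $\lambda \in \cO_{F_0}$ with $\lambda \equiv -1 \bmod 4\cO_{F_0}$, $N_{F_0/\QQ}(\lambda) \equiv 4 \bmod 5$, $\lambda \ne \lambda^\tau$, representable by both $q_{Q,P}$ and $q_{Q,P}^\tau$, and with every prime in $S$ splitting in $F_0(\sqrt{-\lambda})/F_0$ and in $F_0(\sqrt{-\lambda^\tau})/F_0$. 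The parameters $a, b \in (-\sqrt[4]{5}, \sqrt[4]{5})$ and $\epsilon > 0$ in part~(2) of the theorem are chosen to place the two real CM points of $\tilde{Z}(\lambda)$ on the arch $\overset{\frown}{PQR}$ at prescribed positions under $\tau_1$ (via $a$) and under $\tau_2$ (via $b$, using $\tau(\cP_\lambda) = \cP_{\lambda^\tau}$).

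Set $j_t^\circ = c\, j_t/(j_t - c) \in F_0$ and define
$$
M := a_\lambda\, d_\lambda'\, \cP_\lambda(j_t)\, \cQ^\circ_\lambda(j_t^\circ) \cdot (u\sqrt{5})^{2N} \in \cO_{F_0},
$$
for $N$ large enough to absorb poles at $u\sqrt{5}$. Hypotheses (2) and (3) force the resulting power of $u\sqrt{5}$ to be even, so the correction factor is a square and does not affect the Legendre symbol at $\lambda$. Adding finitely many additional congruence constraints in the equidistribution ensures $M \not\equiv 0 \pmod{\lambda}$. By \Cref{coroSQ}, $(x - \bar c)\,\bcP_\lambda(x)$ and $(y - \bar c)\,\bcQo(y)$ are squares in $\bF_\lambda[x]$ and $\bF_\lambda[y]$ respectively. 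Evaluating them at $\bar j_t$ and $\overline{j_t^\circ}$ and multiplying, we use $(j_t - c)(j_t^\circ - c) = c^2$ from \eqref{circfacts} (a nonzero square modulo~$\lambda$) together with \Cref{asquare,asquare2} (giving $a_\lambda, d_\lambda'$ as squares modulo~$\lambda$) to conclude $\left(\tfrac{M}{\lambda}\right) = 1$.

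The central and most delicate step is the sign analysis. By \Cref{Poddrealroot,CMbyorderQP}, $\cP_\lambda, \cQ_\lambda \in F_0[x]$ are monic of odd degree with a unique real root on the arch $\overset{\frown}{PQR} = (-\infty, \tau_i(c))$ under each embedding $\tau_i$. The $\circ$-involution of $\bP^1(\RR)$ is decreasing on this arch (see \Cref{SswitchPR}), so the sign of $\cQ^\circ_\lambda(\tau_i(j_t^\circ))$ is opposite to that of $\cQ_\lambda(\tau_i(j_t))$ relative to the real root of $\cQ_\lambda$ under $\tau_i$. By hypothesis (1), $\tau_i(j_t)$ lies in the arch for $i = 1, 2$; using the two-parameter freedom in \Cref{thm:infintely-many-lambda} we arrange the real roots of $\cP_\lambda$ and $\cQ_\lambda$ so that $\tau_1(j_t)$ lies outside the interval they span under $\tau_1$, while $\tau_2(j_t)$ lies strictly inside the corresponding interval under $\tau_2$. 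This yields $\tau_1(\cP_\lambda(j_t)\,\cQ^\circ_\lambda(j_t^\circ)) < 0$ and $\tau_2(\cP_\lambda(j_t)\,\cQ^\circ_\lambda(j_t^\circ)) > 0$. Since the remaining factors of $M$ are totally positive, $M$ has mixed signs $(-, +)$. Writing $M = \varepsilon\, M'$ with $M' \in \cO_{F_0}$ totally positive and $\varepsilon \in \cU_{F_0}$ of sign $(-, +)$ (hence $\varepsilon = \pm u^k$ with $k$ odd), \Cref{quadrec5}(2) gives $\left(\tfrac{\varepsilon}{\lambda}\right) = -1$. The principal obstacle of the proof is achieving this simultaneous sign control in both real embeddings; it depends crucially on the independent parameters $a, b$ in the equidistribution theorem and on the preservation of the arch $\overset{\frown}{PQR}$ by $\circ$.

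For the contradiction, factor $M'$ into totally positive primes of $\cO_{F_0}$. For each prime $\fp \mid M'$ with $\fp \ne \lambda$, either $\fp$ is inert or ramified in $F_0(\sqrt{-\lambda})/F_0$, or it splits. In the first case, by \Cref{Lcirc} one has $\cQ^\circ_\lambda(j_t^\circ) = (j_t^\circ - c)^n\, \cQ_\lambda(j_t)/\cQ_\lambda(c)$ up to units at $\fp$, so $\bar j_t$ is the $j$-invariant of a CM abelian variety with CM by $\cO_E$ or $\cO_F[\sqrt{-\lambda}]$; by \Cref{ST} this CM variety has basic reduction at~$\fp$, and hence so does $\mathrm{Jac}(C_t)$. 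The prime $\langle u\sqrt{5}\rangle$ is ruled out as such a candidate by \Cref{CMmod5}. Since $\fp \notin S \supseteq \Omega$ by the choice of $\lambda$, we have found a new prime of basic reduction, contradicting the finiteness of $\Omega$. In the second case, $\left(\tfrac{-\lambda}{\fp}\right) = 1$, and by \Cref{QR} applied to the totally positive prime $\fp$, $\left(\tfrac{\fp}{\lambda}\right) = 1$. If the first case never occurs, $\left(\tfrac{M'}{\lambda}\right) = 1$, whence $\left(\tfrac{M}{\lambda}\right) = \left(\tfrac{\varepsilon}{\lambda}\right) = -1$, contradicting the square-mod-$\lambda$ conclusion. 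Thus the first case must occur for some $\fp$, completing the proof.
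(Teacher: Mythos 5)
Your proposal follows the same Euclid/Elkies skeleton as the paper, but your central quantity differs in a genuine way. Where the paper proceeds by a case analysis (cases (A) and (B), with the $\eta$-modification illustrated in Figures~\ref{fig:schematic1} and \ref{fig:schematic2}) and constructs a \emph{totally positive} element
\[
V=\epsilon\,a_\lambda(j-c)\,\cP_\lambda(j)
\quad\text{or}\quad
V=\epsilon\,d'_\lambda(j-c)\,\cQ^\circ_\lambda(j),
\]
then shows $V$ is \emph{not} a square mod~$\lambda$, you instead combine both polynomials into
$M=a_\lambda d'_\lambda\cP_\lambda(j_t)\cQ^\circ_\lambda(j_t^\circ)(u\sqrt 5)^{2N}$
and show $M$ \emph{is} a square mod~$\lambda$ (via $(j_t-c)(j_t^\circ-c)=c^2$) but has mixed signs. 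This is a dual formulation, and the cancellation trick for the $(j-c)$ factors is a nice observation. However, several of your steps have genuine gaps.

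\textbf{(1) The case $M\equiv 0\bmod\lambda$ cannot be removed by congruence constraints.} You write that ``adding finitely many additional congruence constraints in the equidistribution ensures $M\not\equiv 0\pmod\lambda$.'' But $\cP_\lambda(j_t)\equiv 0\bmod\lambda$ means $j_t\bmod\lambda$ coincides with a CM point of $Z(\lambda)$; this is not a congruence condition on $\lambda$ (it depends on the $\lambda$-adic expansion of $j_t$ and on the roots of $\cP_\lambda$, which vary with $\lambda$ in an uncontrolled way). The correct move, as the paper does, is to treat this case separately: if $\mathrm{val}_\lambda(V)>0$ (after discarding roots $\beta$ with $\mathrm{val}_\lambda(\beta)<0$) then the reduction of $\mathrm{Jac}(C)$ at $\lambda$ agrees with the reduction of a CM abelian variety, so $\lambda\notin\mathcal S$ itself is a new prime of basic reduction. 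This is a win, not an obstruction; your dismissal of it leaves the proof incomplete.

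\textbf{(2) The sign analysis is the delicate heart of the argument, and your version glosses over it.} You assert that the two parameters $(a,b)$ in \Cref{thm:infintely-many-lambda} suffice to place $\tau_1(j_t)$ outside and $\tau_2(j_t)$ inside the interval spanned by the real roots. But the two real CM points of $\tilde Z(\lambda)$ are exchanged by $\eta$ and therefore always straddle $M$, so the interval they span always contains $M$; meanwhile which endpoint carries CM by $\cO_E$ and which carries CM by the non-maximal order is not under your control, and the real roots under the \emph{second} embedding correspond to $\lambda^\tau$, not to $\lambda$. The paper's case split (A)/(B) and the $\eta$-modification (replacing $C_\lambda$ by $\eta^{\pm 1}C_\lambda$ to toggle multiplication type without moving it relative to $C,C^\tau$) is precisely what makes this step rigorous. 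Without that or an equivalent justification, your claim that the mixed-sign condition is always achievable is unproved.

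\textbf{(3) The ``inert prime gives basic reduction'' step needs the valuation bookkeeping.} You write that if $\fp\mid M'$ is inert/ramified then $\bar j_t$ is a CM $j$-invariant. But $\mathrm{val}_\fp(M')>0$ only gives $\mathrm{val}_\fp(a_\lambda\cP_\lambda(j_t))>0$ or $\mathrm{val}_\fp(d'_\lambda\cQ^\circ_\lambda(j_t^\circ))>0$ \emph{after} you discard the contributions from $(j_t-c)^n/\cQ_\lambda(c)$ introduced by \Cref{Lcirc}; and even then one must argue, as the paper does at the end of the proof, that the positive valuation comes from a root $\beta$ with $\mathrm{val}_\fp(\beta)\ge 0$ (the roots with negative valuation contribute only denominators). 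This argument is essential to conclude $j_t\equiv\beta\bmod\fp$, and it is missing from your sketch.

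In short, the idea of packaging both $\cP_\lambda$ and $\cQ^\circ_\lambda$ into one quantity is attractive and could streamline the exposition, but the proof as written does not close: the sign positioning, the $\equiv 0\bmod\lambda$ case, and the factorization step all require the careful analysis that the paper supplies and your sketch omits.
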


    By Proposition \ref{Predlehr}, assumption (3) holds if and only if $C$ does not have potentially good reduction at the prime of $F_0$ above $5$. 
   
\begin{remark} \label{RswitchJj} 
Note that $J(t) \in F_0$ if and only if $j_t\in F_0$.
Also, the first two assumptions in Theorem \ref{thm:M11} are equivalent to  \begin{enumerate}
		\item $\frac{27}{4}-J(t)$ is totally positive; and
        \item $\mathrm{val}_{u\sqrt{5}}( J(t)-\frac{27}{4}) \in 2\mathbb{Z}$.\end{enumerate}
In particular, they hold true if $J(t) \in \QQ \cap (-\infty, 27/4)$.
\end{remark}

Recall that $\tau$ is the non-trivial 
automorphism in $\mathrm{Gal}(F_0/\QQ)$.

\begin{lemma}\label{Ltau}
\begin{enumerate}
\item If $j_t \in F_0$, then the isomorphism class of $C$ is defined over $F_0$.

\item The points of the $j$-line corresponding to $C$ and $ C^\tau$ are $j_t$ and $\xi_t:= u^{-10}j_t^\tau$ respectively.

\item The value $c-j_t$ is totally positive if and only if 
these two points lie on the
arch $\overset{\frown}{PQR}$.
\end{enumerate}
\end{lemma}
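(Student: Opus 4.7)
The plan is to handle the three parts sequentially, with part~(2) carrying all the substantive computation and parts~(1) and~(3) reducing to it. For part~(1), since $(u\sqrt{5})^{-5} \in F_0^\times$, the hypothesis $j_t \in F_0$ is equivalent to $J(t) \in F_0$. By Lemma~\ref{Lfomfod}(2), the field of definition of $C$ equals $\QQ(J(t)) \subseteq F_0$.

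For part~(2), I would track how the $F_0$-identification of $\Sh$ with the $j$-line behaves under the nontrivial $\tau \in \Gal(F_0/\QQ)$. Using $u u^\tau = -1$ and $\tau(\sqrt{5}) = -\sqrt{5}$, one checks $\tau(u\sqrt{5}) = \sqrt{5}/u$ and hence
\[
\tau\bigl((u\sqrt{5})^{-5}\bigr) = u^{10}\,(u\sqrt{5})^{-5}.
\]
Since $C^\tau$ has Klein $J$-value $J(t)^\tau$, its $j$-line coordinate is
\[
(u\sqrt{5})^{-5}\,J(t)^\tau \;=\; u^{-10}\,\tau\!\bigl((u\sqrt{5})^{-5}\,J(t)\bigr) \;=\; u^{-10} j_t^\tau \;=\; \xi_t.
\]

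For part~(3), I would pass to the real embedding $\tau_1 : F_0 \hookrightarrow \RR$. Under the identification of Lemma~\ref{LM11defQ}, the points $P, Q, R$ map to $\infty, 0, c$ in $\bP^1(\RR)$, with $c > 0$. Removing $P$ and $R$ splits $\bP^1(\RR)$ into two open arcs; the one containing $Q = 0$ is the interval $(-\infty, c)$, which corresponds to the open part of $\overset{\frown}{PQR}$. Therefore $[C]$ lies on $\overset{\frown}{PQR}$ (away from the endpoints) iff $j_t < c$, and $[C^\tau]$ lies there iff $\xi_t < c$. Using the identity $c^\tau = u^{10} c$ extracted in part~(2), the latter condition is equivalent to $j_t^\tau < c^\tau$. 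Hence both points lie on $\overset{\frown}{PQR}$ precisely when both $c - j_t$ and $c^\tau - j_t^\tau$ are positive, i.e.\ when $c - j_t$ is totally positive.

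I do not expect a serious obstacle: the whole argument is bookkeeping once the Galois-equivariance of $(u\sqrt{5})^{-5}$ is understood. The mild risk is confusing which $F_0$-twist appears in $\xi_t$ versus $c^\tau$; part~(2) is the source of both, and it is reassuring that the same factor $u^{10}$ governs both the coordinate change and the Galois action on $c$, which is exactly why both conditions bundle neatly into total positivity of the single element $c - j_t$.
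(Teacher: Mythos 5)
Your proof is correct and follows essentially the same path as the paper: part (1) reduces to Lemma~\ref{Lfomfod}(2), part (2) is the computation $\tau\bigl((u\sqrt{5})^{-5}\bigr)=u^{10}(u\sqrt{5})^{-5}$ (which the paper does in-line rather than isolating as an identity), and part (3) translates the arch condition into the inequality $j_t<c$ at each real place. The only stylistic difference is that you extract the Galois action on $(u\sqrt{5})^{-5}$ once and reuse it for both $\xi_t$ and $c^\tau$, whereas the paper recomputes, but the substance is identical.
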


\begin{proof}
By Notation~\ref{Nfinalnotation}, $J(t)= j_t(u\sqrt{5})^5\in \CC$. 
\begin{enumerate}
\item If $j_t \in F_0$, then $J(t)\in F_0$. 
By Lemma \ref{Lfomfod}, since $J(t)\in\overline{\QQ}$, the isomorphism class of $C$ is defined over $F_0$.

\item The isomorphism class of $C^\tau$ is given by $J(t)^\tau$. 
Hence, the points of the $j$-line representing $C$ and $ C^\tau$ are respectively $j_t=(u\sqrt{5})^{-5}J(t)$ and 
\[\xi_t:=(u\sqrt{5})^{-5}J(t)^\tau = (u\sqrt{5})^{-5} j_t^\tau (-u^\tau \sqrt{5})^5 = u^{-10} j_t^\tau.\]

\item By Remark~\ref{RswitchJj} and Lemma~\ref{Lfomfod}, 
the point representing $C$ lies on the arch $\overset{\frown}{PQR}$
if and only if $J(t)< 27/4$ 
(or, equivalently, $j_t < c$). 
Similarly, the point $C^\tau$ lies on the arch $\overset{\frown}{PQR}$
if and only if $J(t)^\tau < 27/4$
(or, equivalently, $j_t^\tau < c^\tau= u^{10}c$).
These two conditions are satified if and only if $c-j_t$ is totally positive
\end{enumerate}
\end{proof}

\begin{proof}[Proof of \Cref{thm:M11}]
Write $C =C_t$ and $j=j_t\in F_0$. If $C=M$, then by \Cref{MisCM}, its Jacobian has complex multiplication and hence it has basic reduction 
at infinitely many primes by Shimura--Taniyama. For the rest of the proof, we assume $C, C^\tau \neq M$.

We prove the statement by contradiction. 
Assume $\mathrm{Jac}(C)$ has basic reduction at only finitely many primes of $F_0$. 
Let $\mathcal{S}$ be a finite set of primes of $F_0$, 
such that $\mathcal{S}=\mathcal{S}^\tau$,
containing all primes for which the reduction of $\mathrm{Jac}(C)$ is basic, 
the primes $2$, $3$, and $u\sqrt{5}$, and all primes $v$ 
if either $\mathrm{val}_v(j)\neq 0$ or $\mathrm{val}_v(j-c)\neq 0$. 
    
Our goal is to construct a prime $v \notin \mathcal{S}$ at which 
$\mathrm{Jac}(C)$ has basic reduction. 

Consider the points on the $j$-line associated with $C$ and $C^\tau$; 
by Lemma~\ref{Ltau}, they are $j$ and $\xi_t= u^{-10}j^\tau$. 
By hypothesis, they both lie on the arch $\overset{\frown}{PQR}$. 

By applying Theorem~\ref{thm:infintely-many-lambda} to the set $\mathcal{S}\setminus \{u\sqrt{5}\} $, we obtain a set $\Lambda$ of totally positive irreducible elements $\lambda$ in $\cO_{F_0}$, which satisfy the assumptions in Notation \ref{lambda},
and such that the two real points $C_\lambda$ and $C_{\lambda^\tau}$
having complex multiplication by $\cO_F[\sqrt{-\lambda}]$ lie on the arch $\overset{\frown}{PQR}$ with desired location to be specified below.  The condition $\lambda\neq \lambda^\tau$ implies 
that $C_\lambda, C_{\lambda^\tau} \not = M$.
There are two cases:

Case (A): $C$ and $C^\tau$ are on the same side on $M$, 
meaning they are both on $\overset{\frown}{PM}$
or both on $\overset{\frown}{MR}$; 
without loss of generality, we suppose 
that $C$ and $C^\tau$ are both on $\overset{\frown}{MR}$; 
the proof in the other case is very similar; or

Case (B): $C$ and $C^\tau$ are on the opposite sides of $M$;
without loss of generality, we suppose that $C$ is on $\overset{\frown}{MR}$ 
and $C^\tau$ is on $\overset{\frown}{PM}$.

In case (A), 
we can suppose that $C_\lambda$ (resp.~$C_\lambda^\tau$) is closer to $M$ (resp.~$R$) than  
any of $\{C, C^\tau\}$;
\footnote{To measure distance, we lift to the geodesic segment $\tilde{P} R_1$ in 
$\bH$ and use the hyperbolic distance.}
In case (B), we can suppose that $C_\lambda$, $\eta^{-1} C_\lambda$, and $C_\lambda^\tau$ are all closer to $M$ than any of $\{C, C^\tau\}$.
Note that either $C_\lambda$ or  $C_{\lambda^\tau}$, or both, might have multiplication by the maximal order $\cO_F[\frac{1+\sqrt{-\lambda}}{2}]$.
We say that $C_\lambda$ and $C_{\lambda^\tau}$ have the same multiplication type 
if both have CM by $\cO_E$ or both do not have CM by $\cO_E$. 

Let $\cP_\lambda, \cQ_\lambda^\circ$ be the polynomials given in Definition \ref{poly} and \Cref{SswitchPR}. We claim that (at least) one of $\cP_\lambda(j)\cP_{\lambda^\tau}(j_{\tau})$ and $\cQ_\lambda^\circ(j)\cQ_{\lambda^\tau}^\circ (j_{\tau})$ is negative. 
Indeed, if 
$C_\lambda$ and $C_{\lambda^\tau}$ have the same multiplication type, then by Lemma~\ref{Poddrealroot}, from the relative position of
the points in $\{C, C^\tau, C_\lambda, C_\lambda^\tau\}$,
we deduce that (at least) one of $\cP_\lambda(j)\cP_{\lambda^\tau}(j_{\tau})$ and $\cQ_\lambda^\circ(j)\cQ_{\lambda^\tau}^\circ(j_{\tau})$  is negative. More precisely, $\cP_\lambda(j)\cP_{\lambda^\tau}(j_{\tau})$ is negative if $C_\lambda, C_{\lambda^\tau}$ both have multiplication by the maximal order $\cO_F[\frac{1+\sqrt{-\lambda}}{2}]$, and $\cQ_\lambda^\circ(j)\cQ_{\lambda^\tau}^\circ(j_{\tau})$ is negative otherwise.

Therefore, it remains to consider the case when 
$C_\lambda$ and $C_{\lambda^\tau}$ have different multiplication types.
By Proposition~\ref{CMbyorderQP},
after replacing the point 
$C_\lambda$ with its image under $\eta^{-1}$ (or, equivalently, $\eta$), 
we can ensure that $C_\lambda$ and $C_{\lambda^\tau}$ have the same multiplication type, without changing their relative position with respect to $C$ and $C^\tau$.
We illustrate this modification for case (A) in Figure~\ref{fig:schematic1}
and for case (B) in Figure~\ref{fig:schematic2}; 
(the location of $C_\lambda^\tau$ does not impact the argument).

\begin{figure}[h!]
\begin{center}
\begin{tikzpicture}
        \draw (-5,0)--(5,0);

        \node[below] at (-4,0) {\footnotesize{$P=\eta^{-1}R$}};
        \node[above] at (-4,0) {\footnotesize{$\infty$}};
        \draw [fill] (-4,0) circle [radius=0.04];
        

        \node[below] at (0,0) {\footnotesize{$M$}};
        \draw [fill] (0,0) circle [radius=0.04];

        \node[below] at (1.82,0) {\footnotesize{$Q$}};
        \node[above] at (1.82,0) {\footnotesize{$0$}};
        \draw [fill] (1.82,0) circle [radius=0.04];

        \node[below] at (4,0) {\footnotesize{$R$}};
        \node[above] at (4,0) {\footnotesize{$c$}};
        \draw [fill] (4,0) circle [radius=0.04];

        \node[below] at (2.5,0) {\footnotesize{$C^\tau$}};
        \draw[red] [fill] (2.5,0) circle [radius=0.04];

        \node[below] at (1.2,0) {\footnotesize{$C$}};
        \draw[red] [fill] (1.2,0) circle [radius=0.04];

        \node[below] at (3,0) {\footnotesize{$C_\lambda^\tau$}};
        \draw[blue] [fill] (3,0) circle [radius=0.04];

        \node[below] at (0.7,0) {\footnotesize{$C_{\lambda}$}};
        \draw[blue] [fill] (0.7,0) circle [radius=0.04];

        \node[below] at (-.7,0) {\footnotesize{$\eta^{-1}C_{\lambda}$}};
        \draw[blue] [fill] (-.7,0) circle [radius=0.04];

        \draw (0.7,0.2) .. controls (0,0.7) and (0,0.4) .. (-.7,0.2);
        \draw (-.7,0.2)--(-.6,0.4);
        \draw (-.7,0.2)--(-.6,0.1);

        \node[right] at (5,0) {$\Sh(\mathbb{R})$};
    \end{tikzpicture}
\end{center}
\caption{Schematic of the arch $\overset{\frown}{PQR}$, with modification in Case (A)}
    \label{fig:schematic1}
\end{figure}

\begin{figure}[h!]
\begin{center}
\begin{tikzpicture}
        \draw (-5,0)--(5,0);

        \node[below] at (-4,0) {\footnotesize{$P=\eta^{-1}R$}};
        \node[above] at (-4,0) {\footnotesize{$\infty$}};
        \draw [fill] (-4,0) circle [radius=0.04];
        

        \node[below] at (0,0) {\footnotesize{$M$}};
        \draw [fill] (0,0) circle [radius=0.04];

        \node[below] at (1.82,0) {\footnotesize{$Q$}};
        \node[above] at (1.82,0) {\footnotesize{$0$}};
        \draw [fill] (1.82,0) circle [radius=0.04];

        \node[below] at (4,0) {\footnotesize{$R$}};
        \node[above] at (4,0) {\footnotesize{$c$}};
        \draw [fill] (4,0) circle [radius=0.04];

        \node[below] at (-2,0) {\footnotesize{$C^\tau$}};
        \draw[red] [fill] (-2,0) circle [radius=0.04];

        \node[below] at (1.5,0) {\footnotesize{$C$}};
        \draw[red] [fill] (1.5,0) circle [radius=0.04];

        \node[below] at (1,0) {\footnotesize{$C_\lambda^\tau$}};
        \draw[blue] [fill] (1,0) circle [radius=0.04];

        \node[below] at (0.6,0) {\footnotesize{$C_{\lambda}$}};
        \draw[blue] [fill] (0.6,0) circle [radius=0.04];

        \node[below] at (-.7,0) {\footnotesize{$\eta^{-1}C_{\lambda}$}};
        \draw[blue] [fill] (-.7,0) circle [radius=0.04];

        \draw (0.6,0.2) .. controls (0,0.7) and (0,0.4) .. (-.7,0.2);
        \draw (-.7,0.2)--(-.6,0.4);
        \draw (-.7,0.2)--(-.6,0.1);

        \node[right] at (5,0) {$\Sh(\mathbb{R})$};
    \end{tikzpicture}
\end{center}
\caption{Schematic of the arch $\overset{\frown}{PQR}$, with modification in Case (B)}
    \label{fig:schematic2}
\end{figure}

Therefore, after this modification,
we also have that (at least) one of $\cP_\lambda(j)\cP_{\lambda^\tau}(j_{\tau})$ and $\cQ_\lambda^\circ(j)\cQ_{\lambda^\tau}^\circ (j_{\tau})$ is negative.

Assume $\cP_\lambda(j)\cP_{\lambda^\tau}(j_{\tau})<0$; (the other case is similar and is discussed later). 
From Lemma~\ref{Ltau}, the points of the $j$-line corresponding to $C$ and $C^\tau$ are $j_t$ and $\xi_t = u^{-10}j_t^\tau$.
We deduce that $\cP_{\lambda^\tau}(j_{\tau})=u^{-10n}(\cP_{\lambda}(j))^{\tau}$, where $n=\deg \cP_\lambda$, which is odd by Lemma~\ref{Poddrealroot}. 
Hence $\left(\cP_{\lambda}(j)\right)\left(\cP_{\lambda}(j)\right)^\tau<0$. We choose $\epsilon \in \{u,u^\tau\}$ such that $\epsilon \cP_\lambda(j)$ is totally negative. 

As in Section \ref{denominator}, 
let $a_\lambda \in \cO_{F_0}$ be the
totally positive least common multiple  of the denominators of the coefficients of $\cP_\lambda(x) \in F_0[x]$.

Consider the value
$V:=\epsilon a_\lambda (j-c) \cP_\lambda(j)$ in $ {F_0}$. By construction, 
it is totally  positive.
By Corollary~\ref{coroSQ} and Lemma~\ref{quadrec5} combined,  
$V$ is either $0$ modulo $\lambda$ or not a square modulo $\lambda$.  
Note that we can reduce 
$V$ modulo $\lambda$ since $a_\lambda \cP_\lambda(x)\in \cO_{F_0}[x]$, 
and since 
$\mathrm{val}_{\lambda}(j)= \mathrm{val}_{\lambda}(j-c)=0$
(because $\lambda\not\in \mathcal{S}$).

If $V \equiv 0\bmod \lambda$ (meaning that $\mathrm{val}_\lambda(\epsilon a_\lambda (j-c) \cP_\lambda(j))>0$),
we have 
\[\mathrm{val}_\lambda( a_\lambda \cP_\lambda(j))=\mathrm{val}_\lambda\left(\prod_{\mathrm{val}_\lambda(\beta)\geq 0}(j-\beta) \prod_{\mathrm{val}_\lambda(\beta)=-1}(\varpi j- \varpi \beta)\right)>0,\] 
where $\beta$ runs through all roots of $\cP_\lambda(x)$.\footnote{By \Cref{L2}, $\mathrm{val}_\lambda(\beta)\geq -1$ for all $\beta$; we only use the
condition $\mathrm{val}_\lambda(\beta)=-1$ for ease of notation. In general, it is possible to multiply $\beta$ by a suitable power of $\varpi$ and the argument remains the same.} Note that $\mathrm{val}_\lambda( \prod_{\mathrm{val}_\lambda(\beta)=-1}(\varpi j- \varpi \beta))=0$. 

Thus there exists a principally polarized abelian variety $A$ (defined over $\overline{F}$)
having multiplication by $\mathcal{O}_F[\frac{1+\sqrt{-\lambda}}{2}]$,
such that the reductions at $\lambda$ of $A$ and $\mathrm{Jac}(C)$ are isomorphic.
By Proposition \ref{ST}, $\lambda$ is a prime of basic reduction of $A$, and hence of $\mathrm{Jac}(C)$, and by construction $\lambda\not \in \mathcal{S}$. 
 
If $V$ is not a square modulo $\lambda$, then 
there exists a place $v$ of $F_0$, which is not a square modulo $\lambda$, such that
$\mathrm{val}_v(V)$ is positive and odd. 
By Lemma \ref{QR}, if $v$ is not a square modulo $\lambda$, then $v$ is not split in $F_0(\sqrt{-\lambda})/F_0$, and  hence $v\not\in \mathcal{S}\setminus \{u\sqrt{5}\}$.  We deduce that either
 $v\neq u\sqrt{5}$ and $\mathrm{val}_v(j-c)=0$ or  $v= u\sqrt{5}$ and $\mathrm{val}_{u\sqrt{5}} (j-c)$ is even by Assumption~(2). 
 In both cases, $\mathrm{val}_v(j-c)$ is even. We conclude that $\mathrm{val}_v(a_\lambda \cP_\lambda(j))>0$. 
The same argument as above shows that roots $\beta$ with $\mathrm{val}_v(\beta)<0$ do not contribute to the positive valuation.

Thus, as in the other case, 
there exists a principally polarized abelian variety $A$ (defined over $\overline{F}$)
having multiplication by $\mathcal{O}_F[\frac{1+\sqrt{-\lambda}}{2}]$,
such that the reductions at $v$ of $A$ and $\mathrm{Jac}(C)$ are isomorphic.
By Proposition \ref{ST} and Lemma \ref{QR},  $v$ is a prime of basic reduction for $A$ and thus for $\mathrm{Jac}(C)$. Finally,  Proposition \ref{CMmod5} implies $v\neq u\sqrt{5}$ and hence $v\not\in \mathcal{S}$. 

Assume $\cQ_\lambda^\circ(j)\cQ_{\lambda^\tau}^\circ(j^{\tau})<0$. The argument in this case is similar, with $a_\lambda$ replaced by $d'_\lambda$ as defined in Proposition \ref{asquare2}. Note that since $\lambda, v\not\in \mathcal{S}\setminus \{u\sqrt{5}\}$, then $\lambda, v\neq 2$.

More precisely, as in Section~\ref{SswitchPR}, we use  $\circ$ to denote the unique automorphism of $\mathbb{P}^1$ which fixes $Q$ and switches $R$ and $P$. By \eqref{circfacts}, $j^\circ={cj}(j-c)^{-1}$ and 
$j^\circ-c=c^2(j-c)^{-1}$.  We deduce that $j^\circ \in F_0$, both $j^\circ$ and $j^\circ-c$ are totally negative, and $\mathrm{val}_{u\sqrt{5}}(j^\circ-c)\in 2\ZZ$. 
 Also,  $\mathrm{val}_v(j^\circ)=\mathrm{val}_v(j^\circ-c)=0$ for all primes  $v\not\in\mathcal{S}$.
The same argument for $\cP_\lambda$ above also shows that  we can choose $\epsilon \in \{u,u^\tau\}$ such that $\epsilon \cQ_\lambda^\circ (j)$ is totally negative. The rest of the argument holds verbatim. 
This completes the proof of \Cref{thm:M11}.
\end{proof}

\bibliographystyle{amsplain}
\bibliography{infbib}

\end{document}